\title{The Geometry of Secondary Terms in Arithmetic Statistics}
\author{Michael Kural}
\address{Harvard University}
\date{}
\begin{document}

\begin{abstract}
In this thesis, we prove the existence of a secondary term for the count of cubic extensions of the function field $\F_q(t)$ of fixed absolute norm of discriminant. We show that the number of cubic extensions with absolute norm of discriminant equal to $q^{2N}$ is $c_1 q^{2N} - c_2^{i} q^{5N/3} + O_{\eps}\left(q^{(3/2+\eps)N}\right)$, where $c_1$ and $c_2^{i}$ are explicit constants and $c_2^{i}$ only depends on $N\pmod{3}$.

This builds on the work of Bhargava--Shankar--Tsimerman and Taniguchi--Thorne, who proved the existence of a secondary term for the count of cubic extensions of $\Q$ with bounded discriminant. Our approach uses a parametrization of Miranda and Casnati--Ekedahl, which can be seen as a geometric version of the classical parametrization by binary cubic forms used by Davenport--Heilbronn. This allows us to count and sieve for smooth curves embedded in Hirzebruch surfaces, in the same spirit as Zhao and Gunther.

\vspace{2\baselineskip}
\noindent\textit{This was submitted (with minor formatting changes) to Harvard as the author's thesis on April 3, 2025.}
\end{abstract}

\maketitle
\section{Introduction}
\subsection{Background}
A problem of primary importance in arithmetic statistics is counting extensions of a global field by discriminant. Davenport and Heilbronn \cite{DH71} proved that the number of cubic extensions $K/\Q$ of bounded discriminant of absolute value at most $X$ is asymptotic to $cX$ for a constant $c$.

Datskovsky and Wright \cite{DW88} gave an asymptotic for the number of cubic extensions of bounded absolute discriminant of an arbitrary global field of characteristic not equal to $2$ or $3$.

It was noticed that despite these results, there was poor agreement in the empirical data between the main term and the true number of cubic extensions of discriminant bounded by $X$. Roberts \cite{Rob01} (and also Datskovsky and Wright \cite{DW88}) conjectured a formula for cubic extensions with a \textit{secondary term} in addition to the main term. More precisely, Roberts conjectured that the number of cubic extensions $K/\Q$ of absolute discriminant bounded by $X$ should be in the form 
\[
\#\{K/\Q: |\Disc_{K/\Q}|<X\} = c_1 X - c_2 X^{5/6} + o(X^{5/6})
\]
for some positive constants $c_1$ and $c_2$.

Roberts' conjecture was proven independently by  Bhargava, Shankar, and Tsimerman \cite{BST13} and Taniguchi and Thorne \cite{TT13} with a power saving error term. 

In recent years, there has been increasing attention on secondary terms for various counts in arithmetic statistics, including those for $2$-Selmer groups \cite{ST24}, $S_3\times A$-extensions \cite{Wan17}, and quartic extensions \cite{ST25}, all over $\Q$.

Separately, there has been an interest recently in counting extensions of function fields using geometric and topological methods. Over the function field $\F_q(t)$, each degree $n$ extension $K/\F_q(t)$ corresponds to a branched cover $X \to \PP^1$, and counting by discriminant is equivalent to counting covers with a fixed degree of branch divisor.

One may count (for example) simply branched degree $n$ covers with $b$ branch points by counting $\F_q$-points of an associated Hurwitz scheme $\Hur_{n,b}$. In general, a Hurwitz scheme parametrizes branched covers of $\PP^1$ and may specify a Galois group and ramification types of the branch points (along with data at infinity).

Ellenberg, Venkatesh, and Westerland  \cite{EVW16} (see also Achter \cite{Ach06}, Yu \cite{Yu97}) initiated a program to count certain simply branched covers by using the Grothendieck-Lefschetz trace formula and a homological stability theorem for Hurwitz spaces. This approach was recently used by Landesman and Levy \cite{LL25a} to prove a moments version of the Cohen-Lenstra conjecture over $\F_q(t)$ when $q$ is sufficiently large given the moment of interest (also see \cite{LL25b}).

Gunther \cite{Gun17} proved a main term for cubic extensions of an arbitrary function field $\F_q(C)$ (with $\Char \F_q >3$) using a geometric approach rather than a topological approach.

One may naturally conjecture that there should be a secondary term in the count for cubic extensions of $\F_q(t)$. It is conjectured that this could correspond to a form of ``secondary stability'' in the homology of the associated Hurwitz space.

However, to the author's knowledge there has been no other instance of a secondary term for the discriminants of cubic extensions of any global field other than $\Q$ (see \Cref{sec:zhao} regarding the thesis of Zhao).

In this paper, we indeed prove the existence of such a secondary term for the count of cubic extensions of $\F_q(t)$ with absolute norm of discriminant $q^{2N}$. This is the first instance known of a secondary term for cubic extensions of a function field.

\subsection{Main Result}
To state our result let $\F_q$ be a fixed finite field. For a smooth irreducible curve $C/\F_q$, we call a morphism $f: X \to C$ a \textbf{degree $d$ branched cover} if $X$ is a smooth, irreducible curve and $f$ is finite of degree $d$ and generically \'etale. 

The category of degree $3$ branched covers $X\to \PP^1$ is equivalent to the category of finite separable degree $3$ field extensions $K/\Fq(t)$. The discriminant $\Delta:= \Disc_{K/\F_{q}(t)}$ cuts out a divisor on $\PP^1$, which we identify with the \textbf{branch divisor} $B$ on $\PP^1$ from the corresponding cover $f: X\to \PP^1$. There is a nonnegative integer $N$ such that $\deg B = 2N$ \footnote{The degree of the branch divisor is even because of Riemann-Hurwitz, which holds if $X$ is geometrically irreducible. If $X$ is irreducible and geometrically reducible, then $X\to \PP^1$ must have branch degree $0$, which is still even.}; consequently the absolute norm of the discriminant $\Nm(\Delta)$ equals $q^{2N}$.

Let $\Cov_{3}(2N)$ denote the number of degree $3$ branched covers $f: X \to \PP^1$ with branch divisor of degree $2N$, up to isomorphism. (Here an isomorphism of branched covers $f_1:X_1 \to \PP^1$ and $f_2:X_2 \to \PP^1$ is a commuting isomorphism $X_1\xrightarrow{\sim} X_2$.) 

We can equivalently write
\begin{align*}
    \Cov_3(2N) &= |\{[f: X \to \PP^1] \text{ a degree $3$ branched cover}\mid  \deg B = 2N\}| \\
    &= |\{[K/\F_q(t)] \mid  \Nm \left(\Disc_{K/\F_q(t)}\right) = q^{2N}\}|,
\end{align*}
where $[A]$ denotes the isomorphism class of either a branched cover or a field extension.

Our main theorem provides a main term as well as a secondary term in an estimate for $\Cov_{3}(2N)$.

\begin{thm}
\label{thm:main-thm-intro}
Let $N$ be a nonnegative integer, and let $i \in \{0,1,2\}$ such that $N\equiv i\pmod{3}$. The number of degree $3$ extensions $K/\F_q(t)$ with $\Nm(\Disc_{K/\F_q(t)}) =q^{2N}$ is
\[
\Cov_3(2N) = c_1 q^{2N} - c_2^{i} q^{5N/3} + O\left(N^4 q^{3N/2}+1\right),
\]
where $c_1, c_{2}^{0}, c_2^{1},$ and $c_2^{2}$ are explicit constants not depending on $N$. 
\end{thm}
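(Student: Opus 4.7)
The plan is to use the Casnati--Ekedahl (Miranda) parametrization to recast the problem as counting smooth sections of line bundles on Hirzebruch surfaces, and to run a function-field Poonen-style smoothness sieve in the spirit of Gunther \cite{Gun17}. Casnati--Ekedahl identifies degree-$3$ branched covers $f\colon X\to\PP^1$ of branch degree $2N$ with equivalence classes of pairs $(E,s)$, where $E$ is a rank-$2$ vector bundle on $\PP^1$ with $\det E\cong\OO_{\PP^1}(N)$ (the Tschirnhaus bundle) and $s\in H^0(\PP^1,\Sym^3 E\otimes\det E^{-1})$ cuts out $X\subset\PP(E)$, up to the action of $\Aut(E)$. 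Hence
\[
\Cov_3(2N)=\sum_E \frac{N_E}{|\Aut(E)|},
\]
where $N_E$ counts admissible sections.

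I would stratify by splitting type: write $E_k=\OO(a)\oplus\OO(b)$ with $a+b=N$, $a\ge b$, $k:=a-b\ge 0$, $k\equiv N\pmod 2$, so $\PP(E_k)=\Fk$. A section expands as $s=s_{30}x^3+s_{21}x^2y+s_{12}xy^2+s_{03}y^3$ with $s_{ij}\in H^0(\PP^1,\OO(ai+bj-N))$. When $k>N/3$, the coefficient $s_{03}$ lies in a line bundle of negative degree and must vanish, forcing $V(s)\supset\sigma_\infty\subset\Fk$ and hence to be reducible; so only $k\le N/3$ contributes, and for such $k$ one has $h^0(\PP^1,\Sym^3 E_k\otimes\det E_k^{-1})=2N+4$. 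For each such $k$, I would apply an Ekedahl-style smoothness sieve on $\Fk$, partitioning closed points of $\Fk$ by degree into small, medium, and large; the medium and large pieces are bounded via a fibration over a $\PP^1$ factor together with Lang--Weil. This yields $N_{E_k}=\rho\cdot q^{2N+4}+O(\text{error})$, where the small-point density $\rho=(1-q^{-1})(1-q^{-2})^2(1-q^{-3})$ is independent of $k$ because every Hirzebruch surface shares the local zeta function $(1-T)^{-1}(1-qT)^{-2}(1-q^2 T)^{-1}$.

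Using $|\Aut E_0|=|\GL_2(\F_q)|$ and $|\Aut E_k|=(q-1)^2 q^{k+1}$ for $k\ge 1$, the individual contributions $\rho q^{2N+4}/|\Aut E_k|$ form a geometric series in $k$. A direct manipulation shows the full (untruncated) summation gives the same value in both parities of $N$, yielding the main term $c_1 q^{2N}$ with $c_1$ independent of $N\pmod 2$. The secondary term $-c_2^i q^{5N/3}$ arises precisely as the tail of the geometric series beyond the truncation at $\lfloor N/3\rfloor$: the tail starts at the first excluded eligible $k_{\min}$, and $k_{\min}-N/3$ equals $2$, $2/3$, or $4/3$ according to $i=N\pmod 3\in\{0,1,2\}$ (respectively, after accounting for parity), so the tail is $c_2^i q^{5N/3}$ with $c_2^i$ depending only on $i$. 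The combined sieve errors across the $O(N)$ strata contribute the $O(N^4 q^{3N/2})$ term.

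The principal obstacle is maintaining sufficient precision in the sieve uniformly across all strata so the $q^{5N/3}$ secondary is not swamped. The medium-point part of the Ekedahl sieve must be bounded uniformly in $k$, which is delicate because $\Fk$ becomes increasingly unbalanced as $k$ grows; a fibration analysis sensitive to the varying geometry of $\Fk$ is needed to keep the per-stratum error at $O(N^4 q^{3N/2})$. Moreover, the small-point sieve must produce the density $\rho q^{2N+4}$ with enough accuracy that the tail computation yielding $c_2^i$ is not perturbed, and the three residue classes $N\pmod 3$ must be analyzed separately to track how the truncation boundary and the corresponding $k_{\min}$ discretize.
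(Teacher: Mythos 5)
Your skeleton is right in several respects: the Casnati--Ekedahl reduction, the stratification by Hirzebruch surfaces $F_k$ with the constraint $k\le N/3$ (equivalently $\ell\ge 3k$, so that the $y^3$-coefficient can be nonzero), the dimension count $h^0=2N+4$, the geometric series over $k$ weighted by $1/|\Aut E_k|$ producing a parity-independent $c_1$, and even the heuristic that the secondary term of size $q^{5N/3}$ is created by truncating that series at $k\approx N/3$ --- this is exactly what the pole of $(1-q^5T^3)^{-1}$ in the generating function $\widehat{F}(T)$ records. The gap is in how you propose to control the sieve. Running a Poonen/Ekedahl closed-point sieve \emph{within each fixed} $F_k$ and bounding the medium and large ranges ``via a fibration over a $\PP^1$ factor together with Lang--Weil'' is essentially the strategy of Zhao's thesis, which this paper identifies as flawed: pointwise, per-stratum error bounds of Lang--Weil type are too weak to push the medium-range error below $q^{5N/3}$, and for $k$ near $N/3$ the small range where the local densities are exact shrinks to nothing (since $\ell-3k\to 0$), so for those strata almost the entire sieve lives in the critical range and your claimed uniform estimate $N_{E_k}=\rho q^{2N+4}+O(N^4q^{3N/2})$ is precisely what cannot be established this way.

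Two ideas are needed that are absent from your proposal. First, the condition ``singular at a marked point of $\PP(\mc{E}_1)$'' is converted, via an elementary transformation (blow up the marking, blow down the strict transforms of the fibers), into ``vanishing at a marked point of $\PP(\mc{E}_2)$'' for a \emph{different} bundle with $\wedge^2\mc{E}_2\cong\wedge^2\mc{E}_1(-D)$; consequently the sieve cannot be run stratum by stratum but must be run on the automorphism-weighted sum over all bundles of a given degree simultaneously (\Cref{prop:bijection-enhanced}), which is the function-field analogue of the discriminant-reducing identity of Bhargava--Shankar--Tsimerman. Second, the medium range is handled not by Lang--Weil but by an interpolation dichotomy (\Cref{prop:dichotomy}): when $\ell-2k\ge(\deg D+k)/2$, either the evaluation map $H^0(F_k,\mc{O}(3,\ell))\to H^0(\DD,\mc{O}_{\DD})$ is surjective, so the count is exactly as expected, or every section through $\DD$ acquires a fixed horizontal component and is horizontally reducible --- and reducible sections are bounded separately. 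Without these inputs the per-stratum error cannot be certified to be $o(q^{5N/3})$ uniformly in $k$, and the tail computation you describe for $c_2^i$ (which must also fold in the M\"obius sum over $D$, i.e.\ the factor $Z(q^3T^3)^{-1}=(1-q^3T^3)(1-q^4T^3)$, not just the naive tail of $\sum_k \rho q^{2N+4}/|\Aut E_k|$) is not justified.
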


Here each isomorphism class $K/\F_q(t)$ is counted once. We make no assumptions on the characteristic of $\F_q$. For the most precise statement, see \Cref{thm:main-thm}.

Note that instead of a single secondary term, we have a periodic secondary term depending on $N\pmod{3}$.

\subsection{Outline of Approach}

Our method is geometric and depends on a parametrization of cubic algebras over an integral scheme due to Casnati and Ekedahl. This is the geometric version of the classical Delone--Faddeev correspondence \cite{DF40} between cubic rings over $\Z$ and integral binary cubic forms $ax^3 + bx^2y+cxy^2 + dy^3$, where $a,b,c,d \in \Z$. For a cubic cover $X\to \PP^1$, there is analogously a (twisted) binary cubic form on $\PP^1$, meaning a global section $g \in \Sym^{3} \mc{E} \otimes \wedge^2 \mc{E}^{\vee}$ for some rank $2$ vector bundle $\mc{E}$ on $\PP^1$. For a sufficiently nice (Gorenstein) cubic cover $f: X\to \PP^1$, we get this binary cubic form by embedding $X$ into a ruled surface $\PP(\mc{E}) \to \PP^1$. Here $\mc{E} = \coker(\mc{O}_{\PP^1} \to f_{*} \mc{O}_X)^{\vee}$ is the Tschirnhausen bundle associated to the cover $f: X \to \PP^1$. It will turn out that $X\subseteq \PP(\mc{E})$ can be cut out by a section
\[
s \in H^0(\PP(\mc{E}), \mc{M}_{\mc{E}})
\]
where $\mc{M}_{\mc{E}} = \mc{O}_{\PP(\mc{E})}(3) \otimes (\wedge^2 \mc{E})^{\vee}$ is a specific line bundle on $\PP(\mc{E})$. Then $s$ corresponds via pushforward to the associated binary cubic form $g \in \Sym^3\mc{E} \otimes (\wedge^2 \mc{E})^{\vee}$

So it will suffice to count smooth, irreducible curves $X \subseteq \PP(\mc{E})$ of a given specific divisor class. Our strategy is to count all sections of the relevant line bundle on $\PP(\mc{E})$ and subtract out the curves which are singular. This leads us to an inclusion-exclusion sieve.

In order to count sections singular at a specific set of points in $\PP(\mc{E})$, the key construction is an elementary transformation of ruled surfaces. This is a birational transformation from $\PP(\mc{E})$ to a new ruled surface $\PP(\mc{E}')$, given by blowing up at a certain set of points and then blowing down. We will find that sections which are singular at a set of points on $\PP(\mc{E})$ correspond to sections which vanish at a certain set of points on $\PP(\mc{E}')$. 

This is the same method used by Zhao \cite{Zha13} and is the geometric version of the ``discriminant reducing identity'' used in \cite{BST13}.

After carefully keeping track of the combinatorics of the sieve and the birational transformations of surfaces, we are reduced to estimating certain root-counting functions over sections on ruled surfaces $\PP(\mc{E})$.

To describe these functions, we note that over $\PP^1$, each ruled surface is a \textbf{Hirzebruch surface} $F_k = \Proj(\mc{O}\oplus \mc{O}(-k))$ for some nonnegative integer $k$. Our sections can be identified with binary cubic forms
\[
s \in \mc{V}(\ell,k) = \{A_0 x^3 + A_1 x^2y + A_2 xy^2 + A_3 y^3 : \deg A_i = \ell - i k\}
\]
where each $A_i \in \F_q[t_0, t_1]$ is a homogeneous polynomial. Here $(t_0:t_1)$ are the coordinates of the base $\PP^1$, while $(x:y)$ give the coordinates of the $\PP^1$ fibers.

If $s$ is thought of as such a ``bi-homogeneous polynomial'', and $P \in \PP_{[t_0: t_1]}^1$ is a closed point, let $r_P(s)$ denote the number of roots of $s$ mod $P$. For an effective reduced divisor $D$ on $\PP^1$, let $r_D(s) = \prod_{P \mid D} r_P(s)$. Then our root-counting functions 
\[
R^{\ir}(\ell,k,D) = \sum_{s \in \mc{V}(\ell,k)^{\ir}} r_D(s)
\]
add up the number of roots of $s$ over all (irreducible) binary cubic forms in the $\F_q$-vector space $\mc{V}(\ell, k)$.

An equivalent task is to count (irreducible) sections on $F_k$ passing through a finite union of points on $F_k$, one over each point in $D$.

We break up our divisors $D$ into a small range, medium range, and large range as compared to $\ell$ and $k$. 

In the small range (meaning $\deg D$ is small), we can more or less exactly count the number of sections passing through a collection of points. In the large range (meaning $\deg D$ is large), we can simply count section by section and crudely bound the number of roots on each section.

The critical range is the medium range. In this range, there could be more divisors than expected passing through a given set of points in $F_k$. The key result we prove is that if there are more divisors than expected, then all such divisors must be automatically reducible with a fixed component. Then we can separately bound reducible divisors.

Finally, we bound extensions with Galois group $C_3$ and inseparable extensions. Putting together the pieces yields a main term, secondary term, and power-saving error term for the number of cubic extensions of $\F_q(t)$ with absolute norm of discriminant $q^{2N}$.

The constructions in the first half are fairly general, and indeed we establish the geometric setup and reduce our count to a sum of root counting functions for a general function field $\F_q(C)$. Then we reduce to the case $C= \PP^1$ when we start to estimate the root-counting functions $R^{\ir}(\ell,k, D)$ and break up the sieve into three ranges.

\subsection{Relationship to Zhao's Thesis}\label{sec:zhao}
The main result of this paper was claimed to be proven by Zhao \cite{Zha13}. While Zhao's thesis has many great ideas, several of which we use and build off of in the present paper, his approach is unfortunately flawed.

Much of our setup follows the same framework as Zhao. We embed each smooth trigonal curve $X\to \PP^1$ into a Hirzebruch surface $F_k$ fibered over $\PP^1$, and conversely, we may count the number of trigonal curves by sieving out sections on $F_k$ which are singular. To do so we count sections singular at a certain set of points on a Hirzebruch surface $F_k$, which correspond to sections passing through a certain set of points on a new Hirzebruch surface $F_{k'}$ after a birational transformation.

However, our approach to counting diverges at this point. Zhao's approach is to count curves inside of a fixed Hirzebruch surface. He does so using ``pointwise'' error bounds we find to be not true, especially the last sentence of his Theorem 5.3.0.14.

For our strategy, we do not attempt to count smooth trigonal curves inside a fixed Hirzebruch surface, which we find to be too difficult. Instead, we count all trigonal curves in all surfaces. This requires carefully keeping track of the sheaf-theoretic data associated to birational transformations.

Ultimately, this allows us to set up our count as a sieve with a small, medium, and large range. Our setup simplifies the uniformity bound for the large range. In the medium range, both the author and Zhao approach the interpolation problem by restricting line bundles to curves through points, but we consider a wider class of curves. Critically we restrict line bundles to reducible curves, whereas Zhao considers irreducible curves only. This gives us the flexibility needed to characterize exactly which maps $H^0(F_k, \mc{L}) \to H^0(\mc{D}, \mc{O}_D)$ can fail to be surjective in the medium range.

As another minor difference between our work and Zhao's, our geometric setup for counting degree $3$ extensions is valid for a general function field $\F_q(C)$. In the same vein of Gunther's thesis \cite{Gun17}, we can embed degree $3$ branched covers $X\to C$ in a ruled surface $\PP(\mc{E}) \to C$.

\subsection{Sections of the Paper}
In \Cref{sec:param-cubic}, we establish the parametrization of Gorenstein cubic covers of an integral scheme $S$. This version is due to Casnati and Ekedahl (and Miranda), although the parametrization has a long history with many contributors. In particular, we will focus on the correspondence between triple covers $f:X \to S$ and sections $s$ of a certain line bundle on $\PP(\mc{E})$, where $\mc{E}$ is a rank $2$ vector bundle on $S$. 

In \Cref{sec:sieve-i} we begin to set up the geometric sieve for sections on ruled surfaces $\PP(\mc{E})$ over a nice (smooth, projective, geometrically irreducible) curve $C$. We introduce the notation $\Theta(C,N)$ for a slightly nicer count for degree $3$ branched covers which in particular weights inversely by automorphisms.

In \Cref{sec:ruled-elm}, we construct the blowup blowdown construction for an appropriate collection of points $\mc{D}$ on a ruled surface $\PP(\mc{E})$.

In \Cref{sec:sieve-ii}, we use the results of \Cref{sec:ruled-elm} to reduce our count to a sum of root-counting functions.

Starting in \Cref{sec:hirz} we restrict to the case $C=\PP^1$. We translate our general results for triple covers of a nice curve $C$ to triple covers of $\PP^1$. In particular, we discuss Hirzebruch surfaces $F_k$ and their sections.

In \Cref{sec:restrictions-line-bundles} we set up some discussion about restrictions of line bundles to curves on a Hirzebruch surface. This will be used to count sections in the medium range in the following section.

In \Cref{sec:root-counting} we estimate the root-counting function $R^{\ir}(\ell, k, D)$ and especially an adjusted root-counting function $\Phi^{\ir}(\ell,k,D)$ in the small, medium, and large ranges.

In \Cref{sec:smooth-counting} we put together the pieces our estimates for $\Phi^{\ir}(\ell, k,D)$ to estimate $\Theta(N)$ by a model function $\widehat{\Theta}(N)$ plus an error. We get a main and secondary term out of $\widehat{\Theta}(N)$ using a generating function. Then after using bounds for $C_3$ and inseparable extensions, we finally get our secondary term for discriminants of cubic extensions of $\F_q(t)$.

In \Cref{sec:c3-insep}, we bound the number of $C_3$ and inseparable extensions of $\PP^1$.

We also include three appendices. 

In \Cref{appendix}, we discuss some generalities on discriminants and their behavior under the Casnati-Ekedahl correspondence.

In \Cref{sec:appendix-hor-finite} we discuss the notion of horizontal reducibility of a section and prove a few technical lemmas regarding finiteness of sums.

Finally, in \Cref{sec:appendix-a} we prove some results about the elementary transformation. We show that as a functor, it induces an equivalence of categories, and then we construct the corresponding blowup-blowdown of projective bundles $\PP(\mc{E})$.

\subsection{Notation and Conventions}
\hfill

\noindent\textbf{Generalities}

\begin{itemize}
    \item We fix a finite field $\F_q$ once and for all.
    \item We write $\PP^1 = \PP^1_{\F_q}$.
\end{itemize}
\textbf{Analysis}

\begin{itemize}
\item We write $A = O(B)$ or $A\ll B$ if there is a uniform constant $c$ such that $|A| \le c B$ always holds. If there is such a constant $c = c(b)$ which depends on a parameter $b$, then we write $A = O_b(B)$ or $A \ll_b B$.
\item In our big $O$ notation, we consider $q$ to be a fixed constant. Our bounds are uniform in all other variables.
\end{itemize}
\textbf{Schemes}
\begin{itemize}
    \item All of our schemes will be Noetherian.
    \item We call a scheme $X/\F_q$ \textbf{smooth} if the structure map $X \to \Spec \F_q$ is smooth.
    \item A \textbf{curve} over a base field $k$ is a separated, reduced, finite type $k$-scheme of pure dimension $1$.
    \item We say a curve $C/\F_q$ is a \textbf{nice curve} if it is smooth, projective, and geometrically irreducible.
    \item If $f: X \to Y$ is a blowup morphism and $Z\subseteq Y$ is a closed subscheme, we write $Z^{st}$ for the strict transform of $Z$.
    \item We define a \textbf{Dedekind scheme} to be an  integral, regular, separated (Noetherian) scheme of dimension $1$.
\end{itemize}
\textbf{Hirzebruch Surfaces}
\begin{itemize}
    \item A Hirzebruch surface $\pi:\Fk \to \PP^1$ is defined by $\Fk:= \PP(\mc{O}_{\PP^1}\oplus \mc{O}_{\PP^1}(-k))$.
    \item We denote the line bundle $\mc{O}(m, \ell):= \mc{O}_{F_k}(m) \otimes \pi^{*} \mc{O}(\ell)$ on $F_k$.
    \item We denote the $\F_q$-vector space $\mc{V}(\ell,k):= H^0(F_k, \mc{O}(3,\ell))$.
    \item We denote $\Gamma_k:= \Aut(\mc{O}_{\PP^1}\oplus \mc{O}_{\PP^1}(-k))$.
\end{itemize}
\textbf{Reduced Divisors}

Let $C$ be a nice (smooth, projective, geometrically irreducible) curve over $\F_q$. A reduced effective divisor $D\subseteq C$ can be thought of as a union of closed points $P \in C$.

\begin{itemize}
    \item If $P$ is a closed point in $C$, we write $P\mid D$ to say $P$ is one of the closed points in $D$.
    \item We let $\omega(D)$ denote the number of closed points $P$ in $D$.
    \item We let $\mu(D):= (-1)^{\omega(D)}$ denote the M\"obius function of $D$.
    \item If $D_1$ and $D_2$ are disjoint reduced effective divisors, we write $D_1+D_2$ for their sum as divisors. 
    \item If $D_1\mid D_2$, we let $D_2-D_1$ denote their difference as divisors
    \item We denote the by $Z_C(T)$ formal power series 
    \[
    Z_{C}(T):=\prod_{P \in C} (1- T^{\deg P})^{-1}
    \].
    It satisfies 
    \[
    Z_C(T) = \sum_{D} T^{\deg D} = \left(\sum_{D} \mu(D) T^{\deg D}\right)^{-1}.
    \]
    We write $Z(T) = Z_{\PP^1}(T)$.
\end{itemize}

\textbf{Groupoids}

A \textbf{groupoid} is a category for which all morphisms are isomorphisms. Let $\mc{C}$ be an essentially small groupoid. We write $|\mc{C}|$ for the set of isomorphism classes of objects in $C$.

We write $\sum_{A \in |C|}$ to denote the sum over all isomorphism classes of objects in $C$.

\noindent\textbf{Vector Bundles and Locally Free Sheaves}

Let $S$ be a scheme. A \textbf{rank $r$ vector bundle} on $S$ is a locally free sheaf of rank $r$. A \textbf{line bundle} on $S$ is a vector bundle of rank $1$ (equivalently an invertible $\mc{O}_S$-module).

For a finite rank vector bundle $\E$ on $S$, the \textbf{dual bundle} $\E^{\vee}$ is the sheaf 
\[
\E^{\vee}:=\Hom_{S}(\E, \mc{O}_S).
\]

When $\mc{L}$ is a line bundle, we will interchangeably write $\mc{L}^{\otimes -1} = \mc{L}^{\vee}$.

If $\mc{F}$ is a quasi-coherent sheaf on $S$, we will sometimes write $s \in \mc{F}$ to mean $s$ is a global section of $\mc{F}$ on $S$ when the scheme $S$ is clear from context.

For a nice curve $C$, we let $\Bun(C)$ denote the groupoid of rank $2$ vector bundles $\mc{E}$ on $C$.

\noindent\textbf{Projective Bundles}

For a scheme $S$ and a quasi-coherent $\mc{O}_S$-module $\E$, we define the \textbf{projective bundle} 
\[
\PP(\E):= \Proj(\Sym^{\bullet}(\E))
\]
following the convention in (say) \cite{EGA}. The bundle comes equipped with a structure morphism $\pi: \PP(\E)\to S$. For an $S$-scheme $X\xrightarrow{f} S$, the set of $S$-morphisms $X\to \PP(\E)$ is in bijective correspondence with the set of pairs $(\phi, \LL)$ up to equivalence, where $\LL$ is a line bundle on $X$ and $\phi: f^{*}\E \to \LL$ is a surjection of sheaves.

\noindent\textbf{Discriminants}

A \textbf{discriminant} on a scheme $S$ is a pair $(\mc{N}, \Delta)$, where $\mc{N}$ is a line bundle on $S$ and $\Delta \in \mc{N}^{\otimes 2}$ is a global section.

Two discriminants $(\mc{N},\Delta)$ and $(\mc{N}', \Delta')$ are equivalent if there is an isomorphism $\mc{N} \xrightarrow{\sim}\mc{N}$ sending $\Delta$ to $\Delta'$.

If $S$ is an integral, regular (Noetherian) scheme, and $s$ is nonzero, then the vanishing locus of $s$ is an effective Cartier divisor $D\subseteq S$, which we call the \textbf{discriminant divisor}.

Furthermore, if $S$ is a smooth, projective irreducible curve over some base field, and $\Delta \in \mc{N}^{\otimes 2}$ is a nonzero discriminant, then the discriminant divisor $D := V(\Delta)$ has a well-defined degree. We call this the \textbf{discriminant degree} $\deg\Delta$, and it is equal to $2 \deg \mc{N}$. When $\Delta=0$, we \textit{define} the discriminant degree to be $2 \deg \mc{N}$.

\noindent\textbf{Miscellaneous Common Notation}

Much of this notation will be defined later and commonly used throughout the thesis.

\begin{itemize}
    \item $C$ - a nice (smooth, projective, geometrically irreducible) curve, usually over $\F_q$
    \item $S$ - an integral scheme, sometimes assumed to be a Dedekind scheme
    \item $\mc{E}$ - a degree $2$ vector bundle on $C$ or $S$, with $\pi: \PP(\mc{E})\to S$ or $\pi: \PP(\mc{E}) \to C$.
    \item $D$ - a reduced effective divisor on $C$ or $S$, or a union of closed points $P$
    \item $\DD$ - a reduced $0$-dimensional closed subscheme of $\PP(\mc{E})$, usually with $\pi|_{\DD} : \DD \to D$ an isomorphism
    \item Degree $3$ branched cover - a map $f:X\to C$ finite of degree $d$, generically \'etale, and $X$ is smooth and irreducible
    \item $\Cov_3(C, 2N)$ - number of degree $3$ branched covers with branch degree $2N$ (up to isomorphism)
    \item Triple cover - a map $f: X\to S$ finite, flat of degree $3$
    \item $\mc{V}(\mc{E}, \mc{L}) := H^0(\PP(\mc{E}), \mc{O}(3) \otimes \pi^{*} \mc{L})$.
    \item $\mc{V}(\mc{E}):= \mc{V}(\mc{E}, (\wedge^2 \mc{E})^{\vee})$, which is isomorphic to $\Sym^3 \mc{E} \otimes (\wedge^2\mc{E})^{\vee}$
    \item $\Delta_f$ and $\Delta(s)$ - discriminant of a triple cover $f: X\to C$ or a section $s \in \mc{V}(\mc{E})$
    \item $\Theta(C,N) = \sum_{f: X\to C}\frac{1}{|\Aut(f)|}$ over triple covers with $X$ smooth, irreducible and $\deg \Delta_f= 2N$
    \item $r_D(s):=$ the number of roots of $s|_{D}$ for $s \in \mc{V}(\mc{E})$.
    \item $a_D(s):= \prod_{P \in D} (r_P(s)-1)$.
    \item $\mc{V}(\mc{E}, \mc{L})^{\ir}$ or $\mc{V}(\mc{E})^{\ir}$ - the subset of $s$ which are horizontally irreducible
    \item $\mc{V}(\mc{E}, \mc{L})^{\sm, \ir}$ or $\mc{V}(\mc{E})^{\sm,\ir}$ - the subset of $s$ which are smooth and horizontally irreducbile (equivalently smooth and irreducible)
    \item $\Van(\mc{E}, \DD)$ and $\Sing(\mc{E},\DD)$ - sections which vanish to order at least $1$ or at least $2$ on $\DD$, respectively
    \item $R^{\ir}(\mc{E},D):= \sum_{s \in \mc{V}(\mc{E})^{\ir}} r_D(s)$, and $\Phi^{\ir}(\mc{E},D) := \sum_{s \in \mc{V}(\mc{E})^{\ir}} a_D(s)$.
    \item $\Mar(\mc{E}, D)$ - the set of $D$-markings $\DD \subseteq \PP(\mc{E})$. When $\mc{E}$ is clear from context, we write $\Mar(D) = \Mar(\mc{E}, D)$
    \end{itemize}
On $C = \PP^1$,
\begin{itemize}
    \item $\mc{V}(\ell, k):= H^0(F_k, \mc{O}(3,\ell)) = H^0(F_k, \mc{O}(3) \otimes \pi^{*} \mc{O}(\ell))  \cong \mc{V}(\mc{O}(\ell - k) \oplus \mc{O}(\ell - 2k))$
    \item $\mc{V}(\ell,k)^{\alpha}$ for $\alpha \in \{\ir, \hr, \sr, \xir, \nz, \z\}$ - subset of sections $s$ which are horizontally irreducible, (nonzero) horizontally reducible, specially reducible, x-irreducible, nonzero, zero.

    \item $R^{\alpha}(\ell,k,D) := \sum_{ s\in \mc{V}(\ell,k)^{\alpha}} r_D(s)$, $\Phi^{\alpha}(\ell,k,D):= \sum_{s \in \mc{V}(\ell,k)^{\alpha}} a_D(s)$.
    \item $\Theta(N):= \Theta(\PP^1, N)$ and $G(T):= \sum_{N} \Theta(N) T^{N}$
    \item $\Psi(N):= \sum_{\deg D + N' = N} \sum_{(\ell,k) \in \Par(N')}\frac{1}{|\Gamma_k|}\mu(D)\Phi^{\alpha}(\ell,k,D)$
    \item $\widehat{R}^{\alpha}(\ell,k,D)$, $\widehat{\Phi}^{\alpha}(\ell,k,D)$, $\widehat{\Theta}(N)$,$\widehat{G}(T)$ ,$\widehat{\Psi}(N)$ - ``model function'' estimates
\end{itemize}

\subsection{Acknowledgments}

We would like to thank Melanie Matchett Wood for helpful mathematical input and feedback during the writing process.

We would also like to acknowledge Aaron Landesman for introducing the author to the geometric parametrization of degree $n$ covers.

This work was partially supported by the National Science Foundation Graduate Research Fellowship under Grant No. DGE 2140743.
\section{Parametrization of Cubic Covers}

\label{sec:param-cubic}
In this section, we detail the parametrization of triple covers of an integral scheme $S$. 

Every Gorenstein triple cover of $S$ corresponds to a certain (twisted) binary cubic form, meaning a global section $s \in \Sym^{3}\E \otimes (\wedge^{2} \E)^{\vee}$ for some rank $2$ vector bundle $\E$ on $S$. This version of the correspondence that we will use is due to Miranda and Casnati-Ekedahl (\cite{Mir85}, \cite{CE96}).

Let $S$ be an integral (Noetherian) scheme.
\subsection{Triple Covers}
\begin{defn}
    A \textbf{triple cover} $f: X\to S$ is a finite, flat degree $3$ morphism.
\end{defn}

Equivalently, a triple cover is a finite locally free degree $3$ morphism $X\to S$ (see \cite[\href{https://stacks.math.columbia.edu/tag/02K9}{Tag 02K9}]{stacks-project}).

\begin{defn}
Let $\Trip(S)$ be the category with objects given by triple covers $f:X\to S$. The set of morphisms from $(X_1\xrightarrow{f_1} S)$ to $(X_2\xrightarrow{f_2} S)$ is the set of isomorphisms $g: X_1\to X_2$ such that $f_2 \circ g = f_1$.
\end{defn}
(In this category, every morphism is an isomorphism, so $\Trip(S)$ is a groupoid.)

Let $f: X \to S$ be a triple cover. This means $f_{*} \mc{O}_X$ is a rank $3$ locally free $\mc{O}_S$-module. There is an associated \textbf{discriminant} $\Delta_f$, which is defined to be a section of a certain line bundle on $S$:
\[
\Delta_f\in H^0(S, (\wedge^3 f_{*} \mc{O}_X)^{\otimes -2}).
\]
(See \cite[\href{https://stacks.math.columbia.edu/tag/0BVH}{Section 0BVH}]{stacks-project}.) 

Now assume $f:X \to S$ is a \textit{Gorenstein} triple cover, meaning $f$ is a Gorenstein morphism. Following \cite{CE96} (see also \cite{LVW24}), we define the \textbf{Tschirnhausen bundle} $\E:=\coker(\mc{O}_S \to f_{*} \mc{O}_X)^{\vee}$, which turns out to be a rank $2$ vector bundle.

We can dualize the exact sequence
\[
0 \to \mc{O}_S \to f_{*} \mc{O}_X \to \E^{\vee} \to 0
\]
to obtain the exact sequence
\[
0 \to \E \to f_{*} \omega_{X/S} \to \mc{O}_S \to 0
\]
where the \textbf{relative dualizing sheaf} $\omega_{X/S}$ is an invertible sheaf on $X$ because $f$ is a Gorenstein morphism. (See \cite{CE96}, Section 1.) Now one can show that the composition $f^{*} \E \to f^{*} f_{*} \omega_{X/S} \to \omega_{X/S}$ is a surjection. By universal property, this induces a map $i: X\to \PP(\E)$ such that the diagram below commutes.
\[
\begin{tikzcd}
    X \ar[r, "i"] \ar[rd, "f"] &\PP(\E)\ar[d, "\pi"]\\
    &S
\end{tikzcd}
\]

This is known as the \textbf{relative canonical embedding} of $X$.
\begin{defn}
Let $\Trip^{\Gor}(S)$ the full subcategory of $\Trip(S)$ given by \textit{Gorenstein} triple covers.
\end{defn}

\subsection{Degree $3$ Sections and Binary Cubic Forms}

We would first like to recall a property of projective bundles. 
\begin{prop}[\protect{\cite[\S III.8]{Har77}}]
    \label{prop: proj-sym}
    Let $S$ be a Noetherian scheme and let $\E$ be a rank $2$ vector bundle with associated projective bundle $\pi : \PP(\E) \to S$. Let $m\ge 0$ be an integer and let $\LL$ be a line bundle on $S$. Then $\pi_{*} \mc{O}_{\PP(\E)}(m) \cong \Sym^{m} \E$ and we have a canonical isomorphism 
    \[
    \Lambda: H^{0}(\PP(\E), \mc{O}_{\PP(\mc{E})}(m) \otimes \pi^{*} \LL)\xrightarrow{\sim} H^{0}(S, \Sym^{m} \mc{E} \otimes \mc{L}).
    \]
\end{prop}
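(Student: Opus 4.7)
The strategy is to reduce the second isomorphism to the first by a projection formula, and then prove the first by a local computation on the base using the very definition $\PP(\mc{E}) = \Proj(\Sym^{\bullet} \mc{E})$.

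First I would establish that $\pi_{*} \mc{O}_{\PP(\mc{E})}(m) \cong \Sym^{m} \mc{E}$ canonically for $m \ge 0$. By construction of $\PP(\mc{E})$, there is a tautological surjection $\pi^{*} \mc{E} \twoheadrightarrow \mc{O}_{\PP(\mc{E})}(1)$, and taking $m$-th symmetric powers and then pushing forward gives a canonical map $\Sym^{m} \mc{E} \to \pi_{*} \mc{O}_{\PP(\mc{E})}(m)$. To verify this is an isomorphism, I would work locally on $S$: over an affine open $U = \Spec A$ where $\mc{E}|_{U} \cong A^{2}$ is trivial, we have $\pi^{-1}(U) \cong \Proj A[x_{0}, x_{1}] = \PP^{1}_{A}$, and the standard computation of cohomology of $\PP^{1}_{A}$ (e.g.\ \cite[\S III.5]{Har77}) gives $H^{0}(\PP^{1}_{A}, \mc{O}(m)) = A[x_{0}, x_{1}]_{m} = \Sym^{m} A^{2}$, matching $\Sym^{m} \mc{E}|_{U}$. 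Since the construction is canonical, these local identifications glue.

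Next I would apply the projection formula. Since $\mc{L}$ is a line bundle, hence locally free of finite rank, the projection formula gives
\[
\pi_{*}\bigl(\mc{O}_{\PP(\mc{E})}(m) \otimes \pi^{*}\mc{L}\bigr) \;\cong\; \bigl(\pi_{*} \mc{O}_{\PP(\mc{E})}(m)\bigr) \otimes \mc{L} \;\cong\; \Sym^{m} \mc{E} \otimes \mc{L}.
\]
Then the Leray-type identity $H^{0}(\PP(\mc{E}), \mc{F}) = H^{0}(S, \pi_{*} \mc{F})$, which follows directly from the sheaf-theoretic definition of pushforward evaluated on the open set $S$, applied to $\mc{F} = \mc{O}_{\PP(\mc{E})}(m) \otimes \pi^{*}\mc{L}$ produces the desired canonical isomorphism $\Lambda$.

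The only genuinely substantive step is the local computation of $\pi_{*} \mc{O}_{\PP(\mc{E})}(m)$, and in fact for our rank $2$ case this is essentially the statement that the graded ring $H^{0}_{*}(\PP^{1}_{A}, \mc{O}_{\PP^{1}_{A}})$ equals the polynomial ring $A[x_{0}, x_{1}]$ in non-negative degrees. The rest is formal: the projection formula is standard for locally free sheaves, and the identification of global sections with global sections of the pushforward is tautological. I do not anticipate any real obstacle beyond being careful that the local trivialization isomorphisms patch into a global canonical isomorphism, which they do because every map in sight is built from the universal surjection $\pi^{*}\mc{E} \to \mc{O}_{\PP(\mc{E})}(1)$.
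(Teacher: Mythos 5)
The paper does not prove this proposition at all --- it is stated with a citation to \cite[\S III.8]{Har77} and used as a black box --- so there is no in-paper argument to compare against. Your proof is correct and is exactly the standard argument the cited reference supplies: the adjoint of $\Sym^m$ of the tautological surjection gives the canonical map $\Sym^m\mc{E}\to\pi_*\mc{O}_{\PP(\mc{E})}(m)$, local triviality of $\mc{E}$ reduces the isomorphism claim to the computation of $H^0(\PP^1_A,\mc{O}(m))$, and the projection formula plus $H^0(\PP(\mc{E}),\mc{F})=H^0(S,\pi_*\mc{F})$ handle the twist by $\mc{L}$.
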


\begin{defn}
    A \textbf{degree $3$ section} (or a \textbf{trisection}) is a pair $(\mc{E},s)$, where $\mc{E}$ is a rank $2$ vector bundle on $S$, $\pi: \PP(\mc{E}) \to S$ is the associated projective bundle, and
    \[
    s \in H^0(\PP(\mc{E}), \mc{O}_{\PP(\mc{E})}(3) \otimes \pi^{*} (\wedge^2 \mc{E})^{\vee}).
    \]
\end{defn}
\begin{defn}
    Let $\TriSec(S)$ be the category with objects given by degree $3$ sections $(\mc{E}, s)$. A morphism from $(\mc{E}_1, s_1)$ to $(\mc{E}_2, s_2)$ is an isomorphism $\mc{E}_1 \xrightarrow{\sim} \mc{E}_2$ sending $s_1$ to $s_2$.
\end{defn}
\begin{defn}
    A \textbf{(twisted) binary cubic form} (or simply a \textbf{binary cubic form}) is a pair $(\mc{E},g)$, where $\mc{E}$ is a rank $2$ vector bundle on $S$, and
    \[
    g \in H^0(\Sym^{3} \mc{E} \otimes (\wedge^2 \mc{E})^{\vee}).
    \]
\end{defn}
\begin{defn}
    Let $\CubForm(S)$ be the category with objects given by binary cubic forms $(\mc{E}, g)$. A morphism from $(\mc{E}_1, g_1)$ to $(\mc{E}_2, g_2)$ is an isomorphism $\mc{E}_1 \xrightarrow{\sim} \mc{E}_2$ sending $g_1$ to $g_2$.
\end{defn}

By Proposition $2.1$, there is a canonical equivalence between the categories $\TriSec(S)$ and $\CubForm(S)$. By abuse of notation, we denote the functor inducing the equivalence by 
\[
\Lambda: \TriSec(S) \xrightarrow{\sim} \CubForm(S).
\]

Now consider a binary cubic form $(\mc{E},g) \in \CubForm(S)$ defined by the global section $g \in H^{0}(S, \Sym^{3} \mc{E} \otimes (\wedge^2 \mc{E})^{\vee})$.

We explain why we call $g$ a (twisted) binary cubic form, following (for example) the convention in \cite{Woo11}. This is because locally, on a sufficiently small affine open $\Spec R \subseteq S$ where $\E$ trivializes as 

\[
\E|_{R} \cong R x \oplus R y,
\]
we have that $g$ restricts to some (twisted) binary cubic form
\[
(A_0 x^3 + A_1 x^2 y + A_2 xy^2 + A_3 y^3)\otimes (x\wedge y)^{\otimes -1}
\]
with $A_0, A_1, A_2, A_3 \in R$.

For a (twisted) binary cubic form $g \in \Sym^{3} \mc{E} \otimes (\wedge^2\mc{E})^{\vee}$, there is an associated \textbf{discriminant} $\Delta(g) \in (\wedge^2 \mc{E})^{\otimes 2}$. (See \Cref{appendix} for details.) If $\Lambda(s) = g$, we also write $\Delta(s) \in (\wedge^2 \mc{E})^{\otimes 2}$ to refer to the discriminant $\Delta(g)$, by abuse of notation.

Let $(\mc{E}, s) \in \TriSec(S)$. We say $s\in H^0(\PP(\mc{E}), \mc{O}_{\PP(\mc{E})}(3) \otimes \pi^{*} (\wedge^2 \mc{E})^{\vee})$ is \textbf{primitive} if $s$ does not vanish on any fiber above a closed point $P \in S$.

Similarly, if $(\mc{E},g) \in \CubForm$, we say $g \in \Sym^3 \mc{E} \otimes (\wedge^2 \mc{E})^{\vee}$ is \textbf{primitive} if the restriction $g|_{P}$ is nonzero for every closed point $P \in S$.
\begin{defn}
    Let $\TriSec^{\Prim}(S)$ be the full subcategory of $\TriSec(S)$ given by pairs $(\mc{E},s)$ where $s$ is primitive.
\end{defn}
\begin{defn}
    Let $\CubForm^{\Prim}(S)$ be the full subcategory of $\CubForm(S)$ given by pairs $(\mc{E},g)$ where $g$ is primitive.
\end{defn}
Of course, $\Lambda$ restricts to an equivalence of categories 
\[
\TriSec^{\Prim}(S) \xrightarrow{\sim} \CubForm^{\Prim}(S).
\]

\subsection{The correspondence}
Given a pair $(\E, s) \in \TriSec^{\Prim}(S)$, we can assign to $s$ the vanishing scheme $X:= V(s)\subseteq \PP(\E)$. Because $s$ does not vanish on the fiber above any point of $S$, we have $\dim X_P=0$ for every closed point $P \in S$. It turns out that $X\to S$ is Gorenstein triple cover.

\begin{thm}[Casnati-Ekedahl, \cite{CE96}]
    \label{thm:CE}
    Let $S$ be an integral (Noetherian) scheme.
    There is a functor $V$ of categories
    \[
    V: \TriSec^{\Prim}(S) \xrightarrow{\sim} \Trip^{\Gor}(S)
    \]
    sending $(\E, s)$ to the map $f:X\to S$, where $X:= V(s)\subseteq \PP(\E)$, and $f:= \pi \circ i$, with $i: X\to \PP(\E)$ the inclusion map.
    
    Furthermore, $V$ is an equivalence of categories, and so induces a three-fold equivalence of categories
    \[
    \begin{tikzcd}
        &\TriSec^{\Prim}(S) \ar{ld}[swap]{V} \ar{rd}{\Lambda}& \\
    \Trip^{\Gor}(S)&&\CubForm^{\Prim}(S)\ar{ll}[swap]{\sim}
    \end{tikzcd}
    \]
    The inverse functor to $V$ sends a Gorenstein triple cover $f: X\to S$ to a section cutting out its relative canonical embedding 
\[
\begin{tikzcd}
    X \ar[r, "i"] \ar[rd, "f"] &\PP(\E)\ar[d, "\pi"]\\
    &S
\end{tikzcd}
\]
where $\mc{E}:= \coker(\mc{O}_S \to f_{*} \mc{O}_X)^{\vee}$ is the Tschirnhausen bundle of $f$.
\end{thm}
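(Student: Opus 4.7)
The plan is to construct functors in both directions explicitly and check the equivalence locally on $S$, where the correspondence reduces to the classical Delone--Faddeev parametrization. Using the equivalence $\Lambda: \TriSec(S) \xrightarrow{\sim} \CubForm(S)$ from \Cref{prop: proj-sym}, I can represent a trisection $(\mc{E}, s)$ locally, on an affine chart $\Spec R \subseteq S$ trivializing $\mc{E}$, by a binary cubic form $g = A_0 x^3 + A_1 x^2 y + A_2 xy^2 + A_3 y^3 \in R[x,y]$, with primitivity translating to the condition that the $A_i$ generate the unit ideal at every closed point.

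For the forward functor $V$, given primitive $(\mc{E},s)$, I set $X := V(s) \subseteq \PP(\mc{E})$ and $f := \pi \circ i$. Primitivity implies that no entire geometric fiber of $\pi$ lies in $X$, so $f$ is quasi-finite; being a composition of a closed immersion with a proper map, $f$ is proper, hence finite. The short exact sequence
\[
0 \to \mc{O}_{\PP(\mc{E})}(-3) \otimes \pi^{*}\wedge^{2}\mc{E} \to \mc{O}_{\PP(\mc{E})} \to i_{*}\mc{O}_X \to 0,
\]
pushed forward via $\pi$ (using $\pi_{*}\mc{O}(-3) = 0$, $R^{1}\pi_{*}\mc{O}(-3) = \mc{E}^{\vee} \otimes (\wedge^{2}\mc{E})^{\vee}$, and $R^{1}\pi_{*}\mc{O}_{\PP(\mc{E})} = 0$), yields the extension $0 \to \mc{O}_S \to f_{*}\mc{O}_X \to \mc{E}^{\vee} \to 0$; in particular $f_{*}\mc{O}_X$ is locally free of rank $3$, so $f$ is flat of degree $3$. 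Finally $X$ is a Cartier divisor in the smooth $S$-scheme $\PP(\mc{E})$, so $\omega_{X/S} = (\omega_{\PP(\mc{E})/S}\otimes \mc{O}(X))|_X$ is invertible and $f$ is Gorenstein.

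For the inverse direction, given a Gorenstein triple cover $f: X \to S$, I construct the Tschirnhausen bundle $\mc{E}$ and the morphism $i:X \to \PP(\mc{E})$ arising from the composition $f^{*}\mc{E} \to f^{*}f_{*}\omega_{X/S} \to \omega_{X/S}$. I would verify surjectivity fiberwise: on each geometric fiber $X_P$, the Gorenstein cubic algebra $f_{*}\mc{O}_{X,P}$ admits a trace pairing that identifies $\omega_{X/S}|_{X_P}$ with $(f_{*}\mc{O}_X)^{\vee}|_P$, whence the classical Delone--Faddeev picture furnishes the required surjection. A further fiberwise check (each $X_P$ embeds in $\PP^{1}_{\kappa(P)}$ as a length-$3$ subscheme) shows $i$ is a closed embedding. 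To identify the class of $i(X)$, write it as $\mc{O}(a) \otimes \pi^{*}\mc{L}$; intersecting with a fiber of $\pi$ gives $a = 3$, and applying $\pi_{*}$ to $0 \to \mc{I}_{i(X)} \to \mc{O}_{\PP(\mc{E})} \to i_{*}\mc{O}_X \to 0$ and comparing against $0 \to \mc{O}_S \to f_{*}\mc{O}_X \to \mc{E}^{\vee} \to 0$ forces $\mc{L} = (\wedge^{2}\mc{E})^{\vee}$. The defining section $s$ is primitive because $f$ is finite, so no fiber of $\pi$ lies in $i(X)$.

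Finally I check the functors are mutually quasi-inverse. An isomorphism of triple covers $X_1 \xrightarrow{\sim} X_2$ induces, by pushforward, a canonical isomorphism of Tschirnhausen bundles carrying one relative canonical section to the other; conversely an isomorphism $(\mc{E}_1,s_1) \xrightarrow{\sim} (\mc{E}_2, s_2)$ induces an isomorphism of projective bundles carrying $V(s_1)$ to $V(s_2)$. Both compositions $V \circ V^{-1}$ and $V^{-1}\circ V$ are the identity up to canonical natural isomorphism by the universal property of the projective bundle and the uniqueness (up to a global unit) of the defining section. The main obstacle I anticipate is establishing the surjection $f^{*}\mc{E}\to \omega_{X/S}$ and the closed embedding property of $i$: this is exactly where the Gorenstein hypothesis is genuinely used, via the fiberwise trace pairing, and constitutes the heart of the ``relative canonical embedding'' in \cite{CE96}. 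Once the embedding is in hand, the identification of the line bundle class and the naturality checks are relatively mechanical pushforward computations.
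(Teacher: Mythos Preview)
The paper does not prove this theorem: it is stated as a result of Casnati--Ekedahl \cite{CE96}, with the groupoid-level equivalence attributed in the subsequent remark to Poonen \cite{Poo08} and Deligne \cite{Del07} (see also Wood \cite{Woo11}). There is therefore no in-paper argument to compare against; what you have written is a sketch of the standard Casnati--Ekedahl proof itself.

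As a sketch of that argument, your outline is sound. The pushforward computation for the forward functor is correct (with $\omega_{\PP(\mc{E})/S} \cong \mc{O}(-2)\otimes\pi^{*}\wedge^{2}\mc{E}$ one indeed gets $R^{1}\pi_{*}\mc{O}(-3)\cong \mc{E}^{\vee}\otimes(\wedge^{2}\mc{E})^{\vee}$, hence $R^{1}\pi_{*}(\mc{O}(-3)\otimes\pi^{*}\wedge^{2}\mc{E})\cong \mc{E}^{\vee}$), and your identification of the divisor class of $i(X)$ via the same pushforward trick is the right mechanism. You correctly isolate the genuine content: surjectivity of $f^{*}\mc{E}\to\omega_{X/S}$ and the closed-embedding property of $i$, both of which are fiberwise statements requiring the Gorenstein hypothesis. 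One small caution: your phrase ``intersecting with a fiber of $\pi$'' to extract $a=3$ presumes an intersection theory not available over an arbitrary integral Noetherian base; it is cleaner to restrict to a single geometric fiber $\PP^{1}_{\kappa(P)}$ and read off the degree there, which is what you effectively mean. With that adjustment, your plan matches the structure of the original reference.
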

\begin{rem}
    We note that that inverse functor to $V$ is well-defined. Although a section $s$ cutting out $X \subseteq \PP(\mc{E})$ is only determined up to units, any two such sections are equivalent up to the action of $\Aut(\mc{E})$. So the pair $(\mc{E}, s) \in \TriSec^{\Prim}(S)$ corresponding to a triple cover is well-defined up to isomorphism.
\end{rem}

\begin{rem}
    In fact there is an equivalence $\CubForm(S) \xrightarrow{\sim} \Trip(S)$ even without restricting to primitive forms and Gorenstein covers. In the case $S = \Spec \Z$, the bijection between binary cubic forms and cubic rings was known due to Levi, Delone-Fadeev, Davenport-Heilbronn, and Gan-Gross-Savin (\cite{Lev14},\cite{DF40},\cite{DH71},\cite{GGS02}). For a general base scheme $S$, see Poonen \cite{Poo08} and Deligne \cite{Del07}.
    
    However, the general construction of a cover $f:X\to S$ from a binary cubic form $(\mc{E},s)$ is somewhat more complicated than setting $X$ to be the vanishing scheme $V(s)\subseteq \PP(\mc{E})$. See Wood \cite{Woo11} for more details.

    In any case, we will only need the version of the correspondence in \Cref{thm:CE}. We note that Casnati-Ekedahl show that the composition $V\circ \Lambda^{-1}$ is a bijection on isomorphism classes. The fact that the composition $V\circ \Lambda^{-1}$ is an equivalence of categories (that is, an equivalence of groupoids) between $\CubForm^{\Prim}(S)$ and $\Trip^{\Gor}$ is shown by Poonen \cite{Poo08} and Deligne \cite{Del07} (see also Wood \cite{Woo11}). Therefore, the map $V$ is an equivalence as well. More generally, the bijective  correspondence $V$ on isomorphism classes is functorial in $S$ (due again to \cite{Poo08}, \cite{Del07}, \cite{Woo11}).

    Classically, the bottom correspondence between triple covers and binary cubic forms has been emphasized. However, it will be most useful for us to think of our objects as elements of $\TriSec(S)$, so we choose to highlight this aspect of the correspondence as well.
\end{rem}

We furthermore note that the equivalence of categories in \Cref{thm:CE} preserves discriminants. More precisely, 

\begin{prop}\label{prop:disc-preserve}
    Let $S$ be an integral scheme, let $(\mc{E}, g) \in \CubForm^{\Prim}(S)$ be a primitive binary cubic form, and let $f: X \to S$ be the corresponding Gorenstein triple cover. Let $\Delta(g) \in H^0(S, (\wedge^2 \mc{E})^{\otimes 2})$ and $\Delta_f \in H^0(S,(\wedge^3 f_{*}\mc{O}_X)^{\otimes -2})$ be the discriminants of $g$ and $f$ respectively.
    
    Then there exists a canonical isomorphism
    \[
    \delta: (\wedge^{3} f_{*}\mc{O}_X)^{\otimes -1}\xrightarrow \sim (\wedge^2 \mc{E})
    \]
    such that $\delta^{\otimes 2}(\Delta_f) = \Delta(g)$.
    
\end{prop}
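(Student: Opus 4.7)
The plan is to first construct the canonical isomorphism $\delta$ from the Tschirnhausen sequence, then verify the compatibility $\delta^{\otimes 2}(\Delta_f) = \Delta(g)$ by reducing to a local calculation matching the classical Delone--Faddeev identity. For $\delta$, I would take top exterior powers of the Tschirnhausen sequence $0 \to \mc{O}_S \to f_*\mc{O}_X \to \mc{E}^\vee \to 0$, which is locally split since $\mc{E}^\vee$ is locally free. This gives a canonical isomorphism $\wedge^3 f_*\mc{O}_X \cong \mc{O}_S \otimes \wedge^2 \mc{E}^\vee \cong (\wedge^2 \mc{E})^\vee$, and dualizing produces the desired $\delta \colon (\wedge^3 f_*\mc{O}_X)^{\otimes -1} \xrightarrow{\sim} \wedge^2 \mc{E}$.

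Both $\delta^{\otimes 2}(\Delta_f)$ and $\Delta(g)$ are then global sections of the same line bundle $(\wedge^2 \mc{E})^{\otimes 2}$, so it suffices to verify the identity after restricting to a Zariski open cover on which $\mc{E}$ trivializes. On an affine open $U = \Spec R$ with a chosen basis $\{x,y\}$ of $\mc{E}|_U$, the form $g$ restricts to $(A_0 x^3 + A_1 x^2 y + A_2 x y^2 + A_3 y^3) \otimes (x \wedge y)^{\otimes -1}$ for some $A_i \in R$, and $\Delta(g)|_U$ evaluates to the classical expression $(A_1^2 A_2^2 - 4 A_0 A_2^3 - 4 A_1^3 A_3 - 27 A_0^2 A_3^2 + 18 A_0 A_1 A_2 A_3) \otimes (x \wedge y)^{\otimes 2}$ as recorded in \Cref{appendix}.

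For the matching side, I would use the explicit form of the Casnati--Ekedahl construction restricted to $U$: the subscheme $X|_U = V(g) \subseteq \PP(\mc{E}|_U) = \PP^1_R$ has a pushforward $(f_*\mc{O}_X)|_U$ that can be computed via the Koszul resolution of the cubic section, producing the Delone--Faddeev cubic $R$-algebra with a distinguished basis $\{1, \alpha, \beta\}$ whose trace-pairing determinant is the same polynomial in the $A_i$. Under $\delta$, the section $(1 \wedge \alpha \wedge \beta)^{\otimes -1}$ is identified with $x \wedge y$ up to a sign which is absorbed by squaring, so $\delta^{\otimes 2}(\Delta_f)$ agrees with $\Delta(g)$ on $U$, and hence globally by gluing. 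The main obstacle is the bookkeeping that ties the basis $\{1,\alpha,\beta\}$ of $(f_*\mc{O}_X)|_U$ to the trivialization $\{x,y\}$ of $\mc{E}|_U$ in a way compatible with $\delta$; this step reduces to a single universal verification over $\Spec \Z[A_0, A_1, A_2, A_3]$ by the base-change functoriality of the Casnati--Ekedahl correspondence (\cite{Poo08}, \cite{Del07}, \cite{Woo11}), after which the identity propagates to arbitrary integral $S$ by pullback.
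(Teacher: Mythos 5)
Your proposal is correct and follows essentially the same route as the paper's proof in \Cref{appendix}: $\delta$ is obtained by taking top exterior powers of the Tschirnhausen sequence (so $\wedge^2\mc{E}^{\vee}\cong\wedge^3 f_{*}\mc{O}_X$) and dualizing, and the identity $\delta^{\otimes 2}(\Delta_f)=\Delta(g)$ is checked on trivializing opens via the explicit Delone--Faddeev multiplication table and trace pairing, identifying $(1\wedge\omega\wedge\theta)^{\otimes -1}$ with $x\wedge y$. The only cosmetic difference is that you package the local check as a universal verification over $\Z[A_0,\dots,A_3]$, whereas the paper carries out the trace-form computation directly over $R$; both come to the same calculation.
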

\begin{proof}
    See \Cref{appendix}.
\end{proof}

\subsection{Curves and Smooth Irreducible Covers}
Let $C$ be a smooth, projective, geometrically irreducible curve over $\F_q$. We call this a \textbf{nice curve}.

Let $\Trip^{\sm, \ir}(C)$ be the full subcategory of triple covers $f: X \to C$ such that $X$ is smooth and irreducible.

Let $\TriSec^{\sm, \ir}(C)$ be the full subcategory of pairs $(\mc{E}, s)$ such that $s \in H^0(\PP(\mc{E}), \mc{O}_{\PP(\mc{E})} \otimes \pi^{*} (\wedge^2 \mc{E})^{\vee})$ is nonzero and the vanishing scheme $V(s)\subseteq \PP(\mc{E})$ is smooth and irreducible.

Then we have
\begin{prop}
    The equivalence of categories in \Cref{thm:CE} restricts to an equivalence of full subcategories
    \[
    \TriSec^{\sm, \ir}(C) \xrightarrow{\sim} \Trip^{\sm, \ir}(C).
    \]
\end{prop}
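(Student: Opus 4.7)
The plan is to verify that the equivalence $V$ of \Cref{thm:CE} restricts to the two given full subcategories. Since both $\TriSec^{\sm, \ir}(C)$ and $\Trip^{\sm, \ir}(C)$ sit inside their respective ambient categories as full subcategories, the required equivalence will follow once I check two containments: that $V$ carries $\TriSec^{\sm, \ir}(C)$ into $\Trip^{\sm, \ir}(C)$ and that a quasi-inverse of $V$ carries $\Trip^{\sm, \ir}(C)$ into $\TriSec^{\sm, \ir}(C)$.

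The reverse direction should be essentially routine. Given $f : X \to C$ with $X$ smooth and irreducible, the first step is to observe that $f$ is Gorenstein, so that the quasi-inverse of $V$ applies. Since $X$ is smooth and hence regular of dimension $1$, all of its local rings are Gorenstein; the same holds for $C$. The standard criterion---for a flat local homomorphism $A \to B$ of Noetherian local rings, $B$ is Gorenstein iff both $A$ and the fiber $B/\mathfrak{m}_A B$ are Gorenstein---then forces the fibers of $f$ to be Gorenstein, so $f \in \Trip^{\Gor}(C)$. The quasi-inverse of \Cref{thm:CE} then produces $(\mc{E}, s) \in \TriSec^{\Prim}(C)$ realizing the relative canonical embedding $X = V(s) \subseteq \PP(\mc{E})$, and smoothness and irreducibility transfer tautologically.

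The forward direction contains the real content. Given $(\mc{E}, s) \in \TriSec^{\sm, \ir}(C)$, the key step is to verify that $s$ is primitive, so that $V$ is defined on it. I would argue by contradiction: suppose $s$ vanishes identically on some fiber $F_P := \PP(\mc{E})_P \cong \PP^1$ above a closed point $P \in C$. Then $s$ lies in the image of the twist $\mc{O}(3) \otimes \pi^{*}((\wedge^2\mc{E})^{\vee}(-P)) \hookrightarrow \mc{O}(3) \otimes \pi^{*}(\wedge^2\mc{E})^{\vee}$, corresponding to a section $s_1$ of the smaller line bundle with $V(s) = F_P + V(s_1)$ as effective Cartier divisors on $\PP(\mc{E})$. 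Two cases must then be ruled out: if $V(s_1) = \emptyset$, then $s_1$ is nowhere vanishing, and its restriction to any fiber would give a nowhere-vanishing section of $\mc{O}_{\PP^1}(3)$, contradicting $\deg \mc{O}_{\PP^1}(3) > 0$; if instead $V(s_1) \neq \emptyset$, then irreducibility of $V(s)$ forces $V(s_1) \subseteq F_P$ set-theoretically, hence $V(s_1) = F_P$ as an effective divisor, making $V(s) = 2 F_P$ non-reduced and contradicting smoothness. Once primitivity holds, \Cref{thm:CE} directly produces $f : V(s) \to C$ with $V(s)$ smooth and irreducible, so $f \in \Trip^{\sm, \ir}(C)$.

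The main obstacle is thus the primitivity argument: it combines positivity of $\mc{O}(3)$ along fibers of $\pi$ (to rule out a nowhere-vanishing $s_1$) with the interplay between reducedness (from smoothness) and irreducibility of $V(s)$ (to rule out a second component).
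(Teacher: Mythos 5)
Your proposal follows essentially the same route as the paper: the two pieces of content are (i) $X$ smooth and flat over $C$ implies $f$ is Gorenstein, and (ii) smoothness and irreducibility of $V(s)$ imply $s$ is primitive, and your reverse direction is exactly the paper's argument. One containment in your primitivity argument is backwards, though the contradiction survives. Writing $V(s) = F_P + V(s_1)$, irreducibility of $V(s)$ only gives that one of $F_P$, $V(s_1)$ contains the other as closed sets, and it is the alternative $F_P \subseteq V(s_1)$ that can actually occur: the containment $V(s_1) \subseteq F_P$ is impossible because $s_1$ restricts on \emph{every} fiber to a section of $\mc{O}(3)$, so $V(s_1)$ meets every fiber and dominates $C$ --- the same positivity argument you use to rule out $V(s_1) = \emptyset$. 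In the surviving alternative, $s_1$ again vanishes along the reduced integral divisor $F_P$, so $V(s) \ge 2F_P$ as effective divisors and $V(s)$ is non-reduced along $F_P$, which is the contradiction with smoothness you intended (your specific claims $V(s_1) = F_P$ and $V(s) = 2F_P$ cannot hold for divisor-class reasons, but are not needed). The paper's own argument is the terser observation that a nonzero non-primitive $s$ has $V(s)$ containing both the vertical component $F_P$ and a horizontal component, hence $V(s)$ is reducible.
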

\begin{proof}
Implicit in the statement is the claim that if $f: X\to C$ is in $\Trip^{\sm, \ir}(C)$, then $f:X \to C$ is Gorenstein, and if $(\mc{E}, s) \in \CubForm^{\sm, \ir}(C)$, then $s$ is primitive. 

For the former claim, if $X$ is a regular scheme, then $X$ is Gorenstein, and since $f:X \to C$ is flat we conclude $f$ is Gorenstein (see \cite[\href{https://stacks.math.columbia.edu/tag/0C12}{Tag 0C12}]{stacks-project}). For the latter claim, if $s\neq 0$ is not primitive, then $V(s)$ cannot be irreducible, as it contains a component at a fiber.

Now suppose $(\mc{E},s) \in \TriSec^{\sm, \ir}(C)$ satisfies $s \neq 0$ is primitive and $V(s)$ is smooth and irreducible. Then the corresponding cover is $f: X \to C$ with $X = V(s)$, so $X$ is smooth and irreducible.

Conversely, if $f: X \to C$ is a triple cover with $X$ smooth and irreducible, then in particular $f$ is Gorenstein, so $X\cong V(s)$ for some $(\mc{E},s)$ with $s$ primitive. This implies that this $(\mc{E},s)$ lies in $\TriSec^{\sm, \ir}(C)$.

\end{proof}

\section{Sieving for Smoothness}
\label{sec:sieve-i}

Let $C$ be a nice (smooth, projective, geometrically irreducible) curve over $\F_q$. Our goal is to find $\Cov_3(C,2N)$, the number of degree $3$ branched covers $f: X\to C$ of discriminant degree $2N$ up to isomorphism. We define a slightly more manageable approximation to $\Cov_3(C,2N)$.
\begin{defn}
Let $C$ be a nice curve over $\F_q$, and let $N$ be an integer. Define
\[
\Theta(C,N):= \sum_{\substack{f: X \to C\\ \deg \Delta_f = 2N\\ X\text{ smooth, irreducible}}} \frac{1}{|\Aut(f)|}
\]
where the sum ranges over all triple covers $f: X\to C$ such that $X$ is smooth and irreducible.
\end{defn}

 There are two differences between $\Theta(C,N)$ and $\Cov_3(C,2N)$. 
 
 First, the sum in $\Theta(C,N)$ is inversely weighted by automorphisms, and in particular covers with cyclic automorphism group $\Aut(f) \cong C_3$ are counted by a factor of $1/3$ in $\Theta(C,N)$. 
 
 Second, the sum in $\Theta(C,N)$ includes triple covers $f:X \to C$ such that $X$ is smooth and irreducible but $f$ is not generically \'etale. This corresponds to the situation of a (purely) inseparable map $X\to C$. (In this case, $\Delta_f=0$, but it still has a well-defined discriminant degree $\deg \Delta_f$ as the zero section of a line bundle $\mc{N}^{\otimes 2}$ with $\deg \mc{N} = N$.) This situation only occurs when $\Char \F_q =3$.

 Both cases contribute a small error to the final sum. We address both sources of error (in the case $C= \PP^1$) in \Cref{sec:c3-insep}.
\subsection{Sections Corresponding to Covers} 
Given the correspondence of \Cref{sec:param-cubic}, every triple cover $f: X\to C$ for which $X$ is smooth and irreducible corresponds to a section
\[
s \in H^0(\PP(\mc{E}), \mc{O}_{\PP(\mc{E})}(3) \otimes \pi^{*}(\wedge^2 \mc{E})^{\vee})
\]
for some unique (up to isomorphism) rank $2$ vector bundle $\mc{E}$ on $C$ with associated projective bundle $\pi: \PP(\mc{E}) \to C$. For the sake of brevity, we use the notation
\[
\mc{M}_{\mc{E}} := \mc{O}_{\PP(\mc{E})}(3) \otimes \pi^{*}(\wedge^2\mc{E})^{\vee}
\]
for the distinguished line bundle on $\PP(\mc{E})$, and
\[
\mc{V}(\mc{E}):= H^0(\PP(\mc{E}), \mc{M}_{\mc{E}})
\]
for the $\F_q$-vector space of global sections of $\mc{M}_{\mc{E}}$ on $\PP(\mc{E})$.

There is a canonical isomorphism $\Lambda: \mc{V}(\mc{E}) \xrightarrow{\sim} H^0(C, \Sym^{3} \mc{E} \otimes (\wedge^2 \mc{E})^{\vee})$.

It will also be useful to denote
\[
\mc{V}(\mc{E}, \mc{L}):= H^0(\PP(\mc{E}), \mc{O}_{\PP(\mc{E})}(3) \otimes \pi^{*} \mc{L})
\]
when $\mc{E}$ is a rank $2$ vector bundle on $C$ and $\mc{L}$ is a line bundle on $S$. (So in particular, we recover $\mc{V}(\mc{E}) = \mc{V}(\mc{E}, (\wedge^2 \mc{E})^{\vee})$ by setting $\mc{L} = (\wedge^2 \mc{E})^{\vee}$.)

If a nonzero section $s \in \mc{V}(\mc{E}, \mc{L})$ corresponds to a vanishing scheme $X_0\subseteq \PP(\mc{E})$, then we would like to identify the conditions under which $X_0$ is smooth. In particular, we would like to know when $X_0$ is reduced and irreducible. This could fail to be the case for two reasons. First, $s$ could vanish at a fiber, which would mean $X_0$ contains a ``vertical'' fiber component. Second, $X_0$ could contain multiple ``horizontal'' components (or $X_0$ could be nonreduced along a ``horizontal'' component). We would like to more precisely define a notion of horizontal reducibility which encompasses the latter situation.

\subsection{Horizontal Reducibility}\label{sec:hor-ir-i}

Let $C$ be a nice (smooth, projective, geometrically irreducible) curve over $\F_q$. Let $\mc{E}$ be a rank $2$ vector bundle on $C$ with corresponding projective bundle $\PP(\mc{E})$. Let $\mc{L}$ be a line bundle on $C$, and let $\mc{M}:= \mc{O}_{\PP(\mc{E})}(m) \otimes \pi^{*} \mc{L}$ be a line bundle on $\PP(\mc{E})$.

Let $s \in H^0(\PP(\mc{E}), \mc{M})$ be a nonzero section. Let $X_0:=V(s)\subseteq \PP(\mc{E})$ be the vanishing scheme of $s$. Note that $X_0$ is an effective Cartier divisor, which we can also think of as a Weil divisor on the smooth projective surface $\PP(\mc{E})$. We write $[X_0]$ for the Weil divisor associated to $X_0$, such that
\[
[X_0] = \sum a_i [Y_i]
\]
where $Y_i$ are integral closed $1$-dimensional subschemes of $\PP(\mc{E})$. We say the $[Y_i]$ are the components of $[X_0]$, and the integers $a_i>0$ are the multiplicities of the components.

For each integral closed $1$-dimensional subscheme $Y_i\subseteq \PP(\mc{E})$, the map $Y_i \to C$ is proper, and therefore its scheme-theoretic image is either $C$ or a closed point in $C$ (since it is a reduced closed subscheme of $C$). We say $Y_i$ is \textbf{horizontal} if its image is $C$. Equivalently, $Y_i$ is horizontal if $\mc{O}(Y_i)\cong \mc{O}(m_i) \otimes \pi^{*} \mc{L}_i$ for some $m_i>0$ and some line bundle $\mc{L}_i$ on $C$.

Otherwise, we say $Y_i$ is \textbf{vertical}. In this case, the image of $Y_i$ must be a closed point $P \in C$, and $Y_i$ is the fiber $\PP(\mc{E})_{P}\cong \PP^1_{P}$.

\begin{defn}
    Let $s \in H^0(\PP(\mc{E}), \mc{O}_{\PP(\mc{E})}(m) \otimes \pi^{*} \mc{L})$. Let $X_0\subseteq \PP(\mc{E})$ be the vanishing scheme of $s$. We say $s$ is \textbf{horizontally irreducible} if $s\neq 0$ and $[X_0]$ has exactly one horizontal component $[Y]$ of multiplicity $1$.

    Otherwise, we say $s$ is \textbf{horizontally reducible}.
\end{defn}

We will primarily be focused on sections of line bundles of the form $\mc{O}_{\PP(\mc{E})}(3) \otimes \pi^{*}\mc{L}$. In this case, we have several equivalent definitions of horizontal reducibility.

\begin{prop}\label{prop:horiz-ir}
    Let $C$ be a nice curve. Let $\mc{E}$ be a rank $2$ vector bundle on $C$, and let $\mc{L}$ be a line bundle on $C$.
    Let $s \in H^0(\PP(\mc{E}), \mc{O}_{\PP(\mc{E})}(3) \otimes \pi^{*} \mc{L})$. Let $X_0: = V(s) \subseteq \PP(\mc{E})$ be the vanishing scheme of $s$. The following are equivalent to $s$ being \textbf{horizontally reducible}:
    \begin{itemize}
    \item Either $s=0$ or there is an integral component $Y_i$ of $X_0$ such that $\mc{O}(Y_i) \cong \mc{O}_{\PP(\mc{E})}(1)\otimes \pi^{*} \mc{L}_i$ for some line bundle $\mc{L}_i$.
    \item There is a section $C_1\subseteq \PP(\mc{E})$ of $\pi: \PP(\mc{E}) \to C$ on which $s$ vanishes.
    \item The form $\Lambda(s_{\eta})$ associated to the pullback of $s$ to the generic point $\eta \in C$ is reducible as a binary cubic form over a field.\footnote{    
        That is, let $\eta = \Spec \F_q(C)$ be the generic point of $C$. The pullback of $s$ along $\eta \to C$ is a section $s_{\eta}$ of a line bundle isomorphic to $\mc{O}(3)$ on $\PP(\mc{E})_{\eta}\cong \PP^1_{\eta}$. The associated binary cubic form can be written as $a_0 x^3 + a_1 x^2y + a_2 xy^2 + a_3 y^3$, where $a_i \in \F_q(C)$ are elements of a field. Then $s$ is horizontally reducible if and only if this binary cubic form is $0$ or contains a linear factor.} 

    \item The form $\Lambda(s) \in \Sym^3 \mc{E} \otimes \mc{L}$ is reducible after base change to the generic point of $C$.

    \end{itemize}

    Furthermore, the following is equivalent to $s$ being horizontally irreducible:

    \begin{itemize}
        \item $s\neq 0$ and the finite map $(X_0)_{\eta} \to \eta$ is a degree $3$ extension of fields.
    \end{itemize}

\end{prop}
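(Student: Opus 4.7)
The plan is to establish the equivalences by moving between the total space $\PP(\mc{E})$ and its generic fiber $\PP(\mc{E})_\eta \cong \PP^1_\eta$. The key observation is that when $s \neq 0$, intersecting the effective Weil divisor $[X_0]$ with a fiber $F$ of $\pi$ gives $(\mc{O}_{\PP(\mc{E})}(3) \otimes \pi^* \mc{L}) \cdot F = 3$, so writing $[X_0] = \sum_i a_i[Y_i] + (\text{vertical})$ with the horizontal $Y_i$ satisfying $\mc{O}(Y_i) \cong \mc{O}(m_i) \otimes \pi^* \mc{L}_i$ and fiber intersection $m_i$, we get $\sum_i a_i m_i = 3$ (vertical fibers contribute $0$). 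The only way to avoid having a horizontal component with $m_i = 1$ is a single horizontal $Y$ with $m = 3$, $a = 1$, which is exactly the horizontally irreducible case. This proves $(1) \iff (2)$.

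Next I would pass to the generic fiber. Vertical components disappear under restriction to $\eta$, and the horizontal part of $[X_0]$ restricts to $(X_0)_\eta \subseteq \PP^1_\eta$, which is cut out by $\Lambda(s_\eta)$ regarded as a binary cubic form over the field $\F_q(C)$ (after trivializing $\mc{L}_\eta$). Each horizontal integral $Y_i$ restricts to a closed point of $\PP^1_\eta$ of degree $m_i$, defined by an irreducible homogeneous polynomial of that degree; conversely, every irreducible factor of $\Lambda(s_\eta)$ extends uniquely to a horizontal integral component by taking scheme-theoretic closure in $\PP(\mc{E})$. Hence the factorization $\Lambda(s_\eta) = u \prod g_i^{a_i}$ matches the horizontal part of $[X_0]$ with multiplicities, so $\Lambda(s_\eta)$ has a linear factor iff some $m_i = 1$. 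This gives $(2) \iff (4)$, and $(4) \iff (5)$ is immediate since $\Lambda$ commutes with base change to $\eta$.

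For $(2) \iff (3)$: a horizontal integral component $Y_i$ with $m_i = 1$ gives a finite, generically degree $1$ map $\pi|_{Y_i}: Y_i \to C$. Since $C$ is smooth, the normalization $\tilde Y_i \to C$ is an isomorphism, and the composition $C \xrightarrow{\sim} \tilde Y_i \to Y_i \hookrightarrow \PP(\mc{E})$ is a section of $\pi$ whose image is $Y_i$, on which $s$ vanishes. Conversely, the image of a section $\sigma: C \to \PP(\mc{E})$ is a horizontal integral divisor isomorphic to $C$ via $\pi$, necessarily of class $\mc{O}(1) \otimes \pi^* \mc{L}'$, so the vanishing condition $s|_{\sigma(C)} = 0$ makes it a component of $[X_0]$. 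The last bullet is a direct consequence: horizontal irreducibility is exactly the case where the horizontal part of $[X_0]$ is a single integral $Y$ with $m = 3, a = 1$, which corresponds to $\Lambda(s_\eta)$ being an irreducible cubic, equivalently $(X_0)_\eta = \Spec K$ for a degree $3$ field extension $K/\F_q(C)$. The one subtle step is the bijection between horizontal components of $[X_0]$ and irreducible factors of $\Lambda(s_\eta)$, which I would justify via the flatness of $\PP(\mc{E}) \to C$ and the fact that on the generic fiber, prime divisors on $\PP(\mc{E})$ dominating $C$ restrict to integral closed subschemes of $\PP^1_\eta$.
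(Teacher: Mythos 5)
Your proposal is correct and follows the same overall structure as the paper's proof: decompose $[X_0]$ into components, use that the total vertical degree is $3$ to show that horizontal reducibility forces a component of vertical degree $1$, pass to the generic fiber to match horizontal components with irreducible factors of $\Lambda(s_\eta)$, and identify degree-$1$ horizontal components with sections of $\pi$.

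The one place where you genuinely diverge is the proof that an integral horizontal curve $Y_i$ with $\mc{O}(Y_i)\cong\mc{O}_{\PP(\mc{E})}(1)\otimes\pi^{*}\mc{L}_i$ is the image of a section. You argue geometrically: $\pi|_{Y_i}$ is proper with finite fibers, hence finite, and its generic fiber is a degree-$1$ point, so $Y_i\to C$ is finite and birational onto the normal curve $C$ and therefore an isomorphism (the detour through the normalization $\widetilde{Y}_i$ is not even needed). The paper instead argues sheaf-theoretically: the defining section of $Y_i$ corresponds to an injective map $\mc{L}_i^{\vee}\to\mc{E}$ that is fiberwise nonzero, whose dual is therefore surjective, and the resulting short exact (Koszul) sequence produces the section by the universal property of $\PP(\mc{E})$. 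Your version is more elementary and works verbatim for any integral curve of relative degree $1$ over a normal base; the paper's version has the advantage of producing the section functorially from the line subbundle $\mc{L}_i^{\vee}\subseteq\mc{E}$, which matches the bundle-theoretic bookkeeping used elsewhere in the argument. Both are complete; your treatment of the generic-fiber dictionary (integrality of $(Y_i)_\eta$, multiplicities matching, closures of factors giving components) is also sound and is essentially the paper's restriction-to-$U$ argument made pointwise at $\eta$.
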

\begin{proof}
    See \Cref{sec:appendix-hor-finite}.
\end{proof}
In particular, horizontal irreducibility of a section $s$ is a property of the restriction of $s$ to generic fiber of $C$.

\begin{rem}
    Note that for our definition of horizontal irreducibility, if a section $s$ cuts out a scheme $X_0\subseteq \PP(\mc{E})$ which is irreducible but nonreduced, then $s$ is considered to be horizontally reducible.
\end{rem}

Now we define notation for horizontally irreducible sections.
\begin{defn}
    Let $C$ be a nice curve. Let $\mc{E}$ be a rank $2$ vector bundle on $C$, and let $\mc{L}$ be a line bundle on $C$. 

    We denote by
    \[
    \mc{V}(\mc{E}, \mc{L})^{\ir}\subseteq \mc{V}(\mc{E}, \mc{L})
    \]
    and
    \[
    \mc{V}(\mc{E})^{\ir} \subseteq \mc{V}(\mc{E})
    \]
    the subsets of horizontally irreducible sections.

    More generally, for set $S$ which is a subset of $V(\mc{E}, \mc{L})$ or $V(\mc{E})$, we write $S^{\ir}\subseteq S$ for the subset of horizontally irreducible sections in $S$.
\end{defn}

\begin{defn}
    Let $C$ be a nice curve, and let $\mc{E}$ be a rank $2$ vector bundle on $C$.
    
    We say a section $s \in \mc{V}(\mc{E})$ is \textbf{smooth} if $s\neq 0$ and the vanishing scheme $X_0 \subseteq \PP(\mc{E})$ is smooth. We denote by
    \[
    \mc{V}(\mc{E})^{\sm, \ir} \subseteq \mc{V}(\mc{E})
    \]
    the subset of sections which are smooth and \emph{horizontally} irreducible.
\end{defn}

We would like to say that $s \in \mc{V}(\mc{E})^{\sm, \ir}$ if and only if $V(s)$ is smooth and \emph{irreducible}. So when $s$ is smooth, there is no distinction between horizontal irreducibility and reducibility.

\begin{lemma}\label{lem:smooth-prim}
    Let $C$ be a nice curve and let $\mc{E}$ be a rank $2$ vector bundle on $C$. If $s \in \mc{V}(\mc{E})^{\sm, \ir}$ is a horizontally irreducible smooth section, then $s$ is primitive, and therefore $V(s)$ is irreducible.
\end{lemma}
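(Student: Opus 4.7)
The plan is to argue by contrapositive: I will assume $s$ is not primitive and show that $V(s)$ fails to be smooth. By definition, non-primitivity means there exists a closed point $P \in C$ at which $s$ vanishes identically on the fiber $F_P := \PP(\mc{E})_P \cong \PP^1_P$, so $F_P$ appears as a vertical irreducible component of the vanishing scheme $X_0 := V(s)$. On the other hand, horizontal irreducibility of $s$ (in particular, $s \neq 0$) says that $[X_0]$ has exactly one horizontal component $Y$, appearing with multiplicity one. Since $Y$ is horizontal, the restricted projection $\pi|_Y : Y \to C$ is surjective, so $Y$ meets the fiber $F_P$ at some point $x \in \PP(\mc{E})$.

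The key step is to invoke smoothness: because $V(s)$ is smooth over the perfect field $\F_q$, it is regular. In a regular Noetherian scheme no point lies on two distinct irreducible components, since the local ring at such a point would have more than one minimal prime and hence fail to be a domain. Applying this at $x$ to the two distinct components $Y$ (horizontal) and $F_P$ (vertical) of $V(s)$ produces the required contradiction, so $s$ must be primitive after all.

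Once primitivity is established, irreducibility of $V(s)$ follows immediately: primitivity rules out any vertical components of $[X_0]$, and horizontal irreducibility supplies a unique horizontal component $Y$ with multiplicity one, so $[X_0] = [Y]$ and $V(s) = Y$ is integral. I do not anticipate any serious obstacle; the argument is a short combination of the standard fact that regular schemes have pairwise disjoint irreducible components with the geometric observation that a horizontal curve in $\PP(\mc{E})$ surjects onto the base and therefore meets every fiber. The only point worth double-checking is which notion of smoothness is in play — smoothness of $V(s)$ over $\F_q$ versus over $C$ — but either version forces regularity of the total space $V(s)$, which is all the argument uses.
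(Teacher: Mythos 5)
Your proof is correct, but it takes a genuinely different route from the paper. The paper argues by factoring: if $s$ vanishes on the fiber over $P$, then $s = \sigma_P\cdot s'$ with $s'$ a section of $\mc{M}_{\mc{E}}\otimes \pi^{*}\mc{O}_C(-P)$; restricting $s'$ to the fiber gives a binary cubic form on $\PP^1_P$, which necessarily has a zero $\mc{P}$, and at $\mc{P}$ both factors vanish, so $s \in \mf{m}_{\mc{P}}^2$ and $V(s)$ is singular there. Notably, that argument never uses horizontal irreducibility — it shows the stronger statement that any nonzero smooth section is primitive — and the paper later reuses the explicit singular point it produces on the fiber (in the discussion of $\Bad(\mc{E},P)$, where fibral sections are noted to be singular somewhere in that fiber ``by the proof of \Cref{lem:smooth-prim}''). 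Your argument instead uses horizontal irreducibility to extract the unique horizontal component $Y$, observes that $Y$, being proper over $C$ with dense image, surjects onto $C$ and hence meets the vertical component $F_P$, and then invokes the fact that a regular local ring is a domain, so no point of a regular scheme lies on two distinct irreducible components. All the steps check out: $F_P$ is genuinely a distinct irreducible component (it cannot be contained in the horizontal $Y$ for dimension and horizontality reasons), and smoothness over the perfect field $\F_q$ does give regularity. Your approach is more geometric and avoids the restriction-to-the-fiber computation, at the cost of genuinely needing the horizontal irreducibility hypothesis; the deduction of irreducibility of $V(s)$ from primitivity plus local factoriality of $\PP(\mc{E})$ at the end is also fine.
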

\begin{proof}
    Assume for sake of contradiction that $s$ is smooth but not primitive. Then $s$ vanishes at the fiber over a point $P \in C$. In particular, $s$ lies in the image of the map
    \[
    H^0(\PP(\mc{E}), \mc{M}_{\mc{E}} \otimes \pi^{*}\mc{O}_C(-P))\to H^0(\PP(\mc{E}), \mc{M}_{\mc{E}}).
    \]
    Now $\mc{M}_{\mc{E}}\otimes \pi^{*} \mc{O}_{C}(-P)$ restricts to $\mc{O}(-3)$ the fiber $\PP(\mc{E})_{P} \cong \PP^1_{P}$, so in particular the preimage of $s$ must vanish at some closed point $\mc{P}$ on the fiber. Therefore $s$ vanishes to order $2$ at $\mc{P}$, so $X_0$ is not regular at $\mc{P}$, and therefore $X_0$ is not smooth, a contradiction.

\end{proof}

In particular, we have $s \in \mc{V}(\mc{E})^{\sm, \ir}$ if and only if $(\mc{E}, s) \in \TriSec^{\sm, \ir}(C)$ 
\subsection{Counting Covers to Counting Sections}

Now we would like to rewrite 
\[
\Theta(C,N) = \sum_{\substack{f: X \to C\\ \deg \Delta_f = 2N\\ X\text{ smooth, irreducible}}} \frac{1}{|\Aut f|}
\]
using our correspondence from \Cref{sec:param-cubic}. Here the sum ranges over isomorphism classes of triple covers $f:X \to C$ such that $X$ is smooth and irreducible.

Any such triple cover $f$ corresponds to a unique (up to isomorphism) pair $(\mc{E}, s)$, where $\mc{E}$ is the Tschirnhausen bundle of $f$ and $s \in \mc{V}(\mc{E})^{\ir, \sm}$ is a horizontally irreducible smooth section. 

Furthermore, because we have an equivalence of groupoids, we have an isomorphism between $\Aut(f)$ and the stabilizer $\Stab(s)$ of $s$ by the action of $\Aut(\mc{E})$ on $\mc{V}(\mc{E})$.

By the orbit-stabilizer theorem, we can write
\[
\sum_{[s] \in \mc{V}(\mc{E})^{\ir, \sm}} \frac{1}{|\Stab(s)|} = \frac{1}{|\Aut \mc{E}|}\sum_{[s] \in \mc{V}(\mc{E})^{\ir, \sm}} |\Orb(s)| = \frac{|\mc{V}(\mc{E})^{\sm,\ir}|}{|\Aut \mc{E}|}
\]
where $[s]$ denotes an equivalence class of elements of $ \mc{V}(\mc{E})^{\ir, \sm}$ under the action of $\Aut(\mc{E})$.
(Alternatively, both sides give the cardinality of the quotient groupoid $\mc{V}(\mc{E})^{\ir, \sm}// \Aut(\mc{E})$.)

We will use the following notation for the groupoid of rank $2$ vector bundles on $C$.
\begin{defn}
    Let $C$ be a nice curve. We let $\Bun(C)$ denote the category of rank $2$ vector bundles on $C$ where all morphisms are isomorphisms.
\end{defn}

\begin{defn}
    Let $\mc{C}$ be a groupoid which is an essentially small category. We let $|\mc{C}|$ denote the set of isomorphism classes of objects in $\mc{C}$, and we write $\sum_{x \in |\mc{C}|}$ to denote a sum over all isomorphism classes of objects in $\mc{C}$.
\end{defn}

Now we can write
\[
\sum_{\substack{f: X \to C \\ \deg \Delta_f = 2N\\ X\text{ smooth, irreducible}}} \frac{1}{|\Aut f|} = \sum_{\substack{\mc{E} \in |\Bun(C)| \\ \deg \mc{E} = N} }\sum_{[s] \in \mc{V}(\mc{E})^{\ir, \sm}} \frac{1}{|\Stab(s)|} = \sum_{ \substack{\mc{E} \in |\Bun(C)| \\\deg \mc{E} = N} }\frac{1}{|\Aut \mc{E}|} |\mc{V}(\mc{E})^{\ir, \sm}|
\]
and therefore
\[
\Theta(C, N) = \sum_{\substack{\mc{E} \in |\Bun(C)| \\ \deg \mc{E} = N} }\frac{1}{|\Aut \mc{E}|} |\mc{V}(\mc{E})^{\sm,\ir}|.
\]

\subsection{Sieving for Smooth Sections}

For a fixed rank $2$ vector bundle $\mc{E}$, the set $\mc{V}(\mc{E})$ is a finite dimensional $\F_q$-vector space. We will sieve out the smooth sections in this set by an inclusion-exclusion process.

For a closed point $P \in C$, we would like to define the subset of sections $s \in \mc{V}(\mc{E})$ which are ``bad'' above $P$, meaning they fail to be smooth at some point on the fiber $\pi^{-1}(P)\subseteq \PP(\mc{E})$ of $P$.

Then a nonzero section $s \in \mc{V}(\mc{E})$ is smooth if and only if it is not bad above any closed point $P \in C$. By keeping track of sections which are ``bad'' above every closed point in a reduced effective divisor $D\subseteq C$, we can exactly count smooth sections by an alternating sum.

More precisely, we define the following notation.

\begin{defn}
    Let $C$ be a nice curve and let $\mc{E}$ be a rank $2$ vector bundle on $C$. Let $P$ be a closed point in $C$.

    Let $s \in \mc{V}(\mc{E})$ be a nonzero section, and let $X_0:= V(s) \subseteq \PP(\mc{E})$ be the vanishing scheme of $s$. If there exists a closed point $\mc{P}\in X_0$ which lies in the fiber above $P$ such that $X_0$ is not regular at $P$, then we say $s$ is \textbf{bad} above $P$. (If $s=0$, then $s$ is bad above $P$ as well.)

    Let $D$ be a reduced effective divisor on $C$. We let 
    \[
    \Bad(\mc{E}, D)\subseteq \mc{V}(\mc{E})
    \]
    denote the set of sections that are bad above $P$ for every $P \in D$.
\end{defn}

\begin{rem}
    Since every closed point of $X_0$ lies in the fiber of a point of $C$, we conclude that $X_0$ is regular of dimension $1$ at every point if and only if it is not bad above any fiber. Furthermore, $X_0$ is regular if and only if it is regular at every closed point (because $X_0$ is locally Noetherian), and $X_0$ is smooth if and only if it is regular because the base field $\F_q$ is perfect.
\end{rem}

By the above discussion, we have
\begin{equation}\label{eq:alt-bad}
\Theta(C, N) = \sum_{\substack{\mc{E} \in |\Bun(C)| \\\deg \mc{E}  = N}}\frac{1}{|\Aut \mc{E}|} \sum_{D} \mu(D) |\Bad(\mc{E}, D)^{\ir}|,
\end{equation}
where $\Bad(\mc{E}, D)^{\ir}$ denotes the subset of horizontally irreducible sections. Here $\mu(D) = (-1)^{\omega(D)}$ is the M\"obius function, where $\omega(D)$ is the number of closed points in $D$.

\begin{rem}
    We remark that the above sum is finite. This is because we have:
    \begin{enumerate}
        \item[(i)] For a fixed integer $N$, there are finitely many rank $2$ degree $N$ vector bundles $\mc{E}$ such that $\mc{V}(\mc{E})^{\ir}$ is nonempty.
        \item[(ii)] For a fixed rank $2$ vector bundle $\mc{E}$, if $\deg D$ is sufficiently large, then $\Bad(\mc{E}, D)^{\ir}$ is empty.
    \end{enumerate}

    For a proof, see \Cref{sec:appendix-hor-finite}.
\end{rem}

\subsection{Refining the Sieve}
In this section, we analyze in further detail the ways in which a section $s$ can be bad above a point $P \in C$, and we use this to write a more refined decomposition of $\Theta(C,N)$. This is essentially the same analysis as that in Zhao \cite{Zha13}, Chapter 5 (especially Section 5.4).

We begin with some generalities about vanishing and singularity of a section of a line bundle at a point.

\begin{defn}
    Let $X$ be a regular scheme, and let $x \in X$ be a closed point. Let $s \in H^0(X, \mc{M})$ be a global section of a line bundle $\mc{M}$ on $X$. Let $i\ge 0$ be an integer. We say that the \textbf{order of vanishing} of $s$ at $x$ is \textbf{at least $i$} if the restriction of $s$ to the local ring $\mc{O}_{X,x}$ lies in $\mf{m}_{x}^{i}$.
\end{defn}
\begin{defn}
    Let $\mc{E}$ be a rank $2$ vector bundle on a nice curve $C$. Let $\mc{D} \subseteq \PP(\mc{E})$ be a reduced finite subscheme that is the union of finitely many closed points $\mc{P} \in \PP(\mc{E})$. 
    
    \begin{itemize}
        \item We write
    \[
    \Van(\mc{E}, \mc{D}) \subseteq \mc{V}(\mc{E})
    \]
    for the subspace of points vanishing to order at least $1$ at each point $\mc{P}\in \mc{D}$. We say $s \in \Van(\mc{E}, \DD)$ \textbf{vanishes at} $\mc{D}$.
    \item We write
    \[
    \Sing(\mc{E}, \mc{D}) \subseteq \mc{V}(\mc{E})
    \]
    for the subspace of points vanishing to order $2$ at each point $\mc{P} \in \mc{D}$. We say $s \in \Sing(\mc{E}, \mc{D})$ is \textbf{singular at} $\mc{D}$.
    \item Let $D\subseteq C$ be a reduced effective divisor, thought of as a finite union of closed points $P$. We write 
    \[
    \Fib(\mc{E},D) \subseteq \mc{V}(\mc{E})
    \]
    for the subspace of sections which vanish on each fiber $\PP(\mc{E})_{P} \cong \PP^{1}_{P}$ for $P \in D$. We say $s \in \Fib(\mc{E}, D)$ is \textbf{fibral at} $D$.
    \item Assume now that the image of $\mc{D}\subseteq \PP(\mc{E})$ under $\pi: \PP(\mc{E})\to C$ is $D\subseteq C$, and furthermore assume that the map $\pi|_{\mc{D}}: \mc{D} \xrightarrow{\sim} D$ is an isomorphism. In this case, we write
    \[
    \SingFib(\mc{E}, \mc{D}):= \Sing(\mc{E}, \mc{D}) \cap \Fib(\mc{E}, D) \subseteq \mc{V}(\mc{E})
    \]
    for the subspace of sections which are fibral at $D$ and singular at $\mc{D}$.
    \end{itemize}
\end{defn}
As a remark, if $\mc{P} \in \PP(\mc{E})$ is a closed point, then the local ring at $\mc{P}$ in $\PP(\mc{E})$ is a regular local ring of dimension $2$. If $s \in \mc{V}(\mc{E})$ is a nonzero section vanishing at $\mc{P} \in \PP(\mc{E})$, then the local ring of $\mc{P}$ in $X_0 = V(s)\subseteq \PP(\mc{E})$ is a regular local ring of dimension $1$ if and only if $s \not \in \mf{m}_{\mc{P}}^2$. So when $s$ is nonzero, $X_0$ is non-regular at $\mc{P}$ if and only if $s \in \mf{m}_{\mc{P}}^2$.

By our earlier definition, we have that $s \in \Bad(\mc{E}, P)$ if and only if there exists $\mc{P}$ in the fiber of $P$ such that $s \in \Sing(\mc{E}, \mc{P})$. We note that we only need to test singularity at points $\mc{P}$ of relative degree $1$.

\begin{lemma}
    If $s \in \Bad(\mc{E}, P)$, then either $s \in \Fib(\mc{E}, P)$ is fibral at $P$, or $s$ is singular at a closed point $\mc{P}$ of relative degree $1$ in the fiber of $P$.
\end{lemma}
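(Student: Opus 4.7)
The plan is to reduce immediately to the case that $s$ is nonzero, since $s = 0$ is bad above every point and trivially lies in $\Fib(\mc{E}, P)$. Given a nonzero $s \in \Bad(\mc{E}, P)$, by definition there exists a closed point $\mc{P} \in V(s)$ lying in the fiber $\pi^{-1}(P)$ such that $V(s)$ is not regular at $\mc{P}$. Using the remark preceding the lemma, this is equivalent to $s \in \mf{m}_{\mc{P}}^{2}$ inside the local ring $\mc{O}_{\PP(\mc{E}),\mc{P}}$. Let $d := [\kappa(\mc{P}) : \kappa(P)]$ be the relative degree of $\mc{P}$ over $P$. If $d = 1$ we are already in the second case of the lemma, so we may assume $d \geq 2$ and must deduce that $s \in \Fib(\mc{E}, P)$.

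The heart of the argument is a local calculation on the fiber $\pi^{-1}(P) \cong \PP^{1}_{\kappa(P)}$. First I would trivialize $\mc{E}$ and $\mc{M}_{\mc{E}}$ on a Zariski neighborhood of $P$, so that $s$ becomes a genuine function on a local chart of $\PP(\mc{E})$. Choose an affine coordinate $x$ on $\PP^{1}_{\kappa(P)}$ so that $\mc{P}$ corresponds to a finite point; then if $t \in \mc{O}_{C,P}$ is a uniformizer pulled back to $\PP(\mc{E})$, the closed point $\mc{P}$ is cut out locally by $\mf{m}_{\mc{P}} = (t, f(x))$, where $f(x) \in \kappa(P)[x]$ is the monic irreducible minimal polynomial of $\mc{P}$ on the fiber, with $\deg f = d$. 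Consequently
\[
\mf{m}_{\mc{P}}^{2} = (t^{2},\ t f(x),\ f(x)^{2}).
\]
Restriction of $s$ to the fiber $\pi^{-1}(P)$ is obtained by reducing modulo $t$, and the condition $s \in \mf{m}_{\mc{P}}^{2}$ then forces $s|_{\pi^{-1}(P)}$ to be divisible by $f(x)^{2}$ in the local ring on the fiber.

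Now I finish by a degree count: $s|_{\pi^{-1}(P)}$ is a section of $\mc{M}_{\mc{E}}|_{\pi^{-1}(P)} \cong \mc{O}_{\PP^{1}_{\kappa(P)}}(3)$, which on the chosen affine chart is a polynomial in $x$ of degree at most $3$. Under the assumption $d \geq 2$, the polynomial $f(x)^{2}$ has degree $2d \geq 4$, so the only multiple of $f(x)^{2}$ of degree at most $3$ is the zero polynomial. Hence $s|_{\pi^{-1}(P)} = 0$, which is exactly the assertion that $s \in \Fib(\mc{E}, P)$.

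I do not expect a serious obstacle; the only bookkeeping step that requires a little care is ensuring the local expression for $\mf{m}_{\mc{P}}^{2}$ and the identification of the fiber restriction are compatible across the chosen trivializations of $\mc{E}$ and $\mc{M}_{\mc{E}}$. Since $s|_{\pi^{-1}(P)}$ is a global object on $\PP^{1}_{\kappa(P)}$ (independent of the trivialization up to a nowhere-vanishing scalar), any local chart around $\mc{P}$ suffices for the divisibility argument, and the degree inequality $2d > \deg \mc{O}_{\PP^{1}}(3)$ is insensitive to all auxiliary choices.
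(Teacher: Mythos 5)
Your proof is correct and follows essentially the same route as the paper's: restrict $s$ to the fiber $\PP^1_{\kappa(P)}$, observe that singularity at a point of relative degree $d\ge 2$ forces the restricted section of $\mc{O}(3)$ to vanish to order $2$ there, and conclude from $2d\ge 4>3$ that the restriction is zero. Your explicit local computation with $\mf{m}_{\mc{P}}=(t,f(x))$ is just a more detailed writing-out of the paper's one-line degree argument.
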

\begin{proof}
    Assume for sake of contradiction that $s$ is not fibral at $P$, but $s$ is singular at a closed point $\mc{P}$ lying above $P$ of relative degree at least $2$. Restricting to the fiber $\PP^1_{P}$, we get in particular that the restriction $s|_{P} \in H^0(\PP^1_P, \mc{O}(3))$ vanishes to order at least $2$ on a point of relative degree $2$. This is impossible if $s|_P$ is nonzero since $2\cdot 2>3$.
\end{proof}

The proof of the lemma also shows that a non-fibral section cannot be singular at multiple degree $1$ points. Therefore we can write
\[
\Bad(\mc{E}, P) = \Fib(\mc{E}, P) \cup \bigcup_{\mc{P} \in \PP^1_{P}} \Sing(\mc{E}, \mc{P})
\]
where $\mc{P}$ ranges over relative degree $1$ points in the fiber $\PP^1_{P}$. (Recall that if $s$ is fibral at $P$, then it is singular at some closed point $\mc{P}$ in the fiber above $P$, by the proof of \Cref{lem:smooth-prim}.)

We can write this as a disjoint union
\begin{equation}\label{eq:bad-disj-union}
\Bad(\mc{E}, P) = \Fib(\mc{E},P) \sqcup \bigsqcup_{\mc{P} \in \PP^1_P} \left(\Sing(\mc{E}, \mc{P}) \setminus \SingFib(\mc{E}, \mc{P})\right)
\end{equation}
since a non-fibral section cannot be singular at two distinct points.

In particular, we have
\[
|\Bad(\mc{E}, P)^{\ir}| = \sum_{\mc{P} \in \PP^1_{P}} |\Sing(\mc{E}, \mc{P})^{\ir}| - \sum_{\mc{P} \in \PP^1_{P}}|\SingFib(\mc{E}, \mc{P})^{\ir}| + |\Fib(\mc{E}, P)^{\ir}|
\]
where $\mc{P}$ ranges over relative degree $1$ points in the fiber $\PP^1_{P}$.

In order to write down a more general formula for $|\Bad(\mc{E}, D)^{\ir}|$, we'll first introduce notation for a certain kind of $0$-dimensional scheme on a ruled surface.

\begin{defn}
    Let $C$ be a nice curve and let $D$ be a reduced effective divisor. Let $\mc{E}$ be a rank $2$ vector bundle on $C$, and let $\pi: \PP(\mc{E}) \to C$ be the associated projective bundle.
    
    A \textbf{$D$-marking} of $\mc{E}$ is a $0$-dimensional closed reduced subscheme $\mc{D}\subseteq \PP(\mc{E})$ such that $\mc{D} \subseteq \pi^{-1}(D)$ and the map $\pi|_{\DD}: \mc{D} \to D$ is an isomorphism. Let $\Mar(\mc{E}, D)$ denote the set of $D$-markings of $\mc{E}$.\footnote{In particular, for a closed point $P \in C$ we can identify $\Mar(\mc{E}, P)$ with the set of relative degree $1$ points $\mc{P} \in \PP^1_P$. Therefore $|\Mar(\mc{E}, P)| = q^{\deg P} +1$, and generally
    \[
    |\Mar(\mc{E}, D)| = \prod_{P \in D} \left(q^{\deg P} + 1\right).
    \]
    }
\end{defn}

One can see that a $D$-marking of $\mc{E}$ is equivalent to the choice of a relative degree $1$ point $\mc{P}$ in the fiber $\PP(\mc{E}) \cong \PP^1_P$ for each closed point $P \in D$.

With this notation, following Zhao \cite{Zha13} (Section 5.4), we can write
\begin{prop}[Zhao \cite{Zha13}]\label{prop:zhao-identity}
Let $C$ be a nice curve. Let $\mc{E}$ be a rank $2$ vector bundle on $C$, and let $D$ be a reduced effective divisor on $C$. Then the number of horizontally irreducible sections which are bad above every point $P \in D$ is
\[
|\Bad(\mc{E}, D)^{\ir}| = \sum_{\substack{D_1, D_2, D_3\\D_1+D_2+D_3 = D}} \mu(D_2) \sum_{\substack{\DD_1 \in \Mar(\mc{E}, D_1)\\  \DD_2 \in \Mar(\mc{E}, D_2)}} \left| \Sing(\mc{E}, \DD_1) \cap \SingFib(\mc{E}, \DD_2) \cap \Fib(\mc{E}, D_3)^{\ir}\right|.
\]
Here the first sum ranges over disjoint reduced effective divisors $D_1,D_2,D_3$ on $C$ such that $D_1+D_2+D_3 = D$. 
\end{prop}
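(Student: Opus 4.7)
The plan is to upgrade the disjoint-union decomposition (\ref{eq:bad-disj-union}) for a single point $P$ to an identity of indicator functions on $\mc{V}(\mc{E})$, take the product of these identities over the points $P \mid D$ to obtain an indicator for $\Bad(\mc{E}, D)$, and then read off the formula by expanding the product combinatorially.

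First, I would restate (\ref{eq:bad-disj-union}) as an identity of functions $\mc{V}(\mc{E}) \to \Z$,
\[
\mathbf{1}_{\Bad(\mc{E}, P)} = \mathbf{1}_{\Fib(\mc{E}, P)} + \sum_{\mc{P} \in \Mar(\mc{E}, P)} \mathbf{1}_{\Sing(\mc{E}, \mc{P})} - \sum_{\mc{P} \in \Mar(\mc{E}, P)} \mathbf{1}_{\SingFib(\mc{E}, \mc{P})},
\]
using $\mathbf{1}_{A \setminus B} = \mathbf{1}_A - \mathbf{1}_B$ when $B \subseteq A$, together with the identification of relative-degree-$1$ points in $\PP^1_P$ with elements of $\Mar(\mc{E}, P)$. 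One then verifies the identity pointwise on the three mutually exclusive cases: if $s$ is fibral at $P$, the two sums cancel termwise because membership in $\Fib(\mc{E}, P)$ forces $s \in \SingFib(\mc{E}, \mc{P})$ whenever $s \in \Sing(\mc{E}, \mc{P})$; if $s$ is non-fibral and singular at a (unique, relative-degree-$1$) point $\mc{P}_0$, only that single summand contributes; and if $s$ is not bad, everything vanishes.

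Next, since $\Bad(\mc{E}, D) = \bigcap_{P \mid D} \Bad(\mc{E}, P)$, the left-hand indicator factors as a product of single-point indicators, and substituting gives
\[
\mathbf{1}_{\Bad(\mc{E}, D)} = \prod_{P \mid D} \left( \mathbf{1}_{\Fib(\mc{E}, P)} + \sum_{\mc{P}} \mathbf{1}_{\Sing(\mc{E}, \mc{P})} - \sum_{\mc{P}} \mathbf{1}_{\SingFib(\mc{E}, \mc{P})} \right).
\]
Expanding this product amounts to choosing one of the three summands for each $P \mid D$. I would index such a choice by a disjoint decomposition $D = D_1 + D_2 + D_3$, where $D_3$ records the points assigned to $\mathbf{1}_{\Fib(\mc{E}, P)}$, $D_1$ the points assigned to the positive $\Sing$-sum, and $D_2$ the points assigned to the negative $\SingFib$-sum. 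The aggregated sums over $P \in D_i$ for $i = 1, 2$ become single sums over $\DD_i \in \Mar(\mc{E}, D_i)$, the cumulative sign is $(-1)^{\omega(D_2)} = \mu(D_2)$, and products of indicator functions collapse to indicators of intersections, yielding
\[
\mathbf{1}_{\Bad(\mc{E}, D)} = \sum_{D_1 + D_2 + D_3 = D} \mu(D_2) \sum_{\DD_1 \in \Mar(\mc{E}, D_1),\, \DD_2 \in \Mar(\mc{E}, D_2)} \mathbf{1}_{\Sing(\mc{E}, \DD_1) \cap \SingFib(\mc{E}, \DD_2) \cap \Fib(\mc{E}, D_3)}.
\]

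Finally, summing both sides over $s \in \mc{V}(\mc{E})^{\ir}$ produces the stated formula. The main ``obstacle'' is really just the bookkeeping of the multinomial expansion; all the geometric content has already been extracted in (\ref{eq:bad-disj-union}) and in the earlier observation that a non-fibral section can be singular at at most one relative-degree-$1$ point on any given fiber.
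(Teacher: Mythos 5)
Your proposal is correct and follows essentially the same route as the paper: convert \eqref{eq:bad-disj-union} into an identity of indicator functions, multiply over the points of $D$, expand the product as a sum over decompositions $D = D_1 + D_2 + D_3$ with sign $\mu(D_2)$, and sum over $s \in \mc{V}(\mc{E})^{\ir}$. The only (harmless) difference is that you spell out the pointwise case-check of the single-point indicator identity, which the paper leaves implicit in the disjoint-union decomposition.
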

\begin{proof}
    For a fixed section $s \in \mc{V}(\mc{E})$, \eqref{eq:bad-disj-union} implies that we have the identity of indicator functions
    \[
    1_{s \in \Bad(\mc{E}, P)} = 1_{s \in \Fib(\mc{E}, P)} + \sum_{\mc{P} \in \PP^1_P}1_{s \in \Sing(\mc{E}, \mc{P})} - \sum_{\mc{P} \in \PP^1_P} 1_{s \in \SingFib(\mc{E}, \mc{P})}.
    \]

    Furthermore, by definition we have multiplicativity
    \[
    1_{s \in \Bad(\mc{E}, D)} = \prod_{P \in D} 1_{s \in \Bad(\mc{E}, P)}.
    \]
    So by expanding the product we find that
    \[
    1_{s \in \Bad(\mc{E}, D)}=
    \sum_{\substack{D_1, D_2, D_3\\D_1+D_2+D_3 = D}} \mu(D_2) \left(\prod_{P \mid D_1} \sum_{\mc{P} \in \PP^1_{P}} 1_{s \in \Sing(\mc{E}, \mc{P})} \right)\left(\prod_{P_2 \mid D_2} \sum_{\mc{P} \in \PP^1_P} 1_{s \in \SingFib(\mc{E}, \mc{P})}\right) 1_{s \in \Fib(\mc{E}, D_3)}
    \]
    and so
    \[
    1_{s \in \Bad(\mc{E}, D)}=
     \sum_{\substack{D_1, D_2, D_3\\D_1+D_2+D_3 = D}} \mu(D_2)\left(\sum_{\DD_1 \in \Mar(\mc{E}, D_1)} 1_{s \in \Sing(\mc{E}, \DD_1)} \right)\left(\sum_{\DD_2 \in \Mar(\mc{E}, D_2)} 1_{s \in \SingFib(\mc{E}, D_2)}\right) 1_{s \in \Fib(\mc{E}, D_3)}.
    \]
    Summing over all $s \in \mc{V}(\mc{E})^{\ir}$ yields the desired result.
\end{proof}

\begin{rem}
    Although Zhao only considered the case $C=\PP^1$, the statement and proof is essentially the same for a nice curve $C$, so we attribute the proposition to him.
\end{rem}

With this notation, we can put together the pieces from above and conclude the following proposition.
\begin{prop}\label{prop:version-05}
Let $C$ be a nice curve, and let $N\ge 0$ be an integer. Then we have
\[
\Theta(C, N) =\sum_{\substack{\mc{E} \in |\Bun(C)| \\\deg \mc{E} = N} }\frac{1}{|\Aut \mc{E}|}
\sum_{D_1, D_2, D_3} \mu(D_1)\mu(D_3) 
\sum_{\substack{\DD_1 \in \Mar(\mc{E}, D_1) \\ \DD_2 \in \Mar(\mc{E}, D_2)}} |\Sing(\mc{E}, \DD_1)\cap \SingFib(\mc{E}, \DD_2) \cap \Fib(\mc{E}, D_3)^{\ir}|
\]
where $D_1, D_2, D_3$ range over all triples of disjoint reduced effective divisors on $C$.
\end{prop}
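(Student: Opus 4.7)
The plan is to combine equation \eqref{eq:alt-bad} with Proposition \ref{prop:zhao-identity} and reorganize the resulting triple sum. All the real content has already been established; what remains is essentially bookkeeping and a reindexing of the sieve.

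First, I would recall from \eqref{eq:alt-bad} that
\[
\Theta(C, N) = \sum_{\substack{\mc{E} \in |\Bun(C)| \\\deg \mc{E}  = N}}\frac{1}{|\Aut \mc{E}|} \sum_{D} \mu(D) |\Bad(\mc{E}, D)^{\ir}|,
\]
where $D$ ranges over reduced effective divisors on $C$ and the outer sum is finite by the remark following \eqref{eq:alt-bad} (only finitely many $\mc{E}$ contribute, and for each $\mc{E}$ only finitely many $D$ contribute, by the finiteness discussion to be proved in \Cref{sec:appendix-hor-finite}). This finiteness justifies interchanging sums freely below.

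Next, I substitute the identity of Proposition \ref{prop:zhao-identity}:
\[
|\Bad(\mc{E}, D)^{\ir}| = \sum_{\substack{D_1, D_2, D_3\\D_1+D_2+D_3 = D}} \mu(D_2) \sum_{\substack{\DD_1 \in \Mar(\mc{E}, D_1)\\  \DD_2 \in \Mar(\mc{E}, D_2)}} \left| \Sing(\mc{E}, \DD_1) \cap \SingFib(\mc{E}, \DD_2) \cap \Fib(\mc{E}, D_3)^{\ir}\right|,
\]
where $D_1, D_2, D_3$ are pairwise disjoint (this is forced by the convention that $D_1 + D_2 + D_3$ denotes a sum of disjoint reduced effective divisors). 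I then use the multiplicativity of the M\"obius function on disjoint reduced divisors, namely $\mu(D) = \mu(D_1+D_2+D_3) = \mu(D_1)\mu(D_2)\mu(D_3)$, which follows directly from $\omega(D) = \omega(D_1)+\omega(D_2)+\omega(D_3)$. Hence $\mu(D)\mu(D_2) = \mu(D_1)\mu(D_2)^2 \mu(D_3) = \mu(D_1)\mu(D_3)$.

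Finally, I swap the order of summation. The double sum $\sum_{D} \sum_{D_1+D_2+D_3 = D}$ is the same as a single sum $\sum_{D_1, D_2, D_3}$ over all triples of pairwise disjoint reduced effective divisors (with $D$ recovered as $D_1+D_2+D_3$). Making this reindexing yields
\[
\Theta(C, N) = \sum_{\substack{\mc{E} \in |\Bun(C)| \\\deg \mc{E} = N} }\frac{1}{|\Aut \mc{E}|}
\sum_{D_1, D_2, D_3} \mu(D_1)\mu(D_3)
\sum_{\substack{\DD_1 \in \Mar(\mc{E}, D_1) \\ \DD_2 \in \Mar(\mc{E}, D_2)}} |\Sing(\mc{E}, \DD_1)\cap \SingFib(\mc{E}, \DD_2) \cap \Fib(\mc{E}, D_3)^{\ir}|,
\]
as desired. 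There is no serious obstacle; the only point that might warrant care is verifying that the triple sum is genuinely finite (so that the reordering is unconditional), but this follows from the finiteness remark after \eqref{eq:alt-bad} since every nonzero term of the reindexed sum appears in the expansion of some $|\Bad(\mc{E}, D)^{\ir}|$ with $D = D_1+D_2+D_3$.
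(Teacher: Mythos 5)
Your proof is correct and matches the paper's, which simply says "substitute \Cref{prop:zhao-identity} into \eqref{eq:alt-bad}"; your explicit bookkeeping with $\mu(D)\mu(D_2)=\mu(D_1)\mu(D_2)^2\mu(D_3)=\mu(D_1)\mu(D_3)$ and the reindexing of the double sum is exactly the intended argument.
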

\begin{proof}
    We substitute \Cref{prop:zhao-identity} into \eqref{eq:alt-bad}.
\end{proof}

To make progress from here, we will have to construct a correspondence which reduces singularity on one ruled surface to vanishing on another ruled surface. This is the goal of the next section.

\section{Ruled Surfaces and Elementary Transformations}\label{sec:ruled-elm}
Let $C$ be a smooth, projective, geometrically irreducible curve over $\F_q$. In order to count smooth, irreducible triple covers of $C$, we need to count smooth, horizontally irreducible sections in each 
\[
\mc{V}(\mc{E}) = H^0(\PP(\mc{E}), \mc{M}_\mc{E})
\]
where $\mc{E}$ ranges over rank $2$ vector bundles on $C$ and $\mc{M}_\mc{E}:= \mc{O}_{\PP(\mc{E})}(3) \otimes \pi^{*} (\wedge^2 \mc{E})^{\vee}$ is a specific line bundle on $\PP(\mc{E})$. In particular, we'd like to count sections which are singular at a specific collection of points in $\PP(\mc{E})$ (as in \Cref{sec:sieve-i}).

In order to do so, it will be beneficial to construct a certain birational transformation of $\PP(\E)$. The transformation will be exhibited by the composition of a blowup at a collection of points in $\PP(\E)$ and a blowdown onto a new ruled surface. Singular sections $s \in V(\E)$ will correspond to certain sections vanishing at a collection of points on the new ruled surface. This strategy and construction is essentially the same as that of Zhao \cite{Zha13} (Section 5.1).

\subsection{Motivation}
As a starting point, for motivation let $C$ be a smooth projective irreducible curve over an algebraically closed field $k$. We recall a construction of a certain birational transformation of ruled surfaces over $C$. For more details, see for example see \cite[Section V.5]{Har77}.

Let $\pi: X\to C$ be a ruled surface, meaning all fibers are isomorphic to $\PP^1: = \PP^1_k$ and $\pi$ admits a section. Let $\mc{P} \in X$ be a point with $\pi(\mc{P}) = P \in C$, and let $F = \pi^{-1}(P)\cong \PP^1$ be the fiber passing through $\mc{P}$. Let $f: Y\to X$ be the blowup of $X$ at $\mc{P}$. Now the strict transform $F^{st}$ and the exceptional divisor $E$ are both isomorphic to $\PP^1$. Furthermore $F^{st} \sim f^{*}F - E$, so 
\[
F^{st}\cdot F^{st} = (f^{*}F - E)\cdot (f^{*}F - E) = (f^{*} F) \cdot (f^{*}F) - 2 (f^{*} F\cdot E) + E\cdot E = -1.
\]
Therefore $F^{st}$ is a $(-1)$-curve, so by Castelnuovo's Contraction Theorem we can blow it down to obtain another smooth surface $X'$. Let $g: Y \to X'$ be this contraction, which is a blowup centered at some point $\mc{P}' \in X'$ and with exceptional divisor $F^{st}$. We have $g(E) \cong \PP^1$.

There is a map $X' - \mc{P}' \to C$ by the isomorphism $Y - F^{st} \cong X'-\mc{P}'$, which must extend to a morphism $\pi': X'\to C$ (since $\mc{P}'$ has codimension $2$). Over $U:= C-P$, the complement of $P$, we get isomorphisms $X_{U} \cong Y_{U} \cong X'_{U}$ of $U$-schemes. At $P$, the fiber of $\pi'$ is $g(E) \cong \PP^1$. In particular, all fibers of $\pi': X' \to C$ are isomorphic to $\PP^1$. By taking the pushforward of the strict transform of a section of $\pi$, we obtain a section of $\pi'$ as well. 

In summary, given a ruled surface $\pi: X\to C$ over an algebraicaly closed field $k$ and a point $\mc{P} \in X$, we can construct a new ruled surface $\pi': X' \to C$. The surface $X'$ is birationally equivalent to $X$ as exhibited by a blowup-blowdown correspondence $X\leftarrow Y \rightarrow X'$ of schemes over $C$. This birational transformation of $X$ is sometimes known as the \textbf{elementary transform at $\mc{P}$}.

Now consider a smooth, projective, geometrically irreducible curve $C/\Fq$. The main goal of the rest of the section is to construct an analogue of an elementary transformation for a ruled surface $\PP_1\to C$. We will construct a blowup-blowdown correspondence $\PP_1  = \PP(\E_1)\leftarrow Y \rightarrow \PP_2$ with a different ruled surface $\PP_2 \to C$.

The essence of the construction is the same; however, we will need a few specific properties.
\begin{itemize}
    \item Our surfaces are over $\Fq$, not an algebraically closed field, so it would be less direct to use Castelnuovo's Contraction Theorem to construct a blowdown.
    \item We blow up and down at many points at once. In particular our blowup center will be a $0$-dimensional scheme $\mc{D}_1 \subseteq \PP_1$.
    \item Given the data of a pair $(\E_1, \mc{D}_1)$ of a rank $2$ vector bundle $\E_1$ such that $\PP_1 \cong \PP(\E_1)$ and our blowup center $\mc{D}_1 \subseteq \PP(\E_1)$, we explicitly keep track of the resulting pair $(\E_2, \mc{D}_2)$ where $\PP_2 \cong \PP(\E_2)$ and $\mc{D}_2 \subseteq \PP(\E_2)$ is the new blowup center.
\end{itemize}
These properties (especially the third) will be necessary for our future counting arguments.
\begin{rem}
    Zhao's construction is similar, especially regarding the first two points. However, we keep track of the data associated to $D$-marking $\DD_1 \subseteq \PP(\mc{E}_1)$ in a different manner than he does. Zhao (in the case $C=\PP^1$) measures a nonnegative integer $h(\DD_1)$, thought of as the height of $\DD_1$, and compares it to the height $h(\DD_2)$.

    In contrast, we understand the construction as a transformation of sheaves on $C$, which allows us to prove \Cref{prop:elm-groupoid-equiv}.
\end{rem}

\subsection{$D$-Markings of Bundles and Elementary Modifications}
\label{sec:hecke-mod}
In fact, we will define our construction in a slightly more general situation. We consider a Dedekind scheme $S$, meaning an integral regular separated (Noetherian) scheme of dimension $1$. A nice curve is an example of a Dedekind scheme, as well as $\Spec \mc{O}_K$ where $\mc{O}_K$ the ring of integers in a number field.

We will also consider a reduced effective divisor $D$ on $S$, thought of as a closed immersion $i: D \hookrightarrow C$ of a dimension $0$ reduced closed subscheme. We can write $D =\cup P$ for some finite disjoint collection of closed points $P \in C$.

Earlier, we defined a certain kind of $0$-dimensional subscheme $\mc{D} \subseteq \PP(\mc{E})$ called a $D$-marking. We constructed it for a nice curve $C$, but the same definition will work for a Dedekind scheme $S$.

\begin{defn}
    Let $S$ be a Dedekind scheme and let $D$ be a reduced effective divisor. Let $\E$ be a rank $2$ vector bundle on $S$, and let $\pi: \PP(\E) \to S$ be the associated projective bundle.
    
    A \textbf{$D$-marking} of $\E$ is a dimension $0$ closed reduced subscheme $\DD\subseteq \PP(\E)$ such that $\DD\subseteq \pi^{-1}(D)$ and the map $\pi|_{\DD}: \DD \to D$ is an isomorphism. Let $\Mar(\E,D)$ denote the set of $D$-markings of $\E$.
\end{defn}

We can alternately understand the data of a $D$-marking of rank $2$ vector bundle $\E$ in several equivalent ways. If $\mc{D}\subseteq \PP(\E)$ is a $D$-marking, then $\mc{D}$ can also be thought of as the image of a section of the map $\pi|_{D}: \PP^1_{D} \to D$, or equivalently, a map $j: D \to \PP(\E)$ such that $j\circ \pi = i$. By the universal property of the projective bundle, such a map $j$ is equivalent to the data of a surjection $i^{*}\E \twoheadrightarrow \LL$ for some line bundle $\LL$ on $D$.

We formalize this correspondence in the next proposition.
\begin{prop}
\label{prop:dmar-interp}
    Let $S$ be a Dedekind scheme, and let $i: D \hookrightarrow S$ be a reduced effective divisor. Let $\E$ be a rank $2$ vector bundle on $C$, and let $\pi: \PP(\E) \to S$ be the associated projective bundle.

    There is a (canonical) bijective correspondence between $\Mar(\E, D)$ and each of the following sets:
\begin{itemize}

    \item The set given by a choice of a relative degree 1 closed point 
    \[
    \mc{P} \in \pi^{-1}(P) \cong \PP^1_{P}.
    \]
    for each closed point $P \in D$.
    \item The set of maps $j: D\to \PP(\E)$ such that $j\circ \pi = i$.
    \item The set of pairs $(\LL, \psi)$ up to equivalence, where $\LL$ is a line bundle on $D$ and 
    \[
    \psi: i^{*}\E\twoheadrightarrow \LL
    \]
    is a surjection of sheaves on $D$. Two pairs $(\LL_1, \psi_1)$ and $(\LL_2, \psi_2)$ are equivalent if there is an isomorphism $a: \LL_1\to \LL_2$ such that $a\circ \psi_1 = \psi_2$.
    \item The set of pairs $(\LL, \phi)$ up to equivalence, where $\LL$ is a line bundle on $D$ and 
    \[
    \phi: \E \twoheadrightarrow i_{*}\LL
    \]
    is a surjection of sheaves on $S$. Two pairs $(\LL_1, \phi_1)$ and $(\LL_2, \phi_2)$ are equivalent if there is an isomorphism $a: \LL_1 \to \LL_2$ such that $i_{*}a\circ \phi_1 = \phi_2$.
\end{itemize}
\end{prop}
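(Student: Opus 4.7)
The plan is to prove the four bijections by reducing first to the case of a single closed point, since $D$ decomposes as a disjoint union $D = \bigsqcup_{P} \Spec \kappa(P)$, and each of the four sets in the statement is naturally a product over $P$ of the corresponding sets for individual closed points. So we may assume $D = \Spec \kappa(P)$ is a single closed point of $S$.

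For the first bijection with relative degree $1$ points, a $P$-marking $\DD \subseteq \PP(\E)$ lies in the fiber $\pi^{-1}(P) \cong \PP^1_P$ and satisfies $\pi|_{\DD}: \DD \xrightarrow{\sim} \Spec \kappa(P)$. Thus $\DD$ is a closed subscheme of $\PP^1_P$ isomorphic to $\Spec \kappa(P)$, i.e., a relative degree $1$ closed point; conversely, every such point yields a $P$-marking.

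For the second bijection with maps $j: D \to \PP(\E)$ satisfying $\pi \circ j = i$, a $P$-marking $\DD$ yields $j := (\pi|_{\DD})^{-1}: P \xrightarrow{\sim} \DD \hookrightarrow \PP(\E)$, and conversely any such $S$-morphism $j$ has image a closed subscheme mapping isomorphically onto $P$ via $\pi$, which is a $P$-marking. The third bijection, with equivalence classes of pairs $(\LL, \psi)$ where $\psi: i^{*}\E \twoheadrightarrow \LL$ is a surjection, is then immediate from the universal property of the projective bundle $\PP(\E)$ recalled in the Notation section, applied to the $S$-scheme $D \xrightarrow{i} S$.

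For the final bijection with pairs $(\LL, \phi)$, where $\phi: \E \twoheadrightarrow i_{*}\LL$, I would invoke the sheaf adjunction $\Hom_{\OO_D}(i^{*}\E, \LL) \cong \Hom_{\OO_S}(\E, i_{*}\LL)$, which gives a bijection of Hom sets; the content is to check that $\psi$ is surjective if and only if the corresponding $\phi$ is, and that this matches the equivalence relations on the two sets of pairs. Since $i$ is a closed immersion, $i_{*}$ is exact and surjectivity may be checked on stalks: at points outside $D$ the stalk of $i_{*}\LL$ vanishes so both conditions are automatic, while at $P \in D$ the stalks of $\psi$ and $\phi$ induce the same $\kappa(P)$-linear map on fibers $\E(P) \to \LL \otimes \kappa(P)$, so surjectivity of one is equivalent to surjectivity of the other. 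I expect the main technicality to be bookkeeping the equivalence relations under the adjunction, but this follows from naturality of the adjunction isomorphism in $\LL$.
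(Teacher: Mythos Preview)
Your proposal is correct and follows essentially the same route as the paper: the first two bijections are declared clear, the third comes from the universal property of $\PP(\E)$, and the fourth from the $(i^*,i_*)$ adjunction together with a check that surjectivity is preserved. The only minor stylistic difference is that you reduce to a single point and verify surjectivity by comparing stalks/fibers, whereas the paper works with $D$ globally and checks surjectivity by factoring through the unit $\E \twoheadrightarrow i_*i^*\E$ and counit $i^*i_*\LL = \LL$ of the adjunction; both arguments are equally valid.
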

\begin{proof}
    The correspondence between $\Mar(\E,D)$ and the first two sets is clear.

    By the universal property of the projective bundle, the second set is equivalent to the third set.

    To show the equivalence of the third and fourth sets, we need to show that the bijection (by adjunction) $\Hom_{D}(i^{*}\E, \LL)\cong \Hom_{S}(\E, i_{*}\LL)$ preserves surjectivity.

    Indeed, any surjection $i^{*}\E\twoheadrightarrow \LL$ is associated by adjunction to the composite morphism
    \[
    \E \twoheadrightarrow i_{*} i^{*} \E \twoheadrightarrow i_{*} \LL
    \]
    which is a surjection because $i$ is a closed immersion. Conversely, a surjection $\E \twoheadrightarrow i_{*}\LL$ corresponds to the map
    \[
    i^{*}\E \twoheadrightarrow i^{*}i_{*} \LL = \LL
    \]
    which is a surjection. 

    Therefore, the third and fourth sets are in bijection.
\end{proof}

It will be useful for us to also define the category of rank $2$ vector bundles with a given $D$-marking. We only consider isomorphisms, so our category will be a groupoid.

\begin{defn}
    Let $S$ be a Dedekind scheme and let $i: D\hookrightarrow S$ be a reduced effective divisor. A \textbf{$D$-marked bundle} is a pair $(\E, \mc{D})$, where $\E$ is a rank $2$ vector bundle on $S$ and $\mc{D} \in \Mar(\mc{E}, D)$. 
    
    We define $\DBun$ to be the groupoid of $D$-marked bundles $(\E, \mc{D})$. An isomorphism of $D$-marked bundles $(\E_1, \mc{D}_1)$ and $(\E_2, \mc{D}_2)$ is defined to be an isomorphism $\E_1\xrightarrow{\sim} \E_2$ which sends $\mc{D}_1$ to $\mc{D}_2$.
\end{defn}
We can equivalently view the groupoid $\DBun$ of $D$-marked bundles in terms of sheaves on $S$.
\begin{defn}
 Let $\DBun_{1}$ denote the groupoid with objects given by triples $(\E, \LL, \phi)$, where $\E$ is a rank $2$ vector bundle on $S$, $\LL$ is a line bundle on $D$, and
    \[
    \phi: \E \twoheadrightarrow i_{*} \LL 
    \]
    is a surjection of sheaves. An isomorphism of objects is a pair of isomorphisms $\E_1\xrightarrow{\sim} \E_2$ and $\LL_1\xrightarrow{\sim} \LL_2$ such that the induced diagram commutes: 
    \[
\begin{tikzcd}
\E_1 \arrow[r, "\phi_1"] \arrow[d]
& i_{*}\LL_1\arrow[d] \\
\E_2 \arrow[r, "\phi_2"]
& i_{*}\LL_2
\end{tikzcd}
\]

\end{defn}
\begin{prop}
    We have an isomorphism of groupoids $\DBun \cong \DBun_1$.
\end{prop}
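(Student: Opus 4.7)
The plan is to construct an explicit functor $F \colon \DBun_1 \to \DBun$ and verify that it is essentially surjective and fully faithful, using \Cref{prop:dmar-interp} as the bridge on objects.

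On objects, given $(\mc{E}, \LL, \phi) \in \DBun_1$, the surjection $\phi \colon \mc{E} \twoheadrightarrow i_{*}\LL$ corresponds by \Cref{prop:dmar-interp} to a unique $D$-marking $\DD_{\phi} \subseteq \PP(\mc{E})$, and we set $F(\mc{E}, \LL, \phi) := (\mc{E}, \DD_{\phi})$. On morphisms, given $(\alpha, \beta) \colon (\mc{E}_1, \LL_1, \phi_1) \to (\mc{E}_2, \LL_2, \phi_2)$, meaning an isomorphism $\alpha \colon \mc{E}_1 \xrightarrow{\sim} \mc{E}_2$ and $\beta \colon \LL_1 \xrightarrow{\sim} \LL_2$ with $(i_{*}\beta) \circ \phi_1 = \phi_2 \circ \alpha$, we set $F(\alpha, \beta) := \alpha$. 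The first thing to check is that $\alpha$ really sends $\DD_{\phi_1}$ to $\DD_{\phi_2}$; this follows formally from the naturality of the correspondence in \Cref{prop:dmar-interp} applied to the commutative square.

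Essential surjectivity is immediate from \Cref{prop:dmar-interp}: every $D$-marking $\DD$ of $\mc{E}$ arises from some surjection $\phi \colon \mc{E} \twoheadrightarrow i_{*}\LL$. For full faithfulness, one directly examines the map
\[
\Hom_{\DBun_1}\bigl((\mc{E}_1, \LL_1, \phi_1), (\mc{E}_2, \LL_2, \phi_2)\bigr) \longrightarrow \Hom_{\DBun}\bigl((\mc{E}_1, \DD_{\phi_1}), (\mc{E}_2, \DD_{\phi_2})\bigr).
\]
For injectivity, if $(\alpha, \beta_1)$ and $(\alpha, \beta_2)$ both lift a given $\alpha$, then $(i_{*}\beta_1)\circ \phi_1 = \phi_2 \circ \alpha = (i_{*}\beta_2)\circ \phi_1$, and surjectivity of $\phi_1$ together with faithfulness of $i_{*}$ forces $\beta_1 = \beta_2$. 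For surjectivity, given $\alpha$ with $\alpha(\DD_{\phi_1}) = \DD_{\phi_2}$, consider the surjection $\phi_2 \circ \alpha \colon \mc{E}_1 \twoheadrightarrow i_{*}\LL_2$. By naturality this pair $(\LL_2, \phi_2 \circ \alpha)$ represents the same $D$-marking of $\mc{E}_1$ as $(\LL_1, \phi_1)$, namely $\DD_{\phi_1}$, so the equivalence relation in \Cref{prop:dmar-interp} produces a (unique) isomorphism $\beta \colon \LL_1 \to \LL_2$ with $(i_{*}\beta)\circ \phi_1 = \phi_2 \circ \alpha$.

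There is no real obstacle here; the heart of the argument is already done in \Cref{prop:dmar-interp}, and the present proposition is essentially the upgrade from a bijection of sets to an equivalence of groupoids. The one place requiring care is verifying that the composition $\phi_2 \circ \alpha$ is indeed the surjection classifying $\DD_{\phi_1}$; this is where the condition that $\alpha$ carries $\DD_{\phi_1}$ to $\DD_{\phi_2}$ enters, and combined with the uniqueness afforded by surjectivity of $\phi_1$ it also ensures that $F$ is a functor (i.e.\ composes correctly), completing the proof.
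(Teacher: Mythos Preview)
Your proof is correct and follows the same approach as the paper, which simply invokes the fourth interpretation in \Cref{prop:dmar-interp} and declares the result clear. You have spelled out in detail the functor and the verification of full faithfulness that the paper leaves implicit, but the underlying idea is identical.
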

\begin{proof}
    It is clear that $\DBun\cong \DBun_{1}$ by our fourth interpretation of $\DMar(\E)$ from \Cref{prop:dmar-interp}.
    
\end{proof}

Now consider a given $D$-marked bundle $(\E, \mc{D})$. We would like to define a certain transformation of $(\mc{E}, \mc{D})$.

By \Cref{prop:dmar-interp}, it has associated surjective morphisms $\phi: \E \twoheadrightarrow i_{*}\LL$ and $\psi: i^{*} \E \twoheadrightarrow \LL$. Define the coherent sheaf $\E':= \ker(\phi)$. It is a subsheaf of a locally free sheaf, so it is torsion-free, and therefore locally free. (This is because $S$ is a Dedekind scheme --- see for example \cite[\href{https://stacks.math.columbia.edu/tag/0CC4}{Lemma 0CC4}]{stacks-project}.) Furthermore $\phi: \E' \to \E$ is an isomorphism away from $D$, so $\E'$ has rank $2$.

Similarly, $\LL':= \ker(\psi)$ is a torsion free coherent sheaf on $D$, so it is locally free of rank $1$.

\begin{defn}
    We define the \textbf{elementary transform functor}
    \[
    \ElmD: \DBun\to \DBun
    \] as follows. Let $(\E, \mc{D})$ be a $D$-marked bundle with associated surjective morphisms $\phi: \E\twoheadrightarrow i_{*} \LL$ and $\psi: i^{*} \E \twoheadrightarrow \LL$.

    Let $\E':= \ker(\phi)$ and let $\LL':= \ker(\psi)$. Pulling back the exact sequence 
    \[
    0\to \E' \to \E \to i_{*} \LL\to 0 
    \] yields the right exact sequence
    \[
    i^{*} \E' \to i^{*} \E \to \LL\to 0
    \]
    and therefore a unique surjection $\psi': i^{*} \E' \twoheadrightarrow \LL'$. We define
    \[
    \ElmD(i^{*} \E\xrightarrowdbl{\psi} \LL) = (i^{*}\E' \xrightarrowdbl{\psi'} \LL').
    \]
\end{defn}

We would like to show that $\ElmD$ induces a predictable twist on $\wedge^2 \mc{E}$, and furthermore that $\ElmD$ is invertible.

\begin{defn}
    Let $\mc{N}$ be a line bundle on $S$. Let $\DBun_{N}$ denote the full subcategory of $\DBun$ consisting of pairs $(\E, \mc{D})$ such that $\wedge^2 \E \cong \mc{N}$.
\end{defn}

\begin{prop}\label{prop:elm-groupoid-equiv}
    $\ElmD$ induces an equivalence of groupoids 
    \[
    \DBun_{\mc{N}}\xrightarrow{\sim} \DBun_{\mc{N}(-D)}.
    \]
\end{prop}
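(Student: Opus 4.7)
The plan is to verify that $\ElmD$ lands in $\DBun_{\mc{N}(-D)}$ via a determinant computation, and then exhibit a quasi-inverse by computing $\ElmD \circ \ElmD$.

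For the first point, from the defining sequence $0 \to \mc{E}' \to \mc{E} \to i_{*}\mc{L} \to 0$, I want to show $\wedge^2 \mc{E}' \cong \wedge^2 \mc{E} \otimes \mc{O}_S(-D)$. Since $S$ is Dedekind and $\mc{E}' = \ker(\phi)$ agrees with $\mc{E}$ away from $D$, the claim is local at each closed point $P \in D$. Choose a local basis $e_1, e_2$ of $\mc{E}_P$ with $\phi_P(e_1)$ a generator of $\mc{L}_P$ and $\phi_P(e_2) = 0$; then $\mc{E}'_P = \mc{O}_{S,P}(\pi_P e_1) \oplus \mc{O}_{S,P}\, e_2$ for a uniformizer $\pi_P$, so $\wedge^2 \mc{E}'_P = \pi_P \cdot \wedge^2 \mc{E}_P$ as submodules of $\wedge^2 \mc{E}_P$. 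Gluing over $P \in D$ yields the claim, so $\ElmD$ does restrict to a functor $\DBun_{\mc{N}} \to \DBun_{\mc{N}(-D)}$.

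For the second point, the cleanest approach is to compare $\ElmD \circ \ElmD$ with the obvious auto-equivalence $T_D : \DBun \to \DBun$ that tensors the underlying bundle by $\mc{O}_S(-D)$ (and carries the surjection $\phi : \mc{E} \twoheadrightarrow i_{*}\mc{L}$ to $\phi \otimes \mathrm{id}$). Continuing the local computation, tracing through the definition of $\psi'$ shows that $\psi'_P$ sends $\pi_P e_1 \mapsto 0$ and $e_2$ to a generator of $\mc{L}'_P$, whence $\ker(\phi')_P = \mc{O}_{S,P}(\pi_P e_1) \oplus \mf{m}_P\, e_2 = \pi_P\, \mc{E}_P$. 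Globally this identifies $\ElmD^2(\mc{E})$ with $\mc{E}(-D)$, and one checks that under this identification the twice-transformed marking corresponds to the twist of the original marking. Hence $\ElmD^2 \cong T_D$ as functors, and since $T_D$ is an auto-equivalence with quasi-inverse $T_{-D}$, the composition $T_{-D} \circ \ElmD : \DBun_{\mc{N}(-D)} \to \DBun_{\mc{N}}$ is a quasi-inverse of $\ElmD$.

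The main obstacle is verifying naturality of $\ElmD^2 \cong T_D$, i.e.\ checking that the pointwise identifications assemble into a natural transformation respecting both the bundle and the marking data. Each step of the construction ($\ker(\phi)$, the induced right-exact sequence $i^{*}\mc{E}' \to i^{*}\mc{E} \to \mc{L} \to 0$, $\ker(\psi)$, and the induced surjection $\psi'$) is characterized by a universal property and is therefore functorial, so naturality should go through; the remaining bookkeeping is matching markings via the equivalent descriptions of $\Mar(\mc{E}, D)$ established earlier in the section.
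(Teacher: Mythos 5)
Your proposal follows the same overall strategy as the paper: first the determinant computation at the stalks of points of $D$ (your adapted-basis argument is the same as the paper's Smith normal form computation on the DVR $\mc{O}_{S,P}$), and then an identification of $\ElmD\circ\ElmD$ with the twist functor $\otimes\,\mc{O}_S(-D)$, from which the quasi-inverse $T_{-D}\circ\ElmD$ falls out. The one genuine difference is how that identification is verified: the paper runs a global diagram chase (a snake-lemma argument producing $\mc{E}_3\cong\mc{E}_1(-D)$, followed by a four-column diagram pinning down the maps), whereas you compute everything on stalks and then glue. Your route works, and the naturality worry you flag at the end is in fact less serious than you suggest: since each iterated kernel is canonically a \emph{subsheaf} of $\mc{E}$, your local computation shows $\ElmD^2(\mc{E})$ and $\mc{I}_D\mc{E}\cong\mc{E}(-D)$ are \emph{equal} as subsheaves of $\mc{E}$, and any isomorphism of $D$-marked bundles automatically restricts to these subsheaves, so naturality is immediate rather than something requiring a universal-property argument. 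The remaining bookkeeping you defer --- that the twice-transformed marking $\phi''$ matches $\phi\otimes\mathrm{id}$ under multiplication by $\pi_P$ --- is a one-line check in your local basis and should be written out, but it is not a gap in the method.
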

\begin{proof}
See \Cref{sec:appendix-a}.
\end{proof}

Now consider the case of a rank $2$ vector bundle $\E$ on a nice curve $C$. Let $X = \PP(\E)$, and let $\mc{D} \subseteq X$ be a $D$-marking - equivalently, a choice of a relative degree $1$ point $\mc{P} \in X_P \cong \PP^1_P$ for each closed point $P$ in $D$. We can blow up $X$ at the closed subscheme $\mc{D}$ to get a new scheme $Y\to S$. The exceptional divisor $E$ of the blowup $f: Y\to X$ is the union of the fibers $E_P:= f^{-1}(\mc{P})\cong\PP^1_{\mc{P}} \cong \PP^1_{P}$ at each $\mc{P}$ in $\mc{D}$. 

The map $f: Y\to X$ exhibits an isomorphism $X_P^{st} \xrightarrow{\sim} X_P$. Let $\mc{P}'$ be the preimage of $\mc{P}$ under this isomorphism. We can see that the fiber $Y_P \cong E_P \cup X^{st}_P$, the union of two copies of $\PP^1_{P}$ glued together at $P$.

We will show that we can blow down each $X^{st}_P$ to get a new smooth surface $X'$. It will follow that $X'$ is in fact a ruled surface $X' \to C$, where the fiber at $P$ is the image of $E_P\cong \PP^1_P$ under the blowdown.

Furthermore, we will exactly identify the new ruled surface $X'\to C$ in terms of the image of our elementary transform functor $\ElmD$. That is: if $(\E', \mc{D}') = \ElmD(\E, \mc{D})$, then $X'\to S$ is isomorphic to $\PP(\E')\to C$. Furthermore, the new map $Y\to X'$ is isomorphic to the blowup of $X' \cong \PP(\mc{E}')$ at $\mc{D}'$.

We establish this construction more precisely, and for a general Dedekind scheme $S$, in the following proposition.

\begin{prop}\label{prop:blow-up-down}
    Let $S$ be a Dedekind scheme, and let $D$ be a reduced effective divisor on $S$. Let $\E_1$ be a rank $2$ vector bundle on $S$, with associated projective bundle $\PP(\E_1)\to S$, and let $\mc{D}_1\subseteq \PP(\E_1)$ be a $D$-marking.

    Let $(\E_2, \mc{D}_2)$ be the image of $(\E_1, \mc{D}_1)$ by $\ElmD$. Let $\pi_2: \PP(\E_2)\to S$ be the associated projective bundle.

    Then the blowup of $\PP(\E_1)$ at $\DD_1$ is the blowup of $\PP(\E_2)$ at $\DD_2$.

    More precisely: there exists a scheme $Y$ with $r: Y \to S$ along with maps $f_1: Y \to \PP(\E_1)$, $f_2: Y \to \PP(\E_2)$ such that:
    \begin{itemize}
    \item 
    The following diagram commutes:
\[
\begin{tikzcd}[column sep=small]
& Y \arrow[dl, "f_1"] \arrow[dr, "f_2"]\arrow[dd,"r"] & \\
\PP(\E_1) \arrow[rd,"\pi_1"] & & \PP(\E_2)\arrow[ld, "\pi_2"]\\
&S & 
\end{tikzcd}
\]
\item We have that $f_1: Y\to \PP(\E_1)$ exhibits $Y$ as the blowup of $\PP(\E_1)$ at $\DD_1$, and $f_2: Y \to \PP(\E_2)$ exhibits $Y$ as the blowup of $\PP(\E_2)$ at $\DD_2$.
\item Let $E_1\subseteq Y$ be the exceptional divisor of $f_1$, and let $E_2\subseteq Y$ be the exceptional divisor of $f_2$. Then $E_2 = \pi_1^{-1}(D)^{st}$ and $E_1 = \pi_2^{-1}(D)^{st}$.
\item We have
\[
f_1^{*} \mc{O}_{\PP(\E_1)}(1) \otimes \mc{O}_{Y}(-E_1) \cong f_2^{*}\mc{O}_{\PP(\E_2)}(1).
\]
\end{itemize}

\end{prop}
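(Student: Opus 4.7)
The plan is to construct $Y$ as the blowup of $\PP(\E_1)$ at $\DD_1$, build $f_2: Y \to \PP(\E_2)$ via the universal property of projective bundles, and then identify $f_2$ with the blowup of $\PP(\E_2)$ at $\DD_2$. Set $Y := \Bl_{\DD_1}\PP(\E_1)$ with blowup map $f_1: Y \to \PP(\E_1)$, exceptional divisor $E_1$, and $r := \pi_1 \circ f_1$.

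To build $f_2$, I would exhibit a line bundle on $Y$ together with a surjection from $r^*\E_2$. Starting from the natural inclusion $\E_2 \hookrightarrow \E_1$ coming from $0 \to \E_2 \to \E_1 \to i_*\LL_1 \to 0$, one composes its pullback to $\PP(\E_1)$ with the tautological surjection $\pi_1^*\E_1 \twoheadrightarrow \mc{O}_{\PP(\E_1)}(1)$ to get $\beta: \pi_1^*\E_2 \to \mc{O}_{\PP(\E_1)}(1)$. By the fourth interpretation of $D$-markings in \Cref{prop:dmar-interp}, the evaluation $\E_1|_P \twoheadrightarrow \mc{O}(1)|_{\mc{P}_1}$ at any $\mc{P}_1 \in \DD_1$ over $P \in D$ coincides with $\phi|_P: \E_1|_P \twoheadrightarrow \LL_1|_P$, so the composite $\E_2|_P \to \E_1|_P \to \LL_1|_P$ vanishes. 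Thus $\beta$ factors scheme-theoretically through $\mc{I}_{\DD_1} \otimes \mc{O}_{\PP(\E_1)}(1)$, and pulling back to $Y$ together with the defining property $f_1^{-1}\mc{I}_{\DD_1} \cdot \mc{O}_Y = \mc{O}_Y(-E_1)$ gives a morphism
$$\tilde\alpha: r^*\E_2 \to f_1^*\mc{O}_{\PP(\E_1)}(1) \otimes \mc{O}_Y(-E_1).$$
A chart-by-chart computation in the two standard affine charts of the blowup of $\PP^1_R$ at a closed point of a fiber shows $\tilde\alpha$ is surjective, using a local framing $\E_1 \cong Re_1 \oplus Re_2$ with $\phi(e_1) = 0$ and $\phi(e_2)$ generating $\LL_1|_P$, so that $\E_2 \cong Re_1 \oplus R\pi e_2$ for a uniformizer $\pi$ at $P$. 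The universal property of $\PP(\E_2)$ then produces $f_2: Y \to \PP(\E_2)$ over $S$ satisfying $f_2^*\mc{O}_{\PP(\E_2)}(1) \cong f_1^*\mc{O}_{\PP(\E_1)}(1) \otimes \mc{O}_Y(-E_1)$, which is the asserted line bundle identity.

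To identify $f_2$ with the blowup at $\DD_2$, I would invoke the symmetry of the construction: by \Cref{prop:elm-groupoid-equiv}, $\ElmD$ is involutive, so the same recipe applied to $(\E_2, \DD_2)$ produces $Y' := \Bl_{\DD_2}\PP(\E_2)$ equipped with a blowdown $Y' \to \PP(\E_1)$. Using the universal property of the blowup, one verifies that $f_2^{-1}\mc{I}_{\DD_2} \cdot \mc{O}_Y$ is an invertible sheaf, cutting out the effective Cartier divisor $\pi_1^{-1}(D)^{st}$ on $Y$ — this is again a local check in the two standard blowup charts. This yields a canonical $\PP(\E_2)$-morphism $Y \to Y'$, and by symmetry its inverse, giving $Y \cong Y'$ compatibly with the maps to both $\PP(\E_1)$ and $\PP(\E_2)$. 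From this identification the exceptional divisor of $f_2$ is exactly $\pi_1^{-1}(D)^{st}$, so $E_2 = \pi_1^{-1}(D)^{st}$, and symmetrically $E_1 = \pi_2^{-1}(D)^{st}$.

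The main obstacle is the local analysis underpinning both steps: surjectivity of $\tilde\alpha$, invertibility of $f_2^{-1}\mc{I}_{\DD_2} \cdot \mc{O}_Y$, and the chart-level agreement between the two blowup/blowdown pictures. All three reduce to explicit computations in the standard affine charts of $\Bl_0 \mathbb{A}^2_k$ applied to the local model $\PP^1_R$ over the local ring $R$ at $P$, but they must be assembled carefully so that the Cartier-divisor data and the sheaf-theoretic definition of $\ElmD$ glue consistently across the points of $D$ and yield a global isomorphism $Y \cong \Bl_{\DD_2}\PP(\E_2)$.
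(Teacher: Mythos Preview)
Your overall strategy matches the paper's: set $Y=\Bl_{\DD_1}\PP(\E_1)$, produce $f_2$ by exhibiting a surjection from $r^*\E_2$ onto $\mc{M}:=f_1^*\mc{O}_{\PP(\E_1)}(1)\otimes\mc{O}_Y(-E_1)$, then argue $f_2$ is the blowup at $\DD_2$. The differences are in the execution. For surjectivity of $\tilde\alpha$, you do a local chart computation; the paper instead shows $r_*\mc{M}\cong\E_2$ and proves the counit $r^*r_*\mc{M}\to\mc{M}$ is surjective by cohomology and base change together with an explicit analysis of the line bundle $\mc{M}$ on the nodal fiber $F^{st}\cup_{\mc{P}'}E$ (\Cref{lem:p1-pushout}). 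For the identification $Y\cong\Bl_{\DD_2}\PP(\E_2)$, the paper checks directly that each strict transform $F^{st}$ contracts to the correct point of $\DD_2$, invokes the universal property to get $Y\to Y'$, and finishes with a fiberwise degree argument plus Zariski's Main Theorem. Your symmetry route is legitimate but needs two patches: first, $\ElmD$ is not literally involutive---\Cref{prop:elm-groupoid-equiv} gives $\ElmD\circ\ElmD\cong(-)\otimes\mc{O}(-D)$---so the ``reverse'' construction lands in $\PP(\E_1(-D))$, and you must use the canonical identification $\PP(\E_1(-D))\cong\PP(\E_1)$ and check it carries $\DD_3$ to $\DD_1$; second, ``by symmetry its inverse'' requires the observation that both composites $Y\to Y'\to Y$ and $Y'\to Y\to Y'$ restrict to the identity over $U=S\setminus D$, hence are the identity since $Y,Y'$ are integral and separated. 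With those points made explicit, your argument goes through and is arguably more hands-on than the paper's cohomological approach.
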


\begin{proof}
    See \Cref{sec:appendix-a}.
\end{proof}
\subsection{A bijection between singular and vanishing sections}

We'd like to use this birational transformation to find a correspondence between singular sections on one surface and vanishing sections on another surface.

\begin{defn}
    Let $S$ be a Dedekind scheme and let $\E$ be a rank $2$ vector bundle on $S$. Let $X:= \PP(\E)$ with $\pi: X\to S$. Let $D$ be a reduced effective divisor on $S$ and let $\mc{D}\subseteq X$ be a $D$-marking. Let $\LL$ be a line bundle on $C$.

Let
    \[
    \V(\E, \LL):= H^{0}(X, \mc{O}_X(3)\otimes \pi^{*} \LL).
    \]
We define
    \[
    \Van(\E, \LL, \DD)\subseteq \V(\E, \LL)
    \]
to be the sections vanishing at $\DD$ and
    \[
    \Sing(\E, \LL, \DD)\subseteq \V(\E, \LL)
    \]
to be the sections vanishing to order at least $2$ at $\DD$.

\end{defn}

We recall some general facts about blowups. Suppose $\DD\subseteq X$ is a closed subscheme with ideal sheaf $\mc{I}_{\DD}$. Let $Y := \Bl_{\DD} X$ be the blowup of $X$ at $\DD$ with $f: Y \to X$ the structure morphism and $E$ the exceptional divisor. Let $\mc{M}$ be a line bundle on $X$.

Now $f_{*}\mc{O}_{Y}(-E) = \mc{I}_{\DD}^{n}$ for $n\ge 0$. Therefore
\begin{align*}
H^{0}(Y, f^{*} \mc{M} \otimes\mc{O}_{Y}(-nE)) 
&= H^{0}(X, f_{*}\left(f^{*} \mc{M} \otimes O_{Y}(-nE)\right))\\
&= H^{0}(X, \mc{M}\otimes \mc{I}_{\DD}^{n})\subseteq H^{0}(X, \mc{M}).
\end{align*}

This identifies $H^{0}(f^{*}\mc{M}\otimes \mc{O}_{Y}(-nE))$ with the space of sections in $H^{0}(X, \mc{M})$ which vanish to order at least $n$ at $\DD$.

\begin{prop}\label{prop:zhao-lemma}
    Let $S, D, \mc{E}_1, \DD_1,\mc{E}_2, \DD_2$ be as in \Cref{prop:blow-up-down}. Let $\LL$ be a line bundle on $C$. Then there is an isomorphism of abelian groups
    \begin{equation}\label{eq:ab}
    \Sing(\E_1, \LL, \DD_1)\xrightarrow{\sim} \Van(\E_2, \LL(D), \DD_2).
    \end{equation}

    Now assume $S=C$ is a nice curve over $\F_q$. Then \Cref{eq:ab} is an isomorphism of $\F_q$-vector spaces. Furthermore, suppose a nonzero section $s_1 \in \Van(\E_1, \LL, \mc{D}_1)$ corresponds to $s_2 \in \Sing(\E_2, \LL, \mc{D}_2)$. Let $S_{1}\subseteq \PP(\mc{E}_1)$ be the vanishing scheme of $s_1$ and let $S_2\subseteq \PP(\mc{E}_2)$ be the vanishing scheme of $s_2$. Let $U = C-D$. Then $S_1|_{U}$ and $S_2|_{U}$ are isomorphic as $U$-schemes.

    In particular, if $S=C$ is a nice curve, then this isomorphism preserves horizontal irreducibility of sections.
\end{prop}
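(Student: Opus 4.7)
The plan is to pull the two vector spaces back to the common blowup $Y$ from \Cref{prop:blow-up-down} and identify them as global sections of a single line bundle on $Y$.

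First, using the identification $f_{i,*}\mc{O}_Y(-n E_i) = \mc{I}_{\DD_i}^n$ recalled just before the proposition (together with the projection formula), I would rewrite
\begin{align*}
\Sing(\mc{E}_1, \LL, \DD_1) &\cong H^0(Y,\, f_1^{*}\mc{O}_{\PP(\mc{E}_1)}(3) \otimes r^{*}\LL \otimes \mc{O}_Y(-2E_1)), \\
\Van(\mc{E}_2, \LL(D), \DD_2) &\cong H^0(Y,\, f_2^{*}\mc{O}_{\PP(\mc{E}_2)}(3) \otimes r^{*}\LL \otimes r^{*}\mc{O}_S(D) \otimes \mc{O}_Y(-E_2)),
\end{align*}
where $r = \pi_1 \circ f_1 = \pi_2 \circ f_2$. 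The main step is then to show these two line bundles on $Y$ are isomorphic. Cubing the identity $f_1^{*}\mc{O}(1) \otimes \mc{O}_Y(-E_1) \cong f_2^{*}\mc{O}(1)$ from \Cref{prop:blow-up-down} gives
$$f_1^{*}\mc{O}(3) \otimes \mc{O}_Y(-2E_1) \cong f_2^{*}\mc{O}(3) \otimes \mc{O}_Y(E_1),$$
and the relation $E_1 + E_2 = r^{*}D$ on $Y$ (which holds because $\pi_1^{-1}(D)$ meets each point of $\DD_1$ transversely, so that $f_1^{*}\pi_1^{-1}(D) = \pi_1^{-1}(D)^{st} + E_1 = E_2 + E_1$, while on the other hand this equals $r^{*}D$) rewrites $\mc{O}_Y(E_1) \cong r^{*}\mc{O}_S(D) \otimes \mc{O}_Y(-E_2)$. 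Combining these identifications gives the desired isomorphism of line bundles on $Y$, hence of global sections. No step here is particularly subtle; the main obstacle is just careful bookkeeping with the exceptional divisors and the data packaged in \Cref{prop:blow-up-down}.

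For the second half, when $S = C$ is a nice curve: every morphism in the construction is a morphism of $\F_q$-schemes and the line-bundle isomorphism above can be chosen $\F_q$-linearly, so the bijection is an isomorphism of $\F_q$-vector spaces. Both $f_1$ and $f_2$ restrict to isomorphisms above $U = C \setminus D$ since their centers $\DD_i$ lie above $D$, and over $U$ the line-bundle isomorphism reduces to the canonical identification $\pi_1^{-1}(U) \cong r^{-1}(U) \cong \pi_2^{-1}(U)$ because the exceptional divisors $E_i$ and the divisor $r^{*}D$ are supported over $D$. Thus corresponding sections $s_1$ and $s_2$ agree under this identification over $U$, and so their vanishing schemes are isomorphic as $U$-schemes. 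Horizontal irreducibility then transfers by \Cref{prop:horiz-ir}, which characterizes the property in terms of the restriction to the generic point $\eta$ of $C$; since $\eta \in U$, horizontal irreducibility of $s_1$ is equivalent to that of $s_2$.
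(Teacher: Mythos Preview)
Your proposal is correct and follows essentially the same approach as the paper: pull both spaces back to the common blowup $Y$, cube the relation $f_1^{*}\mc{O}(1)\otimes\mc{O}_Y(-E_1)\cong f_2^{*}\mc{O}(1)$ from \Cref{prop:blow-up-down}, and use $\mc{O}_Y(E_1+E_2)\cong r^{*}\mc{O}(D)$ to match the twists. Your justification of $E_1+E_2=r^{*}D$ via $f_1^{*}\pi_1^{-1}(D)=\pi_1^{-1}(D)^{st}+E_1$ is in fact more explicit than the paper, which simply asserts $r^{*}\mc{O}(-D)=\mc{O}_Y(-E_1-E_2)$; the second half (restriction to $U$ and preservation of horizontal irreducibility via the generic point) is likewise identical.
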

\begin{proof}
    This is proved by Zhao \cite{Zha13} (Proposition 5.1.0.10) in the case $S= \PP^1$; for completeness we provide a proof here. 

    Let $r: Y\to S$ be the common blowup scheme, as in \Cref{prop:blow-up-down}.  Let $E_1$ be the exceptional divisor of $f_1:Y \to \PP(\mc{E}_1)$, which is the blowup of $\PP(\mc{E}_1)$ at $\DD_1$. Let $E_2$ be the exceptional divisor of $f_2: Y \to \PP(\mc{E}_2)$, which is the blowup of $\PP(\mc{E}_2)$ at $\DD_2$.

    By the last part of \Cref{prop:blow-up-down}, we have
    \begin{equation}\label{eq:O1-isom}
    f_1^{*} \mc{O}_{\PP(\mc{E}_1)}(1) \otimes \mc{O}_{Y}(-E_1) \cong f_2^{*} \mc{O}_{\PP(\mc{E}_2)}(1).
    \end{equation}

    Taking the third power of \eqref{eq:O1-isom} and tensoring with $r^{*} \mc{L}$, we obtain an isomorphism
    \[
    r^{*}\LL\otimes f_1^{*}\mc{O}_{\PP(\mc{E}_1)}(3) \otimes \mc{O}_{Y}(-3E_1)\cong r^{*}\LL\otimes f^{*}_{2}\mc{O}_{\PP(\mc{E}_2)}(3).
    \]
    Note that $r^{*} \mc{O}(-D) = \mc{O}_{Y}(-E_1-E_2)$. So we can write
    \begin{equation}\label{eq:lb-isomo}
    r^{*}\LL \otimes f_1^{*} \mc{O}_{\PP(\mc{E}_1)}(3) \otimes \mc{O}_Y(-2E_1)\cong r^{*}\LL \otimes r^{*} \mc{O}(D) \otimes f_2^{*}\mc{O}_{\PP(\mc{E}_2)}(3)\otimes \mc{O}_{Y}(-E_2).
    \end{equation}
    Taking sections, we recover an isomorphism
    \begin{equation}\label{eq:sec-isomo}
    \Sing(\E_1, \LL, \mc{D}_1) \cong \Van(\E_2, \LL\otimes \mc{O}_{C}(D), \mc{D}_2)
    \end{equation}
    as desired.

    To prove the second half, note that over $U$, our maps $f_1$ and $f_2$ restrict to isomorphisms
    \[
    \PP(\mc{E}_1)_{U} \xleftarrow{\sim} Y_{U} \xrightarrow{\sim}\PP(\mc{E}_2)_{U}.
    \]
    So over $U$, \eqref{eq:lb-isomo} restricts to an isomorphism
    \begin{equation}\label{eq:common-line-bund}
    r^{*} \mc{L}\otimes f_1^{*} \mc{O}_{\PP(\mc{E}_1)}(3)|_{U} \cong r^{*} \mc{L} \otimes f_2^{*} \mc{O}_{\PP(\mc{E}_2)}(3)|_{U}
    \end{equation}
    of line bundles on $Y_{U}$. Therefore, if $S_1 \in \Sing(\mc{E}, \mc{L}, \DD_1)$ and $S_2 \in \Van(\mc{E}_2, \mc{L}\otimes \mc{O}_{C}(D) , \DD_2)$ correspond under \eqref{eq:sec-isomo}, then $S_1|_{U}$ and $S_{2}|_{U}$ are both isomorphic to a common subscheme of $Y_{U}$, which is the vanishing scheme of a section of the line bundle in \eqref{eq:common-line-bund}.
\end{proof}
\begin{rem}
We can describe the bijection in \Cref{prop:zhao-lemma} on the level of divisors, although we won't need this. The varieties $\PP(\mc{E}_1)$, $\PP(\mc{E}_2)$, and $Y$ are regular, integral, separated (Noetherian) schemes, so we identify Weil and Cartier divisors.

If $S_1\subseteq \PP(\mc{E}_1)$ and $S_2\subseteq \PP(\mc{E}_2)$ are effective divisors identified via the bijection (as in the notation in \Cref{prop:zhao-lemma}), we have
\[
S_2 = (f_2)_{*}\left(f_1^{*}S_1-2E_1\right)
\]
and
\[
S_1 = (f_1)_{*}\left( f_2^{*}S_2 - E_2\right).
\]
\end{rem}
\subsection{Comparison for Integers}

Let's consider the construction \Cref{prop:blow-up-down} and bijection \Cref{prop:zhao-lemma} for the example $\Spec \Z$.
\begin{example}
    Let $S = \Spec \Z$. Let $P$ be a prime, and let $D=P$ be the corresponding reduced effective divisor.

    Since $\Z$ is a PID, any rank $2$ vector bundle can be written as $\E_1 \cong \Z x_1 \oplus \Z y_1$, and the corresponding projective bundle is isomorphic to $\PP^1_{\Z}$. We write this projective bundle as 
    
    \[
    \PP(\E_1) = \PP^1_{[x_1:y_1]}.
    \]
    We can identify 
    \[
    \mc{V}(\E_1)\cong \Sym^3 \E_1\otimes (\wedge^2 \E_1)^{\vee}
    \]
    with the set of forms
    \[
    s = (A_0x_1^3 + A_1 x_1^2 y_1 + A_2 x_1 y_1^2 + A_3 y_1^3) (x_1\wedge y_1)^{\vee}
    \]
    where $A_0, A_1, A_2, A_3 \in \Z$.

    Choose a $D$-marking 
    \[
    \phi_1: \Z x_1 \oplus \Z y_1 \twoheadrightarrow \Z/P\Z.
    \]
    By a $\GL_2(\Z)$ change of basis, we may assume $\phi$ sends $x_1$ to $0$ and $y_1$ to $1$. 

    The corresponding subscheme $\DD_1 \subseteq \PP^1_{[x_1:y_1]}$ is the relative point $(x_1:y_1) = (0:1)$ over $P$. A section $s = (A_0, A_1, A_2, A_3)$ is singular at $\DD_1$ if and only if $s \in \mc{I}_{\DD_1}^2$, if and only if $P^2 \mid A_3$ and $P \mid A_2$.

    Let
    \[
    \E_2 = \ker(\phi) = \Z x_2 \oplus \Z y_2
    \]
    where the map $\E_2 \to \E_1$ is the matrix
    \[
    \begin{pmatrix} 
    1 & 0 \\
    0 & P
    \end{pmatrix}
    \]
    sending $x_2 \to x_1$ and $y_2 \to P y_1$. We can see that $\ElmD$ sends $\phi_1$ to the $D$-marking
    \[
    \phi_2: \Z x_2 \oplus \Z y_2 \twoheadrightarrow \Z/P\Z
    \]
    sending $x_2$ to $1$ and $y_2$ to $0$.

    Let $\PP_1:= \PP(\E_1) = \PP_{[x_1: y_1]}$ and $\PP_2:= \PP(\E_2) = \PP_{[x_2: y_2]}$. 
    
    We will not prove this, but one can show that the common blowup of $\PP_1$ and $\PP_2$ is the vanishing scheme
    \[
    V(x_1y_2 - P x_2 y_1) \subseteq \PP_1 \times_{\Z} \PP_2.
    \]

    We also claim without proof that the bijection in \Cref{prop:zhao-lemma} associates to the form
    \[
    s_1 = A_0 x_1^3 + A_1 x_1^2y_1 + P A_2' x_1y_1^2 + P^2 A_3' y_1^3
    \]
    a new form 
    \[
    s_2 = P A_0 x_2^3 + A_1 x_2^2 y_2 + A_2' x_2 y_2^2 + A_3' y_2^3
    \]
    which vanishes at $\DD_2$. (This can be checked affine-locally on $\PP_2$.)

    Heuristically, if we take a form $s_1$ singular at $(0:1)$, ``substitute'  $(x_1, y_1) = (Px_2,y_2)$, and divide by $P^2$, we get the new form $s_2$ which vanishes at $(1:0)$.
    
    This is exactly the construction in the proof of Proposition 16 of \cite{BST13}.
    
\end{example}

On a nice curve $C$ with a rank $2$ vector bundle $\mc{E}$ and a $D$-marking $\mc{D}$, there usually no automorphism $\Aut(\mc{E})$ which takes all of the points (or even one point) of $\DD$ to $(0:1)$. However, we can think of our transformation as a formal operation which executes the same procedure as the previous example on small open subsets of $C$.

\subsection{Summing over vector bundles}
Now we restrict again to the case of a nice (smooth, projective, geometrically irreducible) curve $C$, which is a special case of a Dedekind scheme.

Even in light of the bijection in \Cref{prop:zhao-lemma}, it can be challenging to count and sieve out singular sections from a given surface $\PP(\mc{E})$. If $\mc{D}_1 \subseteq \PP(\mc{E}_1)$ is a $D$-marking, it can be difficult to identify and work with the exact $D$-marking $(\mc{E}_2, \mc{D}_2)$ on the other side of the bijection.

We observe that the type of sum we aim to find is roughly in the form
\[
\sum_{\substack{\mc{E} \in |\Bun(C)| \\\deg \mc{E} = N} }\frac{1}{|\Aut \mc{E}|} \sum_{\mc{D} \in \Mar(\mc{E}, D)} |\Sing(\mc{E}, \DD)^{\ir}|.
\]
Therefore, instead applying the bijection ``pointwise'' for each pair $(\mc{E}, \DD)$, we take advantage of our weighted sum over all pairs $(\mc{E}, \DD)$ with (say) $\deg \mc{E}=N$ fixed. In particular, we will not attempt to count smooth curves inside a fixed ruled surface $\PP(\mc{E})$. (For discussion of the count of trigonal curves embedded inside a fixed Hirzebruch surface, see \cite{EW15} (Theorem 9.5).)

This is one significant point of departure between our approach and that of \cite{Zha13}.

\begin{prop}\label{prop:bijection-enhanced}
    Let $C$ be a nice curve, and let $D$ be a reduced effective divisor on $C$. Let $\mc{N}$ be a line bundle on $C$. Then
    \[
    \sum_{\substack{\E_1 \in |\Bun(C)| \\ \wedge^2\E_1 \cong \mc{N}}} \frac{1}{|\Aut \E_1|}\sum_{\DD_1 \in \Mar(\E_1, D)} |\Sing(\E_1, \DD_1)^{\ir}| = \sum_{\substack{\E_2\in |\Bun(C)|\\\wedge^2 \E_2 \cong \mc{N}(-D)}}\frac{1}{|\Aut{\E_2}|}\sum_{\DD_2 \in \Mar(\E_2,D)} |\Van(\E_2, \DD_2)^{\ir}|.
    \]

    More generally, let $\xi_U(s_1)$ be a function of a section $s_1$ which only depends on the isomorphism class of the vanishing scheme $(C_1)_{U} \to U$. Then
    \begin{equation}\label{eq:A}
    \sum_{\substack{\mc{E}_1 \in |\Bun(C)| \\ \wedge^2 \mc{E}_1 \cong \mc{N}}} \frac{1}{|\Aut \mc{E}_1|} \sum_{\mc{D}_1 \in \Mar(\mc{E}_1, D)} \sum_{s_1 \in \Sing(\mc{E}_1, \DD_1)^{\ir}} \xi_{U}(s_1)
    \end{equation}
    is equal to
    \begin{equation}\label{eq:B}
    \sum_{\substack{\mc{E}_2 \in |\Bun(C)|\\ \wedge^2 \mc{E}_2 \cong \mc{N}(-D)}} \frac{1}{|\Aut \mc{E}_2|} \sum_{\DD_2 \in \Mar(\mc{E}_2, D)} \sum_{s_2 \in \Van(\mc{E}_2, \DD_2)^{\ir}} \xi_U(s_2).
    \end{equation}
\end{prop}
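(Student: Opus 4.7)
The plan is to reinterpret each side as a single weighted sum over a groupoid of $D$-marked bundles, and then apply the equivalence $\Elm_D: \DBun_{\mc{N}} \xrightarrow{\sim} \DBun_{\mc{N}(-D)}$ from \Cref{prop:elm-groupoid-equiv} in conjunction with the Sing-to-Van bijection of \Cref{prop:zhao-lemma}. This works because the groupoid $\DBun_{\mc{N}}$ already packages together the data indexing the double sum in \eqref{eq:A}, and the transport-of-structure provided by $\Elm_D$ together with Zhao's lemma matches these term-by-term with those in \eqref{eq:B}.

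First I would observe that the natural action of $\Aut(\mc{E}_1)$ on $\PP(\mc{E}_1)$ permutes $\Mar(\mc{E}_1, D)$ and induces bijections $\Sing(\mc{E}_1, \DD_1)^{\ir} \xrightarrow{\sim} \Sing(\mc{E}_1, g \cdot \DD_1)^{\ir}$ preserving $\xi_U$, since $\xi_U$ only depends on the isomorphism class of the vanishing scheme over $U$ and an automorphism of $\mc{E}_1$ restricts to an automorphism over $U$. By orbit-stabilizer this gives
$$\frac{1}{|\Aut \mc{E}_1|} \sum_{\DD_1 \in \Mar(\mc{E}_1, D)} \sum_{s_1 \in \Sing(\mc{E}_1, \DD_1)^{\ir}} \xi_U(s_1) \;=\; \sum_{[\DD_1]} \frac{1}{|\Aut(\mc{E}_1, \DD_1)|} \sum_{s_1 \in \Sing(\mc{E}_1, \DD_1)^{\ir}} \xi_U(s_1),$$
where $[\DD_1]$ ranges over $\Aut(\mc{E}_1)$-orbits of $D$-markings. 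Summing over $\mc{E}_1 \in |\Bun(C)|$ with $\wedge^2 \mc{E}_1 \cong \mc{N}$ collapses \eqref{eq:A} into a single groupoid sum indexed by $|\DBun_{\mc{N}}|$, and the analogous rewriting presents \eqref{eq:B} as a sum over $|\DBun_{\mc{N}(-D)}|$.

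Second I would apply $\Elm_D$: by \Cref{prop:elm-groupoid-equiv} it furnishes a bijection $|\DBun_{\mc{N}}| \leftrightarrow |\DBun_{\mc{N}(-D)}|$ under which automorphism groups correspond. For each matched pair $(\mc{E}_1, \DD_1) \leftrightarrow (\mc{E}_2, \DD_2)$, I take $\mc{L} = (\wedge^2 \mc{E}_1)^{\vee} = \mc{N}^{\vee}$ in \Cref{prop:zhao-lemma}; since $\mc{L}(D) = \mc{N}^{\vee}(D) = (\wedge^2 \mc{E}_2)^{\vee}$, the lemma supplies an $\F_q$-linear isomorphism $\Sing(\mc{E}_1, \DD_1) \cong \Van(\mc{E}_2, \DD_2)$ that identifies the two vanishing schemes over $U = C - D$. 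Because horizontal irreducibility is a property of the generic fiber (\Cref{prop:horiz-ir}) and $\xi_U$ depends only on the $U$-restriction of the vanishing scheme, the bijection carries $\Sing(\mc{E}_1, \DD_1)^{\ir}$ onto $\Van(\mc{E}_2, \DD_2)^{\ir}$ and preserves $\xi_U$. Matching inner sums term-by-term in the two groupoid presentations then gives the equality of \eqref{eq:A} and \eqref{eq:B}.

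The main bookkeeping point is the orbit-stabilizer step, where I must confirm that the $\Aut(\mc{E}_1)$-action on $\mc{V}(\mc{E}_1)$ is genuinely compatible with the formation of $\Sing(\mc{E}_1, \DD_1)^{\ir}$ and with $\xi_U$; this is a direct consequence of the functoriality of the singularity locus and the vanishing scheme, but it is the spot where the proof has to be careful to line up the $1/|\Aut \mc{E}_i|$ weights with the stabilizer-weights attached to marked pairs. Finiteness of the sums — needed to freely rearrange them — follows from the remark after \eqref{eq:alt-bad} combined with the technical results of \Cref{sec:appendix-hor-finite}.
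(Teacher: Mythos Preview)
Your proposal is correct and follows essentially the same approach as the paper: both arguments rewrite each side as a weighted sum over the groupoid $\DBun_{\mc{N}}$ (resp.\ $\DBun_{\mc{N}(-D)}$) via orbit--stabilizer, invoke the equivalence $\ElmD$ of \Cref{prop:elm-groupoid-equiv} to match isomorphism classes and automorphism groups, and then use \Cref{prop:zhao-lemma} to identify the inner sums term-by-term while preserving $\xi_U$. Your explicit identification of $\mc{L} = (\wedge^2 \mc{E}_1)^{\vee}$ so that $\mc{L}(D) = (\wedge^2 \mc{E}_2)^{\vee}$ is a detail the paper leaves implicit, but otherwise the two proofs are the same.
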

\begin{proof} We'll prove the second half of the proposition, which implies the first half.

Let $(\mc{E}_2, \DD_2) \in \DBun$ be a $D$-marking. Define the function
\begin{equation}\label{eq:s2-formula}
\Xi(\mc{E}_2, \DD_2):= \sum_{s_2 \in \Van(\mc{E}_2, \DD_2)^{\ir}} \xi_U(s_2).
\end{equation}
This is a well-defined function that only depends on the isomorphism class of $(\mc{E}_2, \mc{D}_2) \in \DBun$. Now if $(\mc{E}_1, \DD_1) \in \DBun$ satisfies $(\mc{E}_2, \DD_2) = \ElmD(\mc{E}_1, \DD_1)$, then by \Cref{prop:zhao-lemma}, we have
\[
\sum_{s_2 \in \Van(\mc{E}_2, \DD_2)^{\ir}} \xi_U(s_2)= \sum_{s_1 \in \Sing(\mc{E}_1, \DD_1)^{\ir}} \xi_U(s_1).
\]
So we have
\begin{equation}\label{eq:s1-formula}
\Xi(\ElmD(\mc{E}_1, \DD_1)) = \Xi(\mc{E}_2, \DD_2) = \sum_{s_1 \in \Sing(\mc{E}_1, \DD_1)^{\ir}} \xi_U(s_1).
\end{equation}

Now recall by \Cref{prop:elm-groupoid-equiv} that 
\[
\ElmD: \DBun_{\mc{N}} \to \DBun_{\mc{N}(-D)}
\]
is an equivalence of groupoids.

Furthermore, the set of automorphisms of a pair $(\mc{E}_1, \DD_1) \in \DBun$ is isomorphic to the stabilizer of $\DD_1$ under the action of $\Aut \mc{E}_1$ on $\Mar(\mc{E}_1, D)$. In particular, by the Orbit-Stabilizer Theorem we have
\begin{equation}\label{eq:middle}
\begin{split}
    &\sum_{\substack{\mc{E}_1 \in |\Bun(C)|\\ \wedge^2 \mc{E}_1 \cong \mc{N}}} \frac{1}{|\Aut \mc{E}_1|} \sum_{\mc{D}_1 \in \Mar(\mc{E}_1, D)} \Xi(\ElmD(\mc{E}_1, \DD_1))\\
    &= \sum_{\substack{(\mc{E}_1, \DD_1)\in |\DBun|\\ \wedge^2 \mc{E}_1 \cong \mc{N}}} \frac{1}{|\Aut(\mc{E}_1, \DD_1)|} \Xi(\ElmD(\mc{E}_1, \DD_1))\\
    &= \sum_{\substack{(\mc{E}_2, \DD_2) \in |\DBun|\\ \wedge^2 \mc{E}_2 \cong \mc{N}(-D)}} \frac{1}{|\Aut(\mc{E}_2, \DD_2)|} \Xi(\E_2, \DD_2)\\
    &=\sum_{\substack{\mc{E}_2\in |\Bun(C)|\\ \wedge^2 \mc{E}_2 \cong \mc{N}(-D)}} \frac{1}{|\Aut \mc{E}_2|} \sum_{\DD_2 \in \Mar(\mc{E}_2, D)} \Xi(\mc{E}_2, \DD_2).
\end{split}
\end{equation}
So we have
\begin{align*}
        &\sum_{\substack{\mc{E}_1 \in |\Bun(C)| \\ \wedge^2 \mc{E}_1 \cong \mc{N}}} \frac{1}{|\Aut \mc{E}_1|} \sum_{\mc{D}_1 \in \Mar(\mc{E}_1, D)} \sum_{s_1 \in \Sing(\mc{E}_1, \DD_1)^{\ir}} \xi_{U}(s_1)\\
        &=     \sum_{\substack{\mc{E}_1 \in |\Bun(C)|\\ \wedge^2 \mc{E}_1 \cong \mc{N}}} \frac{1}{|\Aut \mc{E}_1|} \sum_{\mc{D}_1 \in \Mar(\mc{E}_1, D)} \Xi(\ElmD(\mc{E}_1, \DD_1))\\
        &=\sum_{\substack{\mc{E}_2\in |\Bun(C)|\\ \wedge^2 \mc{E}_2 \cong \mc{N}(-D)}} \frac{1}{|\Aut \mc{E}_2|} \sum_{\DD_2 \in \Mar(\mc{E}_2, D)} \Xi(\mc{E}_2, \DD_2)\\
        &=    \sum_{\substack{\mc{E}_2 \in |\Bun(C)|\\ \wedge^2 \mc{E}_2 \cong \mc{N}(-D)}} \frac{1}{|\Aut \mc{E}_2|} \sum_{\DD_2 \in \Mar(\mc{E}_2, D)} \sum_{s_2 \in \Van(\mc{E}_2, \DD_2)^{\ir}} \xi_U(s_2),
\end{align*}
where the first equality holds by \eqref{eq:s1-formula}, the second equality holds by \eqref{eq:middle}, and the third equality holds by \eqref{eq:s2-formula}.

\end{proof}

\section{Further Reduction of the Main Sum}
\label{sec:sieve-ii}
Using the results of \Cref{sec:hecke-mod}, we will continue to transform our main sum $\Theta(C,N)$ into a more manageable form.

Recall that we showed
\begin{equation}\label{eq:theta-version-1}
\begin{split}
\Theta(C, N) =
\sum_{\substack{\mc{E} \in |\Bun(C)| \\\deg \mc{E} = N}} \frac{1}{|\Aut \mc{E}|} \sum_{D_1, D_2, D_3} \mu(D_1)\mu(D_3) \sum_{\substack{\DD_1 \in \Mar(\mc{E}, D_1) \\ \DD_2 \in \Mar(\mc{E}, D_2)}} |\Sing(\mc{E}, \DD_1)\cap \SingFib(\mc{E}, \DD_2) \cap \Fib(\mc{E},D_3)^{\ir}|
\end{split}
\end{equation}
in \Cref{prop:version-05}, where $D_1, D_2, D_3$ range over all triples of disjoint reduced effective divisors on $C$.

We would like to apply the correspondence \Cref{prop:zhao-lemma} to reduce a count of sections singular at points to a count of sections vanishing at points on a different ruled surface. This reduction is analagous to the ``discriminant reducing identity'' from \cite{BST13} (Section 9.1).

To count the total number of sections vanishing at various points on a ruled surface, it will be useful to consider the number of (relative degree $1$) points vanishing at one section on a fiber.

\begin{defn}
    Let $C$ be a nice curve, and let $\mc{E}$ be a rank $2$ vector bundle on $C$. Let $s \in \mc{V}(\mc{E})$ and let $P \in C$ be a closed point. 
    
    We let $r_P(s)$ be the the number of degree $1$ roots of $s|_{P} \in \mc{O}(3)$ on $\PP(\mc{E}) \cong \PP^1_{P}$. 
\end{defn}
Equivalently, if we associate to $s|_{P}$ a binary cubic form 
\[
\Lambda(s|_{P}) = a_0 x^3 + a_1 x^2y + a_2 xy^2 + a_3 y^3
\]
with coefficients $a_i$ in the finite residue field $\F_q(P)$, then $r_P(s)$ is the number of distinct linear factors of the form.

In particular, we will always have $r_P(s) \in \{0,1,2,3, q^{\deg P}+1\}$, with the final case occurring if and only if $s|_{P}=0$.

\begin{defn}
    Let $C$ be a nice curve, and let $\mc{E}$ be a rank $2$ vector bundle on $C$. Let $s \in \mc{V}(\mc{E})$, and let $D$ be a reduced effective divisor on $C$.

    We let $r_D(s)$ denote the number of degree $1$ roots of $s|_{D} \in \mc{O}(3)$ on $\PP(\mc{E}) \cong \PP^1_{D}$. Equivalently, we have
    \begin{itemize}
        \item $r_D(s)$ is the number of $D$-markings $\mc{D} \in \Mar(\mc{E}, D)$ such that $s$ vanishes on $\DD$.
        \item We can write
        \[
        r_D(s) = \prod_{P \in D} r_P(s).
        \]
    \end{itemize}
\end{defn}

In order to simplify $\eqref{eq:theta-version-1}$, we will start by considering fibral sections.

\begin{lemma}\label{lem:fib-mult}
    Let $C$ be a nice curve, and let $\mc{E}$ be a rank $2$ vector bundle on $C$. Let $D$ be a reduced effective divisor on $C$, and let $U = C-D$. Then we have an isomorphism
    \[
    \mc{V}(\mc{E}, (\wedge^2 \mc{E})^{\vee}\otimes \mc{O}(-D)) \xrightarrow{\times \sigma_D} \Fib(\mc{E}, D) \subseteq V(\mc{E}).
    \]
    given by multiplication by $\sigma_D \in \pi^{*} \mc{O}(D)$ is the section cutting out $\pi^{-1}(D)$. Therefore the composite map preserves $s|_{U}$ for a section $s$.
\end{lemma}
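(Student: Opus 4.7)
The plan is to realize the map as coming from the standard short exact sequence defining the ideal sheaf of $D$, pulled back to $\PP(\mc{E})$, and then verify that the image is exactly the fibral sections.

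First, let $\sigma_D \in H^0(C, \mc{O}_C(D))$ be the canonical section whose vanishing locus is the effective Cartier divisor $D$. This gives the short exact sequence
\[
0 \to \mc{O}_C(-D) \xrightarrow{\cdot \sigma_D} \mc{O}_C \to \mc{O}_D \to 0.
\]
Since $\pi: \PP(\mc{E}) \to C$ is flat, pulling back preserves exactness, and tensoring by the invertible sheaf $\mc{M}_{\mc{E}} = \mc{O}_{\PP(\mc{E})}(3) \otimes \pi^{*}(\wedge^2 \mc{E})^{\vee}$ yields
\[
0 \to \mc{M}_{\mc{E}} \otimes \pi^{*}\mc{O}_C(-D) \xrightarrow{\cdot \pi^{*}\sigma_D} \mc{M}_{\mc{E}} \to \mc{M}_{\mc{E}}|_{\pi^{-1}(D)} \to 0.
\]

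Taking global sections gives a left exact sequence, where the first map is multiplication by $\sigma_D$ (we abuse notation and write $\sigma_D$ for $\pi^{*}\sigma_D$). Injectivity is immediate from left exactness. By definition, a section $s \in \mc{V}(\mc{E})$ lies in $\Fib(\mc{E}, D)$ precisely when $s$ vanishes along every fiber $\PP(\mc{E})_P$ for $P \in D$, i.e., when $s|_{\pi^{-1}(D)} = 0$. Thus the image of multiplication by $\sigma_D$ is exactly the kernel of restriction to $\pi^{-1}(D)$, which is $\Fib(\mc{E}, D)$. This shows the map is an isomorphism onto $\Fib(\mc{E}, D)$.

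For the final claim about restriction to $U$: over $U = C - D$, the section $\sigma_D$ is a trivialization of $\mc{O}_C(D)|_U$, hence $\pi^{*}\sigma_D$ is a trivialization of $\pi^{*}\mc{O}_C(D)$ on $\pi^{-1}(U)$. Multiplication by this trivializing section induces the canonical isomorphism between $\mc{M}_{\mc{E}}\otimes \pi^{*}\mc{O}_C(-D)$ and $\mc{M}_{\mc{E}}$ on $\pi^{-1}(U)$, under which a source section $s$ and its image $\sigma_D \cdot s$ agree. In particular their vanishing schemes on $\pi^{-1}(U)$ coincide, so the map preserves $s|_U$ in the desired sense.

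There is no substantive obstacle here; the only mildly delicate point is being careful that the restriction statement uses the canonical trivialization of $\mc{O}_C(D)|_U$ provided by $\sigma_D$, rather than identifying two a priori distinct line bundles.
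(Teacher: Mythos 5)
Your proof is correct and follows essentially the same route as the paper: multiplication by the pullback of the canonical section cutting out $D$ identifies $\mc{V}(\mc{E},(\wedge^2\mc{E})^{\vee}\otimes\mc{O}(-D))$ with the sections vanishing on $\pi^{-1}(D)$, and over $U$ this section trivializes $\pi^{*}\mc{O}(D)$ so the identification is canonical. The paper's proof is just a terser version of your exact-sequence argument.
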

\begin{proof}
    Let $\pi: \PP(\mc{E}) \to C$ be the projective bundle. The section of $\mc{O}_C(D)$ cutting out $D$ pulls back to a section of $\pi^{*} \mc{O}_{C}(D)$ cutting out $\pi^{-1}(D)$.
    
    Therefore 
    \[
    \Fib(\mc{E}, D) \cong H^0(\PP(\mc{E}), \mc{O}_{\PP(\mc{E})} \otimes (\wedge^2 \mc{E})^{\vee} \otimes \pi^{*} \mc{O}(-D)) = \mc{V}(\mc{E}, (\wedge^2 \mc{E})^{\vee} \otimes \mc{O}(-D)).
    \]
    It is clear that since $\Fib(\mc{E}, D)$ injects into $\mc{V}(\mc{E})$ by the multiplication map by a section of $\pi^{*} \mc{O}(D)$, we have that the inclusion is an isomorphism when restricted to $U$.
\end{proof}
\begin{lemma}
    Let $C$ be a nice curve. Let $\mc{E}_1$ be a rank $2$ vector bundle on $C$, and let $D$ be a reduced effective divisor on $C$. Let $\mc{E}_2:= \mc{E}_1\otimes \mc{O}(-D)$. Then
    \[
    \mc{V}(\mc{E}_1, (\wedge^2 \mc{E}_1)^{\vee} \otimes \mc{O}(-D)) \cong \mc{V}(\mc{E}_2).
    \]
\end{lemma}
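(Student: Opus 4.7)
The plan is to reduce both sides to sections of the same sheaf on $C$ via the canonical isomorphism of \Cref{prop: proj-sym}, and then to use the standard behavior of $\Sym^m$ and $\wedge^r$ under tensoring by a line bundle to identify the resulting sheaves.

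More concretely, applying \Cref{prop: proj-sym} with $m=3$ and $\mc{L} = (\wedge^2 \mc{E}_1)^{\vee} \otimes \mc{O}_C(-D)$ on the left and with $m=3$ and $\mc{L} = (\wedge^2 \mc{E}_2)^{\vee}$ on the right, we obtain canonical isomorphisms
\[
\mc{V}(\mc{E}_1, (\wedge^2 \mc{E}_1)^{\vee} \otimes \mc{O}(-D)) \cong H^0\!\left(C, \Sym^3 \mc{E}_1 \otimes (\wedge^2 \mc{E}_1)^{\vee} \otimes \mc{O}(-D)\right)
\]
and
\[
\mc{V}(\mc{E}_2) \cong H^0\!\left(C, \Sym^3 \mc{E}_2 \otimes (\wedge^2 \mc{E}_2)^{\vee}\right).
\]
Thus it suffices to produce a canonical isomorphism of coherent sheaves on $C$ between the two integrands.

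For any line bundle $\mc{M}$ and rank $2$ vector bundle $\mc{E}$, there are canonical isomorphisms $\Sym^3(\mc{E} \otimes \mc{M}) \cong \Sym^3 \mc{E} \otimes \mc{M}^{\otimes 3}$ and $\wedge^2(\mc{E} \otimes \mc{M}) \cong \wedge^2 \mc{E} \otimes \mc{M}^{\otimes 2}$ (constructed by choosing local bases and checking they glue, or by general multilinear algebra). Applied with $\mc{M} = \mc{O}_C(-D)$, and noting $\mc{E}_2 = \mc{E}_1 \otimes \mc{O}(-D)$, we compute
\[
\Sym^3 \mc{E}_2 \otimes (\wedge^2 \mc{E}_2)^{\vee} \cong \Sym^3 \mc{E}_1 \otimes \mc{O}(-3D) \otimes (\wedge^2 \mc{E}_1)^{\vee} \otimes \mc{O}(2D) \cong \Sym^3 \mc{E}_1 \otimes (\wedge^2 \mc{E}_1)^{\vee} \otimes \mc{O}(-D),
\]
which is exactly the sheaf appearing on the left. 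Taking global sections and composing with the two instances of $\Lambda$ gives the required isomorphism $\mc{V}(\mc{E}_1, (\wedge^2 \mc{E}_1)^{\vee} \otimes \mc{O}(-D)) \cong \mc{V}(\mc{E}_2)$.

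There is no real obstacle here: the entire statement is bookkeeping about how $\Sym^3$ and $\wedge^2$ interact with a line-bundle twist, plus one invocation of \Cref{prop: proj-sym}. The only thing one might want to be careful about is that the isomorphism is canonical (which matters if the identification is to be used equivariantly in later constructions); but since each step above is induced by a canonical natural isomorphism of functors on $\mc{O}_C$-modules, canonicity is automatic.
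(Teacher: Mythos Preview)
Your proof is correct and amounts to the same bookkeeping as the paper's, though you route through $\Sym^3$ on the base via \Cref{prop: proj-sym} whereas the paper works directly with the isomorphism $\PP(\mc{E}_1)\cong\PP(\mc{E}_2)$ and tracks how $\mc{O}_{\PP(\mc{E})}(1)$ changes under the twist. Both reduce to the identities $\wedge^2(\mc{E}\otimes\mc{M})\cong\wedge^2\mc{E}\otimes\mc{M}^{\otimes 2}$ and $\Sym^3(\mc{E}\otimes\mc{M})\cong\Sym^3\mc{E}\otimes\mc{M}^{\otimes 3}$ (equivalently, $\mc{O}_{\PP(\mc{E}\otimes\mc{M})}(1)\cong\mc{O}_{\PP(\mc{E})}(1)\otimes\pi^{*}\mc{M}$).
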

\begin{proof}
    Unpacking definitions, we just want to show that
    \[
    H^0(\PP(\mc{E}_1), \mc{O}_{\PP(\mc{E}_1)}(3) \otimes (\wedge^2 \mc{E}_1)^{\vee} \otimes \mc{O}(-D) ) \cong H^0(\PP(\mc{E}_2), \mc{O}_{\PP(\mc{E}_2)}(3) \otimes (\wedge^2 \mc{E}_2)^{\vee}).
    \]
    Now the twisted isomorphism $\mc{E}_2 = \mc{E}_1 \otimes \mc{O}(-D)$ induces an isomorphism $\PP(\mc{E}_2) \cong \PP(\mc{E}_2)$ such that $\mc{O}_{\PP(\mc{E}_2)}\cong \mc{O}_{\PP(\mc{E}_1)} \otimes \mc{O}(-D)$. Also, $\wedge^2 \mc{E}_2 \cong \wedge^2 \mc{E}_1 \otimes \mc{O}(-2D)$. Together, these give the desired isomorphism.
\end{proof}

Combining the last two lemmas, we get:
\begin{lemma}
    Let $C$ be a nice curve, let $\mc{E}$ be a rank $2$ vector bundle on $C$, and let $D$ be a reduced effective divisor on $C$. Then we have an isomorphism
    \begin{equation}\label{eq:fib-reduce}
\mc{V}(\mc{E}(-D)) \xrightarrow{\sim} \Fib(\mc{E}, D)
\end{equation}
which is given by the composition of an isomorphism $\PP(\mc{E}(-D)) \xrightarrow{\sim} \PP(\mc{E})$ with the multiplication by $\sigma_D$. 
If $U = C-D$, then this map preserves sections over $U$. More precisely, if \eqref{eq:fib-reduce} sends $s' \in \mc{V}(\mc{E}(-D))$ to $s \in \Fib(\mc{E}, D) \subseteq \mc{V}(E)$, then above $U$ the map is the natural identification $s'|_{U} \to s|_{U}$ under the isomorphism $\PP(\mc{E}(-D)) \cong \PP(\mc{E})$.

In particular, the map in \eqref{eq:fib-reduce} preserves horizontal irreducibility of sections.
\end{lemma}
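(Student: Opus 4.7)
The plan is to simply chain the two preceding lemmas and then check the compatibility claim over $U$, from which horizontal irreducibility follows for free.

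First I would write down the composition: the second lemma gives an isomorphism $\mc{V}(\mc{E}(-D)) \xrightarrow{\sim} \mc{V}(\mc{E}, (\wedge^2\mc{E})^{\vee} \otimes \mc{O}(-D))$ coming from the canonical identification $\PP(\mc{E}(-D)) \cong \PP(\mc{E})$ together with the bookkeeping of the twists in $\mc{O}_{\PP(\mc{E})}(3)$ and $(\wedge^2\mc{E})^{\vee}$. The first lemma (\Cref{lem:fib-mult}) then provides an isomorphism $\mc{V}(\mc{E}, (\wedge^2\mc{E})^{\vee}\otimes \mc{O}(-D)) \xrightarrow{\times \sigma_D} \Fib(\mc{E}, D)$ via multiplication by the pullback of the section $\sigma_D$ cutting out $D$. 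Composing gives the desired map \eqref{eq:fib-reduce}, and both constituent maps are isomorphisms, so the composition is.

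Next I would verify the claim about restriction to $U = C - D$. Away from $D$, the line bundle $\mc{O}_C(-D)$ is canonically trivialized and the section $\sigma_D$ is a nowhere-vanishing unit, so on the open $U$ both the twist isomorphism $\PP(\mc{E}(-D))|_U \cong \PP(\mc{E})|_U$ and the multiplication by $\sigma_D$ are (canonically, up to a unit scalar) the identity. Concretely, for $s' \in \mc{V}(\mc{E}(-D))$ mapping to $s \in \Fib(\mc{E}, D)$, the restrictions $s'|_U$ and $s|_U$ correspond under the canonical $U$-isomorphism of the projective bundles, so the vanishing schemes $V(s')|_U$ and $V(s)|_U$ are identified as $U$-subschemes of this common projective bundle.

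Finally, horizontal irreducibility is preserved because, by \Cref{prop:horiz-ir}, it is a property of the restriction of a section to the generic point $\eta \in C$, which lies in $U$ (since $D$ is a proper closed subscheme of $C$). Since $s'|_\eta$ and $s|_\eta$ are identified under the canonical isomorphism of the generic fibers of $\PP(\mc{E}(-D))$ and $\PP(\mc{E})$, one is horizontally irreducible if and only if the other is.

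There is really no obstacle here; the entire content is packaging the two previous lemmas into a single statement and observing that the chosen isomorphism is the natural one away from $D$, so all ``generic fiber'' properties — in particular horizontal irreducibility — are automatically preserved.
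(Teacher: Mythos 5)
Your proposal is correct and matches the paper's approach exactly: the paper presents this lemma as an immediate consequence of the two preceding lemmas (the twist isomorphism $\mc{V}(\mc{E}(-D)) \cong \mc{V}(\mc{E}, (\wedge^2\mc{E})^{\vee}\otimes\mc{O}(-D))$ composed with multiplication by $\sigma_D$), with preservation over $U$ and hence of horizontal irreducibility following since the latter is a generic-fiber property by \Cref{prop:horiz-ir}.
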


Therefore we have:
\begin{lemma}\label{lem:singvan}
    Let $C$ be a nice curve. Let $\mc{E}$ be a rank $2$ vector bundle. Let $D_1, D_2, D_3$ be disjoint reduced effective divisors on $C$. Let $\DD_1\subseteq \PP(\mc{E})$ be a $D_1$-marking, let $\DD_2 \subseteq \PP(\mc{E}))$ be a $D_2$-marking. Let $\mc{E}' = \mc{E}(-D_2-D_3)$. Let $\DD_1'$ and $\DD_2'$ denote the images of $\DD_1$ and $\DD_2$ under the natural isomorphism $\PP(\mc{E}) \to \PP(\mc{E}')$. Then there is an isomorphism
    \[
    \Sing(\mc{E}, \DD_1) \cap \SingFib(\mc{E}, \DD_2) \cap \Fib(\mc{E}, D_3) \cong \Sing(\mc{E}', \DD_1') \cap \Van(\mc{E}', \DD_2')
    \]
    which restricts to a bijection on the subsets of horizontally irreducible sections
    \[\Sing(\mc{E}, \DD_1) \cap \SingFib(\mc{E}, \DD_2) \cap \Fib(\mc{E}, D_3)^{\ir} \cong \Sing(\mc{E}', \DD_1') \cap \Van(\mc{E}', \DD_2')^{\ir}.
    \]
\end{lemma}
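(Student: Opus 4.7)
The plan is to realize the claimed bijection explicitly through the fibral isomorphism from the lemma immediately preceding the statement. Any section $s$ on the left-hand side satisfies $s \in \Fib(\mc{E}, D_2) \cap \Fib(\mc{E}, D_3) = \Fib(\mc{E}, D_2 + D_3)$, since $D_2$ and $D_3$ are disjoint and $\SingFib(\mc{E}, \DD_2) \subseteq \Fib(\mc{E}, D_2)$ by definition. Using $\mc{E}' = \mc{E}(-D_2 - D_3)$ and the canonical identification $\PP(\mc{E}) \cong \PP(\mc{E}')$, the isomorphism \eqref{eq:fib-reduce} lets us write $s = \sigma \cdot s'$ uniquely, where $\sigma := \sigma_{D_2 + D_3}$ cuts out $\pi^{-1}(D_2 + D_3)$ and $s' \in \mc{V}(\mc{E}')$. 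It then suffices to check that the remaining two singularity conditions at $\DD_1$ and $\DD_2$ transform correctly, which is a local question at each point of $\DD_1 \cup \DD_2$.

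At a point $\mc{P} \in \DD_1$ above $P \in D_1$, the disjointness of $D_1$ and $D_2 + D_3$ implies that $\sigma$ is a unit in the local ring $\mc{O}_{\PP(\mc{E}), \mc{P}}$. Consequently $s \in \mf{m}_{\mc{P}}^2$ if and only if $s' \in \mf{m}_{\mc{P}}^2$, so $s \in \Sing(\mc{E}, \DD_1)$ if and only if $s' \in \Sing(\mc{E}', \DD_1')$.

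At a point $\mc{P} \in \DD_2$ above $P \in D_2$, the local picture is genuinely different: $\sigma_{D_3}$ is a unit at $\mc{P}$, while $\sigma_{D_2}$ locally cuts out the fiber $\pi^{-1}(P)$. Since $\pi\colon \PP(\mc{E}) \to C$ is a smooth $\PP^1$-bundle and $C$ is regular at $P$, a uniformizer at $P$ pulls back to a regular parameter at $\mc{P}$, so $\sigma_{D_2} \in \mf{m}_{\mc{P}} \setminus \mf{m}_{\mc{P}}^2$. Writing $s = \sigma \cdot s' = (\text{unit}) \cdot \sigma_{D_2} \cdot s'$ near $\mc{P}$, we conclude that $s \in \mf{m}_{\mc{P}}^2$ if and only if $s' \in \mf{m}_{\mc{P}}$. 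In other words, the order-two vanishing of $s$ at $\DD_2$ becomes precisely order-one vanishing of $s'$ at $\DD_2'$. Combining the two local analyses with the reduction to $\Fib(\mc{E}, D_2 + D_3)$ above gives the bijection on the underlying sets.

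For the horizontal irreducibility statement, I would invoke the final clause of the lemma preceding the statement: the isomorphism \eqref{eq:fib-reduce} preserves sections over $U = C - (D_2 \cup D_3)$ once $\PP(\mc{E})$ and $\PP(\mc{E}')$ are identified via the twist. In particular $s$ and $s'$ have canonically identified restrictions to the generic fiber of $C$, and since \Cref{prop:horiz-ir} characterizes horizontal (ir)reducibility entirely in terms of the generic fiber, the property is preserved in both directions. The main point requiring care is the local computation at points of $\DD_2$, where one must confirm that $\sigma_{D_2}$ truly contributes a regular parameter; this is the only place where reducedness of $D_2$ and smoothness of the fibration are used.
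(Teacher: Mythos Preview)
Your proposal is correct and follows essentially the same approach as the paper: both use the isomorphism \eqref{eq:fib-reduce} applied to $\Fib(\mc{E}, D_2+D_3)$, then check that $\Sing$ at $\DD_1$ is preserved (since $\sigma$ is a unit away from $D_2+D_3$) and that $\SingFib$ at $\DD_2$ becomes $\Van$ at $\DD_2'$ (since multiplying by $\sigma_{D_2}$ shifts vanishing order by one). Your local verification that $\sigma_{D_2}$ contributes a regular parameter at points of $\DD_2$ is in fact more explicit than the paper's corresponding step, which simply asserts the order shift.
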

\begin{proof}
Consider the first isomorphism. The left hand side is a subspace of $\Fib(\mc{E}, D_2+D_3)$, and the right hand side is a subspace of $\mc{V}(E') = \mc{V}(E(-D_1-D_2))$. We claim that the two sides are identified under the isomorphism 
\[
\gamma: \mc{V}(\mc{E}') = \mc{V}(\mc{E}(-D_1-D_2)) \xrightarrow{\sim} \Fib(\mc{E}, D)
\]
from \eqref{eq:fib-reduce}.

The isomorphism $\gamma$ sends $\Sing(\mc{E}', \DD_1')$ to $\Sing(\mc{E}, \DD_1)$, since it preserves sections away from $D_2+D_3$.

Next, we claim the isomorphism $\gamma$ sends $\Van(\mc{E}', \DD_2')\subseteq \mc{V}(\mc{E}')$ to $\SingFib(\mc{E}, \DD_2)\subseteq \Fib(\mc{E},D_2+D_3)$. 

This follows because a section $s' \in  \mc{V}(\mc{E}')$ vanishes to order $i$ at $\DD_2'$ if and only if the product $\sigma_D s$ vanishes to order $i+1$ at $\DD_2'$, where $\sigma_D$ was defined in \Cref{lem:fib-mult}.

Putting both claims together gives the first isomorphism. The second isomorphism follows because the sections are preserved away from $D_2+D_3$, so in particular horizontally irreducible sections are preserved.
\end{proof}

Using \Cref{lem:singvan}, we can now find a new, slightly simpler formula for $\Theta(C, N)$.

\begin{prop}
Let $C$ be a nice curve and let $N$ be an integer. We have
\[
\Theta(C,N) = \sum_{D_1, D_2, D_3} \mu(D_1) \mu(D_3) \sum_{\substack{\mc{E}' \in |\Bun(C)| \\\deg \mc{E}' = N'}} \frac{1}{|\Aut{\mc{E}'}|}\sum_{\substack{\DD_1 \in \Mar(\mc{E}', D_1)\\ \DD_2 \in \Mar(\mc{E}', D_2)}} |\Van(\mc{E}', \mc{D}_1 \cup \mc{D}_2)^{\ir}|
\]
where $N' = N-\deg D_1 - 2 \deg D_2 - 2 \deg D_3$, and $D_1,D_2,D_3$ range over disjoint reduced effective divisors on $C$.
\end{prop}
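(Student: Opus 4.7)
The plan is to transform the formula from \Cref{prop:version-05} via two reductions: first \Cref{lem:singvan} collapses the triple intersection $\Sing(\mc{E},\DD_1) \cap \SingFib(\mc{E},\DD_2) \cap \Fib(\mc{E},D_3)^{\ir}$ to $\Sing \cap \Van^{\ir}$ on a twisted bundle, and then \Cref{prop:bijection-enhanced} trades the remaining $\Sing$ condition for a second $\Van$ condition via the elementary transform.

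For the first step, swap the order of summation in \Cref{prop:version-05} so that the sum over $(D_1, D_2, D_3)$ is outside. Fix such a disjoint triple, and for each rank $2$ bundle $\mc{E}$ of degree $N$ set $\mc{E}'' := \mc{E}(-D_2-D_3)$, which has degree $N - 2\deg D_2 - 2\deg D_3$ since $\mc{E}$ has rank $2$. Twisting by a fixed line bundle is an auto-equivalence of the groupoid $\Bun(C)$, so it preserves $|\Aut \mc{E}|$ and sets up a canonical bijection on $D_i$-markings via the induced isomorphism $\PP(\mc{E}) \cong \PP(\mc{E}'')$; \Cref{lem:singvan} then identifies the triple intersection with $\Sing(\mc{E}'', \DD_1'') \cap \Van(\mc{E}'', \DD_2'')^{\ir}$, preserving horizontal irreducibility. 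Reindexing the $\mc{E}$-sum by $\mc{E}''$ (and dropping the double primes) rewrites $\Theta(C,N)$ with the triple intersection replaced by $\Sing(\mc{E},\DD_1) \cap \Van(\mc{E},\DD_2)^{\ir}$ and $\deg \mc{E} = N - 2\deg D_2 - 2\deg D_3$.

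For the second step, fix such an $\mc{E}$ and $\DD_1 \in \Mar(\mc{E}, D_1)$, and swap the inner sums:
\[
\sum_{\DD_2 \in \Mar(\mc{E}, D_2)} |\Sing(\mc{E},\DD_1) \cap \Van(\mc{E},\DD_2)^{\ir}| = \sum_{s \in \Sing(\mc{E},\DD_1)^{\ir}} r_{D_2}(s),
\]
since each $D_2$-marking on which a given $s$ vanishes corresponds to a choice of degree-$1$ root of $s$ on each fiber over $D_2$. Because $D_1 \cap D_2 = \emptyset$, we have $D_2 \subseteq U := C \setminus D_1$, so $r_{D_2}(s)$ depends only on $s|_U$ (equivalently, only on the vanishing scheme over $U$); hence $\xi_U := r_{D_2}$ is admissible as the auxiliary function in \Cref{prop:bijection-enhanced}.

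Now split the $\mc{E}$-sum by $\wedge^2 \mc{E} \cong \mc{N}$ and apply \Cref{prop:bijection-enhanced} with $D = D_1$ and this $\xi_U$. The sum transforms to one over $\mc{E}'$ with $\wedge^2 \mc{E}' \cong \mc{N}(-D_1)$, trading $\Sing(\mc{E},\DD_1)^{\ir}$ for $\Van(\mc{E}', \DD_1')^{\ir}$ and preserving $r_{D_2}$. Unfolding $r_{D_2}(s')$ back into a sum over $\DD_2' \in \Mar(\mc{E}', D_2)$ and using $D_1 \cap D_2 = \emptyset$ yields
\[
\sum_{\DD_2'} |\Van(\mc{E}', \DD_1') \cap \Van(\mc{E}', \DD_2')^{\ir}| = \sum_{\DD_2'} |\Van(\mc{E}', \DD_1' \cup \DD_2')^{\ir}|.
\]
Summing over $\mc{N}$ produces an $\mc{E}'$-sum over bundles of degree $N - \deg D_1 - 2\deg D_2 - 2\deg D_3 = N'$, matching the claim. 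The only real bookkeeping obstacle is verifying that $\xi_U = r_{D_2}$ satisfies the hypothesis of \Cref{prop:bijection-enhanced}; this is immediate once one observes that the relevant fibers sit over $D_2 \subseteq U$, so the count is determined by $s|_U$.
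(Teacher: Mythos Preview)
Your proof is correct and follows essentially the same approach as the paper: first apply \Cref{lem:singvan} together with the twist $\mc{E} \mapsto \mc{E}(-D_2-D_3)$ to reduce the triple intersection to $\Sing \cap \Van^{\ir}$, then rewrite the $\DD_2$-sum as $\sum_{s \in \Sing(\mc{E},\DD_1)^{\ir}} r_{D_2}(s)$ and invoke \Cref{prop:bijection-enhanced} with $\xi_U = r_{D_2}$, finally summing over determinant line bundles. The only cosmetic difference is that the paper carries the determinant $\mc{N}$ through both steps before summing over $\deg \mc{N} = N$, whereas you sum over degree after the first step and re-split by $\mc{N}$ for the second; the substance is identical.
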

\begin{rem}
    We can view this as a function field analogue of the ``discriminant reducing identity'' in Section 9.1 of \cite{BST13}.
\end{rem}
\begin{proof}

The assignment $\mc{E} \to \mc{E}(-D_2-D_3)$ induces an equivalence of groupoids between rank $2$ vector bundles on $C$ of determinant $\mc{N}$ and rank $2$ vector bundles on $C$ of determinant $\mc{N}(-2D_2-2D_3)$.

Therefore, for a fixed line bundle $\mc{N}$ on $C$, we can write
\begin{align*}
&\sum_{\substack{\mc{E}\in |\Bun(C)| \\\wedge^{2}\mc{E}\cong \mc{N}} }\frac{1}{|\Aut \mc{E}|} \sum_{\substack{\DD_1 \in \Mar(\mc{E}, D_1) \\ \DD_2 \in \Mar(\mc{E}, D_2)}} |\Sing(\mc{E}, \DD_1)\cap \SingFib(\mc{E}, \DD_2) \cap \Fib(\mc{E}, D_3)^{\ir}|\\
&=\sum_{\substack{\mc{E}' \in |\Bun(C)|\\\wedge^{2}\mc{E}'\cong \mc{N}(-2D_2-2D_3)} }\frac{1}{|\Aut \mc{E}'|} \sum_{\substack{\DD_1 \in \Mar(\mc{E'}, D_1) \\ \DD_2 \in \Mar(\mc{E'}, D_2)}} |\Sing(\mc{E}', \DD_1) \cap \Van(\mc{E}', \DD_2)^{\ir}|\\
&=\sum_{\substack{\mc{E}' \in |\Bun(C)|\\\wedge^2 \mc{E}' \cong \mc{N}(-2D_2-2D_3)} }\frac{1}{|\Aut \mc{E}'|} \sum_{\DD_1 \in \Mar(\mc{E}', D_1)} \sum_{s \in \Sing(\mc{E}', \DD_1)^{\ir}} r_{D_2}(s).
\end{align*}

Now by \Cref{prop:bijection-enhanced} applied to the function $\xi_{U}(s) = r_{D_2}(s)$, we can further rewrite
\begin{align*}
&\sum_{\substack{\mc{E}'\in |\Bun(C)|\\\wedge^2 \mc{E}' \cong \mc{N}(-2D_2-2D_3)}} \frac{1}{|\Aut \mc{E}'|} \sum_{\DD_1 \in \Mar(\mc{E}', D_1)} \sum_{s \in \Sing(\mc{E}', \DD_1)^{\ir}} r_{D_2}(s)\\
&=\sum_{\substack{\mc{E}'' \in |\Bun(C)|\\\wedge^2 \mc{E}'' \cong \mc{N}(-D_1-2D_2-2D_3)}} \frac{1}{|\Aut \mc{E}''|} \sum_{\mc{D}_1 \in \Mar(\mc{E}'', D_1)} \sum_{s \in \Van(\mc{E}'', \DD_1)^{\ir}} r_{D_2}(s)\\
&=\sum_{\substack{\mc{E}''\in |\Bun(C)|\\\wedge^2 \mc{E}'' \cong \mc{N}(-D_1-2D_2-2D_3) }} \frac{1}{|\Aut \mc{E}''|}\sum_{\substack{\DD_1 \in \Mar(\mc{E}'', D_1)\\ \DD_2 \in \Mar(\mc{E}'', D_2)}} |\Van(\mc{E}'', \mc{D}_1 \cup \mc{D}_2)^{\ir}|.
\end{align*}

Combining the last two equalities, we get that for a fixed line bundle $\mc{N}$ on $C$, we have
\[
\sum_{\substack{\mc{E}\in |\Bun(C)| \\\wedge^{2}\mc{E}\cong \mc{N}} }\frac{1}{|\Aut \mc{E}|} \sum_{\substack{\DD_1 \in \Mar(\mc{E}, D_1) \\ \DD_2 \in \Mar(\mc{E}, D_2)}} |\Sing(\mc{E}, \DD_1)\cap \SingFib(\mc{E}, \DD_2) \cap \Fib(\mc{E}, D_3)^{\ir}|
\]
\[
=\sum_{\substack{\mc{E}''\in |\Bun(C)|\\\wedge^2 \mc{E}'' \cong \mc{N}(-D_1-2D_2-2D_3) }} \frac{1}{|\Aut \mc{E}''|}\sum_{\substack{\DD_1 \in \Mar(\mc{E}'', D_1)\\ \DD_2 \in \Mar(\mc{E}'', D_2)}} |\Van(\mc{E}'', \mc{D}_1 \cup \mc{D}_2)^{\ir}|.
\]

By adding this up for all line bundles $\mc{N}$ of degree $N$, we find
\[
\sum_{\substack{\mc{E} \in |\Bun(C)|\\\deg \mc{E} = N} }\frac{1}{|\Aut \mc{E}|} \sum_{\substack{\DD_1 \in \Mar(\mc{E}, D_1) \\ \DD_2 \in \Mar(\mc{E}, D_2)}} |\Sing(\mc{E}, \DD_1)\cap \SingFib(\mc{E}, \DD_2) \cap \Fib(\mc{E}, D_3)^{\ir}|
\]
\[
=\sum_{\substack{\mc{E}''\in |\Bun(C)|\\\deg \mc{E}'' = N - \deg D_1 - 2\deg D_2 - 2 \deg D_3}} \frac{1}{|\Aut \mc{E}''|}\sum_{\substack{\DD_1 \in \Mar(\mc{E}'', D_1)\\ \DD_2 \in \Mar(\mc{E}'', D_2)}} |\Van(\mc{E}'', \mc{D}_1\cup \mc{D}_2)^{\ir}|.
\]

So finally, we have a new formula for $\Theta(C,N)$ as desired. $\Theta(C,N)$ is equal to 

\begin{align*}
 & \sum_{D_1, D_2, D_3} \mu(D_1)\mu(D_3) \sum_{\substack{\mc{E}\in |\Bun(C)| \\\deg \mc{E} = N}} \frac{1}{|\Aut \mc{E}|} \sum_{\substack{\DD_1 \in \Mar(\mc{E}, D_1) \\ \DD_2 \in \Mar(\mc{E}, D_2)}} |\Sing(\mc{E}, \DD_1)\cap \SingFib(\mc{E}, \DD_2) \cap \Fib(\mc{E}, D_3)^{\ir}|\\
&= \sum_{D_1, D_2, D_3} \mu(D_1) \mu(D_3) \sum_{\substack{\mc{E}'' \in |\Bun(C)|\\ \deg \mc{E}'' = N'}} \frac{1}{|\Aut{\mc{E}''}|}\sum_{\substack{\DD_1 \in \Mar(\mc{E}'', D_1)\\ \DD_2 \in \Mar(\mc{E}'', D_2)}} |\Van(\mc{E}'', \mc{D}_1 \cup \mc{D}_2)^{\ir}|
\end{align*}
where $N' = N - \deg D_1 - 2 \deg D_2 - 2 \deg D_3$.
\end{proof}

We will often encounter the sums of the form
\[
\sum_{\DD \in \Mar(\mc{E}, D)} |\Van(\mc{E}, D)^{\ir}| = \sum_{s \in \mc{V}(\mc{E})^{\ir}} r_{D}(s)
\]
for some rank $2$ vector bundle $\mc{E}$ on $C$ and some reduced effective divisor $D$ on $C$. As such, we will define the following notation:
\begin{defn}
    Let $C$ be a nice curve. Let $\mc{E}$ be a rank $2$ vector bundle on $C$, and let $D$ be a reduced effective divisor. We define the \textbf{root-counting function}
    \[
    R^{\ir}(\mc{E}, D):= \sum_{\DD \in \Mar(\mc{E}, D)} |\Van(\mc{E}, D)^{\ir}| = \sum_{s \in \mc{V}(\mc{E})^{\ir}} r_{D}(s).
    \]
\end{defn}

So for example, we could write
\begin{equation}\label{eq:theta-Rir}
\Theta(C,N) = \sum_{D_1, D_2, D_3} \mu(D_1) \mu(D_3) \sum_{\substack{\mc{E}' \in |\Bun(C)| \\\deg \mc{E}' = N'}} \frac{1}{|\Aut \mc{E}'|} R^{\ir}(\mc{E}',D_1+D_2)
\end{equation}
where $N' = N- \deg D_1 - 2 \deg D_2 - 2 \deg D_3$. 

It will also be useful to define an auxiliary function $a_P(s)$.
\begin{defn}
    Let $C$ be a nice curve, and let $\mc{E}$ be a rank $2$ vector bundle on $C$. Let $P \in C$ be a closed point.

    We define 
    \[
    a_P(s):= r_P(s)-1.
    \]

    Let $D$ be a reduced effective divisor. We define
    \[
    a_D(s) := \prod_{P \in D} a_P(s).
    \]

    Finally, we define the \textbf{adjusted root-counting function}
    \[
    \Phi^{\ir}(\mc{E}, D):= \sum_{s \in \mc{V}(\mc{E})^{\ir}} a_D(s)=\sum_{D_1 \mid D} \mu(D-D_1) R^{\ir}(\mc{E}, D_1).
    \]
\end{defn}
\subsection{Generating Function}

We find that it is easiest to deal with $\Theta(C,N)$ if we package all of the $\Theta(C,N)$ into a generating function for varying $N$.

We would like to consider the formal power series $\sum_{N \ge 0}  \Theta(C,N)T^{N}$. For technical reasons, we will make the following assumption.

\begin{assumption}\label{assumption}
    Assume either $C \cong \PP^1$ or $\Char \F_q \neq 3$.
\end{assumption}

We make this assumption solely in order to ensure that the generating function given by $\sum_{N} \Theta(C,N) T^{N}$ only has terms with $N \ge 0$. This follows from the following lemma:

\begin{lemma}
    Assume either $C \cong \PP^1$ or $\Char \F_q \neq 3$. Then:
    \begin{itemize}
        \item If $\mc{V}(\mc{E})^{\ir}$ is nonempty, then $\deg \mc{E} \ge 0$.
        \item If $\Theta(C,N) \neq 0$, then $N\ge 0$.
    \end{itemize}
\end{lemma}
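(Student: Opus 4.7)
The plan is to derive the second bullet from the first via the Casnati--Ekedahl equivalence, then prove the first bullet by splitting into the two cases of the Assumption.

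For the reduction, given $f: X \to C$ contributing to $\Theta(C,N)$, Lemma~\ref{lem:smooth-prim} together with the equivalence $\TriSec^{\sm,\ir}(C) \xrightarrow{\sim} \Trip^{\sm,\ir}(C)$ produces a pair $(\mc{E},s)$ with $s \in \mc{V}(\mc{E})^{\sm,\ir} \subseteq \mc{V}(\mc{E})^{\ir}$. The isomorphism $\wedge^3 f_*\mc{O}_X \cong (\wedge^2 \mc{E})^{\vee}$ coming from the Tschirnhausen sequence $0 \to \mc{O}_C \to f_*\mc{O}_X \to \mc{E}^{\vee} \to 0$ gives $\deg\Delta_f = 2\deg(\wedge^2\mc{E}) = 2\deg\mc{E}$ (by the \emph{definition} of discriminant degree, even in the inseparable case where $\Delta_f = 0$). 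So $N = \deg\mc{E}$, and the first bullet yields $N \ge 0$.

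For the first bullet in the case $\Char \F_q \neq 3$, I would invoke \Cref{prop:horiz-ir} to translate horizontal irreducibility of $s$ into irreducibility of the binary cubic form $\Lambda(s_\eta)$ at the generic point $\eta = \Spec \F_q(C)$. An irreducible binary cubic form over a field of characteristic $\neq 3$ is automatically separable: irreducibility forces the leading and trailing coefficients to be nonzero (else $x$ or $y$ divides the form), and the dehomogenization $p(x)$ is then an irreducible cubic whose derivative $p'$ is nonzero of strictly smaller degree, so $\gcd(p,p') = 1$. Hence the discriminant $\Delta(s) \in H^0(C, (\wedge^2 \mc{E})^{\otimes 2})$ is nonzero at $\eta$, hence a nonzero global section, forcing $2\deg \mc{E} = \deg(\wedge^2\mc{E})^{\otimes 2} \ge 0$.

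For the first bullet in the case $C \cong \PP^1$, I would use the splitting $\mc{E} \cong \mc{O}(a) \oplus \mc{O}(b)$ with $a \ge b$, which gives
\[
\Sym^3 \mc{E} \otimes (\wedge^2 \mc{E})^{\vee} \cong \mc{O}(2a-b) \oplus \mc{O}(a) \oplus \mc{O}(b) \oplus \mc{O}(2b-a),
\]
with summands corresponding to the $x^3, x^2 y, x y^2, y^3$ coefficients. If $b < 0$, then $2b - a \le b < 0$, so both $\mc{O}(b)$ and $\mc{O}(2b-a)$ have no global sections; every $s \in \mc{V}(\mc{E})$ then restricts at the generic point to $x^2(A_0 x + A_1 y)$, which factors. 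So no such $s$ is horizontally irreducible, and $\mc{V}(\mc{E})^{\ir} \neq \emptyset$ forces $b \ge 0$, whence $\deg\mc{E} = a+b \ge 0$. The only real subtlety I anticipate is the separability argument for an irreducible cubic form in characteristic $\neq 3$; everything else is direct bookkeeping with the correspondences already in place.
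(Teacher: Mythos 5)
Your proof is correct and follows essentially the same route as the paper's: for $C\cong\PP^1$ one uses the vanishing of the low-degree coefficients of the binary cubic form to force $x$-reducibility when the degree is negative, for $\Char\F_q\neq 3$ one uses separability of an irreducible cubic to get a nonzero discriminant section of $(\wedge^2\mc{E})^{\otimes 2}$, and the second bullet follows from the first via the formula $\Theta(C,N)=\sum_{\deg\mc{E}=N}|\mc{V}(\mc{E})^{\sm,\ir}|/|\Aut\mc{E}|$. The only difference is that you spell out the separability argument (via $\gcd(p,p')=1$) that the paper leaves implicit, which is a welcome addition rather than a divergence.
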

\begin{proof}
See \Cref{sec:appendix-hor-finite}.

\end{proof}

\begin{rem}
    It may be surprising that it is possible that 
    \[
    \Theta(C,N) = \sum_{\substack{f: X \to C\\ \deg \Delta_f =2N\\ X\text{ smooth, irreducible}}}\frac{1}{|\Aut f|}
    \]
    can be nonzero when $N<0$. (Recall here the sum ranges over triple covers $f: X \to C$ such that $X$ is smooth and irreducible.)

    Indeed, if the discriminant $\Delta_f \in \mc{N}^{\otimes 2}$ is nonzero, then its degree must be nonnegative. However, if $\Char \F_q=3$, then we could have $\Delta_f=0$ even when $X$ is smooth and irreducible. In this case the discriminant degree $\deg \Delta_f$ is (by definition) $\deg \mc{N}^{\otimes 2} = \deg (\wedge^{3} f_{*} \mc{O}_X)^{\otimes -2}$, which could be negative when the genus of $C$ is large enough.

    For example, if $\Char \F_q=3$ and $f: X \to C$ is a (purely) inseparable triple cover with $X$ smooth, projective, and geometrically irreducible, corresponding to a smooth section $s \in \mc{V}(\mc{E})^{\ir}$, then 
    \[
    \deg \Delta_f = 2 \deg \mc{E} = 2(1-g_C)<0
    \]
    when $g_C> 1$.\footnote{To show $2 \deg \mc{E} = 2(1-g_C)$ in this case, first recall $g_X = g_C$ for a purely inseparable extension of nice curves. We can also apply the adjunction formula to the smooth divisor $X\subseteq \PP(\mc{E})$ to conclude $(2g_X-2) = 3(2g_C-2) = 2 \deg \mc{E}$, from which the formula follows.}

    If desired, we could easily adjust our generating function in this case to be a formal Laurent series with finitely many terms of negative degree. However, in the following section, we will assume $C \cong \PP^1$ anyway, so for the sake of simplicity we will take the assumption as a given from now on. 
\end{rem}

We move forward under \Cref{assumption}. We define a generating function $G_C(T) \in \C[[T]]$ attached to $\Theta(C,N)$ by the formal power series
\begin{equation}\label{eq:GC-defn}
G_C(T):= \sum_{N\ge 0} \Theta(C,N) T^{N}.
\end{equation}

We also recall the \textbf{formal zeta function} associated to the curve $C$, defined by
\[
Z_C(T) := \prod_{P \in C} \left(1 - T^{- \deg P}\right)^{-1} \in \C[[T]].
\]

It turns out that it will be natural to consider a ``normalized'' generating function $G_C(T) Z_C(T)$.
\begin{defn}\label{defn:power-series-eq}
    Let $\Psi^{\ir}(C,N)$ be the unique set of coefficients such that
    \[
    G_C(T) Z_C(T) = \sum_{N \ge 0} \Psi^{\ir}(C,N) T^{N}.
    \]
\end{defn}
We claim that there is a relatively simple formula for  $\Psi^{\ir}(C,N)$.
\begin{prop}\label{prop:Psi-formula}Assume either $C\cong \PP^1$ or $\Char \F_q \neq 3$. Then we have
    \[
    \Psi^{\ir}(C,N) = \sum_{\substack{N',d \ge 0 \\ N' + d = N}}\sum_{\substack{\mc{E}' \in |\Bun(C)| \\\deg \mc{E}' = N'}} \frac{1}{|\Aut \mc{E}'|}\sum_{\deg D = d} \mu(D)\Phi^{\ir}(\mc{E}',D).
    \]
\end{prop}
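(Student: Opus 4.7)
The plan is to verify the power-series identity $G_C(T) Z_C(T) = \sum_{N\ge 0}\Psi^{\ir}(C,N) T^N$ with $\Psi^{\ir}(C,N)$ given by the proposed formula, starting from the expression \eqref{eq:theta-Rir} for $\Theta(C,N)$ and using the M\"obius-inverted identity $R^{\ir}(\mc{E}', D) = \sum_{E \mid D}\Phi^{\ir}(\mc{E}', E)$ (which follows from the defining relation of $\Phi^{\ir}$ in terms of $R^{\ir}$). The final simplification is carried out by an Euler-product calculation at each closed point of $C$.

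Explicitly, I expand $R^{\ir}(\mc{E}', A_1+A_2)$ in \eqref{eq:theta-Rir} as a sum of $\Phi^{\ir}(\mc{E}', E)$ over $E\mid A_1+A_2$. Writing $E = E_1 + E_2$ with $E_i \mid A_i$ uniquely, and setting $F_i := A_i - E_i$, re-parameterizes the formula as a sum over five pairwise disjoint reduced effective divisors $E_1, F_1, E_2, F_2, A_3$; the M\"obius weight becomes $\mu(E_1)\mu(F_1)\mu(A_3)$ (using $\mu(A_1)=\mu(E_1)\mu(F_1)$), and the argument of $\Phi^{\ir}$ is $D:= E_1 + E_2$, itself reduced effective. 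Multiplying by $Z_C(T) = \sum_{D_0 \text{ eff}} T^{\deg D_0}$ introduces an additional summation variable $D_0$ ranging over all effective divisors.

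The key step is to evaluate, for fixed $\mc{E}'$ and $D$, the remaining generating function over $E_1, F_1, F_2, A_3, D_0$. The sum over $E_1 \mid D$ (weighted by $\mu(E_1) T^{\deg E_1 + 2\deg E_2}$) factors multiplicatively to $T^{\deg D}\prod_{P\mid D}(T^{\deg P} - 1)$. The remaining sum over $D_0$ (arbitrary effective) and $F_1, F_2, A_3$ (pairwise disjoint reduced effective, all disjoint from $D$) factors locally over closed points $P$: for $P\mid D$ only $D_0$ contributes, giving the local factor $(1-T^{\deg P})^{-1}$, and for $P \nmid D$ the local factor is
\[
(1-T^{\deg P})^{-1}\bigl(1 - T^{\deg P} + T^{2\deg P} - T^{2\deg P}\bigr) = 1,
\]
with the alternating sum accounting for whether $P$ lies in none of, $F_1$, $F_2$, or $A_3$ (the $T^{2\deg P}$ contributions from $F_2$ and $A_3$ cancel exactly).

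Multiplying these ingredients, the inner sum collapses to $T^{\deg D}\prod_{P\mid D}\tfrac{T^{\deg P}-1}{1-T^{\deg P}} = (-1)^{\omega(D)} T^{\deg D} = \mu(D) T^{\deg D}$, yielding
\[
G_C(T)Z_C(T) = \sum_{\mc{E}'}\frac{T^{\deg\mc{E}'}}{|\Aut\mc{E}'|}\sum_{D \text{ red.\ eff.}}\mu(D) T^{\deg D}\Phi^{\ir}(\mc{E}', D),
\]
and extracting the coefficient of $T^N$ gives the claim. The main obstacle is the bookkeeping in the change of variables and verifying the Euler-product cancellation at closed points $P \nmid D$: this is precisely where the factor $Z_C(T)$ in the definition of $\Psi^{\ir}$ earns its keep, absorbing the contributions from the auxiliary variables $D_0, F_1, F_2, A_3$ and leaving a clean M\"obius weight $\mu(D)$.
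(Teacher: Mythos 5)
Your proof is correct, and it reaches the same Euler-product collapse as the paper, but it is organized differently enough to be worth contrasting. The paper's proof pulls the sum over sections $s \in \mc{V}(\mc{E}')^{\ir}$ to the outside and, for each fixed $s$, evaluates $\sum_{D_1,D_2,D_3}\mu(D_1)\mu(D_3)r_{D_1}(s)r_{D_2}(s)T^{\deg D_1+2\deg D_2+2\deg D_3}$ as the per-section Euler product $\prod_P\bigl(1-r_P(s)T^{\deg P}+r_P(s)T^{2\deg P}-T^{2\deg P}\bigr)=Z_C(T)^{-1}\prod_P\bigl(1-a_P(s)T^{\deg P}\bigr)$; the factor $Z_C(T)^{-1}$ falls out of the factorization and is then moved to the other side. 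You instead never open up $R^{\ir}$ or $\Phi^{\ir}$: you use only the M\"obius relation $R^{\ir}(\mc{E}',D)=\sum_{E\mid D}\Phi^{\ir}(\mc{E}',E)$ (equivalent to the paper's definition of $\Phi^{\ir}$), treat $Z_C(T)$ as an explicit extra divisor variable $D_0$, and carry out the Euler product purely in the divisor variables $E_1,F_1,F_2,A_3,D_0$. The local cancellation $-T^{\deg P}+T^{2\deg P}-T^{2\deg P}$ you exploit at $P\nmid D$ is exactly the identity $1-rT+rT^2-T^2=(1-T)\bigl(1-(r-1)T\bigr)$ underlying the paper's factorization, so the two arguments are the same computation with the order of summation rearranged; what your version buys is that it works for any pair of functions related by divisor-level M\"obius inversion, without needing the pointwise multiplicativity $r_D(s)=\prod_P r_P(s)$ beyond what is already encoded in that relation. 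One small point to make explicit in a final write-up: all the rearrangements are legitimate as formal power series because, for each fixed power $T^N$, only finitely many terms contribute (this is where \Cref{assumption} and the finiteness results of \Cref{sec:appendix-hor-finite} enter, exactly as in the paper).
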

\begin{proof}
    We expand the formal power series $G_C(T)$ using \eqref{eq:theta-Rir}. We can write $G_C(T)$ as 
    \begin{align*}
    &\sum_{N \ge 0} T^{N} \Theta(C,N)\\
        &= \sum_{N \ge 0} T^{N} \sum_{D_1, D_2, D_3} \mu(D_1) \mu(D_3) \sum_{\substack{\mc{E}' \in |\Bun(C)| \\\deg \mc{E}' = N'}} \frac{1}{|\Aut \mc{E}'|} R^{\ir}(\E', D_1+D_2)\\
        &= \sum_{N' \ge 0} T^{N'} \sum_{D_1, D_2, D_3} T^{\deg D_1 + 2 \deg D_2 + 2 \deg D_3} \mu(D_1) \mu(D_3) \sum_{\substack{\mc{E}' \\\deg \mc{E}'  = N'}} \frac{1}{|\Aut \mc{E}'|} R^{\ir}(\E', D_1 + D_2)\\
        &=\sum_{N' \ge 0} T^{N'} \sum_{D_1, D_2, D_3} T^{\deg D_1 + 2 \deg D_2 + 2 \deg D_3} \mu(D_1) \mu(D_3) \sum_{\substack{\mc{E}' \in |\Bun(C)| \\\deg \mc{E}'  = N'}} \frac{1}{|\Aut \mc{E}'|} \sum_{s \in \mc{V}(\mc{E}')^{\ir}} r_{D_1}(s) r_{D_2}(s)\\
        &=\sum_{N' \ge 0} T^{N'} \sum_{ \substack{\mc{E}' \in |\Bun(C)| \\ \deg \mc{E'} = N'} }\frac{1}{|\Aut \mc{E}'|}\sum_{s \in \mc{V}(\mc{E}')^{\ir}}\sum_{D_1, D_2, D_3} \mu(D_1) \mu(D_3) r_{D_1}(s) r_{D_2}(s)T^{\deg D_1 + 2 \deg D_2 + 2 \deg D_3}.
    \end{align*}
    Recall that here $N'$ is defined such that $N' + \deg D_1 + \deg D_2 = N$. Now we focus on the innermost sum. We can write it as
    \begin{align*}
    & \sum_{D_1, D_2, D_3} \mu(D_1) \mu(D_3) r_{D_1}(s) r_{D_2}(s) T^{\deg D_1 + 2 \deg D_2 + 2 \deg D_3}\\
    &= \prod_{P \in C} (1- r_P(s) T^{\deg P} + r_P(s) T^{2 \deg P} - T^{2 \deg P})\\
    &= \prod_{P \in C} (1-T^{\deg P}) \prod_{P \in C} (1-a_P(s) T^{\deg P})\\
    &= Z_C(T)^{-1} \sum_{D} \mu(D) T^{\deg D} a_D(s).
    \end{align*}
    where the products are over all closed points $P \in C$. 

    Therefore we can write
    \begin{align*}
    G_C(T) &=\sum_{N \ge 0}  T^{N} \Theta(C,N)\\
    &= \sum_{\mc{E}' \in |\Bun(C)|} T^{\deg \mc{E}'} \frac{1}{|\Aut \mc{E}'|} \sum_{s \in \mc{V}(\mc{E}')^{\ir}} Z_C(T)^{-1} \sum_{D} \mu(D) T^{\deg D} a_D(s)\\
    &=Z_C(T)^{-1} \sum_{\mc{E}' \in |\Bun(C)|} \sum_{D} T^{\deg \mc{E}' + \deg D} \frac{1}{|\Aut \mc{E}'|} \mu(D)\sum_{s \in \mc{V}(\mc{E}')^{\ir}} a_D(s)\\
    &= Z_C(T)^{-1} \sum_{\mc{E}'\in |\Bun(C)|} \sum_{D} T^{\deg \mc{E}' + \deg D} \frac{1}{|\Aut \mc{E}'|} \mu(D)\Phi^{\ir}(\mc{E}',D)\\
    &= Z_C(T)^{-1} \sum_{N' \ge 0} \sum_{d \ge 0} T^{N' + d} \sum_{\substack{\mc{E}' \in |\Bun(C)|\\ \deg \mc{E}' = N'} }\frac{1}{|\Aut \mc{E}'|} \sum_{\deg D = d} \mu(D) \Phi^{\ir}(\mc{E}', D)\\
    &= Z_C(T)^{-1} \sum_{N \ge 0} T^{N} \Psi^{\ir}(C,N).
    \end{align*}
    So
    \begin{align*}
    \sum_{N \ge 0} T^{N} \Psi^{\ir}(C,N) &= Z_C(T) G_C(T) \\
    &= \sum_{N \ge 0}T^{N} \sum_{\substack{N', d \ge 0\\ N' + d = N}} \sum_{\substack{\mc{E}' \in |\Bun(C)| \\\deg \mc{E}' = N'}} \frac{1}{|\Aut \mc{E}'|} \sum_{\substack{D\\\deg D = d}} \mu(D) \Phi^{\ir}(\mc{E}', D)
    \end{align*}
    and taking coefficients yields the desired result.
\end{proof}

\section{Hirzebruch Surfaces}\label{sec:hirz}
We specialize now to the case of the base curve $C = \PP^{1}_{\Fq}$.

By the Birkhoff-Grothendieck theorem, every rank $2$ vector bundle $\mc{E}$ on $\PP^1$ splits as $\mc{E} \cong \mc{O}_{\PP^1}(a_1) \oplus \mc{O}_{\PP^1}(a_2)$ for some pair of integers $a_1 \le a_2$. Recall also that $\PP(\E) \cong \PP(\E')$ if $\E' \cong \E \otimes \LL$ for some line bundle $\LL$. Consequently, every ruled surface $\PP(\E)$ over $\PP^1$ is isomorphic to a \textbf{Hirzebruch surface} $F_k$, defined by
\[
F_k:= \PP(\mc{O}_{\PP^1}\oplus \mc{O}_{\PP^1}(-k)),
\]
for some unique integer $k\ge 0$.

We write $(t_0: t_1)$ for the projective coordinates on the base $C = \PP^1_{[t_0:t_1]}$. For an integer $n$, a global section in $H^0(\PP^1, \mc{O}_{\PP^1}(n))$ can be canonically identified with a degree $n$ homogeneous polynomial $A(t_0, t_1)   \in \F_q[t_0, t_1]$. (When $n<0$, the only global section is $A(t_0, t_1) =0$.)

\subsection{Background on Hirzebruch Surfaces}
As a basic point of reference for ruled surfaces and Hirzebruch surfaces, see Hartshorne \cite[\S V.2]{Har77}.\footnote{Hartshorne works over an algebraically closed field, but it will not be hard to show that the properties we need are true over over $\F_q$. In particular, intersection numbers and cohomological vanishing can be checked after base change to an algebraic closure.}

For a Hirzebruch surface $\pi: F_k \to \PP^1$, there is a divisor $B_k \subseteq F_k$ known as the \textbf{directrix}. It is the image of a section of $\pi: F_k \to \PP^1$, induced by the surjective morphism
\[
\mc{O}\oplus \mc{O}(-k) \xrightarrow{(0, \Id)} \mc{O}(-k)
\]
on the base $\PP^1$.

The directrix $B_k$ satisfies $\mc{O}(B_k) = \mc{O}_{F_k}(1)$, and the self intersection of the directrix is $B_k\cdot B_k = -k$. (See \cite{Har77} Proposition 2.8, Example 2.11.3, and Corollary 2.13). As such, when $k\ge 1$, $B_k$ is also sometimes known as the \textbf{negative section} of $F_k$.

We can decompose
\[
\Pic F_k \cong \Z \oplus \pi^{*} \Pic \PP^1
\]
where the first $\Z$ is generated by the line bundle $\mc{O}_{F_k}(1)$, or equivalently by $\mc{O}(B_k)$. (See \cite{Har77} Proposition 2.3.) Furthermore, on $\PP^1$ the degree map $\deg: \Pic \PP^1 \xrightarrow{\sim} \Z$ is an isomorphism. We let $f$ denote the divisor class of a fiber of $\pi$. Therefore we have
\[
\Pic F_k \cong \Z B_k \oplus \Z f.
\]

The intersection matrix is $B_k\cdot B_k = -k$, $B_k \cdot f = 1$, and $f\cdot f = 0$ (see \cite{Har77} Proposition 2.3 and Example 2.11.3).

We define the notation
\[
\mc{O}(m, \ell):= \mc{O}(m B_k + \ell f)\cong \mc{O}_{F_k}(m) \otimes \pi^{*}(\mc{O}_{\PP^1}(\ell)).
\]
for a pair of integers $(m, \ell)$. Every line bundle on $F_k$ is isomorphic to $\mc{O}(m, \ell)$ for some pair of integers $(m,\ell)$.

\subsection{Sections and Cohomology}

Let $\mc{E} = \mc{O} \oplus \mc{O}(-k)$ be a (necessarily split) rank $2$ vector bundle on $\PP^1$. We use the notation $\mc{E} = \mc{O} x \oplus \mc{O}(-k) y$ where $x$ and $y$ are formal, to mean that for sections $a$ of $\mc{O}$ and $b$ of $\mc{O}(-k)$, $ax+by$ denotes the section $(a,b)$ of $\mc{E}$. We can think of $[x:y]$ as the coordinates of the $\PP^1$ fibers on the projective bundle $F_k = \PP(\mc{E})$.

By \Cref{prop: proj-sym}, for any integers $m$ and $\ell $ with $m\ge 0$, we have a canonical isomorphism
\[
H^{0}(F_k, \mc{O}(m, \ell)) \cong H^0(\PP^1, \Sym^{m}(\mc{O} x \oplus \mc{O}(-k) y)\otimes \mc{O}(\ell)).
\]
\[
H^{0}(F_k, \mc{O}(m, \ell)) \cong H^0(\PP^1, \Sym^{m}(\mc{E})\otimes \mc{O}(\ell)).
\]
Again assuming $m\ge 0$, we can write a splitting for the vector bundle:
\begin{align*}
\Sym^{m} \mc{E} \otimes \mc{O}(\ell) &\cong \Sym^{m}(\mc{O} x \oplus \mc{O}(-k) y)\otimes \mc{O}(\ell)\\
&\cong \mc{O}(\ell) x^{m} \oplus \mc{O}(\ell-k) x^{m-1}y \oplus \cdots \oplus \mc{O}(\ell-mk) y^m.
\end{align*}

Therefore, if $m \ge 0$, we may canonically identify $H^0(F_k, \mc{O}(m, \ell))$ with the $\F_q$-vector space of ``bi-homogeneous'' polynomials
\[
s = A_0(t_0,t_1) x^m + A_1(t_0, t_1) x^{m-1}y + \cdots + A_{m}(t_0, t_1) y^{m},
\]
\[
\deg A_i(t_0, t_1) = \ell - ik
\]
in coordinates $[x:y]$ and $[t_0: t_1]$.

It will be useful later to identify when the cohomology of line bundles on Hirzebruch surfaces vanishes.
\begin{lemma}\label{lem:hirz-cohom}
    Let $m, \ell$ be integers, and let $k\ge 0$ be an integer.
    \begin{itemize}
        \item If $m \ge 0$ and $\ell - mk \ge -1$, then $H^1(F_k, \mc{O}(m,\ell)) = 0$.
        \item If $m \ge 0$, then $H^2(\mc{O}(m, \ell))=0$.
    \end{itemize}
\end{lemma}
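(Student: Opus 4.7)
The plan is to use the projective bundle projection $\pi \colon F_k \to \PP^1$ together with the Leray spectral sequence to reduce both vanishing statements to cohomological facts on $\PP^1$, which are completely standard.

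First I would recall that $\mc{O}_{F_k}(m) \cong \mc{O}_{\PP(\mc{E})}(m)$ for $\mc{E} = \mc{O} \oplus \mc{O}(-k)$. By the standard computation for projective bundles (\cite[\S III.8]{Har77}), for $m \ge 0$ we have
\[
\pi_* \mc{O}_{F_k}(m) \cong \Sym^m \mc{E}, \qquad R^i \pi_* \mc{O}_{F_k}(m) = 0 \text{ for } i \ge 1.
\]
Applying the projection formula to $\mc{O}(m,\ell) = \mc{O}_{F_k}(m) \otimes \pi^* \mc{O}_{\PP^1}(\ell)$, the higher direct images still vanish when $m \ge 0$ and
\[
\pi_* \mc{O}(m,\ell) \cong \Sym^m \mc{E} \otimes \mc{O}_{\PP^1}(\ell).
\]
Because $R^i \pi_*$ vanishes for $i \ge 1$, the Leray spectral sequence degenerates at $E_2$, yielding
\[
H^j(F_k, \mc{O}(m,\ell)) \cong H^j(\PP^1, \Sym^m \mc{E} \otimes \mc{O}_{\PP^1}(\ell))
\]
for all $j \ge 0$.

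For the second bullet, $H^2$ of any coherent sheaf on $\PP^1$ vanishes by dimension, so $H^2(F_k, \mc{O}(m,\ell)) = 0$ whenever $m \ge 0$. For the first bullet, I would split
\[
\Sym^m \mc{E} \otimes \mc{O}_{\PP^1}(\ell) \;\cong\; \bigoplus_{i=0}^{m} \mc{O}_{\PP^1}(\ell - i k),
\]
so that
\[
H^1(F_k, \mc{O}(m,\ell)) \;\cong\; \bigoplus_{i=0}^{m} H^1(\PP^1, \mc{O}_{\PP^1}(\ell - i k)).
\]
Since $k \ge 0$, the smallest twist appearing is $\ell - mk$; the hypothesis $\ell - mk \ge -1$ then forces every summand to be an $\mc{O}_{\PP^1}(d)$ with $d \ge -1$, whose $H^1$ vanishes. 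This gives the desired vanishing.

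There is no real obstacle here: once one has the two inputs (higher direct images of $\mc{O}_{\PP(\mc{E})}(m)$ vanish for $m \ge 0$, and $H^1(\PP^1, \mc{O}(d)) = 0$ for $d \ge -1$), the rest is formal. The only thing to be slightly careful about is that the bound $\ell - mk \ge -1$ controls the worst summand; since $k \ge 0$ all other summands are automatically at least as good.
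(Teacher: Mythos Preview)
Your proof is correct and essentially identical to the paper's: both reduce to $H^j(F_k,\mc{O}(m,\ell))\cong H^j(\PP^1,\Sym^m\mc{E}\otimes\mc{O}(\ell))$ (the paper by citing \cite[Lemma V.2.4]{Har77}, you by spelling out the Leray/projection-formula argument), then split the symmetric power and use $H^1(\PP^1,\mc{O}(d))=0$ for $d\ge -1$ and $\dim\PP^1=1$. The only cosmetic difference is that the paper first base changes to an algebraic closure, which is harmless here.
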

\begin{proof}
Without loss of generality we can base change to an algebraic closure of $\F_q$. By \cite{Har77} Lemma V.2.4, we have
\[
H^{j}(F_k, \mc{O}(m, \ell)) = H^{j}(\PP^1, \Sym^{m}(\mc{E}) \otimes \mc{O}(\ell))
\]
for all $j \ge 0$. Therefore, if $\ell - mk \ge -1$, we have
\[
H^1(F_k, \mc{O}(m, \ell)) = H^1(\PP^1, \Sym^m(\mc{E})\otimes \mc{O}(\ell)) = \bigoplus_{i=0}^{m} H^1(\PP^1,  \mc{O}(\ell - ik)) = 0,
\]
and $H^2(F_k, \mc{O}(m, \ell)) = H^2(\PP^1(\Sym^{m}(\mc{E}) \otimes \mc{O}(\ell))) = 0$ since the dimension of $\PP^1$ is $1$.
\end{proof}

\subsection{Sections of Vertical Degree 3}
We will primarily be concerned with sections
\[
s \in H^{0}(\PP(\mc{E}),\mc{O}_{\PP(\mc{E})}(3)\otimes \pi^{*}(\wedge^2 \mc{E})^{\vee})
\]
for a rank $2$ vector bundle $\mc{E}$ on $\PP^1$ with associated projective bundle $\pi: \PP(\mc{E}) \to \PP^1$.

If $\mc{E}$ and $\mc{E}'$ are rank $2$ vector bundles with a given isomorphism $\mc{E}' \cong \mc{E} \otimes \mc{L}_1$, then on the isomorphic surfaces $\PP(\mc{E}) \cong \PP(\mc{E}')$ we have $\mc{O}_{\PP(\mc{E'})}(1) \cong \mc{O}_{\PP(\mc{E})}(1)\otimes \pi^{*} \mc{L}_1$, and therefore
\[
H^0(\PP(\mc{E'}), \mc{O}_{\PP(\mc{E'})}(m)) \cong H^0(\PP(\mc{E}), \mc{O}_{\PP(\mc{E})}(m)\otimes \pi^{*} \mc{L}_1^{\otimes m}).
\]

We will define a bit of notation.
\begin{defn}
    If $\ell, k$ are integers such that $k\ge 0$, we denote
    \[
    \mc{V}(\ell, k) := H^{0}(F_k, \mc{O}(3, \ell)) = H^0(F_k, \mc{O}_{F_k}(3) \otimes \pi^{*} \mc{O}(\ell)).
    \]
\end{defn}
Explicitly, a section $s \in \mc{V}(\ell, k)$ is a degree $3$ binary cubic form
\[
s = A_0(t_0, t_1)x^3 + A_1(t_0, t_1) x^2y + A_2(t_0, t_1) xy^2 + A_3 (t_0,t_1) y^3
\]
such that
\begin{align*}
    \deg A_0 &=\ell,\\
    \deg A_1 &= \ell - k,\\
    \deg A_2 &= \ell - 2k,\\
    \deg A_3 &= \ell - 3k.
\end{align*}

We remark that this notation matches with our previously defined notation $\mc{V}(\mc{E})$ if we define 
\[
\mc{E} = \mc{E}_{\ell, k}:= \mc{O}(\ell -k) \oplus \mc{O}(\ell -2k).
\]
Then $\mc{E}_{\ell,k}$ is a twist of $\mc{O} \oplus \mc{O}(-k)$ by $\mc{O}(\ell-k)$, so
\begin{align*}
\mc{V}(\mc{E}_{\ell,k}) &\cong H^{0}(\PP(\mc{E}_{\ell,k}), \mc{O}_{\PP(\mc{E}_{\ell,k})}(3) \otimes \pi^{*}(\wedge^2 \mc{E}_{\ell,k})^{\vee})\\
&\cong H^0(F_k, \mc{O}_{F_k}(3) \otimes \pi^{*} \mc{O}(3\ell - 3k) \otimes \pi^{*}\mc{O}(3k - 2\ell))\\
&\cong H^0(F_k, \mc{O}_{F_k}(3) \otimes\pi^{*} \mc{O}(\ell))\\
&\cong H^0(F_k, \mc{O}(3,\ell))\\
&= \mc{V}(\ell, k).
\end{align*}

\subsection{Horizontal Irreducibility}
 By the results of \Cref{prop:horiz-ir}, a nonzero form 
 \[
 s = A_0 x^3 + A_1x^2 y + A_2 xy^2 + A_3 y^3 \in \mc{V}(\ell, k)
 \]
 is considered to be \textbf{horizontally reducible} if and only if there are integer $b,c$ and a pair of nonzero binary forms
\[
B_0 x^2 + B_1 xy + B_2 y^2 \in H^0(F_k, \mc{O}(2, b))
\]
and
\[
C_0 x + C_1 y \in H^0(F_k, \mc{O}(1, c))
\]
such that $s$ factors as
\[
A_0 x^3 + A_1 x^2y + A_2 xy^2 + A_3 y^3 = (B_0 x^2 + B_1 xy + B_2 y^2) (C_0x + C_1 y).
\]

(The form $s=0$ is considered to be horizontally reducible.)

Indeed, if $s$ is horizontally reducible, then it has an integral component $Y_i$ which is the vanishing scheme of $C_0x+C_1 y \in \mc{O}(1,c)$ for some $c$. Conversely, if $s$ factors as the product of a linear and a quadratic form, then the base change of $s$ to the fraction field $\F_q(t) = \F_q(t_0/t_1)$ has a linear factor, which means $s$ is not horizontally irreducible (again by \Cref{prop:horiz-ir}.

We remark that if $A_3=0$, then $s$ is horizontally reducible, since $s$ has a factor of $x$. We would like to isolate this type of horizontal reducibility in particular. Heuristically, we will see later on that ``most'' horizontally reducible sections $s$ contain a factor of $x$. If $s$ is horizontally reducible but $s$ does not contain a factor of $x$, we think of this behavior as ``special'' and comparatively rare.

(Note that $\deg A_0 \ge \deg A_1 \ge \deg A_2 \ge \deg A_3$, so $A_3$ has a smaller degree than the other coefficients. Heuristically, demanding that $A_3=0$, meaning $s$ has a factor of $x$, places fewer constraints on the binary cubic form $s$ than demanding $s$ has any other linear factor. This is why we break symmetry by singling out the specific factor $x$.)

\begin{defn}
    Let $\ell, k \ge 0$ be integers. A section 
    \[
    s = A_0 x^3 + A_1 x^2 y + A_2 xy^2 + A_3 y^3 \in \mc{V}(\ell, k)
    \]
    is \textbf{$x$-reducible} if it is divisible by $x$ (equivalently if $A_3 = 0$). We say $s$ is \textbf{$x$-irreducible} if it is not $x$-reducible.

    If $s$ is horizontally reducible but not $x$-reducible, we say $s$ is \textbf{specially reducible.}
\end{defn}

\begin{defn}
    For a set $S$ with $S\subseteq \mc{V}(\ell, k) = H^0(F_k, \mc{O}(\ell, k))$, we write
    \begin{itemize}
        \item $S^{\ir}\coloneq \{ s \in S: s\text{ is horizontally irreducible}\}$
        \item $S^{\hr}\coloneq \{ s \in S: s\text{ is nonzero and horizontally reducible}\}$
        \item $S^{\xir}\coloneq\{ s \in S: s\text{ is $x$-irreducible}\}$
        \item $S^{\sr}\coloneq\{ s \in S: s\text{ is specially reducible}\}$.
        \item $S^{\nz}\coloneq\{ s \in S: s\text{ is nonzero}\}$.
        \item $S^{\z}\coloneq\{ s \in S: s\text{ is zero}\}$.
    \end{itemize}
\end{defn}

Note that a section can be forced to be $x$-reducible purely for degree reasons.
\begin{lemma}
    If $\mc{V}(\ell,k)^{\ir}$ is nonzero, then $\ell - 3k \ge 0$.
\end{lemma}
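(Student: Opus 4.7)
The plan is to use the explicit description of sections in $\mc{V}(\ell,k)$ just given, namely that every $s \in \mc{V}(\ell,k)$ has the form
\[
s = A_0(t_0,t_1) x^3 + A_1(t_0,t_1) x^2 y + A_2(t_0,t_1) xy^2 + A_3(t_0,t_1) y^3
\]
with $\deg A_i = \ell - ik$, and to observe that the hypothesis $\ell - 3k < 0$ forces the coefficient of $y^3$ to vanish on $\PP^1$.

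First I would argue by contrapositive: suppose $\ell - 3k < 0$ and take any $s \in \mc{V}(\ell,k)$. Then $A_3$ is a global section of $\mc{O}_{\PP^1}(\ell - 3k)$, and since $H^0(\PP^1, \mc{O}(n)) = 0$ for $n < 0$, we must have $A_3 = 0$. Consequently
\[
s = x \bigl( A_0 x^2 + A_1 xy + A_2 y^2 \bigr),
\]
so $s$ is divisible by $x$, i.e., $s$ is $x$-reducible in the terminology just introduced.

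Next, I would invoke the remark made immediately above the statement: any section divisible by $x$ is horizontally reducible (the vanishing locus $V(x) \subseteq F_k$ is an integral component of the form $\mc{O}(1,c)$ for appropriate $c$, and appears as a factor of $s$, so $s$ is not horizontally irreducible by \Cref{prop:horiz-ir}). Hence no $s \in \mc{V}(\ell,k)$ can be horizontally irreducible, so $\mc{V}(\ell,k)^{\ir} = \emptyset$, contradicting our assumption.

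There is no real obstacle here — the lemma is essentially a dimension/degree count, and the only thing to be careful about is citing the correct equivalent characterization of horizontal (ir)reducibility from \Cref{prop:horiz-ir} when concluding that divisibility by $x$ kills irreducibility.
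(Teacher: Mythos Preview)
Your argument is correct and essentially identical to the paper's own proof, which is the one-line observation that $\ell-3k<0$ forces $\deg A_3<0$, hence $A_3=0$, hence $s$ is $x$-reducible (and therefore horizontally reducible). You have simply spelled out the same reasoning in slightly more detail.
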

\begin{proof}
    If $s \in \mc{V}(\ell,k)$ and $\ell-3k<0$, then $\deg A_3<0$, so $A_3=0$ and $s$ is automatically $x$-reducible.
\end{proof}
We define the \textbf{degree} of a pair $(\ell,k)$ of nonnegative integers to be the degree $N = \deg \mc{E}_{\ell,k} = 2\ell -3k$ of the corresponding vector bundle. In particular, if $V(\ell, k)^{\ir}$ is nonzero, then $N\ge 0$.

For a fixed degree $N$, we would like to define notation for the set of possible parameter pairs $(\ell,k)$ of degree $N$ such that $\mc{V}(\ell,k)^{\ir}$ could possibly be nonempty.

\begin{defn}
Let $N\ge 0$ be an integer. We denote
\[
\Par(N) := \{(\ell,k)\in \Z_{\ge 0}\times \Z_{\ge 0}: 2\ell-3k=N\text{ and } \ell \ge 3k \ge 0 \}.
\]
\end{defn}
\subsection{Notation}
Here we introduce some more notation. We already defined
\[
\mc{V}(\mc{\ell},k):= H^0(F_k, \mc{O}(3, \ell)) 
\]
as a replacement for the notation $\mc{V}(\mc{E})$.

Let $D$ be a reduced effective divisor on $\PP^1$, and let $\DD \subseteq F_k$ be a $D$-marking. We analogously define
\[
\Van(\mc{\ell}, k, \DD)\subseteq \mc{V}(\mc{\ell},k)= H^0(F_k, \mc{O}(3, \ell)) 
\]
and
\[
\Sing(\mc{\ell}, k, \DD)\subseteq \mc{V}(\mc{\ell},k)= H^0(F_k, \mc{O}(3, \ell)) 
\]
to be the $\F_q$-subspaces of sections that are vanishing and singular at $\DD$, respectively.

\begin{defn}
    Let $\ell, k \ge 0$ be integers. Let $D$ be a reduced effective divisor on $\PP^1$.
    
    For $\alpha \in \{\ir, \hr, \xir, \sr, \nz, \z\}$, we let
    \[
    R^{\alpha}(\ell, k, D):= \sum_{s \in \mc{V}(\ell, k)^{\alpha}} r_D(s).
    \]

    Similarly, let
    \[
    \Phi^{\alpha}(\ell, k, D):= \sum_{s \in \mc{V}(\ell, k)^{\alpha}} a_D(s).
    \]
\end{defn}

All of these definitions align with our previous definitions when we set $\mc{E}_{\ell,k} = \mc{O}(\ell-k) \oplus \mc{O}(\ell-2k)$. Note that every rank $2$ vector bundle $\mc{E}$ with $\deg \mc{E} = N$ can be written as $\mc{E}\cong \mc{E}_{\ell,k}$ for a unique pair of integers $(\ell,k)$ with $2\ell -3k = N$.

\begin{defn}
    Let $k\ge 0$. We define the group $\Gamma_k:= \Aut(\mc{O}_{\PP^1} \oplus \mc{O}_{\PP^1}(-k))$.
\end{defn}
Therefore for any rank $2$ vector bundle $\mc{E} = \mc{E}_{\ell, k}$, we have $\Aut(\mc{E}) \cong \Gamma_k$.

Note that if $k\ge 1$, then $|\Gamma_k| = (q-1)^2q^{k+1}$. If $k=0$, then $|\Gamma_0| = |\GL_2(\Fq)| = (q^2-q)(q^2-1)$.

Finally, for the set $\Mar(\mc{E}, D)$ of $D$-markings of $\PP(\mc{E})$, we will often write $\Mar(D)$ when the projective bundle $F_k \to \PP^1$ is clear.

\subsection{Main Sum For $\PP^1$}

In the case $C = \PP^1$, we will translate and simplify some of our previous notation. We define
\begin{align*}
    \Theta(N)&\coloneq \Theta(\PP^1,N),\\
    \Psi^{\ir}(N)&\coloneq \Psi^{\ir}(\PP^1, N),\\
    G(T)&\coloneq G_{\PP^1}(T),\\
    Z(T)&\coloneq Z_{\PP^1}(T).
\end{align*}
The main term we are after can now be written as
\[
\Psi^{\ir}(N) = \sum_{\substack{N', d' \ge 0 \\ N' + d = N}} \sum_{(\ell, k) \in \Par(N')} \frac{1}{|\Gamma_k|}\sum_{\deg D = d} \mu(D) \Phi^{\ir}(\ell, k , D).
\]

Also, it is well known that the formal zeta function\footnote{$Z(T)$ is the unique rational function such that 
\[Z(q^{-s}) = \zeta_{\PP^1}(s) = \frac{1}{(1-q^{-s})(1-q^{1-s})}.\]} $Z(T) = Z_{\PP^1}(T)$ satisfies 
\[
Z(T) = \prod_{P \in \PP^1} (1- T^{\deg P})^{-1} = \frac{1}{(1-T)(1-qT)}.
\]

(Indeed, we can write 
\[
Z(T) = \sum_{d \ge 0} \sum_{\deg D= d }T^{\deg d} = \sum_{d \ge 0} \left(\frac{q^{d+1}-1}{q-1}\right)T^{d}  = \frac{1}{(1-T)(1-qT)}
\]
where $D$ ranges over the $(q^{d+1}-1)/(q-1)$ reduced effective divisors of degree $d$ on $\PP^1$.)

Thus once we estimate $\Psi^{\ir}(N)$, we can easily solve for $\Theta(N)$ using the equality
\[
G(T) = \sum_{N \ge 0} \Theta(N)T^{N} =(1-T)(1-qT)\left(\sum_{N \ge 0} \Psi^{\ir}(N) T^{N}\right)
\]
from \Cref{defn:power-series-eq} and \Cref{prop:Psi-formula}.

\subsection{Strategy for the Rest of the Thesis}

The main task for the rest of the thesis is to estimate $\Psi^{\ir}(N)$. This is a weighted sum of terms $\Phi^{\ir}(\ell, k, D)$ for various triples $(\ell, k, D)$. The terms in the sum satisfy the conditions $2\ell - 3k + \deg D = N$, $\deg D \ge 0$, and $\ell \ge 3k\ge 0$.

We will divide up the collection of possible triples $(\ell, k,D)$ satisfying these three conditions into a ``small range'', ``medium range'', and ``large range'' depending on the size of $\deg D$ as a function of $\ell$ and $k$. For triples in each range, we will estimate $\Phi^{\ir}(\ell, k, D)$ and the related root-counting function $R^{\ir}(\ell, k, D)$.

We'll find a ``model'' function $\widehat{\Phi}^{\ir}$ such that
\[
\Phi^{\ir}(\ell, k, D) = \widehat{\Phi}^{\ir}(\ell, k, D) + O(\Error_1)
\]
where the error depends on the range of $(\ell, k, D)$.

Adding up the errors will yields a model term and error term for 
\[
\Psi^{\ir}(N) = \widehat{\Psi}^{\ir}(N) + O(\Error_2)
\]
which will easily give
\[
\Theta(N) = \widehat{\Theta}(N) + O(\Error_3)
\]
where $\widehat{\Theta}(N)$ will be defined in terms of $\widehat{\Psi}^{\ir}(N)$.

 We can then package the values $\widehat{\Theta}(N)$ into a generating function, which will give a main term, a secondary term, and an error term for $\Theta(N)$.

 Finally, we account for the error coming from $C_3$ and inseparable covers, which is the only discrepancy between $\Theta(N)$ and $\Cov_{3}(2N)$.

 After all of these steps, we will have a main term, a secondary term, and an error term for $\Cov_3(2N)=\Cov_{3}(\PP^1, 2N)$, the number of triple covers of $\PP^1$ of branch degree $2N$ up to isomorphism.
\section{Restrictions of Line Bundles}
\label{sec:restrictions-line-bundles}
Let $D$ be a reduced effective divisor on $\PP^1$, and let $\DD \subseteq F_k$ be a $D$-marking. Consider a line bundle $\LL = \mc{O}(3, \ell)$ on $\Fk$.

We aim to count sections $s \in H^{0}(\Fk, \LL)$ passing through each point of $\DD$. In other words, we wish to understand the kernel of the restriction morphism
\[
H^{0}(\Fk, \LL) \to H^{0}(D, \LL|_{D}) \cong H^{0}(D, \mc{O}_D).
\]
Our strategy is to find a curve $C_1\subseteq \Fk$ passing through $\DD$ and to analyze the composition 
\[
H^{0}(\Fk, \LL) \to H^{0}(C_1, \LL|_{C_1}) \to H^{0}(D, \mc{O}_D).
\]
In particular, it will be beneficial to consider curves $C_1 \subseteq \Fk$ containing $\DD$ such that $\mc{O}(C_1) \cong \mc{O}(1,h)$ for some nonnegative integer $h$.

\begin{rem}
    For us, a curve is reduced by definition, but not necessarily irreducible.
\end{rem}

Any such curve $C_1$ with $\mc{O}(C_1) \cong \mc{O}(1,h)$ can be decomposed into irreducible components
\begin{equation}
C_1 = C_0 \cup \bigcup_{i} f_i \label{eq:c1-decomp}
\end{equation}
where the $f_i$ are fibers above closed points in $\PP^1$ and $C_0$ is the unique horizontal irreducible component of $C_1$ (meaning $\mc{O}(C_0) \cong \mc{O}(1,h_0)$ for some $0\le h_0 \le h$).

The closed embedding $C_1 \hookrightarrow \Fk$ defines an exact sequence
\[
0 \to \mc{O}_{\Fk}(-C_1) \to \mc{O}_{\Fk} \to i_{*} \mc{O}_{C_1}\to 0
\]
of sheaves on $\Fk$. By tensoring with $\LL$, we obtain the long exact sequence
\[
\begin{tikzcd}[arrows=to]
0 \rar & H^0(\Fk, \LL(-C_1)) \rar & H^0(\Fk, \LL)\arrow[d, phantom, ""{coordinate, name=Z}] \rar & H^0(C_1, \LL|_{C_1})
\arrow[dll,rounded corners,to path={ -- ([xshift=2ex]\tikztostart.east)
|- (Z) [near end]\tikztonodes
-| ([xshift=-2ex]\tikztotarget.west)
-- (\tikztotarget)}] \\
 & H^1(\Fk, \LL(-C_1)) \rar & H^1(\Fk, \LL) \rar & H^1(C_1, \LL|_{C_1}).
\end{tikzcd}
\]

So we look for $C_1$ passing through $\DD$ such that $H^1(F_k, \mc{L}(-C_1))=0$, because in this case the restriction map $H^0(F_k, \mc{L}) \to H^0(C_1, \mc{L}|_{C_1})$ is surjective.

\begin{defn}
    We say $C_1\subseteq \Fk$ is a \textbf{minimal $(\DD,h)$-curve} if the following conditions hold:
    \begin{itemize}
    \item $C_1$ contains $\DD$,
    \item $\mc{O}(C_1) \cong \mc{O}(1,h)$,
    \item If we decompose $C_1$ into irreducible components
    \[
    C_1 =  C_0\cup \bigcup_{i \in I} f_i
    \]
    as in \eqref{eq:c1-decomp}, then every $f_i$ contains a point $\mc{P}_i$ of $\DD$, and furthermore this point is disjoint from the intersection $f_i \cap C_0$.
    
    \end{itemize}
\end{defn}
A minimal $(\DD,h)$-curve is minimal in the sense that any sub-curve given by removing some fibral components cannot still contain $\DD$.
\begin{prop}
    \label{prop:h-dichotomy}
    Let $D$ be a reduced effective divisor on $\PP^1$. Let $\DD\subseteq \Fk$ be a $D$-marking. Let $h\ge 0$ be a nonnegative integer. Let $C_1 \subseteq \Fk$ be a minimal $(\DD, h)$-curve, and let $C_0$ be its horizontal component (as in \eqref{eq:c1-decomp}). Let $\mc{D}_0$ be the subscheme of $\mc{D}$ on $C_0$. Let $\ell$ be a nonnegative integer, and set $\LL = \mc{O}(3, \ell)$.

    Now assume
    \[
    \ell-2k \ge h.
    \]

    Then either $\deg \mc{L}|_{C_0} \ge \deg \mc{D}_0$, or $\deg \mc{L}|_{C_0}< \deg \mc{D}_0$. 
    
    In the first case, the map $H^0(\Fk, \LL) \to H^0(\DD, \mc{O}_{\DD})$ is surjective.

    In the second case, if a section $s \in H^0(\Fk, \LL)$ satisfies $s|_{\DD} = 0$, then $s|_{C_0}=0$.

\end{prop}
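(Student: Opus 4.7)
The plan is to factor the restriction map as
\[H^0(\Fk, \LL) \xrightarrow{\rho_1} H^0(C_1, \LL|_{C_1}) \xrightarrow{\rho_2} H^0(\DD, \mc{O}_\DD),\]
analyze each stage separately, and exploit the decomposition $C_1 = C_0 \cup \bigcup_i f_i$ from \eqref{eq:c1-decomp}.

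First I would show $\rho_1$ is surjective. The short exact sequence $0 \to \LL(-C_1) \to \LL \to \LL|_{C_1} \to 0$ on $\Fk$ reduces this to the vanishing of $H^1(\Fk, \LL(-C_1))$. Since $\mc{O}(C_1) \cong \mc{O}(1,h)$, we have $\LL(-C_1) \cong \mc{O}(2, \ell - h)$, and the hypothesis $\ell - 2k \ge h$ gives $(\ell - h) - 2k \ge -1$, so the desired vanishing follows from \Cref{lem:hirz-cohom}. Thus the image of $\rho_2 \circ \rho_1$ equals the image of $\rho_2$.

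Next I would unpack $\rho_2$ via the Mayer--Vietoris short exact sequence
\[0 \to \mc{O}_{C_1} \to \mc{O}_{C_0} \oplus \bigoplus_i \mc{O}_{f_i} \to \bigoplus_i \mc{O}_{Q_i} \to 0,\]
where $Q_i = C_0 \cap f_i$ (a single reduced point since $C_0 \cdot f = 1$). Tensoring with $\LL|_{C_1}$ and using that $\LL|_{f_i} \cong \mc{O}_{\PP^1}(3)$ has surjective evaluation at any point, I can identify $H^0(C_1, \LL|_{C_1})$ with tuples $(s_0, (s_i))$ satisfying $s_0(Q_i) = s_i(Q_i)$, where $s_0 \in H^0(C_0, \LL|_{C_0})$ and $s_i \in H^0(f_i, \LL|_{f_i})$. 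Under $\rho_2$, such a tuple maps to $(s_0|_{\DD_0}, (s_i(\mc{P}_i)))$. The minimality hypothesis $\mc{P}_i \ne Q_i$ combined with the sufficient positivity of $\mc{O}_{\PP^1}(3)$ makes $H^0(f_i, \LL|_{f_i}) \twoheadrightarrow k(Q_i) \oplus k(\mc{P}_i)$ surjective, so the fibral terms contribute freely and the analysis reduces to the restriction $H^0(C_0, \LL|_{C_0}) \to H^0(\DD_0, \mc{O}_{\DD_0})$.

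Finally, since $C_0 \cong \PP^1$ is a section of $\pi$ and $\DD_0$ is a reduced zero-dimensional subscheme: when $\deg \LL|_{C_0} \ge \deg \DD_0$, the kernel of the restriction is $\mc{O}_{\PP^1}(\deg \LL|_{C_0} - \deg \DD_0)$ with vanishing $H^1$, so the restriction is surjective, giving Case 1; when $\deg \LL|_{C_0} < \deg \DD_0$, a section of $\LL|_{C_0}$ on $\PP^1$ vanishing at all $\deg \DD_0$ distinct points of $\DD_0$ must be identically zero, so any $s$ with $s|_{\DD}=0$ has $s|_{C_0}=0$, giving Case 2. The main subtlety is the bookkeeping in the Mayer--Vietoris step --- verifying that $C_0$ is a section of $\pi$ (so isomorphic to $\PP^1$), that $C_0 \cap f_i$ is a single reduced point, and that the minimality condition truly forces $\mc{P}_i \ne Q_i$ so that $\mc{O}_{\PP^1}(3)$ on $f_i$ can interpolate freely.
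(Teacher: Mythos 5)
Your proposal is correct and follows essentially the same route as the paper: surjectivity onto $H^0(C_1,\LL|_{C_1})$ via $H^1(F_k,\mc{O}(2,\ell-h))=0$, then the decomposition $0\to\mc{O}_{C_1}\to\mc{O}_{C_0}\oplus\bigoplus\mc{O}_{f_i}\to\mc{O}_Z\to 0$ with the minimality condition $\mc{P}_i\neq Q_i$ handling the fibral components, and finally the degree dichotomy on $C_0\cong\PP^1$. The only difference is cosmetic: the paper packages the gluing step as an $H^1$-vanishing statement for $\mc{M}=\LL|_{C_1}(-\DD)$ (its Lemma on $H^1$ versus $H^0$), while you unwind the same sequence into an explicit interpolation of tuples $(s_0,(s_i))$.
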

Heuristically, this proposition says that the size of the linear space of divisors cut out by sections in $\mc{O}(3,\ell)$ passing through $\DD$ is exactly ``as large as expected'' unless a numerical condition forces the space to have a fixed component.

\begin{lemma}\label{lem:h1-or-h0}
    Let $C_1\subseteq \Fk$ be a (reduced) curve such that $\mc{O}(C_1)\cong \mc{O}(1,h)$. Consider the decomposition into irreducible components
    \[
    C_1 = C_0 \cup \bigcup_{i} f_i
    \]
    as in \eqref{eq:c1-decomp}, where $C_0$ is the horizontal component and the $f_i$ are fibers. Furthermore let $\mc{M}$ be a line bundle on $C_1$, and assume $\mc{M}|_{f_i} \cong \mc{O}(a_i)$ with $a_i \ge 0$ for all $i$.

    Then either $\deg \mc{M}|_{C_0} \ge 0$ or $\deg \mc{M}|_{C_0}<0$. 
    \begin{itemize}
        \item If $\deg \mc{M}|_{C_0}\ge 0$, then we have \[H^1(C_1,\mc{M})=0.\]
        \item If $\deg \mc{M}|_{C_0}<0$, then we have 
        \[
        H^0(C_0, \mc{M}|_{C_0})=0.
        \]
    \end{itemize}
\end{lemma}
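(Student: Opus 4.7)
The plan is to apply a Mayer--Vietoris-style exact sequence to the decomposition $C_1 = C_0 \cup \bigcup_i f_i$, after identifying each irreducible component with a projective line. First I would check that $C_0 \cong \PP^1_{\F_q}$: since $C_0$ is integral with $C_0 \cdot f = (B_k + h_0 f)\cdot f = 1$, the projection $C_0 \to \PP^1$ is a finite birational morphism of integral proper curves whose scheme-theoretic fiber over each closed point has length $1$, and a Nakayama argument shows it is a local, and hence global, isomorphism. From this it follows that each fiber $f_i$ lying over a closed point $P_i\in \PP^1$ is $\PP^1_{\F_q(P_i)}$, that distinct $f_i$'s are disjoint, and that $C_0\cap f_i$ is a single reduced point $\mc{P}_i$ at which two smooth branches of $C_1$ meet transversally; in particular the singularities of $C_1$ are ordinary nodes at the $\mc{P}_i$.

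The second case is then immediate: if $\deg\mc{M}|_{C_0}<0$, then $H^0(C_0,\mc{M}|_{C_0}) = 0$ because $C_0 \cong \PP^1_{\F_q}$. For the first case, with $\deg\mc{M}|_{C_0}\ge 0$, I would exploit the standard short exact sequence coming from the normalization
\[
0 \to \mc{O}_{C_1} \to \mc{O}_{C_0} \oplus \bigoplus_i \mc{O}_{f_i} \to \bigoplus_i \mc{O}_{\mc{P}_i} \to 0,
\]
tensor by the locally free sheaf $\mc{M}$, and take the long exact sequence in cohomology. Then $H^1(C_1,\mc{M})$ is controlled by the cokernel of
\[
H^0(C_0,\mc{M}|_{C_0}) \oplus \bigoplus_i H^0(f_i,\mc{M}|_{f_i}) \longrightarrow \bigoplus_i \mc{M}|_{\mc{P}_i}
\]
and by $H^1(C_0,\mc{M}|_{C_0})\oplus\bigoplus_i H^1(f_i,\mc{M}|_{f_i})$. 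Each of the latter $H^1$'s vanishes because a line bundle of nonnegative degree on $\PP^1$ over any field has no $H^1$. Surjectivity of the displayed map reduces, for each $i$, to surjectivity of $H^0(f_i,\mc{M}|_{f_i}) \to \mc{M}|_{\mc{P}_i}$, which holds because $\mc{O}(a_i)$ with $a_i\ge 0$ is globally generated on $\PP^1_{\F_q(P_i)}$; hence $H^1(C_1,\mc{M}) = 0$.

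The only mildly technical obstacle is the identification $C_0 \cong \PP^1_{\F_q}$ together with the verification that the singular points of $C_1$ are transverse nodes; once this geometric picture is in place, the rest is a routine cohomological computation on $\PP^1$.
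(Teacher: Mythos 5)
Your proposal is correct and follows essentially the same route as the paper: the decomposition sequence $0 \to \mc{O}_{C_1} \to \mc{O}_{C_0} \oplus \bigoplus_i \mc{O}_{f_i} \to \mc{O}_Z \to 0$ tensored with $\mc{M}$, surjectivity onto $H^0(Z,\mc{M}|_Z)$ supplied by the globally generated $\mc{O}(a_i)$ on the fibers, vanishing of $H^1$ on each $\PP^1$ component, and the dichotomy read off from $\mc{M}|_{C_0}\cong\mc{O}(m)$ on $C_0\cong\PP^1$. The only difference is that you justify $C_0\cong\PP^1$ and the transversality of the intersections in more detail than the paper, which simply observes that $C_0$ is a section of $\pi$.
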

\begin{proof}
We have a short exact sequence of sheaves
\[
0 \to \mc{O}_{C_1} \to \mc{O}_{C_0} \oplus \bigoplus \mc{O}_{f_i} \to \mc{O}_{Z}\to 0
\]
where $Z\coloneq  \left(\cup f_i\right)\cap C_0$ is the scheme theoretic intersection (see \cite[\href{https://stacks.math.columbia.edu/tag/0B7M}{Tag 0B7M}]{stacks-project}). Note that each $C_0$ and $f_i$ intersect transversely, meaning $Z$ is a union of closed points. (We can see this to be true because $C_0$ is a section of $\pi:F_k \to \PP^1$, and because each $f_i$ is reduced.)

After twisting by $\mc{M}$ and taking cohomology, we recover
\[
0 \to H^{0}(C_1, \mc{M}) \to H^{0}(C_0, \mc{M}|_{C_0}) \oplus \bigoplus H^0(f_i, \mc{M}|_{f_i}) \to H^{0}(Z, \mc{M}|_{Z}).
\]
Now note that each $\mc{M}|_{f_{i}} \cong \mc{O}(a_i)$ by assumption. Therefore,
\[
\bigoplus H^{0}(C_0, \mc{M}|_{f_{i}}) \to H^{0}(Z, \mc{M}|_{Z})
\]
is a surjection, since each $a_i \ge 0$ and $Z$ has exactly one closed point in each $f_i$.

Therefore we have an exact sequence
\[
0 \to H^{1}(C_1, \mc{M}) \to H^1(C_0, \mc{M}|_{C_0}) \to 0
\]
noting that $H^{1}(f_i, \mc{O}(a_i)) = 0$ for $a_i \ge 0$ and $H^1(Z, \mc{M}|_Z)=H^1(Z, \mc{O}_Z)=0$ as $Z$ is a union of points.

Finally, note that $C_0 \cong \PP^1$ and so the line bundle $\mc{M}|_{C_{0}}$ satisfies 
\[
\mc{M}|_{C_{0}} \cong \mc{O}(m)
\]
for some integer $m$. Therefore we either have $H^{0}(C_0, \mc{O}(m)) = 0$ if $m \le -1$ or $H^1(C_0, \mc{O}(m))=0$ if $m\ge -1$, which implies the desired result.
\end{proof}
Now we can prove \Cref{prop:h-dichotomy}.
\begin{proof}[Proof of \Cref{prop:h-dichotomy}]

    Note that the subscheme $\DD\subseteq C_1$ is an effective Cartier divisor, since by assumption each point $\mc{P}\in \DD$ is disjoint from the singular set in $C_1$.
    
    Because $\ell - 2k \ge h$, the line bundle $\mc{L}(-C_1) \cong \mc{O}(2, \ell -h)$ satisfies $H^1(F_k, \mc{L}(-C_1)) = 0$, by \Cref{lem:hirz-cohom}. So
    \begin{equation}\label{eq:first-sur}
    H^0(F_k, \mc{L}) \twoheadrightarrow H^0(C_1, \mc{L}|_{C_1})
    \end{equation}
    is a surjection. Furthermore the restriction morphism $H^0(F_k, \mc{L}) \to H^0(\DD, \mc{O}_{\DD})$ factors through this surjection.

    Now define the line bundle $\mc{M}:= \mc{L}|_{C_1}\otimes\mc{O}_{C_1}(-\mc\DD)$. Note that $\mc{M}|_{C_0} = \mc{L}|_{C_0}(-\DD_0)$, where $\DD_0$ was defined to be the subscheme of $\DD$ lying on $C_0$. In particular we have $\deg \mc{M}|_{C_0} = \deg \mc{L}|_{C_0} - \deg \DD_0$.

    Now if $\deg \mc{L}|_{C_0} - \deg \DD_0 \ge 0$, then $\deg \mc{M}|_{C_0} \ge 0$, so by \Cref{lem:h1-or-h0} we have $H^1(C_1, \mc{M})=0$. So in this case we have a surjection
    \begin{equation}\label{eq:second-sur}
    H^0(C_1, \mc{L}|_{C_1}) \twoheadrightarrow H^0(\DD, \mc{O}_{\DD})
    \end{equation}
    and therefore the composite map $H^0(F_k, \mc{L}) \to H^0(F_k, \mc{L}|_{C_1}) \to H^0(\DD, \mc{O}_{\DD})$ is a surjection by \eqref{eq:first-sur} and \eqref{eq:second-sur}.

    On the other hand, if $\deg \mc{L}|_{C_0} - \deg \DD_0<0$, then $\deg \mc{M}|_{C_0}<0$. So by \Cref{lem:h1-or-h0}, we have $H^0(C_0, \mc{M}|_{C_0}) = H^0(C_0, \mc{L}|_{C_0}(-\DD_0))=0$. In particular, if a section $s \in H^0(F_k, \mc{L})$ satisfies $s|_{\DD}=0$, then $s|_{C_0} \in H^0(C_0, \mc{L}|_{C_0})$ vanishes at $\DD_0$, so $s|_{C_0}=0$ as desired.
\end{proof}

In order to make use of \Cref{prop:h-dichotomy}, we need an effective bound on the parameter $h$ controlling the numerical class of a curve passing through $\DD$.
\begin{rem}
    This is the same $h$ as the ``height'' function used by Zhao \cite{Zha13} (Section 5.2).
\end{rem}

\begin{lemma}\label{lem:h-section}
    Let $D$ be a reduced effective divisor on $\PP^1$. Let $\DD\subseteq F_k$ be a $D$-marking.
    
    Then there is a nonnegative integer $h$ with 
    \[
    h\le \frac{\deg D + k}{2}
    \]
    such that there exists a nonzero section $s \in H^{0}(\Fk, \mc{O}(1,h))$ vanishing at $\DD$.
\end{lemma}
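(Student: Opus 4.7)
The plan is a dimension count on the restriction morphism
\[
H^0(F_k, \mc{O}(1,h)) \longrightarrow H^0(\DD, \mc{O}_{\DD}).
\]
The target has $\F_q$-dimension $n := \deg D$, while by \Cref{prop: proj-sym} together with the splitting $\pi_{*}\mc{O}_{F_k}(1) = \mc{O}_{\PP^1} \oplus \mc{O}_{\PP^1}(-k)$, the source has dimension $(h+1) + \max(h-k+1,\,0)$. A nonzero element of the kernel of this map is precisely a nonzero section of $\mc{O}(1,h)$ vanishing on $\DD$, so it suffices to exhibit some integer $h \le (n+k)/2$ for which the source strictly exceeds the target in dimension.

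I would split on whether $n \ge k$ or $n < k$, because the source dimension changes regime at $h = k$. When $n \ge k$, set $h := \lceil (n+k-1)/2 \rceil$; then $(n+k-1)/2 \ge k - \tfrac{1}{2}$ forces $h \ge k$, so the source has dimension $2h - k + 2 \ge n+1$, and a direct parity check (separately for $n+k$ even and odd) confirms $h \le (n+k)/2$, with equality exactly when $n+k$ is even. When $n < k$, set $h := n$; the source has dimension $h + 1 = n + 1 > n$, and the bound $h = n \le k \le (n+k)/2$ is immediate from $n < k$. In this second regime one can in fact write the section down explicitly as $s = x \cdot \sigma_D$, where $\sigma_D \in H^0(F_k, \pi^{*}\mc{O}_{\PP^1}(D))$ cuts out $\pi^{-1}(D)$; its vanishing divisor $B_k \cup \pi^{-1}(D)$ manifestly contains $\DD$, giving an entirely constructive proof in this range.

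There is no real obstacle, as the statement amounts to a straightforward linear-algebra dimension count on a Hirzebruch surface. The only subtlety worth flagging is precisely the regime change at $h=k$: the uniform lower bound $h+1$ on the source dimension would force $h \ge n$, which is too large once $n > k$, whereas the stronger formula $2h - k + 2$ requires $h \ge k$ and thus cannot be invoked when $n$ is small. Splitting into the two cases above is what lets one simultaneously meet the bound $h \le (n+k)/2$ in both regimes.
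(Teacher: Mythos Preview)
Your proof is correct and follows essentially the same approach as the paper: split on whether $\deg D \ge k$ or $\deg D < k$, use a dimension count (with $h = \lfloor (\deg D + k)/2\rfloor$, which equals your $\lceil (n+k-1)/2\rceil$) in the first case, and in the second case take $h = \deg D$ with the explicit section cutting out $B_k \cup \pi^{-1}(D)$. One small slip: in the second case your chain ``$h = n \le k \le (n+k)/2$'' has a wrong middle inequality (since $n < k$ gives $k > (n+k)/2$), but the intended bound $h = n < (n+k)/2$ is immediate from $n < k$ anyway.
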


\begin{proof}
    First, we consider the case when $\deg D\ge k$. Let $h = \lfloor (\deg D + k)/2 \rfloor$. Then $h\ge k$ and $2h \ge \deg D+k -1$, implying
    \[
    \dim H^{0}(\Fk, \mc{O}(1,h)) = (h+1)+(h-k+1)> \dim H^{0}(\DD, \mc{O}_{\DD})
    \]
    so there is a nonzero element $s \in H^{0}(\Fk, \mc{O}(1,h))$ vanishing at $\DD$.

    Now consider the case when $\deg D<k$. Consider the union of the negative curve $B_k \subseteq F_k$ (cut out by $x=0$) and the fibers $\pi^{-1}(D)$. This is in the divisor class $B_k+(\deg D) f $, and so is cut out by a section $s \in H^{0}(F_k, \mc{O}(1, \deg D))$. Then we can set $h = \deg D< \frac{ \deg D + k}{2}$.
\end{proof}

By removing vertical components from the section constructed in \Cref{lem:h-section}, we can in fact find a (reduced) minimal $(\DD,h)$-curve for some (possibly smaller) value of $h$.
\begin{lemma}\label{lem:h-curve}
    Let $D$ be a reduced effective divisor on $\PP^1$. Let $\DD \subseteq F_k$ be a $D$-marking. Then there is a nonnegative integer $h$ with
    \[
    h\le \frac{\deg D + k}{2}
    \]
    such that there exists a minimal $(\DD,h)$-curve $C_1 \subseteq F_k$.
\end{lemma}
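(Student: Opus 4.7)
The plan is to start with the section produced by \Cref{lem:h-section}, factor out any common fibral factor, and then keep only those fibers needed to cover the points of $\DD$ missed by the resulting horizontal section. By \Cref{lem:h-section}, fix a nonzero section $s_0 \in H^0(F_k, \mc{O}(1, h_0))$ vanishing on $\DD$ with $h_0 \le (\deg D + k)/2$. Write $s_0 = C(t_0,t_1)\,x + C'(t_0,t_1)\,y$ and set $g := \gcd(C, C')$, so that $s_0 = g \cdot s_0'$ where $s_0'$ has coprime coefficients. Then $V(s_0')$ is a section $C_0$ of $\pi: F_k \to \PP^1$, and $V(s_0) = C_0 \cup \pi^{-1}(V(g))$ set-theoretically.

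Write $\DD$ as a disjoint union of closed points $\mc{P}_1, \dots, \mc{P}_n$ with $P_i := \pi(\mc{P}_i)$, and let $I := \{i : \mc{P}_i \notin C_0\}$. For each $i \in I$, $s_0'$ is nonzero at $\mc{P}_i$ while $s_0$ vanishes there, so $g$ vanishes at $P_i$; since the $P_i$ for $i \in I$ are distinct closed points of $\PP^1$, $g$ is divisible by $\prod_{i \in I} g_{P_i}$, where $g_{P_i}$ is the monic irreducible homogeneous polynomial of degree $\deg P_i$ cutting out $P_i$. Define
\[
C_1 := C_0 \cup \bigcup_{i \in I} \pi^{-1}(P_i),
\]
a reduced curve cut out by $s_0' \cdot \prod_{i \in I} g_{P_i}$, hence a section of $\mc{O}(1, h)$ with
\[
h := (h_0 - \deg g) + \sum_{i \in I} \deg P_i \le h_0 \le \frac{\deg D + k}{2}.
\]
By construction $C_1$ contains $\DD$, its horizontal component is $C_0$, and each fibral component $\pi^{-1}(P_i)$ for $i \in I$ contains the point $\mc{P}_i$ of $\DD$, which by choice of $I$ lies outside $C_0 \cap \pi^{-1}(P_i)$. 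Hence $C_1$ is a minimal $(\DD, h)$-curve of the desired form.

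The key observation driving the bound on $h$ is the divisibility $\prod_{i \in I} g_{P_i} \mid g$: for $i \in I$, $s_0'$ does not vanish at $\mc{P}_i$, so the fibral factor $g$ of $s_0$ must vanish at $P_i$. This ensures that trading $\deg g$ for $\sum_{i \in I} \deg P_i$ in forming $h$ only shrinks the parameter. The rest is a direct decomposition of a section of $\mc{O}(1,\cdot)$ on $F_k$ into its horizontal part and a set of fibers, so there is no serious analytic or geometric obstruction — only the bookkeeping that each fiber we keep is genuinely justified by a point of $\DD$ off $C_0$.
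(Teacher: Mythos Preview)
Your proof is correct and follows essentially the same approach as the paper's: start from the section provided by \Cref{lem:h-section}, extract its unique horizontal component $C_0$, and then adjoin only those fibers needed to catch points of $\DD$ not already on $C_0$. The paper phrases this in terms of the Weil divisor decomposition $\sum a_i[Y_i]$ and sets the fiber multiplicities $a_i'$ to $0$ or $1$ accordingly, while you carry out the same construction in explicit coordinates via the gcd factorization $s_0 = g\cdot s_0'$; the divisibility $\prod_{i\in I} g_{P_i}\mid g$ is exactly the observation $a_i'\le a_i$ in the paper's notation.
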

\begin{proof}
    By \Cref{lem:h-section}, there exists a nonnegative integer $h_1\le (\deg D + k)/2$ and a nonzero section $s \in H^0(F_k, \mc{O}(1,h_1))$ vanishing at $\DD$. The corresponding Weil divisor $\sum a_i [Y_i]$ has one horizontal component $[C_0]$ and several vertical components $a_i[f_i]$ with multiplicity. We define a subdivisor
    \[
    [C_1] = [C_0] +\sum a_i' [f_i]
    \]
    with $a_i'\le a_i$, where the $a_i'$ are defined as follows. If there is a point $\mc{P} \in \mc{D}$ such that $\mc{P} \in f_i$ but $\mc{P} \neq f_i \cap C_0$, then let $a_i'=1$. Otherwise, let $a_i'=0$.

    Then since $a_i' \le 1$ for all $i$, we know that $C_1$ is a reduced curve. Furthermore, $\mc{O}(C_1)  = \mc{O}(1, h)$ for some integer $h \le h_1\le (\deg D_1 + k)/2$, so $C_1$ is a minimal $(\DD,h)$-curve by construction.
\end{proof}

Putting together the pieces above, we are able to conclude the following.
\begin{prop}
\label{prop:dichotomy}
    Let $D\subseteq \PP^1$ be a reduced effective divisor. Let $\DD \subseteq \Fk$ be a $D$-marking. Let $\ell\ge 0$ be an integer and set $\LL = \mc{O}(3,\ell)$ on $\Fk$. Consider the restriction map
    \[
    \alpha: H^{0}(\Fk, \LL) \to H^{0}(\DD, \mc{O}_{\DD}).
    \]
    If we assume the condition
    \[
    \ell-2k \ge \frac{\deg D + k}{2},
    \]
    then at least one of the following conditions holds:
    \begin{itemize}
    \item $\alpha$ is surjective, or
    \item any section $s \in \ker(\alpha)$ is horizontally reducible.
    \end{itemize}
\end{prop}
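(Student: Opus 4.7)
The plan is to reduce the statement directly to \Cref{prop:h-dichotomy} by producing the right auxiliary curve through $\DD$. First, I would apply \Cref{lem:h-curve} to obtain a nonnegative integer $h \le (\deg D + k)/2$ and a minimal $(\DD, h)$-curve $C_1 \subseteq F_k$, with horizontal component $C_0$. The hypothesis $\ell - 2k \ge (\deg D + k)/2$ then guarantees $\ell - 2k \ge h$, which is precisely the numerical condition required to invoke \Cref{prop:h-dichotomy} for this $C_1$.

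Now I would split into the two cases given by \Cref{prop:h-dichotomy}, depending on whether $\deg \mc{L}|_{C_0} \ge \deg \DD_0$ or $\deg \mc{L}|_{C_0} < \deg \DD_0$, where $\DD_0 \subseteq C_0$ is the part of $\DD$ lying on the horizontal component. In the first case, \Cref{prop:h-dichotomy} directly asserts that $H^0(F_k, \LL) \to H^0(\DD, \mc{O}_\DD)$ is surjective, i.e.\ $\alpha$ is surjective, which is the first alternative of the proposition.

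In the second case, \Cref{prop:h-dichotomy} tells me that for any $s \in \ker(\alpha)$, the restriction $s|_{C_0}$ vanishes identically. Since $C_0$ is the horizontal component of the minimal $(\DD, h)$-curve $C_1$, it is an integral curve whose divisor class is $\mc{O}(C_0) \cong \mc{O}(1, h_0)$ for some $0 \le h_0 \le h$, i.e., $\mc{O}(C_0) \cong \mc{O}_{F_k}(1) \otimes \pi^{*}\mc{O}_{\PP^1}(h_0)$. Then $C_0$ appears as an integral component of the vanishing scheme of $s$ whose class is of the form $\mc{O}_{\PP(\mc{E})}(1) \otimes \pi^{*} \mc{L}_0$. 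By the characterization of horizontal reducibility in \Cref{prop:horiz-ir}, this means $s$ is horizontally reducible, which is the second alternative.

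The argument is essentially a packaging of the earlier lemmas: the only mild subtlety is verifying that the horizontal component $C_0$ really has class $\mc{O}(1, h_0)$ for some $h_0 \ge 0$, so that its appearance as a component of $V(s)$ does trigger horizontal reducibility in the sense of \Cref{prop:horiz-ir}; this follows immediately from the decomposition \eqref{eq:c1-decomp} of the minimal $(\DD, h)$-curve $C_1$ into its unique horizontal component plus fibers. No further computation is required.
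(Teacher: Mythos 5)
Your proposal is correct and follows essentially the same route as the paper: invoke \Cref{lem:h-curve} to produce a minimal $(\DD,h)$-curve with $h \le (\deg D + k)/2$, check $\ell - 2k \ge h$, and apply \Cref{prop:h-dichotomy}, concluding horizontal reducibility in the second case via \Cref{prop:horiz-ir}. The only cosmetic difference is that you use the first characterization of horizontal reducibility (an integral component of class $\mc{O}(1,h_0)$) where the paper cites the second (vanishing on a section of $\pi$); these are equivalent by \Cref{prop:horiz-ir}.
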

\begin{proof}
    By \Cref{lem:h-curve}, there exists a nonnegative integer $h\le (\deg D +k )/2$ and a minimal $(\DD, h)$-curve $C_1 \subseteq F_k$. In particular, this curve has a decomposition
    \[
    C_1 = C_0 \bigcup_{i} f_i
    \]
    into irreducible components, where $C_0$ is horizontal and the $f_i$ are fibers above closed points of the base $C = \PP^1$.

    By the assumption $\ell - 2k \ge (\deg D + k)/2$, so in particular we have
    \[
    \ell - 2k \ge \frac{\deg D + k}{2} \ge h.
    \]

    So the hypotheses of \Cref{prop:h-dichotomy} hold. In particular, either $\alpha: H^0(F_k, \mc{L}) \to H^{0}(\DD, \mc{O}_{\DD})$ is surjective, or $\ker(\alpha)$ only contains sections which vanish on $C_0$. In the latter case, every section $s$ in $\ker(\alpha)$ must be horizontally reducible in particular by the second criterion of \Cref{prop:horiz-ir}.
\end{proof}
\subsection{Remarks on Reducibility and Unexpected Curves}
The discussion in this section is not related to or necessary for any subsequent results in this thesis.
.

    The result in \Cref{prop:dichotomy} answers a specific case of a general question of interest in algebraic geometry. 
    
    \begin{question}
        Let $X$ be a smooth, rational\footnote{By rational, we mean birationally equivalent to $\PP^2$.} projective surface. Let $\mc{L}$ be a line bundle on $X$, and let $\DD\subseteq X$ be a $0$-dimensional reduced subscheme (that is, a finite union of closed points). Let $V \subseteq H^0(X, \mc{L})$ be the linear subsystem of divisors vanishing at $\DD$. We say $V$ satisfies property P if either 
        \begin{itemize}
            \item $V$ has the ``expected'' dimension $\dim V = \max(0, \dim H^0(X, \mc{L}) - \deg \DD)$, or 
            \item $V$ has a fixed component.
        \end{itemize} 
        
        What numerical conditions on $X, \mc{L}$, and $\DD$ are required to guarantee that $V$ satisfies property P?
    \end{question}

    Harbourne \cite{Har97} (Lemma 2.7) gives the following condition over an algebraically closed field. Let $f:Y\to X$ be the blowup of $X$ at $\DD$, and let $\mc{M} = f^{*}\mc{L}(- E)$, where $E$ is the exceptional divisor. If $\mc{M}$ is effective, $(-K_Y)\cdot \mc{M}>0$, and $H^1(Y, \mc{M}) \neq 0$, then $\mc{M}$ has a fixed component.

    (Note that the global sections of $\mc{L}$ vanishing at $\DD$ can be canonically identified with the global sections of $\mc{M}$.)
    
    It is possible that one could adapt this argument in the case $X = F_k$ to improve the numerical hypothesis $\ell - 2k \ge (\deg D +k)/2$ in \Cref{prop:dichotomy}. However, Harbourne's result does not construct a specific fixed component of $\mc{M}$ in $Y$, and we need our fixed component to be horizontal for our sieve arguments. More work would be necessary to eliminate the possibility of fixed vertical components (meaning components contained in fibers of the composite map $Y \to F_k \to \PP^1$).

    We note that \Cref{prop:dichotomy} will be later used in the ``medium'' range of a sieve argument, which will be described in \Cref{sec:root-counting}. We think of this as the critical range, which allows us to reduce the error term small enough in order to reveal a secondary term.
    
    In general, it would be interesting to further understand the connection between secondary terms for arithmetic statistics over function fields, and unexpected divisors related to fixed components of linear systems.

\section{Estimates for Root Counting}\label{sec:root-counting}
In this section, we will estimate the root-counting function
\[
R^{\ir}(\ell,k,D) = \sum_{\DD \in \Mar(D)}|\Van(\ell,k, \DD)^{\ir}|
\]
in certain ranges as a function of nonnegative integers $\ell,k,$ and $\deg D$. We will also estimate the adjusted root counting function in these ranges, which we recall is defined by
\[
\Phi^{ir}(\ell,k,D) = \sum_{D_1\mid D} \mu(D-D_1)R^{\ir}(\ell,k,D_1).
\]

For fixed nonnegative integer values of $t$ and $k$, we will divide the divisors $D$ into three ranges depending on the degree $\deg D$.

\begin{itemize}
\item If $\ell\ge \deg D$, then we will say that $(\ell, k ,D)$ is in the \textbf{small range}.
\item If $\ell<\deg D$ and $\ell-2k\ge \frac{\deg D+k}{2}$, then we will say that $(\ell, k, D)$ is in the \textbf{medium range}.
\item If $\ell<\deg D$ and $\ell-2k < \frac{\deg D+k}{2}$, then we will say that $(\ell, k, D)$ is in the \textbf{large range.}
\end{itemize}

Note that for fixed values of $\ell$ and $k$, the medium range could become empty.

\subsection{Estimates on Reducibility}

\begin{defn}
Let $\ell,k,c\ge 0$ be integers and let $\sigma \in H^{0}(\Fk, \mc{O}(1,c))$ be a nonzero section. We define the notation $\Img(\sigma)$ by
\[
\Img(\sigma)\coloneq  \Img
\left(H^{0}(\Fk, \mc{O}(2, \ell-c))  \xrightarrow{\times \sigma} H^{0}(\Fk, \mc{O}(3,\ell))\right),
\]
that is, the image of the multiplication map by $\sigma$.
\end{defn}
\begin{defn}
Let $c \ge 0$. Let $s \in H^{0}(\Fk, \mc{O}(3,\ell))$ be a section and let $\sigma \in H^{0}(\Fk, \mc{O}(1,c))$ be a nonzero irreducible section. We say $s$ is
\begin{itemize}
    \item \textbf{$\sigma$-reducible} if $s \in \Img(\sigma)$,
    \item \textbf{$c$-reducible} if $s$ is $\sigma$-reducible for some nonzero irreducible section $\sigma \in H^{0}(\Fk, \mc{O}(1,c))$.
\end{itemize}
\end{defn}

More concretely, a section $s \in H^{0}(\Fk, \mc{O}(3,\ell))$ is $c$-reducible if it can be written as
\[
s = A_0x^3 + A_1x^2y + A_2 xy^2 + A_3 y^3 = (B_0 x^2 + B_1 xy + B_2 y^2)(C_0x + C_1y)
\]
with $c = \deg C_0$ and $C_0, C_1$ having no nontrivial common divisors.

Remark that if some $s \in H^{0}(\Fk, \mc{O}(3,\ell))$ is specially reducible, then in particular $s$ is $c$-reducible for some $c \ge k$. This is clear if $k=0$. If $k\ge 1$, note that if 
\[
\sigma = C_0X + C_1 Y \in H^{0}(\Fk, \mc{O}(1,c))
\]
satisfies $C_1 \neq 0$, then we must have $c \ge k$.
\begin{prop}
\label{prop:sigma-root-bound}
Let $\ell,k,c \ge 0$ be integers and let $\sigma \in H^{0}(\Fk, \mc{O}(1,c))$ be a nonzero irreducible section. Assume $\ell-3k \ge 0$ and $\ell-2k \ge c$. Then
\[
\sum_{s \in \Img(\sigma)^{\nz}} r_D(s) = O\left(5^{\omega(D)}q^{3\ell-3k-3c}\right)
\]
and
\[
\sum_{s \in \Img(\sigma)^{\nz}} |a_D(s)| = O\left(3^{\omega(D)} q^{3\ell-3k-3c}\right).
\]
\end{prop}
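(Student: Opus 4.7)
Since $\sigma\ne 0$ and $F_k$ is integral, multiplication by $\sigma$ is injective, so $\Img(\sigma)$ is identified with $H^0(F_k,\mc{O}(2,\ell-c))$ as an $\F_q$-vector space. The hypothesis $\ell-2k\ge c$ forces $\ell-c\ge 2k$, giving $|\Img(\sigma)|=q^{3(\ell-c-k)+3}$. Since $\sigma$ is irreducible of numerical class $(1,c)$, the curve $V(\sigma)$ contains no fiber of $\pi:F_k\to\PP^1$ (the fiber class $(0,1)$ cannot divide $(1,c)$); equivalently, $\sigma|_P$ is a nonzero linear form on $\PP^1_P$ for every closed point $P$.

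For a nonzero $t\in H^0(F_k,\mc{O}(2,\ell-c))$ and each $P\mid D$, I case-split. If $t|_P\ne 0$, then $s|_P=\sigma|_P\cdot t|_P$ is a nonzero cubic with $r_P(s)\le r_P(\sigma|_P)+r_P(t|_P)\le 1+2=3$, giving $|a_P(s)|\le 2$. If $t|_P=0$, then $s|_P=0$, so $r_P(s)=q^{\deg P}+1$ and $|a_P(s)|=q^{\deg P}$. These combine into the pointwise estimates
\[
r_P(\sigma t)\le 3+(q^{\deg P}-2)\mathbf{1}_{t|_P=0},\qquad |a_P(\sigma t)|\le 2+(q^{\deg P}-2)\mathbf{1}_{t|_P=0}.
\]
Multiplying over $P\mid D$, expanding the product, and summing over $t$ using that $\{t:t|_{D_0}=0\}$ equals $H^0(F_k,\mc{O}(2,\ell-c-\deg D_0))$ (via division by the defining section of $\pi^{-1}(D_0)$), reduces both claims to bounding
\[
\sum_{D_0\mid D}C^{\omega(D-D_0)}\prod_{P\mid D_0}(q^{\deg P}-2)\cdot\big|H^0(F_k,\mc{O}(2,\ell-c-\deg D_0))\big|
\]
with $C=3$ for the $r_D$ bound and $C=2$ for the $|a_D|$ bound.

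In the principal range $\deg D_0\le\ell-c-2k$ the sharp dimension formula gives $|H^0(F_k,\mc{O}(2,\ell-c-\deg D_0))|=q^{3(\ell-c-k)+3-3\deg D_0}$, whereupon the sum factors as $q^{3(\ell-c-k)+3}\prod_{P\mid D}\big(C+(q^{\deg P}-2)q^{-3\deg P}\big)$. Since $(q^{\deg P}-2)q^{-3\deg P}\le q^{-2\deg P}\le 1/4$, each factor is at most $C+1/4$, so I obtain $4^{\omega(D)}q^{3\ell-3c-3k+3}$ and $(9/4)^{\omega(D)}q^{3\ell-3c-3k+3}$ respectively. Absorbing the constant $q^3$ into the implicit constant of $O(\cdot)$ yields the claimed bounds for this range.

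The main obstacle is the residual range $\deg D_0>\ell-c-2k$, where the dimension of $H^0(F_k,\mc{O}(2,m))$ for $m<2k$ is only $q^{2m-k+2}$ (for $k\le m<2k$) or $q^{m+1}$ (for $0\le m<k$), so the clean per-point discount $q^{-3\deg P}$ is no longer available. I would split the residual into these two subranges together with the trivial range $m<0$, and in each invoke $\deg D_0\ge\ell-c-2k+1$ together with the assumption $\ell-c\ge 2k$ to show that the contribution, for $q$ a fixed constant, is dominated by the same order $q^{3\ell-3c-3k}$ times $5^{\omega(D)}$ (respectively $3^{\omega(D)}$). This boundary accounting is exactly what replaces the explicit constants $4$ and $9/4$ from the principal range with the final bounds $5$ and $3$ after combining with the implicit $O(\cdot)$ constant.
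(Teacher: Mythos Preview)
Your overall approach matches the paper's: both case-split on whether $s|_P$ vanishes, bound $r_P$ (resp.\ $|a_P|$) by $3$ (resp.\ $2$) in the non-fibral case and by $q^{\deg P}+1$ (resp.\ $q^{\deg P}$) in the fibral case, then sum over the fibral-vanishing locus $D_0\subseteq D$. Your observation that irreducibility of $\sigma$ forces $\sigma|_P\ne 0$ for every $P$, hence $s|_P=0\iff t|_P=0$, is exactly what underlies the paper's reduction to the factor $t\in H^0(F_k,\mc{O}(2,\ell-c))$.

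The genuine gap is your treatment of the ``residual range''. You explicitly acknowledge that when $\deg D_0>\ell-c-2k$ the clean discount $q^{-3\deg D_0}$ breaks down, and you only say you ``would'' split further and invoke $\ell-c\ge 2k$; no computation is carried out. The paper bypasses this entirely via its Lemma~\ref{lemma:fib-bound}: for $\mc{L}=\mc{O}(a,\ell_1)$, the count of nonzero sections vanishing on the fibers over $D_0$ is at most $|H^0(F_k,\mc{L})^{\nz}|\,q^{-\deg D_0}$, proved by the monotonicity of $d\mapsto d+V(\ell_1-d)$ where $V(m)=\sum_{i=0}^{a}\max(m-ik+1,0)$. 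This single inequality absorbs all of your boundary cases at once and gives directly
\[
\sum_{D_0\subseteq D}\Big(q^{-\deg D_0}|H^0(F_k,\mc{O}(2,\ell-c))^{\nz}|\Big)\prod_{P\mid D_0}(q^{\deg P}+1)\prod_{P\mid D\setminus D_0}3
\;\le\;5^{\omega(D)}|H^0(F_k,\mc{O}(2,\ell-c))^{\nz}|,
\]
and similarly $3^{\omega(D)}$ for $|a_D|$. So your strategy can be completed, but the missing piece is precisely this uniform $q^{-\deg D_0}$ bound on the nonzero count; once you prove that (or quote it), the range-splitting becomes unnecessary and the argument collapses to the paper's three-line computation.
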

In order to prove \Cref{prop:sigma-root-bound}, we start by proving the following helpful lemma.
\begin{lemma}
\label{lemma:fib-bound}
Let $\ell_1,a,k$ be nonnegative integers. Let $D\subseteq \PP^1$ be a reduced effective divisor, and let $\LL = \mc{O}(a,\ell_1)$ on $\Fk$. Then the number of nonzero sections $s \in H^{0}(\Fk, \LL)$ vanishing on the fibers above $D$ is at most
\[
|H^{0}(\Fk, \LL)^{\nz}| q^{-\deg D}.
\]
\end{lemma}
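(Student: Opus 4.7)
The plan is to identify the sections vanishing on $\pi^{-1}(D)$ with another cohomology group and then carry out a direct dimension count. Let $d = \deg D$ and let $\sigma_D \in H^0(\PP^1, \mc{O}_{\PP^1}(d))$ be the section cutting out $D$. Its pullback $\pi^{*}\sigma_D$ cuts out the divisor $\pi^{-1}(D) \subseteq F_k$, and from the short exact sequence
\[
0 \to \LL(-\pi^{-1}(D)) \xrightarrow{\times \pi^{*}\sigma_D} \LL \to \LL|_{\pi^{-1}(D)} \to 0
\]
I deduce that the $\F_q$-subspace of $H^0(F_k,\LL)$ consisting of sections vanishing on $\pi^{-1}(D)$ is identified with $H^0(F_k, \LL(-\pi^{-1}(D))) = H^0(F_k, \mc{O}(a, \ell_1 - d))$.

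Next I will compute dimensions using the standard splitting
\[
H^0(F_k, \mc{O}(a, m)) \;\cong\; \bigoplus_{i=0}^{a} H^0\bigl(\PP^1, \mc{O}(m - ik)\bigr).
\]
Set $f := \dim H^0(F_k, \mc{O}(a, \ell_1))$ and $f' := \dim H^0(F_k, \mc{O}(a, \ell_1 - d))$. If $\ell_1 < d$, then for every $i \geq 0$ one has $\ell_1 - d - ik \leq \ell_1 - d < 0$, so $f' = 0$ and the bound is trivial. Otherwise $\ell_1 \geq d$, and the contribution of the $i=0$ summand to $f - f'$ is exactly $(\ell_1+1) - (\ell_1 - d + 1) = d$, while the remaining summands contribute nonnegatively. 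Thus $f \geq f' + d$ in every case.

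Finally I combine these to conclude. The number of nonzero sections of $\LL$ vanishing on the fibers above $D$ equals $q^{f'} - 1$. Since $f \geq f' + d$ and $d \geq 0$, we get
\[
q^{f'} - 1 \;\leq\; q^{f-d} - 1 \;\leq\; q^{f-d} - q^{-d} \;=\; (q^{f} - 1)\, q^{-d} \;=\; |H^0(F_k, \LL)^{\nz}|\, q^{-\deg D},
\]
which is the desired estimate. There is no real obstacle here; the one thing to be careful about is the edge case $\ell_1 < d$, where the restriction map need not be surjective but the left-hand side vanishes, so the inequality still holds trivially.
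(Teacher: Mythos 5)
Your proof is correct and follows essentially the same route as the paper: identify the vanishing subspace with $H^0(F_k,\mc{O}(a,\ell_1-d))$, compare dimensions via the splitting into line bundles on $\PP^1$, and conclude with the elementary inequality $q^{f'}-1\le (q^f-1)q^{-d}$. The only difference is cosmetic — the paper packages the dimension comparison as monotonicity of $d\mapsto d+V(\ell_1-d)$, whereas you compare the summands directly — and your handling of the edge case $\ell_1<d$ matches the paper's.
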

\begin{proof}
Let $\pi: \Fk \to \PP^1$ be the Hirzebruch surface.

There is an exact sequence
\[
0 \to H^{0}(\Fk, \LL\otimes \pi^{*} \mc{O}(-D)) \to H^{0}(\Fk, \LL) \to H^{0}(\Fk, \LL\otimes \pi^{*} \mc{O}_{D})
\] which identifies $H^{0}(\Fk, \LL\otimes \pi^{*}\mc{O}(-D))$ with the vector space of sections $s \in H^{0}(\Fk, \LL)$ vanishing on the fibers above $D$. Therefore we aim to count the the number of nonzero sections $|H^{0}(\Fk, \LL\otimes \pi^{*}\mc{O}(-D))^{\nz}|$.

Let $d = \deg D$, so $\mc{O}(-D) \cong \mc{O}(-d)$ as sheaves on $\PP^1$. Define the function
\[
V(m)\coloneq  [m+1] + [m-k+1]+\cdots + [m-ak+1]
\]
where $[n] = \max(n,0)$ for an integer $n$. Then
\begin{align*}
\dim H^{0}(\Fk, \LL\otimes \pi^{*} \mc{O}(-d)) & = \dim H^{0}(\Fk, \mc{O}(a,\ell_1-d))=V(\ell_1-d).
\end{align*}
Now note that if we fix values of $\ell_1,k$ and vary $d$, the function $d+V(\ell_1-d)$ is non-increasing function in $d$ for $0\le d\le \ell_1$. Therefore, if $0\le d\le \ell_1$, we have $d+V(\ell_1-d) \le V(\ell_1)$ and subsequently
\[
q^{d}\left(q^{V(\ell_1-d)}-1\right) \le q^{d} \left(q^{V(\ell_1-d)}\right)-1 \le q^{V(\ell_1)}-1.
\]
Thus we have
\[
|H^{0}(\Fk, \LL\otimes \pi^{*}\mc{O}(-d))^{\nz}| = \left(q^{V(\ell_1-d)}-1\right)\le \left(q^{V(\ell_1)}-1\right)q^{-d}= |H^{0}(\Fk, \LL)^{\nz}| q^{-d}
\]
under the assumption $0\le d\le \ell_1$. On the other hand, if $d>\ell_1$, then the above inequality holds trivially since the left hand side is $0$. So we are done.
\end{proof}

Now we can prove \Cref{prop:sigma-root-bound}, bounding the number of $\sigma$-reducible sections.
\begin{proof}[Proof of \Cref{prop:sigma-root-bound}]
Note that 
\[
r_D(s) = \prod_{P\mid D} r_P(s)
\]
and furthermore for each $P$ we have $r_P(s) \le 3$ unless $s|_{P}$ vanishes. Let $\Fi(s,D)$ denote the maximal subdivisor $D_1\subseteq D$ where $s$ vanishes on the fibers above $D_1$. Then if $\Fi(s,D)=D_1$, we have
\[
r_D(s) \le  \prod_{P\mid D_1} (q^{\deg P}+1)\prod_{\substack{P\mid D\\ P\nmid D_1}} 3.
\]
Now let $\LL = \mc{O}(2, \ell-c)$. Then by \Cref{lemma:fib-bound} there are at most $q^{-D_1}|H^{0}(\Fk, \LL)|^{\nz}$ nonzero sections $s \in H^{0}(\Fk, \mc{O}(3,\ell))$ in the image of $H^{0}(\Fk, \LL)$ for which $\Fi(s,D) = D_1$. So we have

\begin{align*}
\sum_{\substack{s \in \Img(\sigma)\\ s \neq 0}}r_D(s) &= \sum_{D_1\subseteq D}\sum_{\substack{s \in \Img(\sigma)\\\Fi(s,D) = D_1\\s\neq 0}} r_D(s)\\
&\le \sum_{D_1\subseteq D} \left(q^{-D_1} |H^{0}(\Fk, \LL)^{\nz}|\right)\left(\prod_{P\mid D_1} (q^{\deg P}+1)\prod_{\substack{P\mid D\\ P\nmid D_1}} 3\right).\\
&\le |H^{0}(\Fk, \LL)^{\nz}|\sum_{D_1 \subseteq D}\left(\prod_{P \mid D_1} 2 \prod_{\substack{P\mid D\\ P\nmid D_1}} 3\right)\\
&= 5^{\omega(D)}|H^{0}(\Fk, \LL)^{\nz}| = O\left(5^{\omega(D)} q^{3\ell-3k-3c}\right).
\end{align*}

Similarly, we have $a_D(s) = \prod_{P \mid D} a_P(s)$ and $|a_P(s)|\le 2$ for each $P$ unless $s|_{P}$ vanishes, in which case $a_P(s) = q^{\deg P}$. So if $\Fi(s,D) = D_1$, we have
\[
|a_D(s)| \le \prod_{P \mid D_1}q^{\deg P} \prod_{\substack{P \mid D\\ P\nmid D_1}} 2 = q^{\deg D_1} 2^{\omega(D) -\omega(D_1)}
\]
So setting $\LL = \mc{O}(2,\ell-c)$, we have
\begin{align*}
\sum_{s \in \Img(\sigma)^{\nz}} |a_D(s)|&= \sum_{D_1 \subseteq D} \sum_{\substack{s \in \Img(\sigma)^{\nz}\\\Fi(s,D) = D_1}} |a_D(s)|\\
&\le \sum_{D_1 \subseteq D} \left(q^{-D_1} |H^{0}(\Fk, \LL)^{\nz}|\right)\left(q^{D_1}2^{\omega(D)-\omega(D_1)}\right)\\
&= 3^{\omega(D)}|H^{0}(\Fk, \LL)^{\nz}|= O\left(3^{\omega(D)}q^{3\ell-3k-3c}\right).
\end{align*}
\end{proof}

As a corollary, we get:

\begin{prop}
\label{prop:hr-er-z-bound}
Let $\ell,k \ge 0$ be integers such that $\ell\ge 3k$, and let $D$ be a reduced effective divisor on $\PP^1$. Then we have
\begin{align*}
R^{\hr}(\ell,k,D) &= O\left(5^{\omega(D)}q^{3\ell-3k}\right),\\
R^{\sr}(\ell,k,D) &= O\left(5^{\omega(D)} q^{3\ell-5k}\right),\\
R^{\z}(\ell,k,D) &= O\left(2^{\omega(D)}q^{\deg D}\right).
\end{align*}

Similarly, 
\begin{align*}
|\Phi^{\hr}(\ell,k,D)| &= O\left(3^{\omega(D)} q^{3\ell-3k}\right),\\
|\Phi^{\sr}(\ell,k,D)| &= O\left(3^{\omega(D)} q^{3\ell-5k}\right),\\
|\Phi^{\z}(\ell,k,D)| &= O\left(q^{\deg D}\right).
\end{align*}

\end{prop}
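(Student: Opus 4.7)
The plan is to reduce every bound to the per-linear-factor estimate in \Cref{prop:sigma-root-bound} by stratifying horizontally reducible sections according to the type $c$ of their linear factor. The trivial pieces come first: the set $\mc{V}(\ell,k)^{\z}$ consists only of $s=0$, for which $r_P(0)=q^{\deg P}+1$ and $a_P(0)=q^{\deg P}$; multiplying over $P\mid D$ gives $R^{\z}(\ell,k,D)=\prod_{P\mid D}(q^{\deg P}+1)\le 2^{\omega(D)}q^{\deg D}$ and $|\Phi^{\z}(\ell,k,D)|=q^{\deg D}$, matching the claimed estimates.

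For the reducible cases, every $s\in\mc{V}(\ell,k)^{\hr}$ admits a factorization $s=\sigma\tau$ with $\sigma\in H^{0}(F_k,\mc{O}(1,c))$ a primitive (i.e.\ irreducible as a section) linear form and $\tau\in H^{0}(F_k,\mc{O}(2,\ell-c))$, for some $0\le c\le\ell$. Counting projective classes of such $\sigma$: if $c<k$, the $y$-coefficient of $\sigma$ must vanish, so up to scalar the only option is $c=0$ and $\sigma=x$; if $c\ge k$, the total number of $\sigma$ up to scalar is at most $O(q^{2c-k+1})$. Combining this with the bound $\sum_{s\in\Img(\sigma)^{\nz}}r_D(s)=O(5^{\omega(D)}q^{3\ell-3k-3c})$ from \Cref{prop:sigma-root-bound} (valid since $\ell\ge 3k$ implies $\ell-3k\ge 0$, and for $c>\ell-2k$ the space $H^{0}(F_k,\mc{O}(2,\ell-c))$ has smaller dimension and the same bound still holds), and overcounting freely, we get
\[
R^{\hr}(\ell,k,D)\ \le\ O\!\left(5^{\omega(D)}q^{3\ell-3k}\right)\ +\ \sum_{k\le c\le\ell}O(q^{2c-k+1})\cdot O\!\left(5^{\omega(D)}q^{3\ell-3k-3c}\right).
\]
The first term is the $c=0$, $\sigma=x$ contribution; the second is a geometric series in $c$ dominated by its $c=k$ term, of order $O(5^{\omega(D)}q^{3\ell-5k+1})=O(5^{\omega(D)}q^{3\ell-5k})$ (absorbing constants depending on $q$). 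Thus $R^{\hr}(\ell,k,D)=O(5^{\omega(D)}q^{3\ell-3k})$.

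For $R^{\sr}$, a specially reducible $s$ has no factor of $x$, so any linear factor $\sigma=C_0x+C_1y$ must have $C_1\ne 0$, forcing $c\ge k$. Only the geometric-series piece contributes, giving $R^{\sr}(\ell,k,D)=O(5^{\omega(D)}q^{3\ell-5k})$. The bounds on $|\Phi^{\hr}|$ and $|\Phi^{\sr}|$ follow verbatim from the same decomposition, using instead the $a_D$-estimate $\sum_{s\in\Img(\sigma)^{\nz}}|a_D(s)|=O(3^{\omega(D)}q^{3\ell-3k-3c})$ from \Cref{prop:sigma-root-bound}, which replaces $5^{\omega(D)}$ by $3^{\omega(D)}$ throughout. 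The only genuinely non-mechanical step is the bookkeeping of the number of primitive linear forms $\sigma$ in each $(1,c)$-slot; once that is in hand, the rest is a geometric sum.
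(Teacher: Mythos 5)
Your overall strategy---stratify the horizontally reducible sections by the degree $c$ of a linear factor, apply \Cref{prop:sigma-root-bound} to each stratum, and sum a geometric series---is exactly the paper's, and the $\z$-cases, the count of linear forms $\sigma$ up to scalar, and the reduction of the $\Phi$-bounds to the $a_D$-version of \Cref{prop:sigma-root-bound} are all fine. The gap is the parenthetical claim that the estimate $\sum_{s\in\Img(\sigma)^{\nz}}r_D(s)=O\left(5^{\omega(D)}q^{3\ell-3k-3c}\right)$ ``still holds'' when $c>\ell-2k$. It does not: the proof of \Cref{prop:sigma-root-bound} bounds this sum by $5^{\omega(D)}\bigl|H^{0}(F_k,\mc{O}(2,\ell-c))^{\nz}\bigr|$, and once $\ell-c-2k<0$ the dimension of $H^{0}(F_k,\mc{O}(2,\ell-c))$ equals $[\ell-c+1]+[\ell-c-k+1]$, which can greatly exceed $3(\ell-c)-3k+3$ because the negative summand has been truncated to zero rather than subtracted. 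Concretely, with $\ell=3k$ and $c=2k$ your claimed bound is $O(5^{\omega(D)})$, while $\Img(\sigma)^{\nz}$ alone has roughly $q^{k+2}$ elements, so already for $D=\emptyset$ the sum is of size $q^{k+2}$.

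For $R^{\hr}$ this could be patched by brute force: the corrected contribution of the strata with $\ell-2k<c\le\ell$ is $O\left(k\,5^{\omega(D)}q^{2\ell-k+3}\right)$, which is still $O\left(5^{\omega(D)}q^{3\ell-3k}\right)$ because $\ell\ge 3k$. But for $R^{\sr}$ and $\Phi^{\sr}$ the patch fails: $q^{2\ell-k}$ exceeds the target $q^{3\ell-5k}$ when $\ell=3k$ and $k$ is large, so your ``only the geometric-series piece contributes'' step does not survive the correction. The missing idea is the one the paper uses to exclude this range outright: if $s=(B_0x^2+B_1xy+B_2y^2)(C_0x+C_1y)$ with $\deg C_0=c>\ell-2k$, then $\deg B_2=\ell-2k-c<0$ forces $B_2=0$, so $s$ is divisible by $x$. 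Hence a specially reducible section is $c$-reducible only for $k\le c\le\ell-2k$, where \Cref{prop:sigma-root-bound} applies as stated, and for $R^{\hr}$ the strata with $c>\ell-2k$ are already absorbed into the $\Img(x)$ term. Inserting that observation makes the rest of your argument go through.
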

\begin{proof}
To count $R^{\hr}(\ell,k,D)$, note that any horizonally reducible section $s \in H^{0}(\Fk, \mc{O}(3,\ell))$ is either $x$-reducible or specially reducible.

The $x$-reducible forms contribute a count of
\[
\sum_{s \in \Img(x)^{\nz}} r_D(s) = O\left(5^{\omega(D)} q^{3\ell-3k}\right)
\] by \Cref{prop:sigma-root-bound}.

Now suppose $s \in \Img(\sigma)^{\nz}$ is specially reducible, where $\sigma \in H^{0}(\Fk, \mc{O}(1,c))$ and $c\ge k$. If $c>\ell-2k$, then we can write
\[
s = (B_0x^2 + B_1xy + B_2y^2)(C_0x+C_1y)
\]
where $\deg B_2 = \ell-2k-c<0$, so $B_2=0$. Therefore $s$ is $x$-reducible in this case, a contradiction.

So $\ell-2k\ge c$ in this case. Therefore by \Cref{prop:sigma-root-bound} we have
\begin{align*}
R^{\hr}(\ell,k,D)&\ll 5^{\omega(D)}q^{3\ell-3k} + \sum_{\substack{c \ge k\\c\le \ell-2k}}\sum_{\substack{\sigma \in H^{0}(\Fk, \mc{O}(1,c))^{\nz}\\\sigma\text{ irreducible}}} \sum_{s \in \Img(\sigma)^{\nz}} r_D(s)\\
&\le 5^{\omega(D)}q^{3\ell-3k} +\sum_{\substack{c\ge k\\c\le \ell-2k}} 5^{\omega(D)}q^{2c-k} q^{3\ell-3k-3c}\\
&\ll 5^{\omega(D)}q^{3\ell-3k}.
\end{align*}

By omitting the first term and by a similar argument, we obtain
\[
R^{\sr}(\ell,k,D) \ll \sum_{c \ge k} 5^{\omega(D)} q^{2c-k} q^{3\ell-3k-3c}\ll 5^{\omega(D)} q^{3\ell-5k}.
\]

Next,
\[
R^{z}(\ell,k,D) = \prod_{P\mid D} (q^{\deg P}+1)\le 2^{\omega(D)} q^{\deg D}.
\]

We obtain 
\[
|\Phi^{\hr}(\ell,k,D)|\ll 3^{\omega(D)}q^{3\ell-3k}
\]
and
\[
|\Phi^{\sr}(\ell,k,D)|\ll 3^{\omega(D)} q^{3\ell-5k}
\]
by nearly identical arguments.

Finally, we can exactly compute
\[
|\Phi^{z}(\ell,k,D)| = |\prod_{P\mid D} a_P(0)| = q^{\deg D}.
\]

\end{proof}

\subsection{Small Range}
Assume we have nonnegative integers $\ell\ge 3k \ge 0$ and a reduced effective divisor $D\subseteq \PP^1$ with $\deg D = d$. Recall that we consider $(\ell, k, D)$ to be in the \textbf{small range} if $\ell \ge \deg D$. In this range, we have an exact formula for $R^{\xir}(\ell,k,D)$.

\begin{lemma}
Let $\ell,k \ge0$ be integers and let $D$ be a reduced effective divisor on $\PP^1$ such that $\ell\ge 3k$ and $\ell\ge \deg D$. Then we have
\[
R^{\xir}(\ell,k,D) = \sum_{D_0 \subseteq D}\begin{cases}
q^{4\ell-6k+4 - \deg D_0} - q^{3\ell-3k+3}&\text{if } \ell-3k\ge \deg D_0,\\
0 & \text{else.}

\end{cases}
\]
\end{lemma}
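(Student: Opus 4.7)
The plan is to switch the order of summation and write
\[
R^{\xir}(\ell, k, D) = \sum_{\DD \in \Mar(D)} |\Van(\ell, k, \DD) \cap \mc{V}(\ell, k)^{\xir}|,
\]
then stratify the outer sum by the subset $D_0 = D_0(\DD) := \{P \in D : \mc{P}_P = (0:1)\}$, i.e., the set of points of $D$ at which the marking lies on the directrix $B_k = \{x = 0\}$. For each $D_0 \subseteq D$, the number of $D$-markings $\DD$ with $D_0(\DD) = D_0$ is $q^{\deg D - \deg D_0}$, since at each $P \in D \setminus D_0$ any of the $q^{\deg P}$ relative degree $1$ points of the form $(1 : c_P)$ is allowed.

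For a fixed $\DD$ with $D_0(\DD) = D_0$, I would compute $|\Van(\ell, k, \DD) \cap \mc{V}(\ell, k)^{\xir}|$ directly in coordinates. Writing $s = A_0 x^3 + A_1 x^2 y + A_2 xy^2 + A_3 y^3$, the condition $s(\mc{P}_P) = 0$ at $P \in D_0$ becomes $A_3(P) = 0$, while at $P \in D \setminus D_0$ with $\mc{P}_P = (1 : c_P)$ it becomes the linear relation $A_0(P) + c_P A_1(P) + c_P^2 A_2(P) + c_P^3 A_3(P) = 0$. The small-range hypothesis $\ell \ge \deg D \ge \deg(D \setminus D_0)$ ensures that the evaluation map $H^0(\PP^1, \mc{O}(\ell)) \to H^0(D \setminus D_0, \mc{O}_{D \setminus D_0})$ is surjective, so for any fixed $A_1, A_2, A_3$ there are exactly $q^{\ell + 1 - \deg(D \setminus D_0)}$ choices of $A_0$ satisfying the $D \setminus D_0$ conditions; combined with free choice of $A_1$ and $A_2$, this contributes a factor of $q^{3\ell - 3k + 3 - \deg D + \deg D_0}$.

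It remains to count admissible $A_3 \in H^0(\PP^1, \mc{O}(\ell - 3k))$: those vanishing on $D_0$ with $A_3 \neq 0$ (ensuring $s$ is $x$-irreducible). Since a nonzero section of $\mc{O}(\ell - 3k)$ has at most $\ell - 3k$ zeros on $\PP^1$, no nonzero $A_3$ vanishes on $D_0$ when $\ell - 3k < \deg D_0$, forcing $|\Van(\ell, k, \DD) \cap \mc{V}(\ell, k)^{\xir}| = 0$ in that case. When $\ell - 3k \ge \deg D_0$, the evaluation $H^0(\PP^1, \mc{O}(\ell - 3k)) \to H^0(D_0, \mc{O}_{D_0})$ is surjective with kernel of dimension $\ell - 3k + 1 - \deg D_0$, giving $q^{\ell - 3k + 1 - \deg D_0} - 1$ nonzero admissible $A_3$. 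Thus
\[
|\Van(\ell, k, \DD) \cap \mc{V}(\ell, k)^{\xir}| = (q^{\ell - 3k + 1 - \deg D_0} - 1)\, q^{3\ell - 3k + 3 - \deg D + \deg D_0},
\]
and multiplying by the $q^{\deg D - \deg D_0}$ markings per stratum produces the summand $q^{4\ell - 6k + 4 - \deg D_0} - q^{3\ell - 3k + 3}$ after the clean cancellation of $\deg D$. Summing over $D_0 \subseteq D$ yields the claimed formula. The argument is essentially a bookkeeping exercise combining two standard $\PP^1$-interpolation statements, so there is no serious obstacle beyond keeping the case split $\ell - 3k \gtrless \deg D_0$ straight.
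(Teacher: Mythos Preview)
Your proposal is correct and follows essentially the same approach as the paper: stratify markings by the subdivisor $D_0\subseteq D$ lying on the directrix $B_k=\{x=0\}$, use $\ell\ge\deg D$ to interpolate $A_0$ freely over $D\setminus D_0$, count nonzero $A_3$ vanishing on $D_0$, and multiply by the $q^{\deg D-\deg D_0}$ markings in each stratum. The intermediate formulas and case split match the paper's proof exactly.
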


\begin{proof}
We write
\[
R^{\xir}(\ell,k,D) = \sum_{\DD \in \Mar(D)} |\Van(\ell,k,\DD)^{\xir}|.
\]

Let  $B_k\subseteq \Fk$ be the negative curve cut out by the equation $x=0$. We do casework based on the subdivisor $D_0 \subseteq D$ which lies on $B_k$. 

Suppose we have a fixed partition $D = D_0 \sqcup D_1$ and a choice of $\DD \in \Mar(D)$ such that
\[
\DD = \DD_0 \sqcup \DD_1
\]
where $\DD_i$ lies above $D_i$, $\DD_0\subseteq B_k$, and $\DD_1$ is disjoint from $B_k$. There is a unique element $\alpha \in \mc{O}_{D_1}$ such that $\Van(\ell,k,\DD)$ is the set of sections
\[
s = A_0x^3 + A_1x^2y + A_2xy^2 + A_3y^3
\]
such that the restriction of $A_3$ to $D_0$ vanishes, $A_3\neq 0$, and 
\[
0 = \overline{A_0} + \overline{A_1}\alpha + \overline{A_2} \alpha^2 + \overline{A_3} \alpha^3 \in \mc{O}_{D_1}
\]
holds. If $\deg D_0>\ell-3k = \deg A_3$, then no choice of $A_3$ satisfies these constraints. On the other hand, if $\ell-3k\ge \deg D_0$, there are $q^{\ell-3k-\deg D_0+1}-1$ choices of $A_3 \in H^{0}(\PP^1, \mc{O}(\ell-3k))$ such that $A_3$ vanishes on $D_0$ but $A_3\neq 0$. Furthermore, for any triple $(A_1, A_2, A_3)$, the map
\[
H^{0}(\PP^1, \mc{O}(\ell)) \to \mc{O}_{D_1}
\]
given by
\[
A_0 \to  \overline{A_0} + \overline{A_1}\alpha + \overline{A_2} \alpha^2 + \overline{A_3} \alpha^3 
\]
is surjective, since $\ell \ge \deg D \ge \deg D_1$. Therefore, if $\ell-3k \ge \deg D_0$, then
\begin{align*}
|\Van(\ell,k,\DD)^{\xir}| &= q^{\ell-\deg D_1+1} q^{\ell-k+1}q^{\ell-2k+1} \left(q^{\ell-3k-\deg D_0+1}-1\right)\\
& = q^{4\ell-6k+4 - \deg D} - q^{3\ell-3k+3 - \deg D_1}.
\end{align*}
For any choice of $D_0 \subseteq D$, there are $q^{\deg D_1} = q^{\deg D - \deg D_0}$ choices of sections $\DD \in \Mar(D)$ such that $D_0$ is the subdivisor where $\DD$ lies on $B_k$. So
\begin{align*}
R^{\xir}(\ell,k,D) &= \sum_{D_0\subseteq D}q^{\deg D- \deg D_0}\begin{cases}
q^{4\ell-6k+4 - \deg D} - q^{3\ell-3k+3 - \deg D_1} & \text{if }\ell-3k \ge \deg D_0,\\
0 &\text{else.}
\end{cases}\\
&=  \sum_{D_0\subseteq D}\begin{cases}
q^{4\ell-6k+4 - \deg D_0} - q^{3\ell-3k+3} & \text{if }\ell-3k \ge \deg D_0,\\
0 &\text{else.}
\end{cases}
\end{align*}
as desired.
\end{proof}
As a corollary, we have an exact formula for $\Phi^{\xir}(\ell,k,D)$ in the small range.
\begin{cor}
\label{cor:small-range-hash}
If $\ell\ge 3k \ge 0$ and $\ell\ge \deg D$, then
\[
\Phi^{\xir}(\ell,k,D) =\begin{cases}
q^{4\ell-6k+4 - \deg D} - q^{3\ell-3k+3} &\text{if }\ell - 3k \ge \deg D,\\
0 &\text{else.}
\end{cases}
\]
\end{cor}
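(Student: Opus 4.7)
The plan is to apply Möbius inversion directly to the formula for $R^{\xir}(\ell,k,D_1)$ proved in the preceding lemma. Recall that by definition
\[
\Phi^{\xir}(\ell,k,D) = \sum_{D_1 \mid D} \mu(D - D_1)\, R^{\xir}(\ell,k,D_1),
\]
and the lemma rewrites $R^{\xir}(\ell,k,D_1)$ as an inner sum over subdivisors $D_0 \mid D_1$ of a function $g(D_0)$ that depends only on $\deg D_0$, namely
\[
g(D_0) = \begin{cases} q^{4\ell - 6k + 4 - \deg D_0} - q^{3\ell - 3k + 3} & \text{if } \ell - 3k \ge \deg D_0,\\ 0 & \text{otherwise.}\end{cases}
\]
Note the hypothesis $\ell \ge \deg D_1$ in the lemma is inherited from the assumption $\ell \ge \deg D$ in the corollary, since $D_1 \mid D$ forces $\deg D_1 \le \deg D$.

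First I would substitute this decomposition into the definition of $\Phi^{\xir}(\ell,k,D)$ to obtain
\[
\Phi^{\xir}(\ell,k,D) = \sum_{D_1 \mid D} \mu(D - D_1) \sum_{D_0 \mid D_1} g(D_0).
\]
Then I would swap the order of summation, writing
\[
\Phi^{\xir}(\ell,k,D) = \sum_{D_0 \mid D} g(D_0) \sum_{\substack{D_1 \\ D_0 \mid D_1 \mid D}} \mu(D - D_1).
\]
The inner sum is the Möbius sum of all subdivisors of $D - D_0$ (by the change of variable $E = D_1 - D_0$ and $F = D - D_1$, so that $E + F = D - D_0$), which equals $1$ if $D_0 = D$ and $0$ otherwise. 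Hence only the term $D_0 = D$ survives, giving $\Phi^{\xir}(\ell,k,D) = g(D)$, which is exactly the claimed expression.

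There is no substantive obstacle: the argument is a routine instance of Möbius inversion on divisors of $\PP^1$, and the only care needed is to verify that the hypothesis $\ell \ge \deg D_1$ required by the lemma is automatic from the small-range hypothesis $\ell \ge \deg D$, and that the case distinction ($\ell - 3k \ge \deg D$ or not) in the corollary matches the case distinction built into $g(D)$.
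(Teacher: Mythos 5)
Your proof is correct and is exactly the inclusion--exclusion argument the paper invokes (the paper's proof is the one-line remark ``This follows directly from inclusion-exclusion''); the substitution of the lemma's formula for $R^{\xir}(\ell,k,D_1)$, the swap of summation order, and the observation that $\sum_{F \mid D - D_0} \mu(F)$ vanishes unless $D_0 = D$ are precisely the intended steps. Your check that $\ell \ge \deg D$ propagates to every subdivisor $D_1 \mid D$, so the lemma applies throughout, is the right point of care.
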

\begin{proof}
This follows directly from inclusion-exclusion.
\end{proof}

Note that we can write
\[
\Phi^{\ir}(\ell,k,D) = \Phi^{\xir}(\ell,k,D) - \Phi^{\sr}(\ell,k,D)
\]
recalling that for a subset $S \subseteq H^{0}(\Fk, \mc{O}(3,\ell))$, $S^{\xir}$ denotes the set of nonzero sections $s\in S$ which are not $x$-reducible, and $S^{\sr}$ denotes the set of nonzero sections $s \in S$ such which are specially reducible (that is, horizontally irreducible but not $x$-reducible). Putting together \Cref{cor:small-range-hash} with our previous bound for $|\Phi^{\sr}(\ell,k,D)|$ in \Cref{prop:hr-er-z-bound}, we obtain the following estimate for $\Phi^{\ir}(\ell,k,D)$ in the small range.
\begin{prop}\label{prop:phi-ir-small}
Let $\ell,k \ge 0$ be integers and let $D$ be a reduced effective divisor on $\PP^1$ such that $\ell\ge 3k$ and $\ell\ge \deg D$. Then
\[
\Phi^{\ir}(\ell,k,D) = \left(\begin{cases} q^{4\ell-6k+4-\deg D} - q^{3\ell-3k+3}&\text{if $\ell-3k \ge \deg D$}\\
0 &\text{else.}\end{cases}\right) + O\left(3^{\omega(D)}q^{3\ell-5k}\right).
\]
\end{prop}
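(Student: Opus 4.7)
The plan is to decompose $\Phi^{\ir}(\ell,k,D)$ into an $x$-irreducible piece that we can evaluate exactly and a specially reducible piece that we only need to bound. Specifically, every horizontally irreducible section is $x$-irreducible, and every $x$-irreducible section is either horizontally irreducible or specially reducible (since $x$-reducible means divisible by $x$, which is one specific linear factor). So the sets $\mc{V}(\ell,k)^{\xir}$ and $\mc{V}(\ell,k)^{\sr} \sqcup \mc{V}(\ell,k)^{\ir}$ agree, giving the identity
\[
\Phi^{\ir}(\ell,k,D) = \Phi^{\xir}(\ell,k,D) - \Phi^{\sr}(\ell,k,D).
\]

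First I would apply \Cref{cor:small-range-hash}, which uses the hypotheses $\ell \ge 3k$ and $\ell \ge \deg D$ of the small range to give the exact closed form for $\Phi^{\xir}(\ell,k,D)$ as $q^{4\ell-6k+4-\deg D} - q^{3\ell-3k+3}$ when $\ell - 3k \ge \deg D$ and $0$ otherwise. This supplies the main term in the statement. Then I would invoke the bound $|\Phi^{\sr}(\ell,k,D)| = O(3^{\omega(D)} q^{3\ell-5k})$ from \Cref{prop:hr-er-z-bound} (which only requires $\ell \ge 3k$, hence holds here) to absorb the specially reducible contribution into the error term.

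Subtracting these two and using the triangle inequality on the error yields the claim. There is no real obstacle: the work has already been done in \Cref{cor:small-range-hash} and \Cref{prop:hr-er-z-bound}, and the proposition is simply the assembly of those two ingredients via the identity above.
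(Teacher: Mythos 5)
Your proposal is correct and is essentially identical to the paper's argument: the paper also uses the decomposition $\Phi^{\ir}(\ell,k,D) = \Phi^{\xir}(\ell,k,D) - \Phi^{\sr}(\ell,k,D)$, evaluates the first term exactly via \Cref{cor:small-range-hash}, and absorbs the second into the error via the bound $|\Phi^{\sr}(\ell,k,D)| = O\left(3^{\omega(D)}q^{3\ell-5k}\right)$ from \Cref{prop:hr-er-z-bound}. No gaps.
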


\subsection{Medium Range}
The medium range is the critical range for the sieve. We will use the results of \Cref{sec:restrictions-line-bundles} to estimate $R(\ell,k,D)$. The rough strategy is to observe that any $\DD \in \Mar(D)$ either induces a ``perfect sieve'' or forces a section passing through $\DD$ to be horizontally reducible (or both!). Therefore our estimate will reduce to a bound on horizontally reducible sections.

More precisely, assume for now that we have integers $\ell,k\ge 0$ and a reduced effective divisor $D$ such that $\ell-3k \ge 0$ and $\ell-2k \ge \frac{\deg D + k}{2}$. By definition, we can write
\[
R(\ell,k,D) = \sum_{\DD \in \Mar(D)} |\Van(\ell,k,\DD)|.
\]

Let $\LL = \mc{O}(3,\ell)$ on $\Fk$. 

Now by \Cref{prop:dichotomy}, we know that for all $\DD \in \Mar(D)$, either
\[
\alpha_{\DD}: H^{0}(\Fk, \LL) \to H^{0}(\DD, \mc{O}_{D})
\]
is surjective, or every $s \in \Van(\ell,k,\DD)$ is horizontally reducible.

Let $S_0\sqcup S_1$ be a partition of $\Mar(D)$ such that $\DD \in S_0$ implies $\alpha_{\DD}$ is surjective and $\DD \in S_1$ implies all $s\in \Van(\ell,k,\DD)$ are horizontally reducible. (If some $\DD$ satisfies both conditions, then arbitrarily choose one of $S_0$ or $S_1$ to put it into.)

If $\DD \in S_0$, we have 
\[
|\Van(\ell,k,\DD)| = |H^{0}(\Fk, \LL)|q^{-\deg D}.
\]
On the other hand, if $\DD \in S_1$ we know at least that 
\[
|\Van(\ell,k,\DD)| \ge |H^{0}(\Fk, \LL)|q^{-\deg D}.
\]

So we have
\begin{align*}
R(\ell,k,D) &= \sum_{\DD \in S_0} |\Van(\ell,k,\DD)| + \sum_{\DD \in S_1} |\Van(\ell,k,\DD)|\\
&= \sum_{\DD \in \Mar(D)} |H^{0}(\Fk, \LL)|q^{-\deg D}+ \sum_{D \in S_1} \left(|\Van(\ell,k,\DD)| - |H^{0}(\Fk, \LL)|q^{-\deg D}\right)
\end{align*}
This is our division into a main term and an error term. To bound the error term we note
\begin{align*}
\sum_{D \in S_1} \left(|\Van(\ell,k,\DD)| - |H^{0}(\Fk, \LL)|q^{-\deg D}\right)&\le \sum_{D \in S_1} |\Van(\ell,k,\DD)|\\
&\le r_D(0) + \sum_{s \in H^{0}(\Fk, \LL)^{\hr}} r_D(s)\\
&= R^{z}(\ell,k,D) + R^{\hr}(\ell,k,D) \\
&= O\left(2^{\omega(D)}q^{\deg D} + 5^{\omega(D)}q^{3\ell-3k}\right)
\end{align*}
where the final bound follows from \Cref{prop:hr-er-z-bound}.

Let 
\[
\widehat{R}(\ell,k,D) \coloneq  \sum_{\DD \in \Mar(D)} |H^{0}(\Fk, \LL)|q^{-\deg D} = \prod_{P \mid D} \left(1+q^{-\deg P}\right)q^{4\ell-6k+4}.
\]
We have just shown
\[
R(\ell,k,D) = \widehat{R}(\ell,k,D) + O\left(2^{\omega(D)}q^{\deg D} + 5^{\omega(D)}q^{3\ell-3k}\right).
\]
Now we also know that
\begin{align*}
R^{\ir}(\ell,k,D) &= R(\ell,k,D) + O\left(R^{z}(\ell,k,D) + R^{\hr}(\ell,k,D)\right)\\
&= R(\ell,k,D) + O\left(2^{\omega(D)}q^{\deg D} + 5^{\omega(D)}q^{3\ell-3k}\right)
\end{align*}
So putting together both estimates, we find that
\[
R^{\ir}(\ell,k,D)=\widehat{R}(\ell,k,D) + O\left(2^{\omega(D)}q^{\deg D} + 5^{\omega(D)}q^{3\ell-3k}\right).
\]

Since we are in the range where $2\ell - 5k \ge \deg D$, in particular we have $3\ell - 3k \ge \deg D$, so we can simplify the error to
\[
R^{\ir}(\ell,k,D) = \widehat{R}(\ell,k,D) + O\left(5^{\omega(D)} q^{3\ell - 3k}\right).
\]

Now recall that we have
\[
\Phi^{\ir}(\ell,k,D) = \sum_{D_1\mid D} \mu(D-D_1) R^{\ir}(\ell,k,D_1).
\]
Therefore we have
\[
\Phi^{\ir}(\ell,k,D) = \widehat{\Phi}(\ell,k,D) + O\left(\sum_{D_1 \mid D} 5^{\omega(D_1)} q^{3\ell-3k}\right),
\]
where we define
\begin{align*}
\widehat{\Phi}(\ell,k,D)&\coloneq  \sum_{D_1 \mid D} \mu(D-D_1) \widehat{R}(\ell,k,D_1)\\
&= \sum_{D_1 \mid D} \mu(D-D_1)\prod_{P \mid D_1} \left(1+q^{-\deg P}\right)q^{4\ell-6k+4}\\
&= \sum_{D_1 \mid D} \mu(D-D_1) \sum_{D_2 \mid D_1} q^{-\deg D_2} q^{4\ell-6k+4}\\
&=q^{4\ell-6k+4-\deg D}.
\end{align*}

Here we are using the fact that if the condition $\ell-2k \ge \frac{\deg D + k}{2}$ holds and $D_1\mid D$, then $\ell-2k \ge \frac{\deg D_1 + k}{2}$ holds as well. If we note that $\sum_{D_1 \mid D} 5^{\omega(D_1)} = 6^{\omega(D_1)}$, we get a proof of the following proposition.

\begin{prop}
\label{prop:medium-estimate}
Let $\ell, k \ge 0$ be integers and let $D$ be a reduced effective divisor on $\PP^1$ such that $\ell\ge 3k$ and 
\[
\ell-2k \ge \frac{\deg D + k}{2}.
\]
Then
\[
\Phi^{\ir}(\ell,k,D) = q^{4\ell-6k+4-\deg D} + O\left(6^{\omega(D)} q^{3\ell-3k}\right).
\]
\end{prop}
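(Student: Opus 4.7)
The plan is to follow the strategy already outlined in the paragraphs preceding the statement, making each step rigorous. First I would start from the definition $R(\ell,k,D) = \sum_{\DD \in \Mar(D)} |\Van(\ell,k,\DD)|$ and apply \Cref{prop:dichotomy} to each $\DD$: since the hypothesis $\ell - 2k \ge (\deg D + k)/2$ is exactly the numerical condition of \Cref{prop:dichotomy}, the restriction map $\alpha_{\DD} : H^0(F_k,\mc{L}) \to H^0(\DD, \mc{O}_{\DD})$ is either surjective, or every section in its kernel is horizontally reducible. I would partition $\Mar(D) = S_0 \sqcup S_1$ accordingly.

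For $\DD \in S_0$, the kernel has the expected size $|H^0(F_k,\mc{L})| q^{-\deg D}$, so those terms combine to give the model
\[
\widehat{R}(\ell,k,D) := \sum_{\DD \in \Mar(D)} |H^0(F_k, \mc{L})| q^{-\deg D} = \prod_{P \mid D}(1 + q^{-\deg P}) \cdot q^{4\ell-6k+4}.
\]
For $\DD \in S_1$, the kernel consists entirely of horizontally reducible sections (or $0$), so the contribution to $R(\ell,k,D) - \widehat{R}(\ell,k,D)$ from each such $\DD$ is bounded above by $|\Van(\ell,k,\DD) \cap \mc{V}(\ell,k)^{\hr}| + |\{0\}|$, and summing over $\DD \in \Mar(D)$ gives at most $R^{\hr}(\ell,k,D) + R^{\z}(\ell,k,D)$. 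By \Cref{prop:hr-er-z-bound} this is $O(5^{\omega(D)} q^{3\ell - 3k} + 2^{\omega(D)} q^{\deg D})$, and the second term is absorbed into the first since the medium-range hypothesis forces $\deg D \le 2\ell - 5k \le 3\ell - 3k$. Passing from $R$ to $R^{\ir}$ costs another error of the same shape, yielding
\[
R^{\ir}(\ell,k,D) = \widehat{R}(\ell,k,D) + O(5^{\omega(D)} q^{3\ell-3k}).
\]

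Finally I would apply Möbius inversion $\Phi^{\ir}(\ell,k,D) = \sum_{D_1 \mid D} \mu(D - D_1) R^{\ir}(\ell,k,D_1)$. The key point is that the hypothesis of \Cref{prop:dichotomy} is preserved under $D_1 \mid D$, so the above expansion applies uniformly. The main term $\widehat{\Phi}(\ell,k,D) := \sum_{D_1 \mid D}\mu(D - D_1)\widehat{R}(\ell,k,D_1)$ telescopes: writing $\widehat{R}(\ell,k,D_1) = \sum_{D_2 \mid D_1} q^{-\deg D_2} \cdot q^{4\ell-6k+4}$ and exchanging summations with the Möbius function collapses everything to $q^{4\ell-6k+4-\deg D}$. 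The errors sum to $O\bigl(\sum_{D_1 \mid D} 5^{\omega(D_1)} q^{3\ell-3k}\bigr) = O(6^{\omega(D)} q^{3\ell-3k})$, using $\sum_{D_1 \mid D} 5^{\omega(D_1)} = 6^{\omega(D)}$.

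There is no genuine obstacle here since all inputs are already in place; the only thing to be careful about is verifying that the medium-range hypothesis is inherited by sub-divisors $D_1 \mid D$ (which is immediate since $\deg D_1 \le \deg D$), and checking that the error absorption $2^{\omega(D)} q^{\deg D} \ll 5^{\omega(D)} q^{3\ell-3k}$ actually holds throughout the medium range. Both are routine, so the argument is essentially bookkeeping around the dichotomy from \Cref{prop:dichotomy} and the reducibility bound from \Cref{prop:hr-er-z-bound}.
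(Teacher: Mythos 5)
Your proposal matches the paper's own derivation essentially step for step: the dichotomy from \Cref{prop:dichotomy} splitting $\Mar(D)$ into $S_0\sqcup S_1$, bounding the $S_1$ contribution and the passage from $R$ to $R^{\ir}$ by $R^{\hr}+R^{\z}$ via \Cref{prop:hr-er-z-bound}, absorbing $q^{\deg D}$ into $q^{3\ell-3k}$ using $\deg D\le 2\ell-5k$, and then M\"obius inversion with the telescoping main term and the $\sum_{D_1\mid D}5^{\omega(D_1)}=6^{\omega(D)}$ error. The argument is correct and complete as outlined.
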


As a corollary, if we additionally assume $\deg D > \ell$, the main term becomes smaller than the error term, so we get the following bound in the medium range.
\begin{prop}\label{prop:new-medium-estimate}
    Let $l, k \ge 0$ be integers and let $D$ be a reduced effective divisor on $\PP^1$. Assume $\ell \ge 3k$, and assume $(\ell,k,D)$ is in the medium range, meaning both of the following inequalities hold:
    \[
    \ell - 2k \ge \frac{\deg D+k}{2}\text{ and }     \deg D >\ell.
    \]
    Then we have
    \[
    \Phi^{\ir}(\ell, k, D) = O\left(6^{\omega(D)} q^{3\ell-3k}\right).
    \]
\end{prop}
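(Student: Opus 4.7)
The plan is to deduce this directly from Proposition \ref{prop:medium-estimate}, which has already been proved under the weaker hypothesis $\ell - 2k \ge (\deg D + k)/2$. That proposition gives
\[
\Phi^{\ir}(\ell,k,D) = q^{4\ell - 6k + 4 - \deg D} + O\!\left(6^{\omega(D)} q^{3\ell - 3k}\right),
\]
so the only thing left to check is that the additional hypothesis $\deg D > \ell$ forces the putative ``main term'' $q^{4\ell - 6k + 4 - \deg D}$ to be absorbed into the error.

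To verify this, I would simply compute the exponent difference. Since $\deg D$ is an integer and $\deg D > \ell$, we have $\deg D \ge \ell + 1$, so
\[
(4\ell - 6k + 4 - \deg D) - (3\ell - 3k) \;=\; \ell - 3k + 4 - \deg D \;\le\; 3 - 3k.
\]
When $k \ge 1$ this is $\le 0$, so $q^{4\ell - 6k + 4 - \deg D} \le q^{3\ell - 3k}$ outright. When $k = 0$, it gives $q^{4\ell - 6k + 4 - \deg D} \le q^{3} \cdot q^{3\ell - 3k}$, and the factor $q^{3}$ is a constant since $q$ is fixed throughout the paper. In either case,
\[
q^{4\ell - 6k + 4 - \deg D} \;=\; O(q^{3\ell - 3k}) \;=\; O\!\left(6^{\omega(D)} q^{3\ell - 3k}\right),
\]
using $6^{\omega(D)} \ge 1$. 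Folding this into the estimate from Proposition \ref{prop:medium-estimate} yields the claim.

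There is no real obstacle here; the content of the statement is entirely in the previous proposition, and the new hypothesis $\deg D > \ell$ exists precisely to collapse the two-term expansion into a single error bound that is convenient for the upper-range book-keeping in the sieve.
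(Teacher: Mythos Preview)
Your proof is correct and follows essentially the same approach as the paper: both invoke Proposition~\ref{prop:medium-estimate} and then observe that $\deg D > \ell$ (hence $\ell - 3k < \deg D$) forces the main term $q^{4\ell - 6k + 4 - \deg D}$ to be $O(q^{3\ell - 3k})$, absorbing it into the error. The paper's version is a one-line remark that drops the constant $q^4$ implicitly, while you track it explicitly; the content is identical.
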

\begin{proof}
    Since $\ell < \deg D$, in particular $\ell-3k < \deg D$, so we have $4\ell - 6k -\deg D\le 3\ell -3k$.
\end{proof}

\subsection{Large Range}
In the large range, we will use a naive uniformity estimate to give an upper bound for $\Phi^{\ir}(\ell,k,D)$.

\begin{prop}[Uniformity]\label{prop:uniformity}
Let $\ell,k \ge 0$ be integers such that $\ell\ge 3k$. Let $D$ be a reduced effective divisor on $\PP^1$. Then we have
\[
|\Phi^{ir}(\ell,k,D)| = O\left(3^{\omega(D)} q^{4\ell-6k}\right).
\]
\end{prop}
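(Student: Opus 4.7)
The plan is to bound $|\Phi^{\ir}(\ell,k,D)|$ by the triangle inequality, dropping the horizontal-irreducibility restriction and working with $|a_D(s)|$ summed over all nonzero $s \in \mc{V}(\ell,k)$. The main observation is that the multiplicative function $a_P(s) = r_P(s)-1$ takes values in $\{-1,0,1,2\}$ whenever $s|_P \neq 0$, while $a_P(s) = q^{\deg P}$ exactly when $s$ vanishes on the entire fiber above $P$. The large values of $|a_P|$ are therefore tied precisely to the fibral vanishing behavior that Lemma \ref{lemma:fib-bound} already knows how to count.

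For each nonzero $s$, let $D_1 \mid D$ denote the maximal subdivisor of $D$ such that $s$ vanishes on all fibers above points of $D_1$. Then
\[
|a_D(s)| = \prod_{P \mid D_1} q^{\deg P} \prod_{P \mid D-D_1} |a_P(s)| \le q^{\deg D_1} \cdot 2^{\omega(D-D_1)}.
\]
Grouping sections by the value of $D_1$ and using Lemma \ref{lemma:fib-bound} (with $a=3$ and $\ell_1 = \ell$) applied to $D_1$, the number of nonzero $s \in \mc{V}(\ell,k)$ vanishing on all fibers above $D_1$ is at most $q^{-\deg D_1}|\mc{V}(\ell,k)^{\nz}|$. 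Therefore
\[
\sum_{s \in \mc{V}(\ell,k)^{\ir}} |a_D(s)| \le \sum_{D_1 \mid D} 2^{\omega(D-D_1)} q^{\deg D_1} \cdot q^{-\deg D_1} |\mc{V}(\ell,k)^{\nz}| = |\mc{V}(\ell,k)^{\nz}| \sum_{D_1 \mid D} 2^{\omega(D-D_1)}.
\]

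The combinatorial identity $\sum_{D_1 \mid D} 2^{\omega(D-D_1)} = \prod_{P \mid D}(1+2) = 3^{\omega(D)}$, together with the dimension count $\dim_{\F_q} \mc{V}(\ell,k) = 4\ell - 6k + 4$ (valid because $\ell \ge 3k$), yields $|\mc{V}(\ell,k)^{\nz}| = q^{4\ell-6k+4} - 1 = O(q^{4\ell-6k})$, giving the claimed bound. There is no serious obstacle here; the only thing to be careful about is that the factor $q^{\deg D_1}$ arising from the large values of $|a_P(s)|$ cancels exactly against the density bound on fibrally vanishing sections from Lemma \ref{lemma:fib-bound}, which is precisely why this naive estimate succeeds.
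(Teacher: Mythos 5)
Your proof is correct and follows essentially the same route as the paper: drop the irreducibility restriction, stratify nonzero sections by the maximal subdivisor $D_1 \mid D$ of fibral vanishing, bound $|a_D(s)| \le q^{\deg D_1} 2^{\omega(D-D_1)}$, and cancel the $q^{\deg D_1}$ against the count from Lemma \ref{lemma:fib-bound} to get $3^{\omega(D)}\,|\mc{V}(\ell,k)^{\nz}| = O(3^{\omega(D)} q^{4\ell-6k})$. No gaps.
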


\begin{proof}
Let $\LL = \mc{O}(3,\ell)$. We can write
\[
|\Phi^{\ir}(\ell,k,D)| \le \sum_{s \in H^{0}(\Fk, \LL)^{\ir}} \prod_{P\mid D} |a_p(s)|
\]
where we recall $a_p(s) = r_p(s)-1$ and $r_p(s)$ counts the number of roots of $s|_{P}$.

Consider a given nonzero section $s \in H^{0}(\Fk, \LL)$ and a fixed closed point $P\mid D$. If $s$ vanishes on the fiber $\pi^{-1}(P)$, then $a_P(s) = q^{\deg P}$, and otherwise $|a_P(s)|\le 2$.

Let $\Fi(s,D)$ denote the maximal subdivisor $D_1\subseteq D$ such that $s$ vanishes on the fibers above $D_1$. We will do casework on the value of $D_1=\Fi(s,D)$. We have
\begin{align*}
|\Phi^{\ir}(\ell,k,D)|&\le \sum_{s \in H^{0}(\Fk, \LL)^{n}} \prod_{P \mid D} |a_P(s)|\\
&=\sum_{D_1 \subseteq D} \sum_{\substack{s \in H^{0}(\Fk \LL)^{n}\\ \Fi(s,D)=D_1}} \prod_{P \mid D} |a_P(s)|\\
&=\sum_{D_1 \subseteq D} \sum_{\substack{s \in H^{0}(\Fk,\LL)^{n}\\ \Fi(s,D)=D_1}}  2^{\omega(D)- \omega(D_1)} q^{\deg D_1}.
\end{align*}
Now by \Cref{lemma:fib-bound}, the number of sections $s \in H^{0}(\Fk, \LL)$ such that $\Fi(s,D) = D_1$ is at most
\[
|H^{0}(\Fk, \LL)^{\nz}| q^{-\deg D_1}.
\]
So
\begin{align*}
|\Phi^{\ir}(\ell,k,D)|&\le \sum_{D_1 \subseteq D} \left(|H^{0}(\Fk, \LL)^{\nz}| q^{-\deg D_1}\right)\left(2^{\omega(D) - \omega(D_1)} q^{\deg D_1}\right)\\
&= 3^{\omega(D)} |H^{0}(\Fk, \LL)^{\nz}|= O\left(3^{\omega(D)} q^{4\ell-6k}\right)
\end{align*}
as desired.
\end{proof}

\section{Counting Smooth Covers}\label{sec:smooth-counting}
We have just estimated $\Phi^{\ir}(\ell, k, D)$ for three different ranges. Our intermediate goal is to estimate

\[
\Psi^{\ir}(N) = \sum_{\substack{N', d \ge 0 \\ N' + d = N}} \sum_{(\ell, k) \in \Par(N')} \frac{1}{|\Gamma_k|}\sum_{\deg D = d} \mu(D) \Phi^{\ir}(\ell, k , D),
\]
which is a weighted sum of $\Phi^{\ir}(\ell, k, D)$ in all of the ranges.

\begin{defn}
Let 
\[
\widehat{\Phi}^{\ir}(\ell,k,D)\coloneq  \begin{cases} q^{4\ell-6k+4-\deg D} - q^{3\ell-3k+3}&\text{if $\ell-3k \ge \deg D$,}\\
0 &\text{else.}\end{cases}
\]

\end{defn}

By combining the previous three sections, we obtain the following estimate for the three ranges of $D$.
\begin{prop}
    Let $\ell,k$ be nonnegative integers such that $\ell\ge 3k$, and let $D$ be a reduced divisor on $\PP^1$. Then we have an estimate for $\Phi^{\ir}(\ell,k,D)$ in each range:
    \begin{itemize}
    \item Small range: if $\ell\ge \deg D$, then 
    \[
    \Phi^{\ir}(\ell,k,D) = \widehat{\Phi}^{\ir}(\ell,k,D) + O\left(3^{\omega(D)}q^{3\ell-5k}\right).
    \]
    \item Medium range: if $\ell<\deg D$ and $2\ell-5k> \deg D$, then 
    \[
    \Phi^{\ir}(\ell,k,D) = O\left( 6^{\omega(D)} q^{3\ell-3k}\right).
    \]
    \item Large range: if $\deg D \ge 2\ell-5k$ and $\ell <\deg D$, then 
    \[
    \Phi^{\ir}(\ell,k,D) = O\left(3^{\omega(D)}q^{4\ell-6k}\right).
    \]
    \end{itemize}
\end{prop}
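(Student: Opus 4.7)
The plan is to observe that this proposition is purely a consolidation of three range-specific estimates that have already been proven in the preceding subsections; no new analytic work is required. Each of the three cases corresponds to exactly one of those prior results, so the task reduces to checking that the hypotheses in the statement entail those of the earlier propositions.

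For the small range $\ell \ge \deg D$, the assumption together with the standing hypothesis $\ell \ge 3k$ is precisely the hypothesis of \Cref{prop:phi-ir-small}, which already gives
\[
\Phi^{\ir}(\ell,k,D) = \widehat{\Phi}^{\ir}(\ell,k,D) + O\bigl(3^{\omega(D)} q^{3\ell-5k}\bigr),
\]
so I would simply quote that result. For the medium range, the defining conditions $\ell < \deg D$ and $2\ell - 5k > \deg D$ imply $2\ell - 5k \ge \deg D$, which rearranges to $\ell - 2k \ge (\deg D + k)/2$; combined with $\deg D > \ell$, these are the exact hypotheses of \Cref{prop:new-medium-estimate}, yielding the bound $O(6^{\omega(D)} q^{3\ell-3k})$. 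For the large range, the uniformity estimate of \Cref{prop:uniformity} holds under the standing assumption $\ell \ge 3k \ge 0$ alone, with no restriction on $\deg D$, so the bound $O(3^{\omega(D)} q^{4\ell - 6k})$ is immediate.

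Because the three earlier propositions already carry out all the real work (an exact computation of $R^{\xir}$ together with the reducibility bound in the small range, the dichotomy from \Cref{prop:dichotomy} combined with the horizontal-reducibility estimates of \Cref{prop:hr-er-z-bound} in the medium range, and the naive fiber-vanishing argument via \Cref{lemma:fib-bound} in the large range), there is no substantive obstacle to overcome. The only minor verification is the implication $2\ell-5k > \deg D \Rightarrow \ell - 2k \ge (\deg D+k)/2$, which follows immediately upon rearrangement. The proof can therefore be written in a few lines as a citation of the three predecessor propositions applied in the three disjoint regions into which the parameter space has been partitioned.
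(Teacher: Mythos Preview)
Your proposal is correct and matches the paper's own proof, which is a one-line citation of \Cref{prop:phi-ir-small}, \Cref{prop:new-medium-estimate}, and \Cref{prop:uniformity}. Your only addition is the explicit check that $2\ell-5k>\deg D$ rearranges to $\ell-2k\ge(\deg D+k)/2$, which is harmless and accurate.
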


\begin{proof}
    This follows from \Cref{prop:phi-ir-small}, \Cref{prop:new-medium-estimate}, and \Cref{prop:uniformity}.
\end{proof}

Note that $\widehat{\Phi}^{\ir}(\ell,k,D)$ is zero when $(\ell,k,D)$ is outside of the small range.

As a consequence, we can estimate $\Psi(N)$ as a sum of a main term and an error term.
\begin{defn} Let $N\ge 0$. We define
\[
\widehat{\Psi}(N)\coloneq  \sum_{\substack{d,N'\ge 0\\ d+N'=N}}\sum_{\substack{D\\ \deg D = d}}\mu(D)\sum_{(\ell,k) \in \Par(N')}\frac{1}{|\Gamma_k|}\widehat{\Phi}^{\ir}(\ell,k,D).
\]
\end{defn}

As a consequence, we can write
\[
\Psi(N) = \widehat{\Psi}(N) + O\left(E_{\sml}+E_{\med}+E_{\lrg}\right)
\]
where $E_{\sml}$, $E_{\med}$, and $E_{\lrg}$ are the errors coming from the small, medium, and large ranges respectively. More precisely, we have
\begin{align*}
E_{\sml} &\coloneq  \sum_{\substack{d,N'\ge 0\\ d+N'=N}}\sum_{\substack{D\\ \deg D = d}}\sum_{(\ell,k) \in \Par(N')}\frac{1}{|\Gamma_k|}\begin{cases} O\left(3^{\omega(D)} q^{3\ell-5k}\right) &\text{ if $\ell\ge d$,}\\
0&\text{ else.}\end{cases}\\
E_{\med} &\coloneq  \sum_{\substack{d,N'\ge 0\\ d+N'=N}}\sum_{\substack{D\\ \deg D = d}}\sum_{(\ell,k) \in \Par(N')}\frac{1}{|\Gamma_k|}\begin{cases} O\left(6^{\omega(D)} q^{3\ell-3k}\right) &\text{ if $\ell< d<2\ell-5k$,}\\
0&\text{ else.}\end{cases}\\
E_{\lrg} &\coloneq  \sum_{\substack{d,N'\ge 0\\ d+N'=N}}\sum_{\substack{D\\ \deg D = d}}\sum_{(\ell,k) \in \Par(N')}\frac{1}{|\Gamma_k|}\begin{cases} O\left(3^{\omega(D)} q^{4\ell-6k}\right) &\text{ if $2\ell-5k\le d<\ell$,}\\
0&\text{ else.}\end{cases}
\end{align*}
We bound each of these errors in turn.

\begin{prop}\label{prop:Erange-bounds}
We have 
\begin{align*}
    E_{\sml} &= O\left(q^{3N/2}\right),\\
    E_{\med} &= O\left(N^7q^{4N/3}+1\right),\\
    E_{\lrg} &= O\left(N^4 q^{3N/2}+1\right),
\end{align*}

and therefore
\[
\Psi(N) = \widehat{\Psi}(N) + O\left(N^4 q^{3N/2}+1\right).
\]
\end{prop}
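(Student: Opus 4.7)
The plan is to bound each of $E_{\sml}, E_{\med}, E_{\lrg}$ separately by plugging the range-wise estimates for $\Phi^{\ir}(\ell, k, D)$ into the defining sum and executing the resulting sums over $d = \deg D$, $N'$, and $(\ell, k) \in \Par(N')$. Two analytic ingredients are used throughout. First, the standard divisor-sum estimate
\[
\sum_{\substack{D \subseteq \PP^1\\ \deg D = d}} c^{\omega(D)} \ll_c d^{c-1} q^d,
\]
obtained via the coefficientwise comparison $\prod_P(1 + cT^{\deg P}) \le Z(T)^c = ((1-T)(1-qT))^{-c}$. Second, the uniform lower bound $|\Gamma_k| \gg q^{k+3}$ (including $k = 0$, since $|\Gamma_0| = |\GL_2(\F_q)| \gg q^4$). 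Parametrizing $(\ell, k) \in \Par(N')$ by $k \in [0, N'/3]$ with $\ell = (N' + 3k)/2$ converts each $E_*$ into a three-variable sum over $(d, N', k)$ with $d + N' = N$.

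For $E_{\sml}$, the exponent appearing in $q^{3\ell - 5k}/|\Gamma_k|$ simplifies to $3N'/2 - 3k/2 - 3$, which decreases geometrically in $k$, so the $k$-sum is controlled by $k_{\min} = \max(0, \lceil(2d - N')/3\rceil)$, the minimum allowed by the small-range condition $\ell \ge d$. In the regime $d \le N'/2$ one has $k_{\min} = 0$ and the contribution $d^2 q^d \cdot q^{3N'/2} = d^2 q^{3N/2 - d/2}$ telescopes geometrically in $d$ to $O(q^{3N/2})$. In the regime $N'/2 < d \le N'$ one has $k_{\min} > 0$ and the contribution drops to $d^2 q^{2(N-d)}$, supported on $N/3 < d \le N/2$ with maximum $N^2 q^{4N/3}$; since $N^2 q^{-N/6}$ is uniformly bounded, this is absorbed into $O(q^{3N/2})$.

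For $E_{\med}$, the exponent in $q^{3\ell - 3k}/|\Gamma_k|$ simplifies to $3N'/2 + k/2 - 3$, which now \emph{increases} in $k$, so the $k$-sum is controlled by $k_{\max} = \min(\lfloor(2d - N' - 1)/3\rfloor, \lfloor(N' - d - 1)/2\rfloor)$ coming from $\ell < d$ and $d < 2\ell - 5k$. Non-emptiness of the range forces $N/3 < d < N/2$, and one checks that $k_{\max}$ is maximized at $d = 5N/12$ with value $N/12$, producing the exponent $11N/12$. Pairing with $d^5 q^d$ from the $c = 6$ divisor sum and summing over the admissible $(d, k)$ yields $E_{\med} = O(N^7 q^{4N/3} + 1)$, where the $+1$ handles degenerate small-$N$ cases when the range is empty.

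For $E_{\lrg}$, the exponent in $q^{4\ell - 6k}/|\Gamma_k|$ simplifies to $2N' - k - 3$, decreasing in $k$, so the $k$-sum is controlled by $k_{\min} = \max(0, \lceil(N' - d)/2\rceil)$ coming from $d \ge 2\ell - 5k$. When $d \ge N/2$ one has $k_{\min} = 0$ and the contribution $d^2 q^{2N - d}$ sums geometrically to $O(N^2 q^{3N/2})$. When $5N/12 < d < N/2$ the minimal $k$ produces $q^{(3N' + d)/2}$, which combines with $d^2 q^d$ to give $d^2 q^{3N/2}$ per $d$, summing over the $O(N)$ admissible $d$ to $O(N^3 q^{3N/2})$, well within the claimed $O(N^4 q^{3N/2})$. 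Combining all three ranges and observing $N^7 q^{4N/3} = O(N^4 q^{3N/2})$ for $N$ large (since $N^3 q^{-N/6}$ is bounded) and $O(1)$ for $N$ bounded, one concludes $\Psi(N) - \widehat\Psi(N) = O(N^4 q^{3N/2} + 1)$. The main obstacle is not any single estimate but rather the careful bookkeeping of the constraints defining each range and the identification of the critical $(d, k)$ values where the maxima occur.
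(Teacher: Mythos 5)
Your proposal is correct and follows the same overall strategy as the paper: substitute the three range-wise estimates for $\Phi^{\ir}(\ell,k,D)$ into the defining sums and evaluate the resulting triple sum over $(d,N',k)$ and over divisors $D$ of degree $d$, using the divisor-sum estimate (the paper's version is \Cref{lem:d-bound}) and the bound $1/|\Gamma_k|\ll q^{-k}$. The execution differs in one organizational respect: the paper first massages each summand into a form uniform in $k$ (e.g.\ for the medium range it uses $\ell<d$ to get $3\ell-4k\le 4N'/3+d/3$, so the summand is $\ll q^{4N/3-d}$ and the $(\ell,k)$-sum contributes only a factor $N+1$ via \Cref{lem:k-bound}), whereas you keep the $k$-dependence and locate the extremal $(d,k)$ on the boundary of the feasible region. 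Both routes land on the exponent $4N/3$ for the medium range and $3N/2$ for the other two, and your version is in places slightly sharper (you exploit the constraint $\ell\ge d$ in the small range, which the paper discards). One small point of rigor: in the medium range you maximize $k_{\max}$ over $d$ and then multiply by $q^d$, but the quantity actually being maximized is $3N'/2+k/2+d=3N/2-(d-k)/2$, so one should check directly that $d-k\ge N/3$ throughout the region (which follows from $k\le d-N/3$); the maximum $4N/3$ is then attained along the whole segment $k=d-N/3$, $N/3<d\le 5N/12$, not only at the corner $d=5N/12$. This does not affect the bound, since the $d$- and $k$-sums contribute at most polynomial factors in $N$, which your $N^7$ absorbs.
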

To prove \Cref{prop:Erange-bounds}, we'll use two auxiliary lemmas, which each follow from a small computation. It will also be useful to note that $1/|\Gamma_k| = O(q^{-k})$.

\begin{lemma}\label{lem:k-bound}
    Let $N', r \ge 0$ be integers. Then
    \[
    \sum_{(\ell,k) \in \Par(N')} q^{-rk/2}\ll \begin{cases}
        N'+1 & \text{if $r=0$},\\
        1 & \text{if $r>0$}.
    \end{cases}
    \]
\end{lemma}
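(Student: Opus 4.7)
The plan is to parameterize $\Par(N')$ by the single integer $k$ and then handle the two cases via a trivial count and a geometric series, respectively.

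First I would observe that for $(\ell,k) \in \Par(N')$, the constraint $2\ell - 3k = N'$ forces $\ell = (N'+3k)/2$, so $k$ is determined up to the parity constraint $k \equiv N' \pmod{2}$, and conversely $\ell$ is determined by $k$. The constraint $\ell \geq 3k$ then becomes $N' + 3k \geq 6k$, i.e.\ $0 \leq k \leq N'/3$. Hence
\[
\Par(N') \subseteq \{(\ell,k) : 0 \leq k \leq N'/3,\ \ell = (N'+3k)/2\},
\]
and in particular $|\Par(N')| \leq N'/3 + 1$.

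For the case $r = 0$, the sum is simply $|\Par(N')| \leq N'/3 + 1 \ll N' + 1$, giving the first bound.

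For the case $r > 0$, I would bound
\[
\sum_{(\ell,k) \in \Par(N')} q^{-rk/2} \leq \sum_{k=0}^{\infty} q^{-rk/2} = \frac{1}{1 - q^{-r/2}}.
\]
Since $q \geq 2$ and $r \geq 1$, we have $q^{-r/2} \leq 2^{-1/2} < 1$, so this geometric series is bounded by an absolute constant, yielding the second bound.

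There is no real obstacle here; the lemma is essentially a bookkeeping statement about the parameter set $\Par(N')$, and both estimates follow immediately from the explicit parameterization. The only point to note is that the implied constant in the $r > 0$ case depends on $q$, which is consistent with the paper's convention of treating $q$ as a fixed constant in the $O$-notation.
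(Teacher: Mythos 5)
Your proposal is correct and follows essentially the same route as the paper: both arguments reduce to the observation that each $k$ with $0 \le k \le N'/3$ determines at most one pair $(\ell,k) \in \Par(N')$, giving the trivial count for $r=0$ and a convergent geometric series for $r>0$. Nothing further is needed.
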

\begin{proof}
    If $(\ell,k) \in \Par(N')$, then $0\le k \le N'/3$, and conversely any such $k$ corresponds to at most one parameter pair $(\ell,k) \in \Par(N')$, so the lemma is clear.
\end{proof}
\begin{lemma}\label{lem:d-bound}
    Let $N,b,r$ be integers satisfying $N\ge 0$, $b \ge 1$, and $r \ge 2$. Then
    \[
    \sum_{0 \le d \le N}\sum_{\substack{D \\ \deg D = d}} b^{\omega(D)} q^{-rd/2} \ll \begin{cases} 
    N^b+1&\text{if $r=2$},\\
    1&\text{if $r \ge 3$}.
    \end{cases}
    \]
\end{lemma}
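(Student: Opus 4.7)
The strategy is to bound the inner sum $\sum_{\deg D = d} b^{\omega(D)}$ via a generating function and compare it to a power of the zeta function $Z(T)$. By multiplicativity over closed points of $\PP^1$, we have the formal identity
\[
F(T) \;:=\; \sum_{D} b^{\omega(D)} T^{\deg D} \;=\; \prod_{P \in \PP^1} \bigl(1 + b T^{\deg P}\bigr),
\]
where $D$ ranges over reduced effective divisors.

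The key step is to show the \emph{coefficientwise} inequality $F(T) \le Z(T)^b$. Indeed, for each integer $n \ge 1$ the binomial expansion
\[
(1 - T^n)^{-b} \;=\; \sum_{k \ge 0} \binom{b+k-1}{b-1} T^{kn}
\]
agrees with $1 + bT^n$ in its $T^0$ and $T^n$ coefficients and has nonnegative coefficients thereafter, so $(1-T^n)^{-b} \ge 1 + bT^n$ coefficientwise. Since both sides of each factor are power series with nonnegative coefficients, the inequality propagates through the product to give
\[
[T^d] F(T) \;\le\; [T^d]\, Z(T)^b \;=\; [T^d]\,\frac{1}{(1-T)^b (1-qT)^b}.
\]
A standard partial-fraction expansion (isolating the order-$b$ pole at $T = q^{-1}$) gives $[T^d] Z(T)^b = \binom{d+b-1}{b-1} q^d + (\text{lower-order in }q^d) = O(d^{b-1} q^d)$, where the implied constant depends only on $b$ and $q$ (which is fixed by our conventions).

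The two cases of the lemma then separate cleanly. For $r = 2$, we divide by $q^d$ to get $\sum_{\deg D = d} b^{\omega(D)}\, q^{-d} \ll d^{b-1}$, and summing over $0 \le d \le N$ yields $\ll N^b + 1$. For $r \ge 3$, the point $T = q^{-r/2}$ lies strictly inside the disk of convergence of $Z(T)^b$, since $q \cdot q^{-r/2} = q^{1 - r/2} < 1$; hence
\[
\sum_{0 \le d \le N} \sum_{\deg D = d} b^{\omega(D)}\, q^{-rd/2} \;\le\; \sum_{d \ge 0} [T^d]\, Z(T)^b \cdot q^{-rd/2} \;=\; Z(q^{-r/2})^b \;=\; O(1).
\]

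There is no substantial obstacle here; the only point requiring a moment's care is establishing the coefficientwise inequality $F(T) \le Z(T)^b$ rather than merely the numerical inequality $1 + bx \le (1-x)^{-b}$, since for $r = 2$ we must use the coefficient bound (the series $Z(q^{-1})^b$ itself diverges). The factor-by-factor binomial comparison above is what makes this work.
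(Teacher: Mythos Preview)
Your proof is correct and follows essentially the same approach as the paper: both set up the generating function $\prod_P(1+bT^{\deg P})$, bound it coefficientwise by $Z(T)^b$, and then treat the cases $r=2$ and $r\ge 3$ separately. The only minor difference is in finishing the $r=2$ case: you invoke partial fractions to get $[T^d]Z(T)^b = O(d^{b-1}q^d)$ and then sum, whereas the paper multiplies by $\tfrac{1}{1-T}$ to encode the cumulative sum and bounds $[T^N]$ of $\tfrac{1}{(1-T)^{b+1}(1-q^{-1}T)^b}$ by an iterative argument; these are equivalent in spirit and in output.
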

\begin{proof}
Let $\lambda = q^{-r/2}$. The coefficients of the formal generating function
\begin{equation}\label{eq:pre-zeta}
\sum_{d\ge 0}\sum_{\deg D = d} b^{\omega(D)} \lambda^{d} T^{d} = \prod_{P} (1+b T^{\deg P})
\end{equation}
are bounded from above by the coefficients of
\begin{equation}
\prod_{P}(1-(\lambda T)^{\deg P})^{-b} = Z(\lambda T)^{b}=\left(\frac{1}{(1-\lambda T)(1-q\lambda T)}\right)^{b}.\label{eq:zeta-cube} 
\end{equation}
Both are \eqref{eq:pre-zeta} and \eqref{eq:zeta-cube} are (uniformly) convergent power series for $|T|< \lambda^{-1} q^{-1-\epsilon}$, so if $r\ge 3$, then $\lambda \le q^{-3/2}$ and the sum of the first $N$ coefficients of \eqref{eq:pre-zeta} is at most 
\[
Z(\lambda)^{b}\le Z(q^{-3/2})^{b} = \left(\frac{1}{(1-q^{-3/2})(1-q^{-1/2})}\right)^{b}\ll_b 1.
\]
Now if $r=2$, then $\lambda = q^{-1}$, and we seek to bound the $N$th coefficient of
\begin{equation}\label{eq:iterate-bound}
\left(\frac{1}{1-T}\right)^{b+1}\left(\frac{1}{1-q^{-1}T}\right)^{b} = \left(\sum_{N\ge 0} \binom{N+b}{b}T^{N}\right)\left(\frac{1}{1-q^{-1}T}\right)^{b}.
\end{equation}
For any power series $\sum_{N \ge 0} c(N) T^{N}$ with $c(N)$ non-decreasing in $N$, we have
\begin{align*}
\left[T^{N}\right]\left(\sum_{N' \ge 0} c(N') T^{N'}\right)\left(\frac{1}{1-q^{-1}T}\right) = \sum_{0\le m\le N} c(N-m) q^{-m}&\le \sum_{0\le m \le N} c(N) q^{-m}\\
&\le c(N) \left(\frac{1}{1-q^{-1}}\right).
\end{align*}
So by applying this iteratively, the $N$th coefficient of \Cref{eq:iterate-bound} is at most
\[
\binom{N+b}{b} \left(\frac{1}{1-q^{-1}}\right)^{b}\ll_b N^b+1.
\]

\end{proof}

Now using \Cref{lem:k-bound} and \Cref{lem:d-bound}, we can prove \Cref{prop:Erange-bounds}.
\begin{proof}[Proof of \Cref{prop:Erange-bounds}]
    To bound $E_{\sml}$, recall that if $(\ell, k) \in \Par(N')$, then $2\ell - 3k=N'$. So if $N' = N-d$, then 
    \[
    \frac{1}{|\Gamma_k|} q^{3\ell-5k} \ll q^{3\ell -6k} = q^{3N'/2 - 3k/2} = q^{3N/2 - 3d/2 - 3k/2}.
    \]
    (We don't need to use the small range restriction $\ell \ge d$ to get this bound.) So we have
    \begin{align*}
        E_{\sml} &\ll \sum_{\substack{d,N' \ge 0 \\ d+N'=N}}\sum_{\deg D = d} \sum_{(\ell,k) \in \Par(N')} \frac{1}{|\Gamma_k|} 3^{\omega(D)} q^{3\ell-5k}\\
        &\ll\sum_{\substack{d,N' \ge 0 \\ d+N'=N}}\sum_{\deg D = d} 3^{\omega(D)} q^{3N/2-3d/2 - 3k/2}\\
        &\ll\sum_{\substack{d,N' \ge 0 \\ d+N'=N}}\sum_{\deg D = d}  3^{\omega(D)} q^{3N/2-3d/2}\\
        &=O(q^{3N/2}).
    \end{align*}
    by \Cref{lem:k-bound} and \Cref{lem:d-bound}.

    Now we bound $E_{\med}$. Suppose $N' = N-d$ and $(\ell,k) \in \Par(N')$, so $2\ell - 3k = N$. Then if  $\ell< d$, we have
    \[
    \frac{1}{|\Gamma_k|} q^{3\ell - 3k} \ll q^{3\ell - 4k} \le q^{8\ell/3+d/3 - 4k} = q^{4N'/3 + d/3} = q^{4N/3 - d}.
    \]
    (Here we use the bound $\ell<d$, but we don't use the bound $d<2\ell - 5k$.) So
    \begin{align*}
    E_{\med} &\ll \sum_{\substack{d,N' \ge 0 \\ d+N'=N}}\sum_{\deg D = d} \sum_{(\ell,k) \in \Par(N')} \frac{1}{|\Gamma_k|} 6^{\omega(D)} q^{3\ell-3k}\\
    &\ll \sum_{\substack{d,N' \ge 0 \\ d+N'=N}}\sum_{\deg D = d} \sum_{(\ell,k) \in \Par(N')} 6^{\omega(D)} q^{4N/3-d}\\
    &\ll \sum_{\substack{d,N' \ge 0 \\ d+N'=N}}\sum_{\deg D = d} (N+1) 6^{\omega(D)} q^{4N/3-d}\\
    &\ll (N^6+1)(N+1)q^{4N/3} = O\left(N^7q^{4N/3}+1\right).
    \end{align*}

    Finally, we bound $E_{\lrg}$. Suppose $N'=N-d$ and $(\ell,k) \in \Par(N')$, so $2\ell - 3k=N$. Then if $2\ell - 5k < d$, we have $N'-2k <d$, so $4N'-2k< 3N-2d$, implying
    \[
    \frac{1}{|\Gamma_k|} q^{4\ell - 6k} = \frac{1}{|\Gamma_k|} q^{2N'}\ll q^{2N'-k}\le q^{3N/2 - d}.
    \]
    So
    \begin{align*}
        E_{\lrg}& \ll \sum_{\substack{d,N' \ge 0 \\ d+N'=N}}\sum_{\deg D = d} \sum_{(\ell,k) \in \Par(N')} \frac{1}{|\Gamma_k|} 3^{\omega(D)} q^{4\ell-6k}\\
        &\ll \sum_{\substack{d,N' \ge 0 \\ d+N'=N}}\sum_{\deg D = d} \sum_{(\ell,k) \in \Par(N')}3^{\omega(D)} q^{3N/2-d}\\
        &\ll \sum_{\substack{d,N' \ge 0 \\ d+N'=N}}\sum_{\deg D = d}(N+1) 3^{\omega(D)} q^{3N/2-d}\\
        &\ll (N^3+1)(N+1) q^{3N/2} = O\left(N^4q^{3N/2}+1\right).
    \end{align*}
\end{proof}

Now that we have bounded the error, we compute asymptotics for the main term of $\Psi(N)$.
\begin{defn}
    Let $\widehat{F}(T) \in \C[[T]]$ be the formal power series defined by 
    \[
    \widehat{F}(T)\coloneq \sum_{N \ge 0} \widehat{\Psi}(N) T^{N}.
    \]
\end{defn}
\begin{prop}\label{prop:gen-func}
    We have 
    \[
    \widehat{F}(T)= \frac{q^2}{q^2-1}\frac{(1-q^3 T^3)(1-q^4T^3)(1+q^6 T^3)}{(1-q^5 T^3)(1-q^4 T^2)(1-q^3 T^2)}.
    \]
\end{prop}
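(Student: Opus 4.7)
The plan is to compute $\widehat{F}(T)$ as a formal power series by substituting the definition of $\widehat{\Psi}(N)$, swapping orders of summation, and recognising that the inner M\"obius sum is really the reciprocal zeta function of $\PP^1$. Concretely, I would first plug in
\begin{equation*}
\widehat{\Phi}^{\ir}(\ell,k,D)=q^{4\ell-6k+4-\deg D}-q^{3\ell-3k+3}\quad\text{on its support }\ell-3k\ge\deg D,
\end{equation*}
reparametrise by $m:=\ell-3k\ge 0$ (so $N=2m+3k+d$), and split $\widehat{F}=\widehat{F}_{1}-\widehat{F}_{2}$ according to the two monomials. After pulling the $k$-factors out, each $\widehat{F}_{i}(T)$ becomes a product $c_{i}\,\Sigma(T)\cdot S_{i}(T)$, where $\Sigma(T):=\sum_{k\ge 0}q^{6k}T^{3k}/|\Gamma_{k}|$ decouples, and the only coupling between $m$ and $d$ is the constraint $d\le m$.

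The key step is to observe that $Z(T)^{-1}=(1-T)(1-qT)$ means
\begin{equation*}
\sum_{d\ge 0}T^{d}\sum_{\deg D=d}\mu(D)=(1-T)(1-qT)=\sum_{d=0}^{2}a_{d}T^{d}
\end{equation*}
is supported in $d\in\{0,1,2\}$, so the constraint $d\le m$ is automatic as soon as $m\ge 2$. Swapping the $m$- and $d$-sums then yields, for $\widehat{F}_{1}$ (with $\alpha_{1}=q^{4}$) and $\widehat{F}_{2}$ (with $\alpha_{2}=q^{3}$), the clean closed form
\begin{equation*}
S_{i}(T)=\frac{1}{1-\alpha_{i}T^{2}}\sum_{d=0}^{2}a_{d}(q^{3}T^{3})^{d}=\frac{(1-q^{3}T^{3})(1-q^{4}T^{3})}{1-\alpha_{i}T^{2}},
\end{equation*}
the last equality being the identity $\sum_{d}a_{d}X^{d}=(1-X)(1-qX)$ evaluated at $X=q^{3}T^{3}$. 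This simultaneously produces two of the cubic factors in the numerator and the two quadratic factors in the denominator of the claimed formula, via
\begin{equation*}
\frac{q^{4}}{1-q^{4}T^{2}}-\frac{q^{3}}{1-q^{3}T^{2}}=\frac{q^{3}(q-1)}{(1-q^{4}T^{2})(1-q^{3}T^{2})}.
\end{equation*}

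For the remaining series $\Sigma(T)$, I would split off the $k=0$ term (where $|\Gamma_{0}|=q(q-1)(q^{2}-1)$) and use $|\Gamma_{k}|=(q-1)^{2}q^{k+1}$ for $k\ge 1$ to sum a single geometric series; after clearing denominators this gives
\begin{equation*}
\Sigma(T)=\frac{1+q^{6}T^{3}}{q(q-1)^{2}(q+1)(1-q^{5}T^{3})},
\end{equation*}
accounting for the last cubic numerator factor $(1+q^{6}T^{3})$ and the cubic denominator factor $(1-q^{5}T^{3})$. Multiplying everything together, the prefactor collapses as $q^{3}(q-1)/(q(q-1)^{2}(q+1))=q^{2}/(q^{2}-1)$, yielding the claimed product. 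None of these steps is genuinely deep — the argument is purely formal manipulation of generating series — so the main obstacle is just bookkeeping: keeping track of the $k=0$ anomaly in $|\Gamma_{k}|$ and of the boundary terms $m=0,1$ where the M\"obius sum is truncated. A useful sanity check is the constant term, $\widehat{F}(0)=\widehat{\Psi}(0)=q^{2}/(q^{2}-1)$, which comes solely from the unique triple $(\ell,k,D)=(0,0,\emptyset)$ and matches the right-hand side at $T=0$.
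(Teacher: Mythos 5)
Your proposal is correct and follows essentially the same route as the paper: reparametrize by $m=\ell-3k$, pull out the $k$-series $\sum_k q^{6k}T^{3k}/|\Gamma_k|=\frac{1}{|\Gamma_0|}\frac{1+q^6T^3}{1-q^5T^3}$, evaluate the truncated geometric sums over $m\ge d$, and recognize $\sum_D\mu(D)(q^3T^3)^{\deg D}=(1-q^3T^3)(1-q^4T^3)$, with the partial-fraction combination $\frac{q^4}{1-q^4T^2}-\frac{q^3}{1-q^3T^2}=\frac{q^4-q^3}{(1-q^4T^2)(1-q^3T^2)}$ appearing identically in both arguments. The only cosmetic difference is that you split the two monomials into $\widehat{F}_1-\widehat{F}_2$ before summing, whereas the paper keeps them together and evaluates the inner $n$-sum first.
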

\begin{proof}
By definition, we have
\[
\widehat{F}(T)=  \sum_{N\ge 0} T^{N}\sum_{\substack{d, N' \ge 0\\ d+N' = N}} \sum_{\substack{D \\ \deg D=d}} \mu(D) \sum_{(\ell,k) \in \Par(N')} \frac{1}{|\Gamma_k|} \widehat{\Phi}^{\ir}(\ell,k,D).
\]

For a pair of nonnegative integers $(\ell,k) \in \Par(N')$, let $n = \ell-3k$. We must have $n \ge 0$ and $N' = 2(n+3k)-3k=2n+3k$. Conversely, for any pair of integers $n, k\ge 0$, we have a pair $(n+3k,k)\in \Par(N')$ with $N'\coloneq  2n+3k$. Also note that $\widehat{\Phi}^{\ir}(\ell,k,D)$ is only nonzero when $n \ge \deg D$. Therefore we can re-write the above sum as
\begin{align*}
\widehat{F}(T) &= \sum_{N' \ge 0} T^{N'} \sum_{d \ge 0} T^{d} \sum_{\substack{D\\\deg D = d}}\mu(D) \sum_{(\ell,k) \in \Par(N')} \frac{1}{|\Gamma_k|} \widehat{\Phi}^{\ir}(\ell,k,D)\\
&=\sum_{d \ge 0}T^{d}\sum_{\substack{D \\ \deg D  = d}}\mu(D)\sum_{n \ge 0} \sum_{k \ge 0} T^{2n+3k} \frac{1}{|\Gamma_k|} \widehat{\Phi}^{\ir}(n+3k,k,D)\\
&= \sum_{d\ge 0} T^{d}\sum_{\substack{D \\ \deg D = d}} \mu(D)\sum_{n \ge d}\sum_{k \ge 0} T^{2n + 3k} \frac{1}{|\Gamma_k|}\left( q^{4(n+3k)-6k+4-d} - q^{3(n+3k)-3k+3}\right)\\
&= \sum_{d\ge 0} T^{d}\sum_{\substack{D \\ \deg D = d}} \mu(D)\sum_{k \ge 0}\frac{1}{|\Gamma_k|}q^{6k}T^{3k} \sum_{n \ge d} T^{2n} \left( q^{4n+4-d} - q^{3n+3}\right).
\end{align*}

Note that
\[
\sum_{n \ge d} T^{2n}\left(q^{4n + 4 - d} - q^{3n +3}\right) = T^{2d}q^{3d} \left(\frac{q^4}{1-q^{4}T^2} - \frac{q^3}{1-q^{3}T^2}\right) = T^{2d}q^{3d}\frac{q^4-q^3}{(1-q^4T^2)(1-q^3T^2)}.
\]
Also, by \Cref{lem:aut-series}, we have
\[
\sum_{k\ge 0} \frac{1}{|\Gamma_k|} q^{6k} T^{3k} = \frac{1}{|\Gamma_0|} \frac{1+q^6 T^3}{1-q^5 T^3}. 
\]
Therefore we have
\begin{align*}
\widehat{F}(T) &= \sum_{d \ge 0} T^{d} \sum_{\substack{D \\ \deg D = d}} \mu(D) \sum_{k \ge 0} \frac{1}{|\Gamma_k|} q^{6k} T^{3k} \left( q^{3d} T^{2d}  \frac{q^4-q^3}{(1-q^4 T^2)(1-q^3 T^2)}\right)\\
&=\sum_{d \ge 0} \sum_{\substack{D\\ \deg D = d}} \mu(D) q^{3d} T^{3d}\left(\frac{q^4-q^3}{|\Gamma_0|}\right)\frac{(1+q^6T^3)}{(1-q^5T^3)(1-q^4T^2)(1-q^3T^2)}.
\end{align*}

Finally, recall we have
\[
\sum_{D} \mu(D) T^{\deg D} = Z(T)^{-1} = (1-T)(1-qT).
\]
So
\[
\sum_{d \ge 0} \sum_{\substack{D \\ \deg D = d}} \mu(D) q^{3d} T^{3d} =Z(q^3 T^3)^{-1}= (1-q^3 T^3)(1-q^4 T^3).
\]
In conclusion we can write
\[
\widehat{F}(T) = \frac{q^4-q^3}{|\Gamma_0|}\frac{(1-q^3 T^3)(1-q^4 T^3)(1+q^6 T^3)}{(1-q^5 T^3)(1-q^4 T^2)(1-q^3 T^2)}.
\]

\end{proof}
\begin{lemma}\label{lem:aut-series}
As a formal power series in $\C[[T]]$, we have
\[
\sum_{k \ge 0} \frac{T^k}{|\Gamma_k|} = \frac{1}{|\Gamma_0|} \left(\frac{1+T}{1-T/q}\right).
\]
\end{lemma}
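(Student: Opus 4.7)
The plan is to substitute the explicit formulas for $|\Gamma_k|$ (already recorded in the paper just below the definition of $\Gamma_k$) and evaluate both sides as rational functions in $T$. Recall $|\Gamma_0|=|\GL_2(\F_q)|=(q^2-q)(q^2-1)=q(q-1)^2(q+1)$, while for $k\ge 1$ a standard computation of $\Hom$-groups on $\PP^1$ gives
\[
\Aut(\mc{O}\oplus\mc{O}(-k))\;=\;\begin{pmatrix}\F_q^\times & H^0(\PP^1,\mc{O}(k))\\ 0 & \F_q^\times\end{pmatrix},
\]
since $\Hom(\mc{O}(-k),\mc{O})=H^0(\PP^1,\mc{O}(k))$ has dimension $k+1$ and $\Hom(\mc{O},\mc{O}(-k))=0$. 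Hence $|\Gamma_k|=(q-1)^2 q^{k+1}$ for $k\ge 1$, which I would either cite directly from the paper or include in a one-sentence verification.

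With these cardinalities in hand, I would split the series as
\[
\sum_{k\ge 0}\frac{T^k}{|\Gamma_k|}\;=\;\frac{1}{|\Gamma_0|}\;+\;\frac{1}{(q-1)^2 q}\sum_{k\ge 1}(T/q)^k.
\]
The geometric sum evaluates (formally in $\C[[T]]$) to $\tfrac{T/q}{1-T/q}$, so the right-hand side becomes
\[
\frac{1}{q(q-1)^2(q+1)}\;+\;\frac{T/q^{2}}{(q-1)^{2}(1-T/q)}.
\]

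The remaining step is to rewrite this as $|\Gamma_0|^{-1}(1+T)/(1-T/q)$. Putting the two terms over the common denominator $q^{2}(q+1)(1-T/q)$, the numerator equals $q(1-T/q)+T(q+1)=q-T+qT+T=q(1+T)$, which simplifies to $(1+T)/\bigl(q(q+1)(1-T/q)\bigr)$ after dividing by $q$. Multiplying by the outer factor $1/(q-1)^{2}$ then gives $(1+T)/\bigl(q(q-1)^{2}(q+1)(1-T/q)\bigr)=|\Gamma_0|^{-1}(1+T)/(1-T/q)$, as desired. None of these steps presents any real obstacle; the only mild technical point is confirming the formula for $|\Gamma_k|$ when $k\ge 1$, which is immediate from Birkhoff--Grothendieck and the cohomology of $\mc{O}(k)$ on $\PP^{1}$.
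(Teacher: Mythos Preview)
Your proof is correct and follows essentially the same approach as the paper: split off the $k=0$ term, sum the geometric series for $k\ge 1$ using $|\Gamma_k|=(q-1)^2 q^{k+1}$, and simplify to the stated rational function. The paper compresses the algebra into one line by writing the $k\ge 1$ contribution as $\tfrac{1+1/q}{|\Gamma_0|}\cdot\tfrac{T}{1-T/q}$, but your more explicit common-denominator computation arrives at the same identity.
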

\begin{proof}
Note that $|\Gamma_0| = |\GL_2(\Fq)| = (q^2-q)(q^2-1)$, while $|\Gamma_k| = q^{k+1}(q-1)^2$ for $k \ge 1$. Therefore
\[
\sum_{k \ge 0} \frac{T^k}{|\Gamma_k|} = \frac{1}{|\Gamma_0|} + \frac{1+1/q}{|\Gamma_0|} \frac{T}{1-T/q} = \frac{1}{|\Gamma_0|} \frac{1+T}{1-T/q}.
\]
\end{proof}

\begin{defn}
    Let $\widehat{G}(T):= (1-T)(1-qT)\widehat{F}(T)$. Define $\widehat{\Theta}(N)$ by
    \[
    \widehat{G}(T) =: \sum_{N \ge 0} \widehat{\Theta}(N)  T^{N}.
    \]
\end{defn}
\begin{lemma}\label{lem:theta-near-tilde}
    We have
    \[
    \Theta(N) = \widehat{\Theta}(N) + O(N^4q^{3N/2}+1).
    \]
\end{lemma}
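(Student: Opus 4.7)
The plan is to reduce the claim to the estimate $\Psi^{\ir}(N) = \widehat{\Psi}(N) + O(N^4 q^{3N/2} + 1)$ already established in \Cref{prop:Erange-bounds}, by passing through the generating function identities that define $\Theta(N)$ and $\widehat{\Theta}(N)$.

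First, I would recall that on $\PP^1$ we have the explicit formula $Z(T)^{-1} = (1-T)(1-qT)$. By \Cref{defn:power-series-eq} and \Cref{prop:Psi-formula}, this gives
\[
G(T) = Z(T)^{-1} \sum_{N \ge 0} \Psi^{\ir}(N) T^N = (1-T)(1-qT) \sum_{N \ge 0} \Psi^{\ir}(N) T^N.
\]
Extracting the coefficient of $T^N$ yields the finite linear recurrence
\[
\Theta(N) = \Psi^{\ir}(N) - (q+1)\Psi^{\ir}(N-1) + q\,\Psi^{\ir}(N-2),
\]
where by convention $\Psi^{\ir}(M) = 0$ for $M < 0$. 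By the definition of $\widehat{G}(T) = (1-T)(1-qT)\widehat{F}(T)$ and $\widehat{F}(T) = \sum_{N\ge 0} \widehat{\Psi}(N) T^N$, the same coefficient extraction gives
\[
\widehat{\Theta}(N) = \widehat{\Psi}(N) - (q+1)\widehat{\Psi}(N-1) + q\,\widehat{\Psi}(N-2).
\]

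Second, I would subtract the two recurrences to get
\[
\Theta(N) - \widehat{\Theta}(N) = \bigl(\Psi^{\ir}(N) - \widehat{\Psi}(N)\bigr) - (q+1)\bigl(\Psi^{\ir}(N-1) - \widehat{\Psi}(N-1)\bigr) + q\bigl(\Psi^{\ir}(N-2) - \widehat{\Psi}(N-2)\bigr),
\]
and apply the bound $\Psi^{\ir}(M) - \widehat{\Psi}(M) = O(M^4 q^{3M/2} + 1)$ from \Cref{prop:Erange-bounds} to each of the three terms with $M \in \{N, N-1, N-2\}$. Since $q$ is treated as a fixed constant in our $O$-notation, the factors $(q+1)$ and $q$ are absorbed, and each shifted term $(N-j)^4 q^{3(N-j)/2} + 1$ for $j = 0,1,2$ is $O(N^4 q^{3N/2} + 1)$. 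Summing the three contributions gives the claimed bound.

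There is essentially no obstacle here: the work has all been done upstream in \Cref{prop:Erange-bounds}, and this lemma just propagates that estimate through the multiplication by the polynomial $(1-T)(1-qT)$, which only couples three consecutive coefficients. The one small care is to observe that boundary terms (with $N-1 < 0$ or $N-2 < 0$) are automatically handled because the convention $\Psi^{\ir}(M) = \widehat{\Psi}(M) = 0$ for $M < 0$ preserves the difference bound trivially.
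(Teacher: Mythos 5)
Your proposal is correct and follows essentially the same route as the paper: both extract the three-term recurrence $\Theta(N) = \Psi^{\ir}(N) - (q+1)\Psi^{\ir}(N-1) + q\Psi^{\ir}(N-2)$ (and its hatted analogue) from the factor $(1-T)(1-qT)$ and then propagate the bound of \Cref{prop:Erange-bounds} term by term, absorbing the constants since $q$ is fixed. No gaps.
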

\begin{proof}
    Let 
    \[
    F(T) = \sum_{N \ge 0} \Psi^{\ir}(N) T^{N}.
    \]

    Since $\widehat{G}(T) = (1-T)(1-qT) \widehat{F}(T)$ and $G(T) = (1-T)(1-qT) F(T)$, we have both of the following identities:
    \begin{align*}
    \Theta(N) &= \Psi(N) - (q+1)\Psi(N-1) + q\Psi(N-2),\\
    \widehat{\Theta}(N) &= \widehat{\Psi}(N) - (q+1) \widehat{\Psi}(N-1)  + q \widehat{\Psi}(N-2).
    \end{align*}
    Here $\Psi(M)$ and $\widehat{\Psi}(M)$ are both $0$ if $M<0$.

    Now we have showed that
    \[
    \Psi(N) = \widehat{\Psi}(N) + O\left(N^4 q^{3N/2}+1\right),
    \]
    so it is clear that the same error bound (up to a constant) holds for $\Theta(N)$.
\end{proof}

Now by \Cref{prop:gen-func}, we have
\begin{align*}
 \widehat{G}(T) &= (1-T)(1-qT) \widehat{F}(T)\\
 &= \frac{q^2}{q^2-1}\frac{(1-T)(1-qT) (1-q^3 T^3)(1-q^4 T^3) (1-q^2T + q^4 T^2)}{(1-q^5 T^3)(1-q^2 T)(1-q^3 T^2)}.
\end{align*}
From this power series, we extract a main term, secondary term, and error term for $\widehat{\Theta}(N)$ and therefore for $\Theta(N)$.

We can write
\[
\widehat{G}(T) = \frac{(1-q^{-3})(1+q^{-1})}{1-q^2 T}  - q^{-1}(1-q^{-2})\left(\frac{1+q+q^2T+q^3T+q^4 T^2}{1-q^5 T^3}\right) + \frac{P(T)}{1-q^3 T^2}
\]
for some polynomial $P(T)$.

Therefore, if we write $N = 3m + i$ where $i \in \{0,1,2\}$, we have
\[
\widehat{\Theta}(N) = c_1 q^{2N} - c_2^{i} q^{5m}  + O\left(q^{3N/2}\right)
\]
where the constants are 
\begin{equation}\label{eq:explicit-constant}
\begin{aligned}
c_1&= (1-q^{-3})(1+q^{-1})\\
c_2^{i}&:= (1-q^{-2})q^{-1}\begin{cases}
    1+q & \text{if } i =0,\\
    q^2 + q^3&\text{if } i=1,\\
    q^4 &\text{if }i=2.
\end{cases}
\end{aligned}
\end{equation}

Putting this together with \Cref{lem:theta-near-tilde}, we conclude that
\[
\Theta(N) = c_1 q^{2N} - c_2^{i} q^{5m} + O(N^4 q^{3N/2}+1).
\]

Now by the results of \Cref{sec:c3-insep}, the total number of $C_3$ covers and inseparable covers $f: X\to \PP^1$ such that $X$ is smooth and irreducible with discriminant degree $2N$ is at most $O(Nq^{N}+1)$. This means
\[
\Cov_{3}(2N) = \Theta(N) + O(Nq^{N}+1).
\]
Finally, putting together all of the pieces above, we conclude our main theorem.
\begin{thm}\label{thm:main-thm}
    Let $N$ be a nonnegative integer, and let $m, i$ be integers such that $m\ge 0$, $i\in \{0,1,2\}$ and $N=3m+i$. 

    Then the number of degree 3 field extensions $K/\F_q(t)$ up to isomorphism satisfying $\Nm \Disc_{K/\F_q(t)} = q^{2N}$, or equivalently the number degree $3$ branched covers $X\to \PP^1$ up to isomorphism with branch degree $2N$, is equal to 
    \[
    \Cov_3(2N) = c_1 q^{2N} - c_2^{i} q^{5m} + O\left(N^4q^{3N/2}+1\right)
    \]
    where $c_1$ and $c_2^{i}$ are explicit constants given above in \eqref{eq:explicit-constant}. 

    (For a field extension or a branched cover, each isomorphism class is counted once.)
\end{thm}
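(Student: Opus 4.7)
The plan is to combine all the machinery developed throughout the paper in the specific order it has been built. First I would observe that the number of branched covers $\Cov_3(2N)$ differs from the weighted quantity $\Theta(N)$ only by contributions from covers $f: X \to \PP^1$ with $X$ smooth and irreducible that are either $C_3$-Galois (weighted by $1/3$ instead of $1$) or purely inseparable (included in $\Theta(N)$ but not in $\Cov_3$). Appealing to the bounds of \Cref{sec:c3-insep}, both contribute at most $O(Nq^N + 1)$, so it suffices to prove the asymptotic for $\Theta(N)$.

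Next, I would plug the estimates of $\Phi^{\ir}(\ell,k,D)$ from the three ranges (\Cref{prop:phi-ir-small}, \Cref{prop:new-medium-estimate}, \Cref{prop:uniformity}) into the formula
\[
\Psi^{\ir}(N) = \sum_{\substack{N',d\ge 0\\ N'+d=N}}\sum_{(\ell,k)\in\Par(N')}\frac{1}{|\Gamma_k|}\sum_{\deg D=d}\mu(D)\Phi^{\ir}(\ell,k,D)
\]
from \Cref{prop:Psi-formula}, splitting into a model piece $\widehat{\Psi}(N)$ coming from $\widehat{\Phi}^{\ir}$ and three error pieces $E_{\sml}, E_{\med}, E_{\lrg}$. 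The hardest combinatorial step is bounding $E_{\med}$ and $E_{\lrg}$ uniformly in all parameters, which requires the divisor-sum and parameter-sum lemmas (analogues of \Cref{lem:k-bound} and \Cref{lem:d-bound}). The total error should come out to $O(N^4 q^{3N/2}+1)$, giving $\Psi^{\ir}(N) = \widehat{\Psi}(N) + O(N^4 q^{3N/2}+1)$, and hence $\Theta(N) = \widehat{\Theta}(N) + O(N^4 q^{3N/2}+1)$ after multiplying the generating functions by $Z(T)^{-1} = (1-T)(1-qT)$.

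The remaining step is extracting the asymptotics from $\widehat{G}(T) = (1-T)(1-qT)\widehat{F}(T)$. I would compute $\widehat{F}(T)$ by expanding the definition of $\widehat{\Phi}^{\ir}$: the sum over $d, n = \ell - 3k, k$ factors as a product of three geometric-type series, one in $T^d$ giving $Z(q^3T^3)^{-1}$, one in $T^{2n}$ giving a rational expression in $q^3 T^2, q^4 T^2$, and one in $T^{3k}/|\Gamma_k|$ handled by \Cref{lem:aut-series}. This yields a closed-form rational expression for $\widehat{G}(T)$ whose poles lie at $T = q^{-2}$, at the three cube roots of $q^{-5}$, and at the two square roots of $q^{-3}$. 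Performing partial fractions, the pole at $q^{-2}$ contributes the main term $c_1 q^{2N}$, the three cube-root poles combine into a periodic secondary term $c_2^i q^{5N/3}$ depending only on $N \bmod 3$, and the square-root poles at $\pm q^{-3/2}$ contribute to the error.

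The main obstacle I anticipate is the medium-range error bound: this is where \Cref{prop:dichotomy} is essential, forcing the restriction map $H^0(F_k, \mc{L}) \to H^0(\DD, \mc{O}_\DD)$ to be surjective except when the kernel is concentrated on horizontally reducible sections, which are then separately controlled by \Cref{prop:hr-er-z-bound}. Without this dichotomy, one would be stuck at a weaker uniformity bound in the medium range and would not be able to push the error below the secondary-term size $q^{5N/3}$. The rest of the proof is bookkeeping of error terms, extraction of residues, and the cover-counting identities already established.
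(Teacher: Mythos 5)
Your proposal is correct and follows essentially the same route as the paper: reduce $\Cov_3(2N)$ to $\Theta(N)$ via the $C_3$/inseparable bounds, split $\Psi^{\ir}(N)$ into the model term and the three range errors, bound those errors to $O(N^4 q^{3N/2}+1)$, and extract the main and periodic secondary terms by partial fractions of the rational function $\widehat{G}(T)$, whose poles at $q^{-2}$, the cube roots of $q^{-5}$, and $\pm q^{-3/2}$ play exactly the roles you describe. The only cosmetic difference is that you write the secondary term as $c_2^i q^{5N/3}$ (as in the introduction) rather than $c_2^i q^{5m}$ with $N = 3m+i$ (as in the theorem statement); these agree after absorbing $q^{5i/3}$ into the constant.
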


We remark that the leading constant for the main term can be written as is
\[
c_1= \frac{(1-q^{-3})(1-q^{-2})}{(1-q^{-1})}= \frac{ Z(q^{-3})^{-1}}{1-q^{-1}} = \frac{1}{q^{-1}(q-1)\zeta_{\PP^1}(3)} 
\]
which aligns with the main term of Gunther \cite{Gun17} for branched covers of a nice genus $g$ curve $C$ over $\F_q$. In this case Gunther found the leading term
\[
c_1(C) = \frac{\Pic^{0}(C)}{q^{g-1}(q-1)\zeta_{C}(3)}.
\]
\section{Cyclic and Inseparable Covers}
\label{sec:c3-insep}
Let $C = \PP^1$. Recall a triple cover $X \to C$ is in $\Trip^{\sm,\ir}(C)$ if $X$ is smooth and irreducible. 

We call a smooth irreducible triple cover $X \to C$ a \textbf{$C_3$ cover} if $\Aut(X\to C) \cong C_3$. This is equivalent to $\F_q(C)/\F_q(t)$ being a $C_3$ Galois extension.

We call a smooth irreducible triple cover $X\xrightarrow{f} C$ an \textbf{inseparable cover} if the degree $3$ field extension $\F_q(X)/\F_q(t)$ is inseparable. This is equivalent to the vanishing of the discriminant $\Delta_f \in \mc{N}^{\otimes 2}$ (or equivalent to $f$ failing to be generically \'etale).

In this section, we bound the number of isomorphism classes of $C_3$ covers of $\PP^1$ and inseparable covers with a fixed discriminant degree.

\begin{prop}
    Let $N\ge 0$. The number of isomorphism classes of $C_3$ covers $X\to \PP^1$ with discriminant degree $2N$ is at most $O(Nq^{N}+1)$.
\end{prop}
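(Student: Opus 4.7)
The plan is to bound the count of $C_3$-covers directly, handling $\Char \F_q = 3$ separately from other characteristics via explicit Galois-theoretic parametrizations of cyclic cubic extensions of $\F_q(t)$.

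When $\Char \F_q \neq 3$, every $C_3$-Galois extension $L/\F_q(t)$ is tamely ramified, so its discriminant equals the square of its conductor $\mathfrak{f}$, a squarefree effective divisor on $\PP^1$. Hence $\deg \Disc = 2N$ forces $\deg \mathfrak{f} = N$, and the number of squarefree effective divisors on $\PP^1$ of degree $N$ is at most the total number of effective divisors of that degree, i.e., $O(q^N)$. For each such $\mathfrak{f}$, class field theory bounds the number of cyclic cubic extensions with that conductor: since $\Pic^0(\PP^1)$ is trivial, they correspond to order-$3$ characters of the ray class group $(\mc{O}_{\PP^1}/\mathfrak{f})^\times / \F_q^\times$, of which there are $O(1)$ per $\mathfrak{f}$. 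Summing yields $O(q^N)$, which is absorbed in $O(Nq^N + 1)$.

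When $\Char \F_q = 3$, every $C_3$-Galois extension arises from an Artin--Schreier equation $y^3 - y = a$ with $a \in \F_q(t)$ unique modulo $\wp(\F_q(t)) = \{c^3 - c : c \in \F_q(t)\}$. Choosing a normalized representative with pole orders $m_P$ coprime to $3$ at each ramified place $P$, the Riemann--Hurwitz formula gives $\sum_P (m_P+1)\deg P = N$. I would fix the reduced ramification divisor $R$ (the squarefree support of the polar divisor of $a$) of some degree $s \leq N$, and bound the number of normalized $a$ with ramification support contained in $R$ by the size of a suitable linear subsystem of $H^0(\PP^1, \mc{O}(D))$ for an effective $D$ of degree $N - s$ supported on $R$. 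This gives at most $q^{N-s}$ normalized parameters per $R$; summing over the $O(q^s)$ possible $R$ of degree $s$ contributes $O(q^N)$, and summing over $s \in \{0, \ldots, N\}$ yields $O(Nq^N + 1)$.

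The main technical obstacle is the characteristic $3$ case, where one must set up the Artin--Schreier normalization carefully to obtain a single $\F_q$-vector-space parametrization per ramification type and avoid over-counting extensions arising from $\wp$-equivalence. The tame case, by contrast, follows immediately from a standard class-field-theoretic count of ray class characters.
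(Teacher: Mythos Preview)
Your approach via class field theory and Artin--Schreier theory is genuinely different from the paper's. The paper stays inside the Casnati--Ekedahl parametrization: a $C_3$ cover corresponds to a section $s \in H^0(F_k,\mc{O}(3,\ell))$ stabilized by an order-$3$ element $\gamma \in \Gamma_k = \Aut(\mc{O}\oplus\mc{O}(-k))$, and the paper classifies such $\gamma$ case by case (depending on $k$ and on $q \bmod 3$), counting the fixed forms explicitly to obtain $O(|\Gamma_k|\,q^{N})$ forms per Hirzebruch surface and hence $O(Nq^{N})$ in total after summing over $k$.

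Your tame case, however, contains a concrete error: the claim that there are $O(1)$ cyclic cubic extensions per conductor $\mathfrak{f}$ is false. When $3\mid q-1$, Kummer theory identifies $C_3$ extensions with nontrivial classes in $\F_q(t)^{\times}/(\F_q(t)^{\times})^{3}$, and a squarefree conductor $\mathfrak{f}$ with $\omega(\mathfrak{f})$ prime factors supports on the order of $2^{\omega(\mathfrak{f})}$ such extensions (each ramified prime contributes an exponent in $\{1,2\}$ to the Kummer generator). Summing $2^{\omega(\mathfrak{f})}$ over squarefree $\mathfrak{f}$ of degree $N$ gives $\Theta(Nq^{N})$, not $O(q^{N})$: the generating function $\prod_P(1+2T^{\deg P})$ behaves like $Z(T)^{2}$ near $T=q^{-1}$, and the double pole produces the extra factor of $N$. (Even when $3\nmid q-1$, conductors supported on primes with $3\mid q^{\deg P}-1$ exhibit the same growth.) So the bound $O(Nq^{N}+1)$ in the statement is in fact sharp, and your route still reaches it once you replace the $O(1)$ by the correct $O(2^{\omega(\mathfrak{f})})$ and sum. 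The characteristic-$3$ sketch is plausible in outline, though the step ``at most $q^{N-s}$ normalized parameters per $R$'' needs more care: you must also account for the choice of pole multiplicities $(m_P)_{P\mid R}$ and make the $\wp$-equivalence explicit before the linear bound applies.
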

\begin{proof}
    Let $C = \PP^1$. Recall the equivalence of groupoids
    \[
    \Trip^{\sm, \ir}(C) \xrightarrow{\sim} \TriSec^{\sm, \ir}(C).
    \]

    In particular, a smooth irreducible cover $X\to C$ corresponds to a nonzero section
    \[
    s \in H^{0}(F_k, \mc{O}(3,\ell))=H^0(F_k, \mc{O}(3) \otimes \pi^{*} \mc{O}(\ell))
    \]
    for some $k, \ell \ge 0$.

    The corresponding discriminant line bundle on $\PP^1$ satisfies $\mc{N}^{\otimes 2} \cong \mc{O}(4\ell - 6k)$, so the discriminant degree is $2N = 4\ell-6k$. Note that if $s$ is horizontally irreducible, then $\ell - 3k \ge 0$.

    Recall our notation for the automorphism group $\Gamma_k = \Aut(\mc{O}\oplus \mc{O}(-k))$. The automorphism group of a cover is identified with the stabilizer of $\Gamma_k$ acting on the corresponding section $s$. So in particular, if $X\to C$ has a nontrivial order $3$ automorphism, then there is a nontrivial order $3$ element $\gamma \in \Gamma_k$ fixing the section $s\neq 0$. Assuming such a pair $(\gamma, s)$ exists, we will determine its structure explicitly.

    We break our analysis into cases.

\noindent\textbf{Case 1:} $k=0$. 
    
    In this case we have $\Gamma_0 \cong \Aut(\mc{O} \oplus \mc{O}) \cong \GL_2(\F_q)$, and we can write $\gamma\in \Gamma_0$ in Jordan normal form. Up to conjugacy, $\gamma$ has the same action on $\F_q^{\oplus 2}$ as the action of $T$ on some $\F_q[T]$-module $M$, where either 
    \[
    M \cong \F_q[T]/f_1(T)\oplus \F_q[T]/f_2(T)
    \]
    with $\deg f_1=\deg f_2=1$, or
    \[
    M \cong \F_q[T]/f(T)
    \]
    with $\deg f = 2$. Furthermore we must have $f_1, f_2 \mid T^3-1$ in the first case and $f \mid T^3-1$ in the second case.

    Let's suppose $3 \mid q-1$. Choose a primitive third root of unit $\omega \in \F_q^{\times}$. The only case here is
    \[
    M \cong \F_q[T]/(T-\omega^{a_1}) \oplus \F_q[T]/(T-\omega^{a_2})
    \]
    meaning $\gamma$ is conjugate to $\begin{pmatrix} \omega^{a_1} & 0 \\ 0 & \omega^{a_2}\end{pmatrix}$ for some integers $a_1, a_2 \in \{0,1,2\}$.

    Now let's suppose $3\nmid q-1$. In this case, either 
    \[
    M \cong (\F_q[T]/(T-1))^{\oplus 2}
    \]
    meaning $\gamma = 1$, or
    \[
    M\cong \F_q[T]/(T^2+T+1),
    \]
    meaning $\gamma$ is conjugate to 
    \[\begin{pmatrix}
        0 & -1\\
        1 & -1
    \end{pmatrix}.
    \]
    Finally, suppose $\Char \F_q = 3$. Then $T^3-1 = (T-1)^3$, so if $\gamma \neq 1$, then 
    \[
    M \cong \F_q[T]/(T-1)^2
    \]
    and again $\gamma$ is conjugate to $\begin{pmatrix} 0 & -1\\  1 &-1 \end{pmatrix}$.
    
    To summarize, if $\gamma\neq 1$, then either $3\mid q-1$ and $\gamma$ is conjugate to $\begin{pmatrix} \omega^{a_1} & 0 \\ 0 & \omega^{a_2}\end{pmatrix}$ or $3\nmid q-1$ and $\gamma$ is conjugate to $\begin{pmatrix}
        0 & -1\\
        1 & -1
    \end{pmatrix}$, or $3\mid q$ and $\gamma$ is conjugate to $\begin{pmatrix} 0 & -1 \\ 1 & -1\end{pmatrix}$.

    Now we consider the action of $\gamma$ on a nonzero, horizontally irreducible section $s$. Write
    \[
    s  = A_0 x^3 + A_1 x^2 y + A_2 xy^2 + A_3 y^3
    \]
    for $A_0, A_1, A_2, A_3 \in \mc{O}(\ell)$.
    
\noindent\textbf{Subcase 1(a)}: $3\mid q-1$.

    In this case, $\gamma$ is conjugate to $\begin{pmatrix} \omega^{a_1} & 0 \\ 0 & \omega^{a_2}\end{pmatrix}$ for some $a_1, a_2 \in \{0,1,2\}$. We also require $\gamma \neq 1$, so we can't have $(a_1, a_2)=(0,0)$. Now (potentially after a change of basis) the action of $\gamma$ sends $s$ to 
    \[
    \gamma\cdot s = \omega^{2a_1-a_2} A_0 x^3 + \omega^{a_1} A_1 x^2 y + \omega^{a_2} A_2 xy^2 + \omega^{2a_2 - a_1} A_3 y^3.
    \]
    Suppose $\gamma \cdot s = s$ (and $s\neq 0$). Note that we must have $a_1 \neq a_2$ - else $\gamma$ acts on $s$ by a nonzero scalar, which is a contradiction.

    If only one entry in $(A_0, A_1, A_2, A_3)$ is nonzero, then $s$ must be horizontally reducible.

    If two adjacent entries of $(A_0, A_1, A_2, A_3)$ are nonzero, then $a_1 =a_2$, a contradiction.

    Now suppose $A_0, A_2 \neq 0$. Then $(2a_1-a_2, a_2) \equiv (0,0) \pmod 3$ implying $(a_1,a_2) = (0,0)$ and $\gamma=1$, a contradiction. Similarly, if $A_1, A_3 \neq 0$, then $\gamma=1$, a contradiction.

    Finally, suppose $A_1=A_2=0$ and $A_0, A_3 \neq 0$. Then we must have $a_1 \equiv 2a_2 \pmod{3}$, and conversely any form $A_0X^3 + A_3 Y^3$ is stabilized by the action of $\gamma = \begin{pmatrix} \omega^{a_1} & 0\\ 0 & \omega^{2a_1}\end{pmatrix}$.

    So if $3\mid q-1$, then the only horizontally irreducible forms which could possibly have stabilizer $C_3$ are $\Gamma_0$-equivalent to a form $A_0X^3 + A_3Y^3$. The total number of such forms is bounded above by $O(|\Gamma_0|q^{2\ell})=O(q^{2\ell})$.

\noindent\textbf{Subcase 1(b)}: $3\nmid q-1$.
    
    In this case, (potentially after a change of basis) the action of $\gamma=\begin{pmatrix}0 & -1 \\ 1 & -1\end{pmatrix}$ sends $s$ to 
    \[
    \gamma(s) = A_0(-y)^3+A_1(-y)^2(x-y) + A_2(-y) (x-y)^2 + A_3 (x-y)^3.
    \]
    Solving $\gamma(s)=s$, we obtain $A_0 = A_3$ and $A_1+A_2 = - 3A_3$.

    So in this case, the number of forms with a $C_3$ stabilizer is also $O(q^{2\ell})$.

\noindent\textbf{Subcase 1(c):} $\Char \F_q = 3$. 

By the same argument as Subcase 1(b), the number of forms with a $C_3$ stabilizer is $O(q^{2\ell})$.

\noindent\textbf{Case 2:} $k\ge 1$.

    In this case, we can write $\Gamma_k \cong (\F_q)^{k+1} \rtimes (\F_q^{\times})^2$, thought of as matrices in
    \[
    \begin{pmatrix}
        \mc{O}^{\times}& 0\\
        \mc{O}(k) & \mc{O}^{\times} 
    \end{pmatrix}.
    \]
    Write an element $\gamma \in \Gamma_k$ as $\gamma=ng$, where $n:= \begin{pmatrix} 1 & 0 \\n & 1\end{pmatrix}$ and $g:= \begin{pmatrix} g_1 & 0 \\  0& g_2\end{pmatrix}$. The action of an element $g$ on an element $n$ sends $n$ to $gng^{-1} = \left(\frac{g_2}{g_1}\right)n$.

    In particular, we have $(ng)^3 = n'g^3$ where $n' = \left(1 + \left(\frac{g_2}{g_1}\right) + \left(\frac{g_2}{g_1}\right)^2 \right) n$. So if $\gamma = ng$ satisfies $\gamma^3 =1$, then $g_1, g_2$ are third roots of unity in $\F_q$. Furthermore, if $g_1=g_2$ and $\Char \F_q \neq 3$, then $n=0$ as well.

    On the other hand, if $g_1\neq g_2$ are distinct third roots of unity, then we can choose $m$ such that $n = \left(\frac{g_2}{g_1}\right)m-m$. Then
    $ng=m^{-1}gm$, and $\gamma=ng$ is conjugate to $g$.

    Finally, if $\Char \F_q = 3$, then if $\gamma = ng$ satisfies $\gamma^3=1$, we must have $g=1$. So $\gamma = \begin{pmatrix} 1 & 0 \\ n & 1 \end{pmatrix}$ for some $n$.

    We have three subcases:

\noindent\textbf{Subcase 2(a):} $\Char \F_q\neq 3$ and $g_1=g_2$.

    In this case we established that we must have $n=0$ and $\gamma = g$. If $g^3=1$ and $g\neq 1$, then we must have $3\mid q-1$, and $\gamma = \begin{pmatrix} \omega^{a} & 0 \\ 0 & \omega^{a}\end{pmatrix}$ for $a \in \{1,2\}$.

    Note that $\gamma$ acts on a form $s$ by the nonzero scalar $\omega^{a}$. So there are no nonzero forms $s$ fixed by $\gamma$.

\noindent\textbf{Subcase 2(b):} $\Char \F_q \neq 3$ and $g_1 \neq g_2$.

    Then $\gamma$ is conjugate to $g = \begin{pmatrix} \omega^{a_1} & 0 \\ 0 & \omega^{a_2} \end{pmatrix}$, for distinct $a_1, a_2 \in \{0,1,2\}$. (Again we must have $3\mid q-1$, or else we would have $\gamma=1$.)

    By a similar argument to Subcase 1(a), the only horizontally irreducible forms that could be fixed by $\gamma$ are $\Gamma_k$-equivalent to a form $A_0 X^3 + A_3 Y^3$. In total there are at most $O(|\Gamma_k| q^{2\ell -3k}) = O(q^{2\ell - 2k})$ many such forms.

\noindent\textbf{Subcase 2(c):} $\Char \F_q =3$. 

We have $\gamma = \begin{pmatrix} 1 & 0 \\ n & 1 \end{pmatrix}$ with $n\neq 0$. Suppose $\gamma$ fixes a nonzero section 
\[
s = A_0x^3 + A_1 x^2y + A_2 xy^2 + A_3 y^3.
\]
Then $\gamma$ acts by
\begin{align*}
\gamma\cdot s &= A_0 x^3 + A_1 x^2(nx+y) + A_2 x(nx+y)^2 + A_3 (nx+y)^3\\
&= (A_0 + nA_1 + n^2 A_2 + n^3 A_3)x^3 + (A_1 + 2nA_2)x^2y + (A_2)xy^2 + A_3 y^3.
\end{align*}
So if $\gamma\cdot s = s$, matching the $x^3$ and $x^2y$ coefficients yields $A_2 = 0$, and $A_1 =- n^2 A_3$.

Such a form $s = A_0 x^3 - n^2 A_3 x^2y + A_3 y^3$ is therefore determined by a choice of $A_0 \in \mc{O}(\ell), A_3 \in \mc{O}(\ell-3k)$, and nonzero $n \in \mc{O}(k)$. Since $\ell - 3k \ge 0$, there are at most $O(q^{2\ell - 2k})$ many such forms.

We find that the total number of horizontally irreducible forms $s \in H^0(F_k, \mc{O}(3,\ell))$ over all cases with a $C_3$ stabilizer is at most $O(|\Gamma_k| q^{2\ell-3k}) = O(|\Gamma_k|q^{N})$ for all values of $k$. Therefore the number of isomorphism classes of $C_3$ covers is at most
\[
3\sum_{k \le N/3} \frac{1}{|\Gamma_k|} O(|\Gamma_k| q^{N}) = O(Nq^{N}+1)
\]
as desired.

\end{proof}

\begin{prop}
    Let $N \ge 0$ and $C = \PP^1$. The number of isomorphism classes of inseparable covers $X\to C$ with discriminant degree $N$ is at most $O(1)$.
\end{prop}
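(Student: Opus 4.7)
The plan is to reduce to the case $\Char \F_q = 3$, use genus preservation for purely inseparable maps to deduce $X \cong \PP^1$, and then bound the number of isomorphism classes via the Frobenius factorization.

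First, if $\Char \F_q \ne 3$, every degree $3$ extension of $\F_q(t)$ is separable (since the degree is coprime to the characteristic), so no inseparable covers exist and the count is zero. Hence assume $\Char \F_q = 3$. Then an inseparable triple cover $f : X \to \PP^1$ with $X$ smooth, projective, and geometrically irreducible must be purely inseparable, because its function-field degree $3$ is prime and equal to $\Char \F_q$. Purely inseparable finite morphisms of nice curves preserve genus, so $g_X = g_{\PP^1} = 0$; moreover, pulling back any $\F_q$-rational point of $\PP^1$ gives a single $\F_q$-rational point of $X$ (the scheme-theoretic fiber is a local Artinian $\F_q$-algebra with reduced spectrum $\Spec \F_q$), so $X \cong \PP^1_{\F_q}$.

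Next, any purely inseparable degree $3$ morphism $\PP^1 \to \PP^1$ over $\F_q$ in characteristic $3$ factors through the absolute Frobenius $F : \PP^1 \to \PP^1$, $[u_0 : u_1] \mapsto [u_0^3 : u_1^3]$. Since $F$ already has degree $3$, the factoring map is an automorphism $M \in \PGL_2(\F_q) = \Aut_{\F_q}(\PP^1)$. Hence every such cover arises as $f = M \circ F$ for some $M \in \PGL_2(\F_q)$, giving at most $|\PGL_2(\F_q)| = O(1)$ many morphisms (recall $q$ is treated as a fixed constant in our conventions).

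This already yields the $O(1)$ bound on isomorphism classes. One can further sharpen the count to exactly one isomorphism class (occurring at a single discriminant degree, since applying adjunction on $X \subseteq F_k$ with $g_X = 0$ pins down $2\ell - 3k$) by exploiting the identity $F \circ h = h^{(3)} \circ F$ in characteristic $3$, where $h^{(3)}$ denotes coefficientwise cubing: combined with the bijectivity of $h \mapsto h^{(3)}$ on $\PGL_2(\F_q)$, this shows $M_1 \circ F$ and $M_2 \circ F$ are isomorphic as covers for all $M_1, M_2$. No essential obstacle arises — the argument simply combines purely inseparable genus preservation with the Frobenius factorization.
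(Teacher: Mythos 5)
Your proof is correct and follows essentially the same route as the paper: reduce to $\Char \F_q = 3$, observe the cover is purely inseparable of degree $3$, and identify it with the (relative) Frobenius, which pins down the cover up to isomorphism. You simply fill in more detail than the paper does — deducing $X \cong \PP^1$ from genus preservation and the existence of a rational point, and making the $\PGL_2(\F_q)$ bookkeeping explicit — whereas the paper invokes the universal property of the relative Frobenius ($X^{(3)} \cong \PP^1$) directly.
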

\begin{proof}
If $f: X \to \PP^1$ is a degree $3$ finite morphism between smooth curves such that $\F_q(X)/\F_q(t)$ is inseparable, then in particular the extension is purely inseparable of degree $3$. (This can only occur when $\Char \F_q = 3$.) 

Therefore, up to isomorphism there is at most one inseparable cover $X\to \PP^1$. Concretely, when $\Char \F_q=3$ there is a unique isomorphism $X^{(3)} \cong \PP^1$ exhibiting $f$ as the relative Frobenius, as in the commuting diagram below.
\[
\begin{tikzcd}
    X \ar[d, "f"] \ar[dr, "F_{X/\F_q}"] &\\
    \PP^1 & X^{(3)}\ar{l}[swap]{\sim}
\end{tikzcd}
\]
(See \cite[\href{https://stacks.math.columbia.edu/tag/0CCY}{Tag 0CCY}]{stacks-project}.)
\end{proof}
\section{Appendix A: Discriminants}\label{appendix}

In this appendix, we discuss the construction of the discriminant of a binary cubic form, as well as the equivalence between discriminants of a binary cubic form and the corresponding triple cover.
\begin{lemma}
    Let $S$ be a scheme, and let $\mc{E}$ and $\mc{L}$ be a rank $2$ vector bundle and a line bundle on $S$. Let $g \in \Sym^{3} \mc{E} \otimes \mc{L}$. There is a well-defined \textbf{discriminant} 
    \[
    \Delta(g) \in (\wedge^2 \mc{E})^{\otimes 6} \otimes \mc{L}^{\otimes 4}.
    \]

    Furthermore, when $\mc{E} \cong \mc{O}_S x \oplus \mc{O}_S y$ and $\mc{L} \cong \mc{O}_S z$ are free $\mc{O}_S$-modules, this construction sends a form
    \[
    g = ax^3z + bx^2yz + cxy^2z + dy^3z \in \Sym^{3} \mc{E} \otimes \mc{L}
    \]
    to the ``classical'' discriminant
    \[
    (-27a^2d^2+18abcd-4ac^3-4b^3d+b^2c^2) (x\wedge y)^{\otimes 6} (z)^{\otimes 4}.
    \]
\end{lemma}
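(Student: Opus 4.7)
My plan is to construct the discriminant by first writing down the classical formula over the ``universal'' cubic form, verifying that it transforms by the correct character under the natural $GL_2 \times \mathbb{G}_m$ action, and then gluing local constructions.

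More concretely, fix the universal base $R_0 := \mathbb{Z}[a,b,c,d]$ with universal form $g_0 = a x^3 + b x^2 y + c xy^2 + dy^3 \in \Sym^3 (R_0 x \oplus R_0 y)$, and let $\Delta_0 := -27 a^2 d^2 + 18 abcd - 4 ac^3 - 4b^3 d + b^2 c^2 \in R_0$. The key input is the classical invariance: for any commutative ring $R$ and any $\gamma \in GL_2(R)$, under the induced action on $\Sym^3 R^{\oplus 2}$, one has $\Delta_0(\gamma \cdot g_0) = (\det \gamma)^6 \, \Delta_0(g_0)$. This is a polynomial identity that I would verify by checking it on a set of generators of $GL_2$ (scalars, upper-unitriangular elementary matrices, and the swap $\begin{pmatrix} 0 & 1 \\ 1 & 0\end{pmatrix}$); the scalar case $\gamma = \lambda I$ gives $\Delta_0(\lambda^3 g_0) = \lambda^{12}\Delta_0 = (\det \lambda I)^6 \Delta_0$, which already fixes the character, and the other two generators reduce to explicit polynomial identities.

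Given this invariance, the construction of $\Delta(g)$ proceeds by gluing. Locally on $S$ choose a trivialization $\mc{E}|_U \cong \mc{O}_U x \oplus \mc{O}_U y$ and $\mc{L}|_U \cong \mc{O}_U z$; write
\[
g|_U = (a x^3 + b x^2 y + c xy^2 + dy^3)\otimes z,
\]
and define the local section
\[
\Delta(g)|_U := \Delta_0(a,b,c,d)\cdot (x \wedge y)^{\otimes 6} \otimes z^{\otimes 4} \;\in\; (\wedge^2 \mc{E})^{\otimes 6}\otimes \mc{L}^{\otimes 4}\big|_U.
\]
If we change trivialization of $\mc{E}|_U$ by $\gamma \in GL_2$ and of $\mc{L}|_U$ by $\mu \in \mathbb{G}_m$, then the coefficient tuple $(a,b,c,d)$ transforms by the dual of the $\Sym^3$-action (scaled by $\mu$), while $(x \wedge y)^{\otimes 6} \otimes z^{\otimes 4}$ transforms by $\det(\gamma)^6 \mu^4$; the invariance $\Delta_0(\gamma \cdot g) = (\det \gamma)^6 \Delta_0(g)$ together with the degree-$4$ homogeneity in $(a,b,c,d)$ (absorbing the $\mu$) shows that $\Delta(g)|_U$ is independent of the choice of trivialization. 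Consequently the local sections agree on overlaps and patch to a well-defined global $\Delta(g) \in H^0\bigl(S, (\wedge^2 \mc{E})^{\otimes 6}\otimes \mc{L}^{\otimes 4}\bigr)$. The second assertion of the lemma holds by construction.

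The only real obstacle is the invariance identity, and two clean ways to package it (besides brute force on generators) are: (i) viewing $\Delta_0$ as the resultant $\mathrm{Res}(\partial_x g_0, \partial_y g_0)$ up to an explicit scalar, for which functoriality of the resultant under $GL_2$ is standard; or (ii) invoking the representation-theoretic fact that the $SL_2$-invariants in $\Sym^4(\Sym^3 V)^\vee$ form a free $\mathbb{Z}$-module of rank one spanned by $\Delta_0$ (so that $\Delta_0$ is automatically an eigenvector for the $GL_2/SL_2 = \mathbb{G}_m$-action, with weight determined by a single scalar computation). Either route makes the invariance essentially automatic, after which the gluing argument above finishes the proof.
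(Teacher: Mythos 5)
Your proposal is correct and follows essentially the same route as the paper: trivialize $\mc{E}$ and $\mc{L}$ locally, define $\Delta(g)$ by the classical formula, check that the $(\det\gamma)^6\mu^4$ transformation law makes the local sections of $(\wedge^2\mc{E})^{\otimes 6}\otimes\mc{L}^{\otimes 4}$ agree on overlaps, and glue. The paper simply asserts the $\GL_2\times\GL_1$ invariance as an easy check, whereas you spell out how to verify it (generators of $\GL_2$, or the resultant interpretation), which is a harmless elaboration of the same argument.
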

\begin{proof}
    Affine locally, on an open $\Spec R \subseteq S$ where $\mc{E}$ and $\mc{L}$ trivialize as $\mc{E}|_{R} \cong R x \oplus R y$ and $\mc{L}|_{R} \cong R z$, we can define $\Delta(f)$ by the the explicit ``classical'' formula. We can easily check that this is well-defined by seeing that if we computed the discriminant with respect to a different basis $x', y'$ of $\mc{E}|_{R}$ and $z'$ of $\mc{L}|_{R}$ (explicitly by applying a transformation by $\GL_2(R) \times \GL_1(R)$), we would get the same section of $(\wedge^{2}\mc{E})^{\otimes 6}\otimes \mc{L}^{\otimes 4}|_{R}$. So we can glue together the discriminant globally as a section of $(\wedge^{2}\mc{E})^{\otimes 6}\otimes \mc{L}^{\otimes 4}$.
\end{proof}
Therefore we have:
\begin{cor}
    If $g \in \Sym^{3}\mc{E} \otimes (\wedge^2 \mc{E})^{\vee}$, then there is a well-defined discriminant $\Delta(g) \in (\wedge^2 \mc{E})^{\otimes 2}$.
\end{cor}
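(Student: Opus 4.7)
The plan is to deduce the corollary as an immediate specialization of the preceding lemma. The lemma produces, for any rank $2$ vector bundle $\mc{E}$ and any line bundle $\mc{L}$ on $S$, a discriminant section
\[
\Delta(g) \in (\wedge^2 \mc{E})^{\otimes 6} \otimes \mc{L}^{\otimes 4}
\]
attached to a form $g \in \Sym^3 \mc{E} \otimes \mc{L}$. The corollary is the case $\mc{L} = (\wedge^2 \mc{E})^{\vee}$, and so the only thing to do is identify the target line bundle.

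First, I would apply the lemma verbatim with this choice of $\mc{L}$. Then the target is
\[
(\wedge^2 \mc{E})^{\otimes 6} \otimes \bigl((\wedge^2 \mc{E})^{\vee}\bigr)^{\otimes 4} \cong (\wedge^2 \mc{E})^{\otimes (6-4)} \cong (\wedge^2 \mc{E})^{\otimes 2},
\]
using the canonical pairing $\mc{M} \otimes \mc{M}^{\vee} \xrightarrow{\sim} \mc{O}_S$ for a line bundle $\mc{M}$ (here $\mc{M} = \wedge^2 \mc{E}$), applied four times. This gives a canonical (not just local) isomorphism, so the image of $\Delta(g)$ is well-defined as a global section of $(\wedge^2 \mc{E})^{\otimes 2}$.

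There is no real obstacle here since the lemma already does all the work of gluing the local formula and checking independence of the trivialization; the corollary is purely a bookkeeping identification of line bundles. The only thing worth being careful about is fixing a consistent sign/normalization convention for the canonical contraction $\mc{M}^{\otimes n}\otimes (\mc{M}^{\vee})^{\otimes n}\xrightarrow{\sim}\mc{O}_S$ so that later references to $\Delta(g)$ (in particular the comparison $\delta^{\otimes 2}(\Delta_f)=\Delta(g)$ in \Cref{prop:disc-preserve}) use the same convention.
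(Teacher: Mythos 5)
Your proposal is correct and matches the paper's (implicit) argument exactly: the corollary is obtained by specializing the lemma to $\mc{L} = (\wedge^2\mc{E})^{\vee}$ and contracting $(\wedge^2\mc{E})^{\otimes 6}\otimes\bigl((\wedge^2\mc{E})^{\vee}\bigr)^{\otimes 4}$ canonically to $(\wedge^2\mc{E})^{\otimes 2}$. Your remark about fixing the normalization of the contraction consistently with the later comparison of discriminants is a sensible precaution but not an issue, since the pairing $\mc{M}\otimes\mc{M}^{\vee}\to\mc{O}_S$ is canonical.
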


Now we prove \Cref{prop:disc-preserve}, which says that on an integral scheme $S$, the discriminant of a binary cubic $g \in \Sym^{3} \mc{E} \otimes (\wedge^2\mc{E})^{\vee}$ is isomorphic to the discriminant triple cover $f: X \to S$ corresponding to $g$ by the equivalence $\CubForm^{\Prim}(S)\xrightarrow{\sim}\Trip^{\Gor}(S)$.

\begin{proof}[Proof of \Cref{prop:disc-preserve}]
Let $S$ be an integral scheme and $f:X \to S$ be a triple cover. Let $\mc{E}$ be the rank $2$ Tschirnhausen vector bundle on $S$ such that
\[
0 \to \mc{O}_S \to f_{*} \mc{O}_X \to \mc{E}^{\vee} \to 0
\]
and let $g \in \Sym^{3} \mc{E} \otimes (\wedge^2 \mc{E})^{\vee}$ be the associated (twisted) binary cubic form.

We start by noting there is a canonical isomorphism
\[
\wedge^2\mc{E}^{\vee} \cong \mc{O}_S\otimes \wedge^2 \mc{E}^{\vee} \xrightarrow{\sim} \wedge^3 f_{*} \mc{O}_X.
\]
(See \cite[\href{https://stacks.math.columbia.edu/tag/0B38}{Tag 0B38}]{stacks-project}.) Therefore we have a canonical isomorphism
\[
\delta: (\wedge^3 f_{*} \mc{O}_X)^{\otimes -1} \to (\wedge^2 \mc{E}^{\vee})^{\otimes -1} \cong \wedge^2 \mc{E}.
\]
We aim to show that $\delta^{\otimes 2}$ maps $\Delta_f \in \left(\wedge^3 f_{*} \mc{O}_{X}\right)^{\otimes -2}$ to $\Delta(g) \in (\wedge^2 \mc{E})^{\otimes 2}$.

Let's first assume that $\mc{E}$ trivializes as $\mc{E} \cong \mc{O}_S x \oplus \mc{O}_S y$. Write the form as 
\[
g = (ax^3+bx^2y + cxy^2 + dy^3)(x\wedge y)^{\otimes -1}
\]
where $a,b,c,d \in \mc{O}_S$.

Recall (for example, from \cite{Woo11}) that if $\mc{E}$ is free, we can concretely construct the associated triple cover $X\to S$
by 
\[
X \cong \Spec_S(\mc{O}_S\oplus \mc{E}^{\vee})
\]
where we explicitly define an $\mc{O}_S$-algebra structure on $f_{*}\mc{O}_X = \mc{O}_S \oplus \mc{E}^{\vee}$.

Let $(\omega, \theta)$ be the basis of $\mc{E}^{\vee}$ dual to $(x,y)$. Then the algebra structure on 
\[
\mc{O}_S \oplus \mc{E}^{\vee} \cong \mc{O}_S \oplus \mc{O}_S \omega \oplus \mc{O}_S \theta
\]
is defined by
\begin{align*}
    \omega \theta &= -ad\\
    \omega^2 &= -ac + b \omega - a\theta \\
    \theta^2 &= -bd + d \omega - c\theta.
\end{align*}
We can check that
\begin{align*}
    \tr(1)&= 3\\
    \tr(\omega)&=b\\
    \tr(\theta)&= -c\\
    \tr(\omega \theta) &= -3ad\\
    \tr(\omega^2) &= b^2-2ac\\
    \tr(\theta^2) &= c^2-2bd
\end{align*}
so the trace pairing sends
\begin{align*}
    1 & \to 3 +bx-cy\\
    \omega &\to b + (b^2-2ac)x -3ad y\\
    \theta & \to -c -3ad x + (c^2-2bd) y
\end{align*}
where we identified $1,x,y \in \mc{O}_S \oplus \mc{E}$ with the duals of $1,\omega, \theta \in \mc{O}_S \oplus \mc{E}^{\vee}$. The determinant map $\wedge^3 (\mc{O}_S \oplus \mc{E}^{\vee}) \to \wedge^3(\mc{O}_S \oplus \mc{E})$ sends
\[
(1\wedge \omega \wedge \theta)\to (b^2c^2 - 4ac^3 - 4b^3d + 18abcd - 27a^2d^2) (1 \wedge x \wedge y).
\]

Now the canonical isomorphism
\[
\wedge^2 \mc{E}^{\vee} \cong \mc{O}_S \otimes \wedge^2 \mc{E}^{\vee} \to \wedge^3 f_{*} \mc{O}_X
\]
is the map sending $\omega \wedge \theta \to 1 \wedge \omega \wedge \theta$. (This is a well-defined map because $1 \wedge \omega \wedge \theta$ does not change if $\omega$ and $\theta$ are shifted by elements of $\mc{O}_S$.) So the discriminant
\[
\Delta_f =(b^2c^2 - 4ac^3 - 4b^3d + 18abcd - 27a^2d^2) (1 \wedge \omega \wedge \theta)^{\otimes -2} \in (\wedge^{3}f_{*} \mc{O}_X)^{\otimes -2}
\]
corresponds to the discriminant
\begin{align*}
\Delta(g) &= (b^2c^2 - 4ac^3 - 4b^3d + 18abcd - 27a^2d^2) (\omega \wedge \theta)^{\otimes -2}\\
&= (b^2c^2 - 4ac^3 - 4b^3d + 18abcd - 27a^2d^2) (x \wedge y)^{\otimes 2}
\end{align*}
after identifying $(\omega \wedge \theta)^{\otimes -1} = x\wedge y$.

Now for a general rank $2$ vector bundle $\mc{E}$, the map $\delta^{\otimes 2}: (\wedge^3 f_{*} \mc{O}_X)^{\otimes -2} \to (\wedge^2 \mc{E})^{\otimes 2}$ is well-defined globally and sends $\Delta_f$ to $\Delta(g)$ on small opens where $\mc{E}$ trivializes, so $\delta^{\otimes 2}(\Delta_f) = \Delta(g)$ globally.

\end{proof}
\section{Appendix B: Horizontal Reducibility and Finiteness}
\label{sec:appendix-hor-finite}
In this appendix, we show various equivalent characterizations of horizontal reducibility. Then we show that various sets of horizontally irreducible sections are nonempty only when certain numerical degree conditions hold. This helps us note that certain sums are finite.

We start by proving \Cref{prop:horiz-ir}, which we reproduce here.

\begin{prop}
    Let $C$ be a nice curve. Let $\mc{E}$ be a rank $2$ vector bundle on $C$, and let $\mc{L}$ be a line bundle on $C$.
    Let $s \in H^0(\PP(\mc{E}), \mc{O}_{\PP(\mc{E})}(3) \otimes \pi^{*} \mc{L})$. Let $X_0: = V(s) \subseteq \PP(\mc{E})$ be the vanishing scheme of $s$. The following are equivalent to $s$ being \textbf{horizontally reducible}:
    \begin{itemize}
    \item Either $s=0$ or there is an integral component $Y_i$ of $X_0$ such that $\mc{O}(Y_i) \cong \mc{O}_{\PP(\mc{E})}(1)\otimes \pi^{*} \mc{L}_i$ for some line bundle $\mc{L}_i$.
    \item There is a section $C_1\subseteq \PP(\mc{E})$ of $\pi: \PP(\mc{E}) \to C$ on which $s$ vanishes.
    \item The form $\Lambda(s_{\eta})$ associated to the pullback of $s$ to the generic point $\eta \in C$ is reducible as a binary cubic form over a field.
    \item The form $\Lambda(s) \in \Sym^3 \mc{E} \otimes \mc{L}$ is reducible after base change to the generic point of $C$.

    \end{itemize}

    Furthermore, the following is equivalent to $s$ being horizontally irreducible:

    \begin{itemize}
        \item $s\neq 0$ and the finite map $(X_0)_{\eta} \to \eta$ is a degree $3$ extension of fields.
    \end{itemize}
\end{prop}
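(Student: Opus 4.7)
My plan is to reduce all four conditions to a single statement about the base change of $s$ to the generic point $\eta = \Spec \F_q(C)$ of $C$. Writing $K = \F_q(C)$, I will identify $\PP(\mc{E})_\eta \cong \PP^1_K$ and observe that $\pi^{*}\mc{L}$ becomes trivial on the generic fiber, so the restriction of $\mc{O}_{\PP(\mc{E})}(3)\otimes \pi^{*}\mc{L}$ to $\PP^1_K$ is canonically $\mc{O}_{\PP^1_K}(3)$. Thus $s_\eta$ corresponds under $\Lambda$ to a binary cubic form $g \in K[x,y]_3$, and the equivalence of (3) and (4) is then immediate from the fact that $\Lambda$ commutes with base change along $\eta \hookrightarrow C$.

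The crucial geometric input is the dictionary between integral horizontal components of $X_0$ and closed points of $V(s_\eta) \subseteq \PP^1_K$. Since $\PP(\mc{E})$ is integral and separated, the assignment $Y \mapsto Y_\eta$ is a bijection between integral horizontal $1$-dimensional closed subschemes of $\PP(\mc{E})$ and closed points of $\PP^1_K$, with inverse given by scheme-theoretic closure. Under this correspondence, multiplicity is preserved (both sides compute the length of $\mc{O}_{X_0}$ localized at the generic point of $Y$, which is a DVR localizing onto $\mc{O}_{\PP^1_K, Y_\eta}$). Intersection with a fiber $f$ gives $Y \cdot f = [K(Y):K]$, so $\mc{O}(Y)$ has class $\mc{O}_{\PP(\mc{E})}(1) \otimes \pi^{*}\mc{L}_Y$ for some line bundle $\mc{L}_Y$ on $C$ if and only if $Y_\eta$ is $K$-rational, equivalently the corresponding factor of $g$ is linear.

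With this dictionary the equivalences become straightforward. For (1) $\Leftrightarrow$ (3): a nonzero binary cubic over $K$ is reducible precisely when it has a linear factor, matching via the correspondence an integral horizontal component $Y_i$ with divisor class $\mc{O}(1)\otimes \pi^{*}\mc{L}_i$; the $s=0$ case is built into both sides. For (1) $\Leftrightarrow$ (2): by the universal property of the projective bundle, sections of $\pi$ are exactly the integral horizontal curves $C_1 \subseteq \PP(\mc{E})$ with $\mc{O}(C_1) \cong \mc{O}(1) \otimes \pi^{*}\mc{L}'$; such a $C_1$ lies in $X_0$ iff $s$ vanishes on it, and when $s = 0$ any section works (and one exists via any quotient $\mc{E} \twoheadrightarrow \mc{L}'$). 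For the irreducibility characterization: $s$ horizontally irreducible means $s \neq 0$ and $[X_0]$ has a unique horizontal component of multiplicity $1$; via the dictionary this says $g$ is irreducible of degree $3$, which translates exactly to $(X_0)_\eta$ being the spectrum of a degree $3$ field extension of $K$.

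The main technical obstacle will be a careful verification of the component/point correspondence together with the multiplicity matching. This is essentially a localization argument: the local ring $\mc{O}_{\PP(\mc{E}),Y}$ at the generic point of an integral horizontal curve $Y$ is a DVR whose further localization at the generic point of $C$ is $\mc{O}_{\PP^1_K, Y_\eta}$, and the order of vanishing of $s$ is preserved under this localization. Once this correspondence is established, the remainder of the proof is bookkeeping with the Picard group decomposition of $\PP(\mc{E})$ and the universal property of projective bundles.
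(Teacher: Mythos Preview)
Your proposal is correct and follows essentially the same architecture as the paper's proof: reduce the horizontal component structure of $X_0$ to the factorization of the binary cubic $s_\eta$ over $K = \F_q(C)$, and read off all five equivalences from there. The paper likewise passes to an open $U$ with no vertical components and then to the generic fiber, matching horizontal components with irreducible factors of $s_\eta$ and multiplicities with orders of vanishing.

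The one substantive difference is in how you justify that an integral horizontal component $Y$ with $\mc{O}(Y) \cong \mc{O}_{\PP(\mc{E})}(1)\otimes \pi^{*}\mc{L}'$ is actually the image of a section of $\pi$. You invoke ``the universal property of the projective bundle,'' but that only gives the forward direction (sections yield such curves). For the converse, the paper writes down the Koszul sequence: the section cutting out $Y$ gives an injection $\mc{L}'^{\vee}\hookrightarrow \mc{E}$ which is fiberwise nonzero, dualizes to a surjection $\mc{E}\twoheadrightarrow \mc{L}'\otimes\wedge^2\mc{E}$, and this surjection defines the section. Your implicit route---$Y\cdot f = 1$ forces $Y\to C$ to be finite of degree $1$ onto the smooth (hence normal) curve $C$, so it is an isomorphism---is equally valid and arguably quicker, but you should state it rather than attribute it to the universal property.
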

\begin{proof}
    Recall our original definition of horizontal reducibility for a nonzero section $s \in \PP(\mc{E})$. The vanishing scheme $X_0:= V(s) \subseteq \PP(\mc{E})$ is an effective Cartier divisor, to which we can associate a Weil divisor
    \[
    [X_0] = \sum a_i [Y_i]
    \]
    where each $Y_i$ is an integral $1$-dimensional closed subscheme of $\PP(\mc{E})$, and $a_i >0$ for each $i$ in the sum.

    For each component $Y_i$ of $X_0$, we can write $\mc{O}(Y_i) \cong \mc{O}_{\PP(\mc{E})}(m_i) \otimes \pi^{*} \mc{L}$ for some line bundle $\mc{L}$ on $C$ and some nonnegative integer $m_i$. Let's call $m_i$ the vertical degree of $Y_i$. 
    
    So $Y_i$ is horizontal if and only if $m_i\ge 1$. We defined $s$ to be horizontally irreducible if and only if $X_0$ has one horizontal component $Y_i$ with multiplicity $a_i=1$.

    Let's assume from now on that $s$ is nonzero.

    To prove the first condition, it suffices to note that if $s$ is horizontally reducible, then at least one of the horizontal components of $X_0$ must have vertical degree $1$. If there are two horizontal components (possibly the same component with multiplicity) each of vertical degree at least $2$, then the vertical degree of $X_0$ is at least $3$, a contradiction.

    To prove the second condition, we just need to show that an integral (horizontal) 1-dimensional closed subscheme $Y_i \subseteq \PP(\mc{E})$ of vertical degree $1$ must be a section of $C$. Because $Y_i$ is integral, it is cut out by a section $s \in \mc{O}_{\PP(\mc{E})}(1)\otimes \pi^{*} \mc{L}$ which does not vanish on any fiber. The section $s$ corresponds to a nonzero (and therefore injective) map $0 \to \mc{L}^{\vee} \xrightarrow{f} \mc{E}$ with the property that $f|_{\mc{P}}$ is nonzero at every closed point $P \in C$. Consider the dual map 
    \[
    \mc{E} \cong \mc{E}^{\vee} \otimes \wedge^2 \mc{E} \xrightarrow{f^{\vee} \otimes \Id} \mc{L} \otimes \wedge^2 \mc{E}.
    \]

    Now $f^{\vee}|_{P} = f|_{P}^{\vee}$ is nonzero for each $P \in C$, so $f^{\vee}|_{P}$ is surjective for each $P$, and therefore $\mc{E}_{P} \to (\mc{L}\otimes \wedge^2 \mc{E})_{P}$
    is a surjection on fibers (by Nakayama). So $\mc{E} \to \mc{L}\otimes \wedge^2 \mc{E}$ is a surjection, implying that the Koszul sequence
    \[
    0 \to \mc{L}^{\vee} \to \mc{E} \to \mc{L} \otimes \wedge^2 \mc{E} \to 0
    \]
    is exact. Therefore the surjection on the right induces a section $C\to \PP(\mc{E})$ which is cut out by the section $s$ corresponding to the map $f: \mc{L}^{\vee} \to \mc{E}$.

    Now let $U\subseteq C$ be an open subscheme where $X_0$ has no vertical components. So as Weil divisors we now have
    \[
    [(X_0)_{U}] = \sum a_i [(Y_i)_{U}]
    \]
    where all $Y_i$ are horizontal. So now $X_0$ is horizontally irreducible if and only if $(X_0)_{U}$ is integral.

    Further note that $(X_0)_{\eta} \to \eta$ is the scheme cut out by the restriction of $s$ to $\PP^1_{\eta}$. Note that $s_{\eta} \in H^0(\PP^1_{\eta}, \mc{O}(3))$, and if $s$ is nonzero, then $s_{\eta}$ is nonzero.

    If $X_0$ is horizontally irreducible and $(X_0)_{U}$ is integral, then since $(X_0)_{U} \to U$ is dominant, the generic point of $(X_0)_{U}$ lies over $\eta$. So $(X_0)_{\eta} \to \eta$ is an extension of fields. This implies that the form $\Lambda(s_{\eta})$, thought of as the binary cubic form cutting out $(X_0)_{\eta} \subseteq \PP^1_{\eta}$, must be irreducible.

    If $X_0$ is horizontally reducible, then $(X_0)_{\eta}$ is the pullback of $\sum a_i [Y_i]$ to $\eta$, where $Y_i$ are horizontal components. The divisor $\sum a_i[{Y_i}_{\eta}]$ is either reducible or non-reduced. So the associated nonzero form $s_{\eta}$ is reducible. Therefore $(X_0)_{\eta} \to \eta$ is not an extension of fields.

    Finally note that $\Lambda(s_{\eta}) = \Lambda(s)_{\eta}$, so the conditions on both are the same.
\end{proof}

\begin{prop}\label{prop:nonempty-3g}
    Let $C$ be a nice curve and let $\mc{E}$ be a rank $2$ vector bundle on $C$.
    
    If $\mc{V}(\mc{E})^{\ir}$ is nonempty, then $\deg \mc{E} \ge -3g_C$. Furthermore, for any fixed integer $N \ge -3g_C$, there are finitely many isomorphism classes of rank $2$ vector bundles $\mc{E}'$ on on $C$ such that $\deg \mc{E}' = N$ and $\mc{V}(\mc{E})^{\ir}$ is nonempty.
\end{prop}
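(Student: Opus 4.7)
For the lower bound $\deg \mc{E} \ge -3 g_C$, my strategy is to extract an integral curve from the vanishing scheme of a horizontally irreducible section and apply adjunction on the ruled surface $\PP(\mc{E})$. Given $s \in \mc{V}(\mc{E})^{\ir}$, by the characterization in \Cref{prop:horiz-ir} the generic fiber $V(s)_\eta \to \eta$ is a degree $3$ field extension. In particular $V(s) \subseteq \PP(\mc{E})$ has a unique integral horizontal component $Y$ of multiplicity $1$, and its remaining (necessarily fibral) components form a divisor of total degree $m \ge 0$; intersecting with a general fiber forces $Y$ to have vertical degree $3$. Using the generators $H = \mc{O}_{\PP(\mc{E})}(1)$ and $f$ (fiber class) of $\Pic \PP(\mc{E})$, with $H^2 = \deg \mc{E}$, $H \cdot f = 1$, $f^2 = 0$, and $K_{\PP(\mc{E})} \equiv -2 H + (2 g_C - 2 + \deg \mc{E}) f$, the divisor $Y$ has class $3H - (\deg \mc{E} + m) f$, so adjunction yields
\begin{equation*}
2 p_a(Y) - 2 \;=\; Y \cdot (Y + K_{\PP(\mc{E})}) \;=\; 2 \deg \mc{E} - 4 m + 6 g_C - 6.
\end{equation*}
Since $Y$ is an integral projective $\F_q$-curve of degree $3$ over $C$, its base change to $\overline{\F_q}$ is a disjoint union of at most three integral components of nonnegative arithmetic genus, so $p_a(Y) = 1 - \chi(Y, \mc{O}_Y) \ge 1 - 3 = -2$. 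Substituting gives $\deg \mc{E} - 2m + 3 g_C - 2 \ge -2$, i.e., $\deg \mc{E} \ge 2 m - 3 g_C \ge -3 g_C$, as claimed.

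For the finiteness statement, I would first reduce to primitive sections via \Cref{lem:fib-mult}. If $s \in \mc{V}(\mc{E})^{\ir}$ has fibral part supported over an effective divisor $D$ on $C$ with $\deg D = m$, the bound above forces $m \le (N + 3 g_C)/2$, so only finitely many such $D$ arise (an $\F_q$-curve has only finitely many effective divisors of any fixed degree). Moreover, \Cref{lem:fib-mult} writes $s = \sigma_D \cdot s'$ for a primitive horizontally irreducible section $s' \in \mc{V}(\mc{E}(-D))^{\ir}$ of a rank $2$ bundle of degree $N' = N - 2 m$. Thus it suffices to prove, for each fixed $N'$, that only finitely many isomorphism classes of $\mc{E}'$ of degree $N'$ admit a primitive horizontally irreducible section.

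For primitive $s'$, the Casnati-Ekedahl equivalence \Cref{thm:CE} produces an integral Gorenstein triple cover $f \colon X \to C$ with $X = V(s')$ and Tschirnhausen bundle $\mc{E}'$; integrality of $X$ follows from horizontal irreducibility (the generic fiber is a field), and $\mc{E}'$ is recovered from $X$ via the defining short exact sequence. By \Cref{prop:disc-preserve}, the discriminant $\Delta_f$ corresponds to the classical discriminant of $K(X)/K(C)$, and it is a section of a line bundle of degree $2 N'$. Hence the ramification discriminant of this extension is bounded in degree by $2 N'$, and by the function field analog of Hermite's theorem only finitely many degree $3$ extensions of $K(C)$ have this property over $\F_q$. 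For each such extension the smooth normalization $\tilde X \to C$ is determined, and the intermediate Gorenstein curves $\tilde X \to X \to C$ have arithmetic genus bounded by the first-part adjunction formula and are therefore parametrized by a choice of conductor ideal in $\mc{O}_{\tilde X}$ of bounded colength. Over $\F_q$ only finitely many such conductor data exist, so only finitely many $X$ and hence only finitely many $\mc{E}'$.

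The main obstacle is the last step -- controlling Gorenstein partial normalizations of $\tilde X$ with bounded conductor -- which I would resolve either by the direct commutative-algebra argument sketched above (each Gorenstein singularity of a curve is determined by a local conductor ideal of bounded $\delta$-invariant, of which there are finitely many over $\F_q$) or by invoking that the moduli of such covers is parametrized by an open subscheme of a suitable relative Hilbert scheme of bounded-length quotients, which is of finite type over $\F_q$ and so has only finitely many $\F_q$-points.
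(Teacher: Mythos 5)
Your degree bound is correct but proved by a genuinely different route. The paper never touches adjunction: it observes that if $\mc{V}(\mc{E})^{\ir}\neq\emptyset$ then every line-bundle quotient $\mc{E}\twoheadrightarrow\mc{M}$ must satisfy $3\deg\mc{M}\ge\deg\mc{E}$ (otherwise the restriction of $s$ to the corresponding section of $\PP(\mc{E})\to C$ lands in a negative-degree line bundle, forcing $s$ to vanish on that section and hence be horizontally reducible), and combines this with the skew-degree bound $\deg\mc{N}-\deg\mc{M}\ge -g_C$ for a maximal subbundle $\mc{N}\subseteq\mc{E}$ to get $\deg\mc{M}\ge -g_C$, $\deg\mc{N}\ge -2g_C$. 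Your adjunction computation is right (and in fact yields the slightly stronger $\deg\mc{E}\ge 2m-3g_C$), with two small caveats: the geometric components of $Y$ over $\overline{\F_q}$ are Galois conjugates and need not be \emph{disjoint}, but the bound you actually need, $\chi(\mc{O}_Y)\le 3$, survives since identifications only decrease $\chi$; and the fibral part of $V(s)$ may carry multiplicities, so the divisor you subtract is not necessarily reduced (harmless for the intersection theory, but relevant to your later reduction via \Cref{lem:fib-mult}, which is stated only for reduced $D$).

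The finiteness argument is where the two proofs diverge most sharply, and where yours carries real risk. The paper gets finiteness almost for free from the same subbundle analysis: the pair $(\deg\mc{M},\deg\mc{N})$ lies in a finite set, $\Pic^d(C)$ is finite over $\F_q$, and $\Ext^1(\mc{M},\mc{N})$ is a finite group, so there are finitely many $\mc{E}$. Your route through Casnati--Ekedahl, the function-field Hermite theorem, and finiteness of Gorenstein partial normalizations of bounded conductor colength can be made to work, but it imports two substantial inputs you would have to actually supply: (i) Hermite--Minkowski for $\F_q(C)$ in arbitrary characteristic, including wildly ramified cubic extensions in characteristic $3$, together with the separate treatment of the inseparable case where $\Delta_f=0$ and the discriminant gives no control (there you must fall back on the $\chi$/adjunction bound, as you hint); and (ii) finiteness of subalgebras $\mc{O}_C\subseteq\mc{O}_X\subseteq\nu_*\mc{O}_{\tilde X}$ of bounded colength, which needs either a finite-type Quot-scheme argument or a local analysis. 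None of this is wrong, but for a proposition whose role in the paper is merely to justify that certain sums are finite, the bundle-theoretic argument is both shorter and avoids any appeal to the classification of cubic extensions --- which is, after all, what the whole paper is trying to count.
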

\begin{proof}
    By our previous characterization, a section $s \in \mc{V}(\mc{E})$ is horizontally reducible if there exists a section $C_1 \subseteq \PP(\mc{E})$ of $\pi: \PP(\mc{E}) \to C$ where $s$ vanishes.

    Let $\mc{E} \twoheadrightarrow \mc{M}$ be a surjection onto a line bundle. Let $s \in \mc{V}(\mc{E})$ correspond to a binary cubic form $\Lambda(s) \in \Sym^{3} \mc{E} \otimes (\wedge^2 \mc{E})^{\vee}$.

    The surjection induces a section $C \to \PP(\mc{E})$ with image $C_1\subseteq \PP(\mc{E})$. The restriction of $\mc{O}_{\PP(\mc{E}}(1)$ to $C_1\cong C$ is $\mc{M}$, so the restriction of $\mc{O}_{\PP(\mc{E})}(3) \otimes \pi^{*}(\wedge^2 \mc{E})^{\vee}$ to $C_1\cong C$ is $\mc{M}^{\otimes 3} \otimes (\wedge^2 \mc{E})^{\vee}$.

    One can check that the restriction sends a global section $s \in \mc{V}(\mc{E})$ to the image of $\Lambda(s)$ via the map
    \[
    \Sym^{3}\mc{E} \otimes (\wedge^2 \mc{E})^{\vee} \to \mc{M}^{\otimes 3}\otimes  (\wedge^2 \mc{E})^{\vee}.
    \]

    So if $\mc{V}(\mc{E})^{\ir}$ is nonempty, then for every line bundle $\mc{M}$ with a surjection $\mc{E} \to \mc{M}$, we have $3\deg \mc{M} - \deg \mc{E} \ge 0$.

    Now for any rank $2$ vector bundle $\mc{E}$, there is a pair of line bundles $\mc{N}$, $\mc{M}$ along with a short exact sequence
    \[
    0 \to \mc{N} \to \mc{E} \to \mc{M} \to 0
    \]
    such that $\deg \mc{N} - \deg \mc{M} = e$, the \textbf{skew degree} of $\mc{E}$. Following \cite{Gun17}, we note that $e\ge -g_C$, where $g_C$ is the genus of $C$.

    So we know that $2 \deg \mc{M} - \deg \mc{N} \ge 0$ and $\deg \mc{N} - \deg \mc{M} \ge -g_C$. This implies $deg \mc{M} \ge -g_C$ and $\deg \mc{N} \ge -2g_C$. In particular we must have $\deg \mc{E} \ge -3g_C$, proving the first part of the proposition.

    To prove the second part, fix an integer $N\ge -3g_C$ and consider the vector bundles $\mc{E}$ with $\deg \mc{E} = N$ and $\mc{V}(\mc{E})^{\ir}$ nonempty. Each such $\mc{E}$ determine a pair $(m,n):= (\deg \mc{M}, \deg \mc{N})$ satisfying $m+n=N$, $m \ge -g_C$, and $n\ge -g_C$. There are finitely many pairs of integers satisfying the three conditions.

    For each pair of integers $(m,n)$ there are finitely many pairs $(\mc{M}, \mc{N})$ with degrees $(m,n)$ (since $\Pic^{d}(C)$ is finite for all $d$). Finally, for a fixed pair $(\mc{M}, \mc{N}$), there are at most $|\Ext^{1}(\mc{M}, \mc{N})|<\infty$ many extension classes $0 \to \mc{N} \to \mc{E} \to \mc{N} \to 0$, which uniquely determine $\mc{E}$ up to isomorphism. So in total there are finitely many isomorphism classes of rank $2$ vector bundles $\mc{E}$ with degree $N$ and $\mc{V}(\mc{E})^{\ir}$ nonvanishing.
\end{proof}
\begin{prop}
    Let $D$ be a reduced effective divisor on $C$. If $\Bad(\mc{E}, D)^{\ir}$ is nonempty, then $\deg \mc{E} - \deg D\ge -3g_C$. 
\end{prop}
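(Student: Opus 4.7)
The plan is to induct on $\omega(D)$, peeling off closed points of $D$ one at a time and transferring a bad horizontally irreducible section on $\PP(\mc{E})$ to a horizontally irreducible section on a modified bundle $\PP(\mc{E}')$ of smaller determinant. For the base case $D = 0$, we have $\Bad(\mc{E}, 0)^{\ir} = \mc{V}(\mc{E})^{\ir}$, so the hypothesis gives $\deg \mc{E} \ge -3g_C$ by \Cref{prop:nonempty-3g}.

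For the inductive step, I would pick a closed point $P$ of $D$, write $D = P + D'$, and fix $s \in \Bad(\mc{E}, D)^{\ir}$. The goal is to produce a pair $(\mc{E}', s')$ with $s' \in \Bad(\mc{E}', D')^{\ir}$ and $\deg \mc{E}' \le \deg \mc{E} - \deg P$. Granting this, the inductive hypothesis gives $\deg \mc{E}' - \deg D' \ge -3g_C$, and hence
\[
\deg \mc{E} - \deg D = (\deg \mc{E} - \deg P) - \deg D' \ge \deg \mc{E}' - \deg D' \ge -3g_C.
\]

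To build $(\mc{E}', s')$, I would use the dichotomy for bad sections from \Cref{sec:sieve-i}: either $s$ is fibral at $P$, or $s$ is singular at some relative degree $1$ closed point $\mc{P}$ in the fiber above $P$. In the fibral case, set $\mc{E}' := \mc{E}(-P)$ and let $s' \in \mc{V}(\mc{E}(-P))^{\ir}$ be the preimage of $s$ under the isomorphism $\mc{V}(\mc{E}(-P)) \xrightarrow{\sim} \Fib(\mc{E}, P)$ of \eqref{eq:fib-reduce}, which is stated to preserve horizontal irreducibility; here $\deg \mc{E}' = \deg \mc{E} - 2 \deg P$. In the singular case, take the $P$-marking $\DD_1 := \{\mc{P}\}$, set $\mc{E}' := \Elm_P(\mc{E})$, and apply \Cref{prop:zhao-lemma} with $\LL = (\wedge^2 \mc{E})^{\vee}$ to produce a horizontally irreducible $s' \in \Van(\mc{E}', \DD_1')$. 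By \Cref{prop:elm-groupoid-equiv}, $\wedge^2 \mc{E}' \cong (\wedge^2 \mc{E})(-P)$, so $\deg \mc{E}' = \deg \mc{E} - \deg P$; and since $\LL(P) = (\wedge^2 \mc{E}')^{\vee}$, the target line bundle in \Cref{prop:zhao-lemma} is exactly $\mc{M}_{\mc{E}'}$, putting $s'$ in $\mc{V}(\mc{E}')^{\ir}$.

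Finally, I need to check that $s'$ is bad above every point $Q \in D'$. Both constructions are isomorphisms of $U$-schemes over $U := C - P \supseteq D'$: this is the content of the second half of \Cref{prop:zhao-lemma}, and in the fibral case multiplication by $\sigma_P$ is also an isomorphism away from $P$. Hence the vanishing schemes of $s$ and $s'$ are identified over every fiber above $D'$, so non-regularity at some closed point above $Q$ is automatically preserved. The main obstacle is the bookkeeping around \Cref{prop:zhao-lemma} that confirms the line bundle on the target side is exactly $\mc{M}_{\mc{E}'}$; everything else is a clean unwinding of the correspondences set up in \Cref{sec:ruled-elm} and \Cref{sec:sieve-ii}.
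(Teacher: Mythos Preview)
Your proof is correct and takes a genuinely different route from the paper's. The paper argues by contrapositive using the Zhao identity (\Cref{prop:zhao-identity}): it expands $|\Bad(\mc{E}, D)^{\ir}|$ as an alternating sum over decompositions $D = D_1 + D_2 + D_3$ and markings $\DD_1, \DD_2$, then for each individual summand applies \Cref{lem:singvan} and \Cref{prop:zhao-lemma} once (with the full divisors $D_1, D_2, D_3$) to land in some $\mc{V}(\mc{E}'')^{\ir}$ with $\deg \mc{E}'' \le \deg \mc{E} - \deg D < -3g_C$, which is empty by \Cref{prop:nonempty-3g}. Your argument instead tracks a single section through an induction on $\omega(D)$, invoking only the local fibral/singular dichotomy at one point $P$ at a time and applying either the twist $\mc{E} \mapsto \mc{E}(-P)$ or a single $\Elm_P$. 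Your approach is more elementary in that it never needs the combinatorial inclusion--exclusion of \Cref{prop:zhao-identity}; the paper's approach has the advantage of reusing exactly the same decomposition that drives the main count elsewhere, so no new bookkeeping is introduced. The verification that $\LL(P) = (\wedge^2 \mc{E}')^{\vee}$ in the singular case, and that both constructions are isomorphisms of $U$-schemes over $U = C - P$ (hence preserve non-regularity above each $Q \in D'$), is exactly right.
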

\begin{proof}
    Assume $\deg \mc{E} - \deg D < -3g_C$. We will prove $|\Bad(\mc{E}, D)^{\ir}| = 0$.

    We can write
    \[
    |\Bad(\mc{E}, D)^{\ir}| = \sum_{D_1+ D_2+ D_3 = D} \mu(D_2) |\Sing(\mc{E}, \DD_1) \cap \SingFib(\mc{E}, \DD_2) \cap \Fib(\mc{E}, D_3)^{\ir}|.
    \]
    Now we can write each term as
    \[
    |\Sing(\mc{E}, \DD_1) \cap \SingFib(\mc{E}, \DD_2) \cap \Fib(\mc{E}, D_3)^{\ir}| = |\Sing(\mc{E'}, \DD_1' \cap \Van(\mc{E}', \DD_2')^{\ir}|
    \]
    for $\mc{E}' = \mc{E}(-D_2 -D_3)$.

    Finally, there exists $\mc{E}''$ with $\deg \mc{E}'' = \deg \mc{E}-D_1-2D_2-2D_3$ such that
    \[
    |\Sing(\mc{E'}, \DD_1') \cap \Van(\mc{E}', \DD_2')^{\ir}| = |\Van(\mc{E}'', \DD_1'') \cap \Van(\mc{E}'', \DD_2'')^{\ir}|
    \]

    Now $\deg \mc{E}'' \le \deg \mc{E} - \deg D< -3g_C$, so $|\mc{V}(\mc{E}'')^{\ir}|=0$. Therefore $|\Bad(\mc{E}, D)^{\ir}| = 0$.
\end{proof}

Finally, if we assume that either $\Char \F_q \neq 3$ or $C\cong \PP^1$ holds, then we can improve the bound in \Cref{prop:nonempty-3g}.
\begin{prop}
    Assume either $\Char \F_q \neq 3$ or $C \cong \PP^1$. Let $\mc{E}$ be a rank $2$ vector bundle on $C$. If $|\mc{V}(\mc{E})^{\ir}| \neq 0$, then $\deg \mc{E} \ge 0$. Furthermore, if $\Theta(C,N) \neq 0$, then $N \ge 0$.
\end{prop}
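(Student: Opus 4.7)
The plan is to reduce the claim about $\Theta(C,N)$ to the claim about $\mc{V}(\mc{E})^{\ir}$, and then to dispatch the latter by splitting on the two hypotheses.

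First I would handle the reduction. If $\Theta(C,N)\neq 0$, then there exists a triple cover $f\colon X\to C$ with $X$ smooth and irreducible and $\deg\Delta_f=2N$. Because $X$ is smooth, $f$ is Gorenstein, so \Cref{thm:CE} produces a primitive section $s\in \mc{V}(\mc{E})^{\sm,\ir}\subseteq \mc{V}(\mc{E})^{\ir}$, where $\mc{E}$ is the Tschirnhausen bundle of $f$. By \Cref{prop:disc-preserve}, there is a canonical isomorphism $(\wedge^3 f_{*}\mc{O}_X)^{\otimes -1}\cong \wedge^2 \mc{E}$, so $2N=\deg \Delta_f=2\deg \mc{E}$, i.e.\ $N=\deg \mc{E}$. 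The first assertion then yields $N\ge 0$.

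For the first assertion, I would split on the hypothesis. If $C\cong \PP^1$, the statement is immediate from the already-established \Cref{prop:nonempty-3g}: since $g_C=0$, any rank-$2$ bundle $\mc{E}$ with $\mc{V}(\mc{E})^{\ir}\neq\emptyset$ satisfies $\deg\mc{E}\ge -3g_C=0$.

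The substantive case is $\Char \F_q\ne 3$. Given any $s\in \mc{V}(\mc{E})^{\ir}$, I would pass to the associated binary cubic form $g=\Lambda(s)\in H^0(C,\Sym^3\mc{E}\otimes (\wedge^2\mc{E})^{\vee})$ and its discriminant $\Delta(g)\in H^0(C,(\wedge^2\mc{E})^{\otimes 2})$. By the final bullet of \Cref{prop:horiz-ir}, horizontal irreducibility of $s$ is equivalent to the restriction $g_\eta$ at the generic point $\eta\in C$ being an irreducible binary cubic over the function field $\F_q(C)$. Since an irreducible polynomial of degree not divisible by the characteristic is automatically separable, and $\Char \F_q\neq 3$, the generic discriminant $\Delta(g)|_\eta=\Delta(g_\eta)$ is nonzero. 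Hence $\Delta(g)$ is a nonzero global section of $(\wedge^2\mc{E})^{\otimes 2}$, and the existence of a nonzero global section of a line bundle on the nice curve $C$ forces $2\deg \mc{E}=\deg(\wedge^2\mc{E})^{\otimes 2}\ge 0$, as required.

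The only real subtlety is the separability step (irreducible cubics have nonzero discriminant in characteristic different from $3$), which is a standard fact; the rest is a direct application of \Cref{thm:CE}, \Cref{prop:disc-preserve}, \Cref{prop:horiz-ir}, and \Cref{prop:nonempty-3g}.
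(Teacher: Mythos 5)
Your proof is correct and follows essentially the same route as the paper: the $\Char \F_q \neq 3$ case via nonvanishing of the generic discriminant of an irreducible (hence separable, since $3 \nmid \Char \F_q$) cubic forcing $2\deg\mc{E} = \deg(\wedge^2\mc{E})^{\otimes 2} \ge 0$, and the reduction of the $\Theta(C,N)$ claim to the bundle-degree claim via the Casnati--Ekedahl correspondence. The only cosmetic difference is that for $C \cong \PP^1$ you cite \Cref{prop:nonempty-3g} with $g_C = 0$, whereas the paper argues directly that $\deg \mc{E} < 0$ forces $\deg A_3 < 0$, hence $A_3 = 0$ and $x$-reducibility; both are valid.
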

\begin{proof}
    If $C \cong \PP^1$, then $s \in \mc{V}(\mc{E})$ can be written as $\Lambda(s) = A_0 x^3 + A_1 x^2y + A_2 xy^2 + A_3 y^3$. If $\deg \mc{E} <0$, then $A_3<0$ and $s$ is horizontally reducible.

    Now assume $\Char \F_q \neq 3$. Then any horizontally irreducible section is associated to a separable field extension. In particular, the base change of the discriminant to $\F_q(C)$ is nonzero, so the discriminant $\Delta(s)$ is a nonzero section of $(\wedge^2 \mc{E})^{\otimes 2}$. Therefore we must have $\deg \mc{E} \ge 0$.

The second part follows directly from the first, recalling our original definition
\[
\Theta(C,N) = \sum_{\substack{\mc{E} \in |\Bun(C)| \\ \deg \mc{E} = N} }\frac{1}{|\Aut \mc{E}|} |\mc{V}(\mc{E})^{\ir,\sm}|.
\]
\end{proof}
\section{Appendix C: Elementary Transformations, Blowups}
\label{sec:appendix-a}
In this appendix, we prove various propositions about the elementary transformation functor and our blowup construction.

Here we prove \Cref{prop:elm-groupoid-equiv}. We reproduce the statement here:
\begin{prop}
    Let $S$ be a Dedekind scheme. Let $\mc{N}$ be a line bundle on $S$, and let $D$ be a reduced effective divisor. The elementary transform functor $\ElmD$ induces an equivalence of groupoids
    \[
    \ElmD: \DBun_{\mc{N}} \to \DBun_{\mc{N}(-D)}.
    \]
\end{prop}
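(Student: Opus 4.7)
The plan is to verify well-definedness of $\ElmD$ and then produce a quasi-inverse by showing that iterating $\ElmD$ twice recovers the twist-by-$\mc{O}(-D)$ functor. For well-definedness, I would observe that $\mc{E}'=\ker\phi$ is a subsheaf of the locally free rank-2 sheaf $\mc{E}$ on the Dedekind scheme $S$, hence locally free (of rank $2$ since $\mc{E}'|_{S\setminus D}=\mc{E}|_{S\setminus D}$). For the determinant identity $\wedge^{2}\mc{E}'\cong\mc{N}(-D)$, I would work locally at each $P\in D$: in a basis $(e_1,e_2)$ of $\mc{E}$ adapted to $\phi$ (so $\phi(e_1)=0$, $\phi(e_2)=$ generator), $\mc{E}'$ becomes $Re_1\oplus \pi Re_2$ for the uniformizer $\pi$ at $P$, whence $\det\mc{E}'=\pi\cdot\det\mc{E}$ locally. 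Gluing gives the global twist. Functoriality and the well-definedness of $\psi'$ are immediate from the naturality of the kernel construction and the snake lemma applied to the pullback along $i:D\hookrightarrow S$.

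To produce the equivalence, I would prove that $\ElmD\circ\ElmD$ is naturally isomorphic to the twist functor $T_{-D}\colon(\mc{E},\DD)\mapsto(\mc{E}\otimes\mc{O}(-D),\DD\otimes\mathrm{id})$ as functors $\DBun_{\mc{N}}\to\DBun_{\mc{N}(-2D)}$. Since $T_{-D}$ is an equivalence with inverse $T_{D}$, and since $\ElmD$ commutes with tensoring by a line bundle (kernels commute with exact tensor functors), the composite $T_{D}\circ\ElmD\colon\DBun_{\mc{N}(-D)}\to\DBun_{\mc{N}}$ will be a quasi-inverse to $\ElmD$: on one side $(T_{D}\circ\ElmD)\circ\ElmD=T_{D}\circ\ElmD^{2}\simeq T_{D}\circ T_{-D}=\mathrm{id}$, and on the other $\ElmD\circ(T_{D}\circ\ElmD)\simeq T_{D}\circ\ElmD^{2}\simeq\mathrm{id}$.

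The heart of the argument is the natural isomorphism $\ElmD^{2}\simeq T_{-D}$, which I would verify by a local computation. Starting from $(\mc{E}_1,\phi_1)$, in the local basis above I compute $\mc{E}_2=Re_1\oplus\pi Re_2$, find that $i^{*}\mc{E}_2\to i^{*}\mc{E}_1$ has image $\ker\psi_1=\mc{L}_2$ generated by $\bar e_1$ with $\ker\psi_2$ generated by $\overline{\pi e_2}$, whence $\phi_2(e_1)=$ generator and $\phi_2(\pi e_2)=0$, so $\mc{E}_3=\pi Re_1\oplus\pi Re_2=\pi\mc{E}_1$. This identifies $\mc{E}_3$ canonically with $\mc{E}_1\otimes\mc{O}(-D)$; the identification is independent of the local basis chosen (any two such bases differ by an element of the stabilizer of $\phi_1$, which acts compatibly), so it glues to a global natural isomorphism. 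Matching $\psi_3$ with the twisted $\psi_1$ amounts to checking that $\psi_3(\overline{\pi e_1})=0$ and $\psi_3(\overline{\pi e_2})=$ generator, which the local calculation yields directly. The main obstacle will be managing the canonical identifications between the various kernels, cokernels, and pullbacks throughout the bookkeeping, but each individual compatibility is routine once the local model is fixed, and naturality in $(\mc{E}_1,\phi_1)$ follows from the naturality of every ingredient.
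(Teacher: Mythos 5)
Your proposal is correct and follows essentially the same route as the paper: both establish well-definedness via local freeness of the kernel and a stalk/Smith-normal-form computation for the determinant twist, then prove the key natural isomorphism $\ElmD\circ\ElmD\simeq(\,\cdot\,)\otimes\mc{O}(-D)$ and take $\left((\,\cdot\,)\otimes\mc{O}(D)\right)\circ\ElmD$ as the quasi-inverse. The only difference is that you verify the double-transform identity by a local computation in a basis adapted to $\phi_1$ (which does glue cleanly, since $\mc{E}_3$ and $\mc{I}_D\cdot\mc{E}_1$ are both canonical subsheaves of $\mc{E}_1$, so the identification is basis-free), whereas the paper runs a global snake-lemma diagram chase.
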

\begin{proof}[Proof of \Cref{prop:elm-groupoid-equiv}]
First, we show that if 
\[
0 \to \E' \xrightarrow{f} \E \to i_{*} \LL \to 0
\]
is an exact sequence of sheaves on $S$ with $\E, \E'$ rank $2$ vector bundles on $S$ and $\LL$ a line bundle on $D$, then 
\begin{equation}\label{eq:wedge-exact}
0 \to \wedge^2\mc{E}' \xrightarrow{\wedge^2 f} \wedge^2 \mc{E} \to i_{*}(\wedge^2 \mc{E})|_{D} \to 0
\end{equation}
is a short exact sequence. This would give in particular a canonical isomorphism
\[
\wedge^2 \E' \cong \wedge^2 \E \otimes \mc{O}(-D).
\]
We can check exactness of \eqref{eq:wedge-exact} at the stalks. For a stalk at $P$ away from $D$, $f$ is an isomorphism so $\wedge^2 f$ is an isomorphism as well.

Now consider a stalk at $P \in D$. The local ring $\mc{O}_P$ is a DVR and in particular a PID, so after identifying $\E'_P \cong \mc{O}_{P}^{\oplus 2}$ and $\E_P \cong \mc{O}_P^{\oplus 2}$ and picking appropriate bases, we can write $f_P: \E'_{P} \to \E_P$ in Smith Normal Form as
\[
f_P = \begin{pmatrix}
    \pi^{\lambda_1} & 0 \\ 0 & \pi^{\lambda_2}
\end{pmatrix}
\]
where $\pi$ is a uniformizer at $P$ and $0\le \lambda_1\le \lambda_2 $. Then since $\coker(f_P) \cong \mc{O}_P/\mathfrak{m}_P$, we must have $(\lambda_1, \lambda_2) = (0,1)$ and the map $\wedge^2 f_P$ is multiplication by $\pi$.

So at the stalk $P$ our sequence is isomorphic to
\[
0 \to \mc{O}_P \xrightarrow{\times \pi} \mc{O}_P \to \mc{O}_{P}/(\pi) \to 0
\]
which is exact. Therefore \eqref{eq:wedge-exact} is exact, so $\ElmD$ restricts to a well-defined functor $\DBun_{\mc{N}} \to \DBun_{\mc{N}(-D)}$.

Next, we need to show this is an equivalence of categories. 

First, we will note that for a $D$-marking $\phi_1: \mc{E}_1 \to i_{*} \mc{L}_1$ with $\ker(\phi_1) = \mc{E}_2$, we can write a short exact sequence
\[
0 \to \mc{E}_2 \xrightarrow{f_1} \mc{E}_1 \to i_{*} \mc{L}_1 \to 0.
\]
Now $\phi_1$ determines $f_1$ up to isomorphism and vice versa, so it's not hard to see that (yet another) equivalent characterization of $\DBun$ is the groupoid with objects given by injective maps $\mc{E}_2 \xrightarrow{f_1} \mc{E}_1$ with cokernel isomorphic to $i_{*} \mc{L}$ for some $\mc{L}$. An morphism from $(\mc{E}_2 \xrightarrow{f_1} \mc{E}_1)$ to $(\mc{E}_2' \xrightarrow{f_1'} \mc{E}_1')$ is a pair of isomorphisms $\mc{E}_2 \xrightarrow{\sim} \mc{E}_2'$ and $\mc{E}_1 \xrightarrow{\sim} \mc{E}_1'$ commuting with $f_1$ and $f_1'$.

Now we consider the application of $\ElmD$ twice. We first take $\mc{L}_2 = \ker(\mc{E}_1|_{D} \to \mc{L}_1)$ and let $\phi_2: \mc{E}_2 \to i_{*}\mc{L}_2$ be the induced morphism. Let $\mc{E}_3 \xrightarrow{f_2} \mc{E}_2$ be the kernel of $\phi_2$, so $\ElmD(\mc{E}_2 \xrightarrow{f_1} \mc{E}_1) = (\mc{E}_3 \xrightarrow{f_2} \mc{E}_2)$.

Similarly define $\mc{L}_3 = \ker(\mc{E}_2|_{D} \to \mc{L}_2)$, $\phi_3: \mc{E}_3 \to i_{*}\mc{L}_3$, and $(\mc{E}_4 \xrightarrow{f_3} \mc{E}_3) = \ElmD(\mc{E}_3\xrightarrow{f_2} \mc{E}_2)$.

We claim that there is a natural isomorphism
\[
(\mc{E}_4 \xrightarrow{f_3} \mc{E}_3) =(\Elm_D \circ \Elm_D)(\mc{E}_2 \xrightarrow{f_2} \mc{E}_1)\cong \left(\mc{E}_2(-D) \xrightarrow{f_1(-D)} \mc{E}_1(-D)\right).
\]

First, consider the diagram
\[
\begin{tikzcd}
& \mc{E}_3 \ar{r}{\sim} \ar{d} & \mc{E}_1(-D) \ar{r} \ar{d} & 0 \ar{d}\\
   0 \ar{r} &  \mc{E}_2 \ar{r}\ar{d} &\mc{E}_1 \ar{r}\ar{d} & \mc{L}_1 \ar{r} \ar{d}& 0\\
   0 \ar{r}&  \mc{L}_2 \ar{r}\ar{d} & \mc{E}_1|_{D} \ar{r} \ar{d}& \mc{L}_1 \ar{r} \ar{d}& 0\\
   & 0 \ar{r} & 0 \ar{r} & 0
\end{tikzcd}
\]
where the two middle rows are exact, and so the top left map $\mc{E}_3 \to \mc{E}_1(-D)$ is an isomorphism by (say) the snake lemma. Above we write $\mc{L}_1$ for the sheaf $i_{*} \mc{L}_1$ and $\mc{E}_1|_{D}$ for $i_{*} \mc{E}_1|_{D}$, by a slight abuse of notation.

Now consider the diagram
\[
\begin{tikzcd}
0 \ar{d} & 0 \ar{d} & 0 \ar{d}& 0 \ar{d} \\
\mc{E}_4 \ar{r}{\sim}\ar{d}{f_3} & \mc{E}_2(-D) \ar{r}\ar{d} & \mc{E}_3 \ar{r}{\sim} \ar{d}{f_2}& \mc{E}_1(-D)\ar{d} &\\
\mc{E}_3 \ar{r}{f_2}\ar{d}& \mc{E}_2 \ar[equal]{r} \ar{d}& \mc{E}_2 \ar{r}{f_1}\ar{d} & \mc{E}_1\ar{d} &\\
\mc{L}_3 \ar{r} \ar{d}& \mc{E}_2|_{D} \ar{r}\ar{d} &  \mc{L}_2 \ar{r}\ar{d} & \mc{E}_1|_{D}\ar{d}& \\
0 & 0 & 0 &0
\end{tikzcd}
\]
where the four vertical columns are short exact sequences. We first claim that the composite map $\mc{E}_4 \to \mc{E}_2(-D) \to \mc{E}_3$ along the first row is the same as the vertical map $\mc{E}_4 \xrightarrow{f_3} \mc{E}_3$ in the first column.

Indeed, by isolating the first and third columns we get a commutative diagram
\[
\begin{tikzcd}
    0 \ar{d} & 0 \ar{d} \\
    \mc{E}_4\ar{r} \ar{d}{f_3}&\mc{E}_3\ar{d}{f_2}\\
    \mc{E}_3 \ar{r}{f_2} \ar{d} & \mc{E}_2 \ar{d}\\
    \mc{L}_3 \ar{r}{0} \ar{d} & \mc{L}_2 \ar{d} \\
    0 & 0
\end{tikzcd}
\]
and we see that $\mc{E}_4 \xrightarrow{f_3} \mc{E}_3$ is the unique map in the first row that makes the diagram commute.

On the other hand, by isolating the second and fourth columns we see that the composite first row map $\mc{E}_2(-D) \to \mc{E}_3 \to \mc{E}_1(-D)$ must be the map $f_1(-D) = f_1 \otimes \mc{O}(-D)$. So by looking at the first row, we get a commutative diagram
\[
\begin{tikzcd}
    \mc{E}_4\ar{r}{f_3}\ar{d}{\sim} & \mc{E}_3 \ar{d}{\sim} \\
    \mc{E}_2(-D) \ar{r}{f_1(-D)} \ar[dashed]{ru}& \mc{E}_1(-D)
\end{tikzcd}
\]
as desired.

One can check that this construction is natural. So we get a natural isomorphism between $\ElmD \circ \ElmD$ and the functor $\otimes \mc{O}(-D)$, which sends
\[
(\mc{E}_2\xrightarrow{f_1} \mc{E}_1) \to (\mc{E}_2(-D) \xrightarrow{f_1(-D)} \mc{E}_1(-D)).
\]
In particular $(\otimes \mc{O}(D))\circ \ElmD$ is an inverse to $\ElmD$, which must be an equivalence of categories.
\end{proof}

Now we prove \Cref{prop:blow-up-down}. We reproduce the statement here.

\begin{prop}\label{prop:blow-up-down-appendix}
    Let $S$ be a Dedekind scheme, and let $D$ be a reduced effective divisor on $S$. Let $\E_1$ be a rank $2$ vector bundle on $S$, with associated projective bundle $\PP(\E_1)\to S$, and let $\mc{D}_1\subset \PP(\E_1)$ be a $D$-marking.

    Let $(\E_2, \mc{D}_2)$ be the image of $(\E_1, \mc{D}_1)$ by $\ElmD$. Let $\pi_2: \PP(\E_2)\to S$ be the associated projective bundle.

    Then the blowup of $\PP(\E_1)$ at $\DD_1$ is the blowup of $\PP(\E_2)$ at $\DD_2$.

    More precisely: there exists a scheme $Y$ with $r: Y \to S$ along with maps $f_1: Y \to \PP(\E_1)$, $f_2: Y \to \PP(\E_2)$ such that:
    \begin{itemize}
    \item 
    The following diagram commutes:
\[
\begin{tikzcd}[column sep=small]
& Y \arrow[dl, "f_1"] \arrow[dr, "f_2"]\arrow[dd,"r"] & \\
\PP(\E_1) \arrow[rd,"\pi_1"] & & \PP(\E_2)\arrow[ld, "\pi_2"]\\
&S & 
\end{tikzcd}
\]
\item We have that $f_1: Y\to \PP(\E_1)$ exhibits $Y$ as the blowup of $\PP(\E_1)$ at $\DD_1$, and $f_2: Y \to \PP(\E_2)$ exhibits $Y$ as the blowup of $\PP(\E_2)$ at $\DD_2$.
\item Let $E_1\subseteq Y$ be the exceptional divisor of $f_1$, and let $E_2\subseteq Y$ be the exceptional divisor of $f_2$. Then $E_2 = \pi_1^{-1}(D)^{st}$ and $E_1 = \pi_2^{-1}(D)^{st}$.
\item We have
\[
f_1^{*} \mc{O}_{\PP(\E_1)}(1) \otimes \mc{O}_{Y}(-E_1) \cong f_2^{*}\mc{O}_{\PP(\E_2)}(1).
\]
\end{itemize}
\end{prop}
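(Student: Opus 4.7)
\medskip

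\textbf{Proof proposal.} The plan is to construct $Y$ as the blowup of $\PP(\mc{E}_1)$ along $\DD_1$, and then produce the map $f_2: Y \to \PP(\mc{E}_2)$ directly from sheaf-theoretic data using the universal property of projective bundles. The key input is the short exact sequence
\[
0 \to \mc{E}_2 \xrightarrow{\iota} \mc{E}_1 \to i_{*}\mc{L}_1 \to 0
\]
coming from the elementary transformation, where $\phi_1: \mc{E}_1 \twoheadrightarrow i_{*}\mc{L}_1$ is the surjection corresponding to $\DD_1$ via \Cref{prop:dmar-interp}.

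First, I would pull this sequence back to $\PP(\mc{E}_1)$ and compose with the tautological quotient $\pi_1^{*}\mc{E}_1 \twoheadrightarrow \mc{O}_{\PP(\mc{E}_1)}(1)$ to obtain a morphism
\[
\alpha: \pi_1^{*}\mc{E}_2 \longrightarrow \mc{O}_{\PP(\mc{E}_1)}(1).
\]
A local computation (reducing to a DVR, where $\mc{E}_2 \hookrightarrow \mc{E}_1$ is Smith-normal and a point $\mc{P}\in\DD_1$ has coordinates $(0:1)$ above a uniformizer $\pi$) shows that the cokernel of $\alpha$ is supported exactly on $\DD_1$ with ideal sheaf $\mc{I}_{\DD_1}$. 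Now set $Y := \Bl_{\DD_1}\PP(\mc{E}_1)$ with $f_1: Y \to \PP(\mc{E}_1)$ and exceptional divisor $E_1$. Using $f_1^{*}\mc{I}_{\DD_1} = \mc{O}_Y(-E_1)$, the pullback $f_1^{*}\alpha$ factors through a surjection
\[
\beta: r^{*}\mc{E}_2 \twoheadrightarrow f_1^{*}\mc{O}_{\PP(\mc{E}_1)}(1) \otimes \mc{O}_Y(-E_1),
\]
which by the universal property of $\PP(\mc{E}_2)$ defines the map $f_2: Y \to \PP(\mc{E}_2)$ with $f_2^{*}\mc{O}_{\PP(\mc{E}_2)}(1) \cong f_1^{*}\mc{O}_{\PP(\mc{E}_1)}(1)\otimes \mc{O}_Y(-E_1)$. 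This establishes the commutative diagram and the last bullet point immediately.

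The main obstacle is verifying that $f_2: Y \to \PP(\mc{E}_2)$ is itself a blowup, centered precisely at $\DD_2$. I would handle this by the symmetry built into the elementary transformation: by \Cref{prop:elm-groupoid-equiv} and the natural isomorphism $\Elm_D \circ \Elm_D \cong (-)\otimes \mc{O}(-D)$ established in the proof of \Cref{prop:elm-groupoid-equiv}, we can run the same construction starting from $(\mc{E}_2, \DD_2)$ to obtain a blowup $f_1': Y' \to \PP(\mc{E}_2)$ at $\DD_2$ together with a map $f_2': Y' \to \PP(\mc{E}_1\otimes \mc{O}(-D)) \cong \PP(\mc{E}_1)$. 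It then suffices to identify $Y$ and $Y'$ canonically as $S$-schemes compatible with the two maps; this can be checked by a local computation in a neighborhood of each point of $D$ where both $\mc{E}_1$ and $\mc{E}_2$ split and $\DD_1$, $\DD_2$ become the standard points $(0:1)$, $(1:0)$ respectively (the $\Spec\Z$ example in the paper is the model case). In those coordinates, both $Y$ and $Y'$ are cut out by the same bilinear equation $x_1 y_2 - \pi x_2 y_1$ on $\PP(\mc{E}_1)\times_S \PP(\mc{E}_2)$, which simultaneously exhibits the blowdowns to each projective bundle.

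Finally, to identify the exceptional divisors as claimed, note that over $U = S \setminus D$ the map $f_1$ is an isomorphism, so $E_1$ is supported over $D$. Above each closed point $P \in D$, the local computation shows that the fiber $\pi_1^{-1}(P)$ meets $\DD_1$ transversely in the single point $\mc{P}$, so after blowup the fiber decomposes as $E_{1,P} \cup \pi_1^{-1}(P)^{st}$. Tracing through $f_2$, one sees that $E_{1,P}$ maps isomorphically to the fiber $\pi_2^{-1}(P)$ while $\pi_1^{-1}(P)^{st}$ is contracted to the point $\mc{P}' \in \DD_2$, which gives $E_2 = \pi_1^{-1}(D)^{st}$ and (by symmetry) $E_1 = \pi_2^{-1}(D)^{st}$.
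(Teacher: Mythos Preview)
Your proposal is correct and takes a somewhat different route than the paper at two places, both worth noting.

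First, to obtain the surjection $r^{*}\mc{E}_2 \twoheadrightarrow \mc{M}$ (and hence $f_2$), you argue directly on $\PP(\mc{E}_1)$: you identify the image of $\alpha:\pi_1^{*}\mc{E}_2 \to \mc{O}_{\PP(\mc{E}_1)}(1)$ as $\mc{I}_{\DD_1}(1)$ by a local computation, and then use that $f_1^{*}\mc{I}_{\DD_1}$ surjects onto $\mc{O}_Y(-E_1)$. The paper instead identifies $r_{*}\mc{M}\cong\mc{E}_2$ by pushing forward a short exact sequence, and then proves surjectivity of the counit $r^{*}r_{*}\mc{M}\to\mc{M}$ via cohomology and base change: it first shows $r:Y\to S$ is flat, then computes $H^1(Y_P,\mc{M}_P)=0$ fiber by fiber using an auxiliary lemma on line bundles over two copies of $\PP^1$ glued at a point. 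Your argument is shorter and avoids the flatness and cohomology-and-base-change machinery; the paper's argument is more self-contained in that the relevant fiber computations are spelled out rather than deferred to a local model.

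Second, to prove $f_2$ is the blowup at $\DD_2$, you invoke the symmetry $\ElmD\circ\ElmD\cong(-)\otimes\mc{O}(-D)$ and match $Y$ with the symmetrically constructed $Y'$ inside $\PP(\mc{E}_1)\times_S\PP(\mc{E}_2)$ via the bilinear equation. The paper instead works asymmetrically: it shows directly that $f_2$ contracts each strict transform $F^{st}$ to a point lying in $\DD_2$, factors $f_2$ through $\Bl_{\DD_2}\PP(\mc{E}_2)$ by the universal property of blowups, and then proves $Y\to\Bl_{\DD_2}\PP(\mc{E}_2)$ is an isomorphism by checking it fiberwise and invoking Zariski's Main Theorem. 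Your symmetry argument is conceptually cleaner; the price is that identifying $Y$ and $Y'$ via the bilinear equation requires verifying that $(f_1,f_2)$ and $(f_2',f_1')$ are each closed immersions into $\PP(\mc{E}_1)\times_S\PP(\mc{E}_2)$ with the same scheme-theoretic image, which needs a little more than the Smith-normal-form computation you sketch (though it is straightforward once you check it over each local ring at a point of $D$ and note that away from $D$ everything is an isomorphism).
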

\begin{proof}
    
    Our strategy will be to define the blowup $Y$ of $\PP(E_1)$ at $\DD_1$, then to construct a map $Y\to \PP(\E_2)$ by universal property using the universal property, and finally to show that this constructed map is the blowup of $\PP(\E_2)$ along $\DD_2$.

    We remark that $\PP(\mc{E}_1), \PP(\mc{E}_2)$ and $Y:=\Bl_{\DD_1} \PP(\mc{E}_1)$ are all regular, integral, $2$-dimensional schemes that are projective over $S$.

    Let $i: D \to S$ be the inclusion map. Let $X_1:= \PP(\mc{E}_1)$ and let $X_2:= \PP(\mc{E}_2)$. Recall that $\E_2$ is defined as the kernel fitting in the exact sequence
    \begin{equation}\label{eq:struct-seq}
    0 \to \E_2 \to \E_1 \xrightarrow{\phi_1} i_{*} \mc{O}_D\to 0,
    \end{equation}
    where $\phi_1: \E_1 \to i_{*} \mc{O}_D$ is the morphism associated to the $D$-marking $\DD_1\subseteq X_1.$

    Let $j: D \to X_1$ be the section associated with the $D$-marking $\DD_1\subseteq X_1$, meaning $j$ is a closed embedding with image $\DD_1$, and $i = \pi \circ j$.

    On $X_1=\PP(\E_1)$ there is an exact sequence of sheaves associated to the closed subscheme $\DD_1 \subseteq X_1$:
    \[
    0 \to \mc{I}_{\DD_1} \to \mc{O}_{X_1} \to j_{*} \mc{O}_D\to 0.
    \]
    After twisting by $\mc{O}_{X_1}(1)$, we get an exact sequence
    \[
    0 \to \mc{I}_{\DD_1}(1) \to \mc{O}_{X_1}(1)\to j_{*} \mc{O}_D \to 0
    \]
    after choosing an isomorphism $\mc{O}_D \otimes j^{*} \mc{O}(1) \cong \mc{O}_D$, since $\mc{O}_D$ is a PID.

    Let us define the coherent sheaf $\mc{K}:= \mc{I}_{\DD_1}(1)$ on $X_1$, so we can write this exact sequence as 
    \begin{equation} \label{eq:X1-seq}
    0 \to \mc{K}\to \mc{O}_{X_1}(1) \to j_{*} \mc{O}_D\to 0.
    \end{equation}

    If we push forward by $\pi_1$, we get an exact sequence
    \begin{equation}\label{eq:base-ses}
    0 \to (\pi_{1})_{*}\mc{K}\to \E_1\xrightarrow{\phi_1} i_{*} \mc{O}_D\to 0 
    \end{equation}
    which is the same as \eqref{eq:struct-seq} by \Cref{lem:taut-push}. (In particular, the sequence is exact on the right because we already know $\phi_1$ to be surjective.) This induces an isomorphism $(\pi_1)_{*} \mc{K} \cong \mc{E}_2$. 

    On the other hand, $\pi_1$ is flat, so we can pull back \eqref{eq:base-ses} to get an exact sequence
    \[
    0 \to \pi_1^{*}\E_2 \to \pi_1^{*} \E_1 \to \pi_1^{*} i_{*} \mc{O}_D \to 0
    \] on $X_1$, giving a commutative diagram
    \[
    \begin{tikzcd}0 \ar[r] & \pi_1^{*} \E_2 \ar[r]\ar[d] & \pi_1^{*} \E_1 \ar[r]\ar[d] &\pi_1^{*} i_{*} \mc{O}_D \ar[r]\ar[d] & 0 \\
    0 \ar[r] & \mc{K} \ar[r]& \mc{O}_{X_1}(1) \ar[r]& j_{*} \mc{O}_D\ar[r]& 0.
    \end{tikzcd}
    \]

    Let $U = S - D$. We have an open subset $(X_1)_{U} = \pi_1^{-1}(U) = X_1 - \pi_1^{-1}(D)$. On this open subset of $X_1$, the first vertical map is surjective, and in fact isomorphic to the second vertical map. So we get an induced isomorphism $(X_1)_{U} \xrightarrow{\sim} (X_2)_{U}$.
    
    Now consider the blowup $Y:= \Bl_{\DD_1} X_1$, with structure morphism $f_1: Y \to X_1$. Let $E_1$ be the exceptional divisor. Define $r: Y \to S$ by $\pi_1 \circ f_1$.

    We define the line bundle 
    \[
    \mc{M}:= f_1^{*} \mc{O}_{X_1}(1) \otimes \mc{O}_Y(-E_1)
    \]
    on the blowup $Y$. We will prove that $\mc{M}$ is relatively globally generated over $S$, and therefore it will define our morphism to $X_2 = \PP(\mc{E}_2)$. It will also extend the isomorphism $(X_1)|_{U} \xrightarrow{\sim} X_2|_{U}$.

    Consider again the exact sequence
    \[
    0 \to \mc{I}_{\DD_1} \to \mc{O}_{X_1} \to j_{*} \mc{O}_D \to 0.
    \]

    Then we have an exact sequence
    \[
    0 \to (f_1^{-1} \mc{I}_{D_1})\cdot \mc{O}_Y \to \mc{O}_Y \to f_1^{*} j_{*} \mc{O}_D\to 0.
    \]
    Since $E_1$ is the pullback of $D_1$ along $f_1$, we have $(f_1^{-1} \mc{I}_{D_1})\cdot \mc{O}_Y = \mc{I}_{E_1} = \mc{O}_{Y}(-E_1)$, and this is the same sequence as
    \[
    0 \to O_Y(-E_1) \to \mc{O}_Y \to f_1^{*} j_{*} \mc{O}_D \to 0 
    \]
    on $Y$, where $f_1^{*}j_{*}\mc{O}_D$ is isomorphic to the inclusion of the structure sheaf $\mc{O}_{E_1}$ in $Y$.

    If we twist by $f_1^{*} \mc{O}_{X_1}(1)$, we get the exact sequence
    \begin{equation}\label{eq:Y-ses}
        0 \to \mc{M} \to f_1^{*} \mc{O}_{X_1}(1) \to f_1^{*} j_{*} \mc{O}_D \to 0.
    \end{equation}
    on $Y$.
    
    Note that ${f_1}_{*} \mc{O}_Y = \mc{O}_{X_1}$. This follows since $Y \to X_1$ is a birational projective morphism of Noetherian integral schemes and $X_1$ is normal (see \cite[\S III.11]{Har77}.)

    Therefore, if we push forward \eqref{eq:Y-ses} by ${f_{1}}_{*}$ to $X_1$, we get by the projection formula
    \[
    0 \to \mc{I}_{\DD_1}\otimes \mc{O}_{X_1}(1) \to \mc{O}_{X_1}(1) \to j_{*} \mc{O}_D \to 0.
    \]
    using the fact that ${f_1}_{*} \mc{O}_{Y}(-E_1) = \mc{I}_{\DD_1}$. This is the same exact sequence as \eqref{eq:X1-seq}. Therefore the composition $r_{*}={\pi_1}_{*}{f_{1}}_{*}$ sends the sequence \eqref{eq:Y-ses} to \eqref{eq:struct-seq}. 

    We also claim $Y\to S$ is flat. We will show later that each fiber above a closed point $P \in S$ is either isomorphic to $\PP^1_{P}$ or the pushout $F \cup_{\mc{P'}} E$ of two copies of $\PP^1_P$ glued at a point $\mc{P}' \cong P$. Then since $S$ is regular and purely $1$ dimensional, $Y$ is Cohen-Macaulay (in fact regular) and purely $2$-dimensional, $r: Y \to S$ is proper (and so sends closed points to closed points), and the fibers have pure dimension $1$, we conclude by Miracle Flatness that $Y\to S$ is flat (see \cite{EGA}).
    
    As such there is an induced commutative diagram
    \begin{equation}\label{eq:commut-const}
    \begin{tikzcd}
    0 \ar[r] &r^{*} \E_2 \ar[r] \ar[d, "\alpha_\mc{M}"] &r^* \E_1 \ar[r]\ar[d]&r^{*} \mc{O}_D \ar[r]\ar[d] &0\\
    0 \ar[r] & \mc{M} \ar[r] &f_1^{*} \mc{O}_{X_1}(1) \ar[r]& f_1^{*} j_{*} \mc{O}_{D}\ar[r]& 0 
    \end{tikzcd}
    \end{equation}
    where we can identify the first vertical map with $\alpha_{\mc{M}}: r^{*} r_{*} \mc{M} \to \mc{M}$. 

    Remark that the first two columns become isomorphic on the restriction to $U$, so 
    \[
    r^{*} r_{*} \mc{M}|_{U} \to \mc{M}|_{U}
    \]
    is the isomorphism $(X_1)_{U} \cong Y_{U} \xrightarrow{\sim} (X_2)_{U}$.

    So far we have constructed the diagram
    \[
    \begin{tikzcd}[column sep=small]
    & Y \arrow[dl, "f_1"] \arrow[dd,"r"] & \\
    \PP(\E_1) \arrow[rd,"\pi_1"] & & \PP(\E_2)\arrow[ld, "\pi_2"]\\
    &S. & 
    \end{tikzcd}
    \]
    We would like to construct a morphism $Y\to \PP(\mc{E}_2)$ by universal property from the map $\alpha_{\mc{M}}: r^{*} \mc{E}_2 \to \mc{M}$. We need to show this map is surjective - that is, we need to show that the map $r^{*}r_{*} \mc{M}\to \mc{M}$ is surjective, or that $\mc{M}$ is relatively globally generated.

    We will consider the map $r^{*}r_{*} \mc{M} \to \mc{M}$ fiber by fiber. Let $P \in S$ and let $Y_P$ be the fiber above $P$. Let $\ell: Y_P \to Y$ be the inclusion of the fiber. We show that $\ell^{*} r^{*} r_{*} \mc{M} \to \ell^{*} \mc{M}$ is surjective.

    We would like to identify $(r_{*}\mc{M})|_{P} $ with $H^0(Y_P, \mc{M}_P)$ using cohomology and base change. We showed earlier that $r: Y \to S$ is flat, so we need to show that $H^1(Y_P, \mc{M}_P)=0$ for all fibers.

    We'll start with a fiber over $P \in U$, where $U = S-D$. On $r^{-1}(U)$, the bottom left map $\mc{M} \to f_{1}^{*} \mc{O}_{X_1}(1)$ of \eqref{eq:commut-const} restricts to an isomorphism. In fact $f_1: r^{-1}(U) \to \pi_1^{-1}(U)$ is an isomorphism, so if $P \in U$, then $\mc{M}_{P}$ is isomorphic to $\mc{O}(1)$ on $Y_P \cong (X_1)_P \cong \PP^1_P$. Therefore in this case $H^1(Y_P, \mc{M}_P) = H^1(\PP^1_P, \mc{O}(1)) = 0$.

    Now suppose $P \in D$. Let $F:= (X_1)_{P} \cong \PP^1_{P}$ be the fiber of $P$ on $X_1$. Let $\mc{P} \in F$ be the point in $\mc{D}$ which is a relative degree $1$ closed point above $P$.

    Let $F^{st} \subseteq Y_P$ be the strict transform of the fiber. Note that $F^{st}$ can also be defined as the blowup of $F$ at $\mc{P}$, which is isomorphic to $F$. So $f_1: Y \to X_1$ restricts to an isomorphism $F^{st} \xrightarrow{\sim} F$.

    Let $E := f_1^{-1}(\mc{P})$. This is the component of the exceptional divisor $E_1$ above $P$. In particular, on an affine open $W\subseteq S$ containing $P$ but no other closed points of $D$, the map $Y_W\to (X_1)_W$ is simply the blowup of $(X_1)_{W}$ at $\mc{P}$. 

    Note that $X_1$ is a regular dimension $2$ scheme, so the exceptional divisor $E=f_1^{-1}(\mc{P})$ is the projectivized normal bundle at $\mc{P}$, which must be isomorphic then to $\PP^1_{\mc{P}}\cong \PP^1_{P}$.

    Now as divisors, we have that $Y_P$ is the pullback $f^{*} F = F^{st} + E$. This means $Y_P$ is the scheme theoretic intersection of $F^{st}$ and $E$. The scheme theoretic intersection of $F^{st}$ and $E$ is the preimage of $\mc{P}$ under the isomorphism $F^{st} \to F$. Call this preimage $\mc{P}'$, and note that it is a closed point equipped with an isomorphism $\mc{P}' \xrightarrow{\sim} \mc{P}$. So we can write $Y_P$ as the pushout
    \[
    Y_P \cong F^{st} \cup_{\mc{P}'} E.
    \]
    Let's understand the restriction of the line bundle $\mc{M}$ to $Y_P$. By \Cref{lem:p1-pushout}, the restriction $\mc{M}|_{Y_P}$ is determined by the restrictions $\mc{M}|_{F^{st}}$ and $\mc{M}|_{E}$.

    Recall the definition 
    \[
    \mc{M} = f^{*} \mc{O}_{X_1}(1) \otimes \mc{O}_Y(-E_1).
    \]
    By restricting to an open around $P$, we might as well assume that $E_1 = E$ is the blowup at $\mc{P}$ only.

    Now recall that on $E \cong \PP^1_{P}$ we have $\mc{O}(E)|_E \cong \mc{O}(-1)$ and $f^{*} \mc{L}|_{E} \cong \mc{O}_E$ for any line bundle $\mc{L}$ on $X_1$. So $\mc{M}|_E \cong \mc{O}(1)$.

    Next, note that the restriction of $f_1^{*} \mc{O}_{X_1}(1)$ to $F^{st}$ is the same as the pullback of $\mc{O}_{X_1}(1)|_{F}$ from $F$ to $Y_P$. Now $\mc{O}_{X_1}(1)|_{F}$ is isomorphic to $\mc{O}(1)$ on $F$, so it pulls back to $\mc{O}(1)$ on $F^{st}$.

    Finally, the restriction of $\mc{O}(E)$ to $F^{st}$ is the line bundle $\mc{O}(\mc{P}')\cong \mc{O}(1)$. So 
    \[
    \mc{M}|_{F^{st}} \cong (f^{*} \mc{O}_{X_1}(1)|_{F^{st}}) \otimes (\mc{O}(-E)|_{F^{st}})
    \]
    \[
    \cong \mc{O}_{F^{st}}(1) \otimes \mc{O}_{F^{st}}(-1) \cong \mc{O}_{F^{st}}.
    \]

    Therefore $\mc{M}_{P} = \mc{M}|_{Y_P}$ is exactly the type of line bundle in the second half of \Cref{lem:p1-pushout}, so we conclude $H^1(Y_P, \mc{M}_P) = 0$.

    So by Cohomology and Base Change, we know that $(r_{*} \mc{M})_{P} \cong H^0(Y_P, \mc{M}_P)$ for all closed points $P \in S$.

    Therefore, for $\ell: Y_P \to Y$, we can identify the map $\ell^{*}r^{*}r_{*} \mc{M} \to \ell^{*}\mc{M}$ with 
    \[
    r^{*} H^0(Y_P, \mc{M}_{P}) \to \mc{M}_P
    \]
    for each fiber $Y_P$. But we know that this is surjective for each fiber $Y_P$ with $P \in D$ by \Cref{lem:p1-pushout}, and for $P \not \in D$ it's clear that $\mc{O}(1)$ on $\PP^1_{P}$ is globally generated. So
    \[
    \ell^{*} r^{*} r_{*} \mc{M} \to \ell^{*} \mc{M}
    \]
    is surjective for all fibers over closed points $\ell: Y_P \to Y$. So the cokernel of $r^{*}r_{*} \mc{M} \to \mc{M}$ is a coherent sheaf which vanishes on the residue field of every closed point in every closed fiber on $Y$, and therefore on each stalk by Nakayama, so $r^{*}r_{*} \mc{M} \to \mc{M}$ is surjective on $Y$.

    In other words, we have successfully constructed a surjective morphism $r^{*} \mc{E}_2 \to \mc{M} = f_1^{*}\mc{O}_{X_1} \otimes \mc{O}_{Y}(-E_1)$ inducing a map $f_2: Y\to \PP(\mc{E}_2)$ of $S$-schemes. By definition of the universal property, we have guaranteed that $f_1^{*} \mc{O}_{X_1} \otimes \mc{O}_{Y}(-E_1) \cong f_2^{*} \mc{O}_{X_2}(1)$.

    Furthermore, our map restricts to an isomorphism over $U = S-D$, and on each fiber over $P \in D$, our map is isomorphic to the contraction morphism 
    \[
    Y_P = F^{st} \cup_{\mc{P}'} E \to E \xrightarrow{\sim}\PP(\mc{E}_2)_{P}.
    \]
    Therefore each strict transform $F^{st}$ contracts to a relative degree $1$ point $\mc{Q} \in \PP(\mc{E}_2)_P$. We would like to determine that the image of the contractions of all of the strict transforms $F^{st}$ is the closed subscheme $\mc{D}_2 \subseteq \PP(\mc{E}_2)$. We argue as follows:

    Recall the commutative diagram
    \begin{equation}\label{eq:X-commut}
    \begin{tikzcd}0 \ar[r] & \pi_1^{*} \E_2 \ar[r]\ar[d] & \pi_1^{*} \E_1 \ar[r]\ar[d] &\pi_1^{*} i_{*} \mc{O}_D \ar[r]\ar[d] & 0 \\
    0 \ar[r] & \mc{K} \ar[r]& \mc{O}_{X_1}(1) \ar[r]& j_{*} \mc{O}_D\ar[r]& 0.
    \end{tikzcd}.
    \end{equation}
    Let $V \subseteq X_1$ be the open subset $V:= X_1 - \mc{D}_1$. Note that the blowup $f_1: Y \to X_1$ induces an isomorphism $f_1^{-1}(V) \to V$. So applying the pushforward $(f_{1})_{*}$ to the surjection $r^{*} \mc{E}_2 \to \mc{M}$ on $f_1^{-1}(V)$ yields the isomorphic map $\pi_1^{*}{\mc{E}_2}|_{V} \to \mc{K}|_{V}$, which must therefore be a surjection.

    This gives a morphism $V \to \PP(\mc{E}_2)$, which can be identified with the restriction of $f_2: Y\to \PP(\mc{E}_2)$ to $f_1^{-1}(V) \cong V$. Since $f_2:Y \to \PP(\mc{E}_2)$ contracts each $F^{st}$ above $P \in D$ to a point $\mc{Q} \in \PP(\mc{E}_2)|_{P}$, we find that the same must be true on $V$. That is, when the map $V \to \PP(\mc{E}_2)$ is restricted to the fiber over a point $P \in D$, it sends the affine line $(V)|_{P}\cong (X_1)_P - \mc{P}$ to the same point $\mc{Q} \in \PP(\mc{E}_2)|_{P}$.

    To proceed, let's consider the commutative diagram \eqref{eq:X-commut} restricted to the open $V\subseteq X_1$. In this case the bottom left map $\mc{K}|_{V} \to \mc{O}_{X_1}(1)$ becomes an isomorphism.
    \begin{equation}\label{eq:V-commut}
    \begin{tikzcd}0 \ar[r] & (\pi_1^{*} \E_2)|_{V} \ar[r]\ar[d] & (\pi_1^{*} \E_1)|_{V} \ar[r]\ar[d] &\pi_1^{*} i_{*} \mc{O}_D \ar[r]\ar[d]|_{V} & 0 \\
    0 \ar[r] & \mc{K}|_{V} \ar{r}{\sim}& \mc{O}_{X_1}(1)|_{V} \ar[r]& 0\ar[r]& 0.
    \end{tikzcd}.
    \end{equation}
    Now consider the fiber $V_D$ of $V$ at $D$. The image of $\mc{E}_2|_{D} \to \mc{E}_1|_D$ has rank $1$, so there exists a line bundle $\mc{L}_2$ on $D$ which is the image of $\mc{E}_2|_{D} \to \mc{E}_1|_{D}$, exhibiting $\mc{E}_2|_D \twoheadrightarrow \mc{L}_2 \hookrightarrow \mc{E}_1|_{D}.$ The map $\mc{E}|_{D}\twoheadrightarrow \mc{L}_2$ exactly corresponds to the $D$-marking $(\mc{E}_2, \DD_2):= \ElmD(\mc{E}_1, \DD_1)$. Furthermore, restricting \eqref{eq:V-commut} to $V_D$ gives a commutative diagram

    \[
    \begin{tikzcd} & (\pi_1^{*} \E_2)|_{V_D} \ar[r]\ar[d] &(\pi_1^{*} \mc{L}_2)|_{V_D} \ar[r] & (\pi_1^{*} \E_1)|_{V_D} \ar[d]\\
     & \mc{K}|_{V_D} \ar{rr}{\sim}& &\mc{O}_{X_1}(1)|_{V_D} 
    \end{tikzcd}.
    \]
    which means that the first vertical map factors through the top left horizontal map. In other words, the induced map $V_D \to \PP(\mc{E}_2)_{D}$ factors through the subscheme determined by ${\mc{E}_2}|_{D} \twoheadrightarrow \mc{L}_2$, which is exactly $\DD_2 \subseteq \PP(\mc{E}_2)$. Therefore each $F^{st}$ must contract to a point in $\DD_2$.

    So we have a morphism of $S$-schemes $f_2: Y \to \PP(\mc{E}_2)$ which is an isomorphism above $U \subseteq S$. On the fibers above $P \in D$, the morphism is the contraction map on $Y_P \cong F^{st} \cup_{\mc{P}'} E$. Here the contraction is onto $E \cong \PP(\mc{E}_2)_{P}$ and each fiber $F^{st}$ contracts to a point $\mc{Q} \in \DD_2$. By universal property of blowups, $f_2$ factors through a map $Y\to Y':= \Bl_{\DD_2}\PP(\mc{E}_2)$, as the preimage $f_2^{-1}(\DD_2) = \cup_P F^{st}$ is an effective Cartier divisor.

    We claim $Y\to Y'$ is an isomorphism.

    Consider a fiber $Y_P \to Y'_P$. Write $Y_P \cong F^{st} \cup_{\mc{P} '} E$ and $Y'_P\cong G^{st} \cup_{\mc{Q}'} E'$, where $G$ is the fiber $\PP(\mc{E}_2)_{P}$ and $E'$ is the exceptional divisor above some point $\mc{Q} \in \PP(\mc{E}_2)_{P}$.

    Now we know that our map must induce an isomorphism $E \to G^{st}$. Since $Y\to Y'$ is a proper, dominant morphism onto an integral scheme, it must be a surjection onto its scheme-theoretic image $Y'$. In particular, the map $F^{st} \to E'$ must be surjective. Furthermore, under this map the preimage of $\mc{Q}' \in E' \cong \PP^1_P$ is $\mc{P}' \in F^{st} \cong \PP^1_{P}$, so the induced map of projective lines must have degree $1$. Therefore it is an isomorphism, and $Y_P\to Y'_P$ is an isomorphism.

    Finally, we have constructed a morphism $Y \to Y'$ which is an isomorphism above $U$ and an isomorphism above $D$. Furthermore this map is proper, birational, bijective, $Y$ and $Y'$ are integral, and $Y'$ is normal. So $Y\to Y'$ is an isomorphism by Zariski's Main Theorem.

\end{proof}
In the proof, we used a few lemmas.

\begin{lemma}\label{lem:p1-pushout}
    Let $P$ be a closed point. Let $X_1, X_2$ be schemes over $P$ such that $X_1 \cong \PP^1_{P}$ and $X_2 \cong \PP^1_{P}$. Let $Z$ be a scheme with a given isomorphism $Z\xrightarrow{\sim} P$. Let $a_1: Z \to X_1$ and $a_2: Z \to X_2$ be closed immersions of schemes over $P$.

    Then there is a unique pushout scheme $X = X_1\cup_{Z} X_2$. The restriction of line bundles to the closed subschemes $X_1\subseteq X$ and $X_2 \subseteq X$ induces an isomorphism
    \[
    \Pic(X) \xrightarrow{\sim} \Pic(X_1) \times \Pic(X_2).
    \]
    Let $\mc{L}\in \Pic(X)$ denote the unique line bundle which maps to  $\left( \mc{O}_{X_1}, \mc{O}_{X_2}(1)\right)$ under the previous isomorphism. Let $\pi: X\to P$ be the structure map. Then the map
    \[
    \pi^{*}\pi_{*} \mc{L} \to \mc{L}
    \]
    is a surjection and corresponds to the canonical contraction morphism $X\to X_2$, which contracts $X_1$ to $Z \in X_2$. Also, we have $H^1(X, \mc{L})=0$.

\end{lemma}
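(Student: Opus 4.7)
The plan is to realize $X$ as a scheme-theoretic pushout, extract the Picard group from a Mayer--Vietoris sequence, and then apply the same principle to compute the cohomology of $\mc{L}$. First, since the closed immersions $a_i : Z \hookrightarrow X_i$ identify a single reduced point with a $k$-point of each smooth projective $\PP^1_P$ (writing $k$ for the residue field of $P$), the pushout $X := X_1 \cup_Z X_2$ will exist as a scheme---either by Ferrand's theorem on pushouts along closed immersions, or by direct affine gluing, with local ring at the node equal to the fiber product $B_1 \times_k B_2$ of the local rings $B_i := \mc{O}_{X_i, a_i(Z)}$. The closed immersions $\iota_i : X_i \hookrightarrow X$ will satisfy $X_1 \cap X_2 = Z$ scheme-theoretically, yielding the Mayer--Vietoris short exact sequence
\[
0 \to \mc{O}_X \to (\iota_1)_* \mc{O}_{X_1} \oplus (\iota_2)_* \mc{O}_{X_2} \to j_* \mc{O}_Z \to 0
\]
and an analogous sequence of sheaves of units.

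For the Picard group, I take the long exact cohomology sequence attached to the $\mc{O}^\times$-version. Since $H^0(X_i, \mc{O}^\times) = k^\times$ surjects onto $H^0(Z, \mc{O}^\times) = k^\times$ and $\Pic(Z) = 0$, the sequence will collapse to the isomorphism $\Pic(X) \cong \Pic(X_1) \oplus \Pic(X_2) \cong \Z^2$ implemented by restriction. The line bundle $\mc{L}$ is then the unique class with $(\mc{L}|_{X_1}, \mc{L}|_{X_2}) = (\mc{O}_{X_1}, \mc{O}_{X_2}(1))$.

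Next, to compute cohomology I tensor the Mayer--Vietoris sequence with $\mc{L}$ and take cohomology, obtaining
\[
0 \to H^0(X, \mc{L}) \to k \oplus H^0(X_2, \mc{O}(1)) \to k \to H^1(X, \mc{L}) \to 0,
\]
using $H^1(X_1, \mc{O}) = H^1(X_2, \mc{O}(1)) = 0$. The middle map $(\alpha, s) \mapsto s(a_2(Z)) - \alpha$ is surjective, so $H^1(X, \mc{L}) = 0$ and $H^0(X, \mc{L}) \cong \{(\alpha, s) : \alpha = s(a_2(Z))\}$ has dimension $2$. The second projection identifies $H^0(X, \mc{L})$ bijectively with $H^0(X_2, \mc{O}(1))$, so $\mc{L}|_{X_2} = \mc{O}(1)$ will be globally generated by restrictions of sections on $X$; the first projection is surjective onto $H^0(X_1, \mc{O}) = k$ (choose $s$ to take any desired value at $a_2(Z)$), so $\mc{L}|_{X_1}$ will be generated by a nowhere-vanishing constant. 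The resulting surjection $\pi^* \pi_* \mc{L} \to \mc{L}$ induces a morphism $X \to \PP(\pi_* \mc{L}) \cong \PP^1_P$ that restricts to the standard isomorphism $X_2 \xrightarrow{|\mc{O}(1)|} \PP^1_P$ and contracts $X_1$ to the image of $a_2(Z)$, matching the contraction of $X_1$ described in the statement.

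The hard part will be establishing the Mayer--Vietoris sequence for the pushout $X$ and confirming that restriction decomposes $\Pic(X)$ as claimed; once these structural facts are in hand the remaining content reduces to elementary Riemann--Roch on $\PP^1$ and a linear-algebra identification of $H^0(X, \mc{L})$ with pairs $(\alpha, s)$ compatible at the node.
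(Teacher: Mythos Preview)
Your proposal is correct and follows essentially the same approach as the paper: existence of the pushout, the Mayer--Vietoris sequence of units to compute $\Pic(X)$, and the additive Mayer--Vietoris sequence tensored with $\mc{L}$ to get $H^1(X,\mc{L})=0$. The only cosmetic difference is that the paper argues the contraction part ``backwards'' (it first defines the contraction $c:X\to X_2$, sets $\mc{L}':=c^*\mc{O}_{X_2}(1)$, and checks $\mc{L}'\cong\mc{L}$ via the Picard isomorphism), whereas you argue ``forwards'' by directly verifying global generation of $\mc{L}$ from your description of $H^0(X,\mc{L})$; both are fine.
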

\begin{proof}
    If $a_1:Z \to X_1$ and $a_2: Z \to X_2$ are closed immersions, then the pushout exists in the category of schemes (see \cite[\href{https://stacks.math.columbia.edu/tag/0B7M}{Lemma 0B7M}]{stacks-project}). Furthermore we have a short exact sequence of sheaves on $X$
    \begin{equation*}
    0 \to \mc{O}_X \to \mc{O}_{X_1} \oplus \mc{O}_{X_2} \to \mc{O}_{Z} \to 0
    \end{equation*}
    where the first map sends $s$ to $(s|_{X_1}, s|_{X_2})$ and the second map sends $(s_1, s_2)$ to $s_1|_{Z} - s_2|_{Z}$.

    We claim there is also a short exact sequence
    \begin{equation}\label{eq:full-ses}
    1 \to \mc{O}_X^{*} \to \mc{O}_{X_1}^{*} \oplus \mc{O}_{X_2}^{*}\to \mc{O}_Z^{*}\to 1.
    \end{equation}
    where the first map sends $s$ to $(s|_{X_1}, s|_{X_2})$ and the second map sends $(s_1, s_2)$ to $s_1|_{Z} (s_2|_{Z})^{-1}$. We can check exactness on stalks. Away from $Z$, it's trivially exact. Now consider the stalks at the closed point $Z$.
    \begin{equation}\label{eq:mult-stalk}
    1 \to \mc{O}_{X,Z}^{*} \to \mc{O}_{X_1, Z}^{*} \oplus \mc{O}_{X_2, Z}^{*} \to \mc{O}_{Z}^{*} \to 1.
    \end{equation}
    These are simply the groups of units in the local rings. Note that an element of a local ring is a unit if and only if its reduction to $Z$ is nonzero.

    Now we prove exactness by comparison with the additive exact sequence at the stalk:
    \begin{equation}\label{eq:add-stalk}
    0 \to \mc{O}_{X,Z} \to \mc{O}_{X_1, Z} \oplus \mc{O}_{X_2, Z} \to \mc{O}_{Z} \to 0.
    \end{equation}

    Consider the first map in \eqref{eq:mult-stalk}, which sends an invertible element $s \in \mc{O}_{X,Z}^{*}$ to $(s|_{X_1}, s|_{X_2})$. So if $s$ maps to $(1,1)$ in \eqref{eq:mult-stalk}, then $s-1$ maps to $(0,0)$ in \eqref{eq:add-stalk}, which means $s=1$ because \eqref{eq:add-stalk} is left exact.

    Now let's consider exactness at the middle. Suppose that $(s_1, s_2)$ maps to $1$, meaning $(s_1|_{Z}) (s_2|_{Z})^{-1} =1$. This means $s_1|_{Z} = s_2|_{Z}$, so there exists an element $s \in \mc{O}_X$ mapping to $(s_1, s_2)$. Furthermore this element satisfies $s|_{Z} = s_1|_{Z} = s_2|_{Z} \neq 0$, so $s$ is a unit in $\mc{O}_{X,Z}$. Thus \eqref{eq:mult-stalk} is exact at the middle.

    Finally, let's consider right exactness. This is clear, as every element of $\mc{O}_Z$ comes from an element of (say) $\mc{O}_{X_1, Z}$, and this element is invertible if and only if the target element in $\mc{O}_Z$ is invertible.

    So the sequence \eqref{eq:full-ses} is exact. Taking cohomology, we get
    \[
\begin{tikzcd}[arrows=to]
0 \rar & H^0(X, \mc{O}_X^{*}) \rar & H^0(X_1, \mc{O}_{X_1}^{*}) \oplus  H^0(X_2, \mc{O}_{X_2}^{*})\arrow[d, phantom, ""{coordinate, name=Z}] \rar & H^0(Z, \mc{O}_Z^{*})
\arrow[dll,rounded corners,to path={ -- ([xshift=2ex]\tikztostart.east)
|- (Z) [near end]\tikztonodes
-| ([xshift=-2ex]\tikztotarget.west)
-- (\tikztotarget)}] \\
 & \Pic(X) \rar & \Pic(X_1) \oplus \Pic(X_2)\rar& \Pic(Z).
\end{tikzcd}
\]
But $\Pic(Z) = 0$ and $H^0(X_1, \mc{O}_{X_1}^{*})\to H^0(Z, \mc{O}_Z^{*})$ is surjective (both are the group of units in $Z\cong P$). So we get the isomorphism $\Pic(X) \xrightarrow{\sim} \Pic(X_1) \oplus \Pic(X_2)$.

Because of this, we can prove the second part of the lemma ``backwards.'' In other words, if we define the canonical contraction morphism $c: X \to X_2$ over $P$, and we let $\mc{L}':= c^{*} \mc{O}_{X_2}(1)$, then $\mc{L}'$ must be globally generated and defines the map $c: X \to X_2$ by the surjection $\pi^{*} \pi_{*} \mc{L}' \to \mc{L}'$. So it suffices to show that $\mc{L}' = c^{*} \mc{O}_{X_2}(1)$ is isomorphic to $\mc{L} = \Phi^{-1}(\mc{O}_{X_1}, \mc{O}_{X_2}(1))$. In other words, it suffices to show that the restriction of $\mc{L}'$ to $X_1$ is $\mc{O}_{X_1}$ and the restriction of $\mc{L}$ to $X_2$ is $\mc{O}_{X_2}(1)$.

But these are both clear. For the first part, note that notice that the composition $X_2 \to X \to X_2$ is the identity, so $\mc{O}_{X_2}(1)$ pulls back to $\mc{O}_{X_2}(1)$.

For the first part, note that the restriction of $c$ to $X_1$ factors through $f: X_1 \to Z$. So the pullback of $\mc{O}_{X_2}(1)$ factors through $\mc{O}_{X_2}(1)|_{Z} \cong \mc{O}_{Z}$, implying that the restriction of $\mc{L}'$ to $X_1$ is $f^{*} \mc{O}_Z = \mc{O}_{X_1}$.

So the second part of the lemma is proved.

Finally, to compute $H^1(X, \mc{L})$ consider the long exact sequence
    \[
\begin{tikzcd}[arrows=to]
0 \rar & H^0(X, \mc{L}) \rar & H^0(X_1, \mc{L}) \oplus  H^0(X_2, \mc{L})\arrow[d, phantom, ""{coordinate, name=Z}] \rar & H^0(Z, \mc{L})
\arrow[dll,rounded corners,to path={ -- ([xshift=2ex]\tikztostart.east)
|- (Z) [near end]\tikztonodes
-| ([xshift=-2ex]\tikztotarget.west)
-- (\tikztotarget)}] \\
 & H^1(X, \mc{L}) \rar & H^1(X_1, \mc{L})\oplus H^1(X_{2}, \mc{L})\rar& H^1(Z, \mc{L}).
\end{tikzcd}
\]

Note that $H^1(X_1, \mc{L}) = H^1(X_1, \mc{O}_{X_1}) = 0$ and $H^1(X_2, \mc{L}) = H^1(X_2, \mc{O}(1)) = 0$.

Let $k$ be the field with $Z = \Spec k$. Then $H^0(X_1, \mc{L}) = k$ and restricts to $H^0(Z, \mc{L}) = k$ surjectively. So $H^1(X, \mc{L})=0$ as desired.

\end{proof}

In this lemma, we prove that two ``canonical'' maps are the same.

\begin{lemma}\label{lem:taut-push}
    Let $S$ be a Dedekind scheme. Let $i: D \to S$ be a reduced effective divisor on $S$. Let $\E$ be a rank $2$ vector bundle on $S$, let $\LL$ be a line bundle on $D$, and let $\phi: \E \twoheadrightarrow i_{*} \LL$ be the data associated to a $D$-marking $\DD\subseteq \PP(\E)$.

    Let $j:D \to \PP(\E)$ be the section associated to the $D$-marking. There is an associated restriction morphism $\mc{O}_{\PP(\E)} \to j_{*} \mc{O}_{D}$.

    Let $\mc{M} = \mc{O}_{\PP(\E)}(1)$, and let $\mc{M} \to j_{*} j^{*} \mc{M}$ be the restriction morphism. Push forward this morphism by $\pi$ to get a map $\phi': \E \to i_{*} \LL'$ for a line bundle $\LL' = j^{*} \mc{M}$.

    Then $\mc{L} \cong \mc{L}'$ and $\phi= \phi'$.
\end{lemma}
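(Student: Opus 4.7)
The plan is to unwind the universal property of $\PP(\E)$ and then apply $\pi_*$ to the standard commutative square relating the tautological quotient and its restriction along $j$. First, recall the tautological surjection $q: \pi^*\E \twoheadrightarrow \mc{M} = \mc{O}_{\PP(\E)}(1)$. Under the tensor-Hom adjunction $\Hom(\pi^*\E, \mc{M}) = \Hom(\E, \pi_*\mc{M})$, and via the canonical identification $\pi_*\mc{M} \cong \E$ from Proposition 2.1, the map $q$ corresponds to the identity on $\E$. Since $\pi \circ j = i$ (from the third interpretation in Proposition 2.4), pulling back $q$ along $j$ yields a surjection $j^*q : i^*\E = j^*\pi^*\E \twoheadrightarrow j^*\mc{M} = \LL'$, and by the universal property of the projective bundle this surjection is precisely the data $i^*\E \twoheadrightarrow \LL$ determining the section $j$, up to the unique identification $\alpha: \LL' \xrightarrow{\sim} \LL$. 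This already gives $\LL' \cong \LL$.

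Next I consider the commutative square of sheaves on $\PP(\E)$
\[
\begin{tikzcd}
\pi^*\E \ar[r, "q"] \ar[d] & \mc{M} \ar[d] \\
j_*j^*\pi^*\E \ar[r, "j_*j^*q"] & j_*j^*\mc{M}
\end{tikzcd}
\]
whose vertical arrows are the adjunction units for $j^* \dashv j_*$. Applying $\pi_*$ and using $\pi\circ j = i$ to rewrite $\pi_*j_*(-) = i_*j^*(-)$ at the bottom, together with $\pi_*\pi^*\E \cong \E$ (projection formula and $\pi_*\mc{O}_{\PP(\E)} = \mc{O}_S$) on the left and $\pi_*\mc{M} \cong \E$ on the right, produces
\[
\begin{tikzcd}
\E \ar[r, "\pi_*q"] \ar[d] & \E \ar[d, "\phi'"] \\
i_*i^*\E \ar[r, "i_*(j^*q)"] & i_*\LL'.
\end{tikzcd}
\]
The right vertical map is precisely $\phi'$ by its definition as $\pi_*$ of the restriction morphism $\mc{M} \to j_*j^*\mc{M}$. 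The top arrow is the identity by the observation of the previous paragraph. Commutativity of the square therefore gives
\[
\phi' = i_*(j^*q) \circ \eta_\E,
\]
where $\eta_\E : \E \to i_*i^*\E$ is the adjunction unit for $i^* \dashv i_*$.

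Finally, $i_*(j^*q) \circ \eta_\E$ is exactly the image of $j^*q : i^*\E \twoheadrightarrow \LL'$ under the adjunction bijection $\Hom_D(i^*\E, \LL') \xrightarrow{\sim} \Hom_S(\E, i_*\LL')$ used in Proposition 2.4. Composing with the isomorphism $\alpha: \LL' \xrightarrow{\sim} \LL$, the surjection $j^*q$ becomes $\psi: i^*\E \twoheadrightarrow \LL$, and Proposition 2.4 identifies the adjoint of $\psi$ with $\phi$. Hence $\phi' = \phi$ under the identification $\LL \cong \LL'$. The only subtlety to watch, and where I would expect to spend the most care, is keeping the two adjunctions (tensor-Hom for $\pi^* \dashv \pi_*$ on the one hand, and $i^* \dashv i_*$, $j^* \dashv j_*$ on the other) and the canonical isomorphism $\pi_*\mc{M} \cong \E$ from Proposition 2.1 straight, so that the final matching is literally the adjunction bijection used to define $\DBun_1$ rather than some compatible but a priori different isomorphism.
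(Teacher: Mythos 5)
Your proposal is correct and follows essentially the same route as the paper: both identify $\LL'=j^*\mc{M}$ with $\LL$ via $\psi = j^*q$ and then verify $\phi=\phi'$ by comparing the two adjunction composites, the paper doing so via the triangle identity for $\pi^*\dashv\pi_*$ and you via the naturality square for the $j^*\dashv j_*$ unit applied to $q$ followed by $\pi_*$ — these are the same diagram chase packaged slightly differently.
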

\begin{proof}
    Recall $\pi \circ j = i$. The map $\psi_1: i^{*} \E \to \LL$ associated to the $D$-marking is induced by pulling back $\pi^{*} \E \to \mc{O}_{\PP(\E)}$ by $j$. Therefore $\LL = j^{*} \mc{M} = \LL'$. Furthermore we have
    \[
    \phi: \E \to i_{*} i^{*} \E \to i_{*} \LL
    \]
    is the composite map
    \[
    \pi_{*} \mc{M} \to i_{*} i^{*} \pi_{*} \mc{M} = i_{*} j^{*} \pi^{*} \pi_{*} \mc{M} \to i_{*} j^{*} \mc{M}.
    \]
    On the other hand, our map $\phi'$ induced by the pushforward of $\pi$ is 
    \[
    \phi': \pi_{*} \mc{M} \to \pi_{*} j_{*} j^{*}\mc{M} = i_{*} j^{*} \mc{M}.
    \]

    The equality then follows by the triangle identity and the commutative diagram
    \[
    \begin{tikzcd}
        \pi_{*} \mc{M} \ar[r] \ar[rd, "\Id"] &\pi_{*} \pi^{*} \pi_{*} \mc{M} \ar[d]\ar[r]& \pi_{*} j_{*} j^{*} \pi^{*} \pi_{*} \ar[d]\\
        &\pi_{*} \mc{M} \ar[r] & \pi_{*}j_{*} j^{*} \mc{M}.
    \end{tikzcd}
    \]
\end{proof}
\singlespacing

\printbibliography
\addcontentsline{toc}{chapter}{Bibliography}
\end{document}